%%%%%%%%%%%%%% DOCUMENTCLASS %%%%%%%%%%%%%%%%%%%%%%%%%%%%%%%%%%%%%%%

\documentclass{amsart}

%%%%%%%%%%%%% USEPACKAGE  %%%%%%%%%%%%%%%%%%%%%%%%%%%%%%%%%%%%%%%%%%%

\usepackage{latexsym,amssymb,amscd,textcomp,fancyhdr,lastpage}
\usepackage{amsbsy,amsmath,amsthm,amsfonts}
\usepackage[english]{babel}
\usepackage[all]{xy}
\usepackage[latin1]{inputenc}
\usepackage{array}
\usepackage{enumerate}

%\usepackage[active]{srcltx}

%%%%%%%%%%% OPTIONS %%%%%%%%%%%%%%%%%%%%%%%%%%%%%%%%%%%%%%%%%%%%%%%%%
%\topmargin -1.3cm % marge en haut de page
%\headheight 2cm 
\pagestyle{plain}
\textwidth 15cm %largeur texte
\oddsidemargin 0.5cm % marge verticale
\evensidemargin 0.5cm %marge verticale
\textheight 22 cm %hauteur texte
%\footskip 1cm

%%%%%%%%%%%%%%% NEWTHEOREM ANGLAIS %%%%%%%%%%%%%%%%%%%%%%%%%%%%%%%%%%%%%%%%%%

\newtheorem{Atheorem}{Theorem}

\newtheorem{theorem}{Theorem}[section]

\newtheorem{lemma}[theorem]{Lemma}
\newtheorem{corollary}[theorem]{Corollary}
\newtheorem{proposition}[theorem]{Proposition}

\newtheorem{definition}[theorem]{Definition}

\newtheorem{remark}[theorem]{Remark}
\theoremstyle{remark}
\newtheorem{rem}[theorem]{Remark}
\newtheorem{nb}[theorem]{Nota Bene}

\newtheorem*{NB*}{Nota Bene}

%%%%%%%%%%%%% NEWENVIRONMENT ANGLAIS %%%%%%%%%%%%%%%%%%%%%%%%%%%%%%%%%%%%%%%%%

\newenvironment{sproof}[1]
{\begin{proof}[#1]} {\end{proof}}

%%%%%% ENSEMBLES %%%%%%%%%%%%%%%%%%%%%%%%%%%%%%%%%%%%%%%%%%%%%%%%%%%%%%%%

\newcommand{\N}{\mathbb N}

\newcommand{\C}{\mathbb C}
\newcommand{\Z}{\mathbb Z}
\newcommand{\R}{\mathbb R}
\newcommand{\Q}{\mathbb Q}
%%%%% ESPACE DES GROUPES %%%%%%%%%%%%%%%%%%%%%%%%%%%%%%%%%%%%%%%%%%%%%%%%%%

\newcommand{\Free}{\mathbb{F}}
\newcommand{\F}{{\mathbb F}_2}

\newcommand{\G}{{\mathcal G}_2}

%%% NOTATIONS GROUPES %%%%%%%%%%%%%%%%%%%%%%%%%%%%%%%%%%%%%%%%%%%%%%%%%%%%%
%%%% Baumslag-Solitar %%%%
\newcommand{\BS}{\overline{BS}(m,\xi)}
\newcommand{\ovBS}{\overline{BS}(m,\xi)}
\newcommand{\bBS}[2]{\overline{BS}(#1,#2)}
\newcommand{\tBS}[2]{\widetilde{BS}(#1,#2)}
\newcommand{\eps}{\varepsilon}
\newcommand{\BBS}{\overline{BS}}

%%% NOTATIONS THEORIE DES GROUPES %%%%%%%%%%%%%%%%%%%%%%%%%%%%%%%%%%%%%%%%%%%%%%%%%%%%%
\newcommand{\br}[1]{\lbrack #1 \rbrack}
\newcommand{\Pres}[2]{\left\langle{#1}\ \big\vert\ {#2}\right\rangle}

%%%%%% NOTATIONS MACHINES DE TURING %%%%%%%%%%%%%%%%%%%%%%%%%%%%%%%%%%%%%%%%%
\newcommand{\DS}{\textnormal{DSPACE}}
\newcommand{\NS}{\textnormal{NSPACE}}
\newcommand{\DT}{\textnormal{DTIME}}
\newcommand{\A}{\mathcal{A}}
\newcommand{\Aa}{\mathcal{A}^{\ast}}
\newcommand{\Mp}{\text{M}'}
\newcommand{\tv}{\tilde{v}}
\newcommand{\tw}{\tilde{w}}
\newcommand{\ab}{ \{a^{\pm1},b^{\pm1}\}^{\ast} }
\newcommand{\aeo}{ \{a^{\pm1},\pm e_0\}^{\ast} }

%%%%%%%%%%% NOTATIONS ENTIERS M %%%%%%%%%%%%%%%%%%%%%%%%%%%%%%%%%%%%%%%%%%%
\newcommand{\mh}{\hat{m}}
\newcommand{\nh}{\hat{n}}

%%%%%%%%%%%% NOTATIONS OPERATEURS  %%%%%%%%%%%%%%%%%%%%%%%%%%%%%%%%%
\newcommand{\roots}{\operatorname{Root}}

\title{Limits of Baumslag-Solitar groups and dimension estimates in the space of marked groups}
\author{Luc Guyot and Yves Stalder}

\subjclass[2000]{Primary 20E05, 20E08, 20F10, 20F05}
\keywords{space of marked groups, HNN extension, groups acting on trees, Haudorff dimension, Turing complexity of the word problem, Hopf property, C*-simplicity, twisted conjugacy}

\begin{document}

\maketitle
%\tableofcontents

%%%%%%%%%%%%%%%%%%%%%%%%%%%%%%%%%%%%%%%%%%%%%%%%%%%%%%%%%%%%%%%%%
%                         ABSTRACT                              %
%%%%%%%%%%%%%%%%%%%%%%%%%%%%%%%%%%%%%%%%%%%%%%%%%%%%%%%%%%%%%%%%%

\begin{abstract}

  We prove that the limits of Baumslag-Solitar groups studied in \cite{GS08} 
are non-linear hopfian C*-simple groups with infinitely many twisted conjugacy classes. 
We exhibit infinite presentations for these groups, classify them up to group isomorphism, describe their automorphisms and discuss the word and conjugacy problems. Finally, we prove that the set of these groups has non-zero Hausforff dimension in the space of marked groups on two generators.
\end{abstract}

%%%%%%%%%%%%%%%%%%%%%%%%%%%%%%%%%%%%%%%%%%%%%%%%%%%%%%%%%%%%%%%%%
%                       INTRODUCTION                            %
%%%%%%%%%%%%%%%%%%%%%%%%%%%%%%%%%%%%%%%%%%%%%%%%%%%%%%%%%%%%%%%%%

\section*{Introduction} \label{SecIntro}

Baumslag-Solitar groups are ubiquitous in group theory and topology \cite{BS62,Mol69,Gil79,FM98,Why01,JS79} and offer a remarkable test bed for group-theoretic properties. Considering their limits, we obtain in the present paper a Cantor set of pairwise non-isomorphic 
two-generated groups with a number of combinatorial and geometrical non-closed properties. By a closed property, we mean a property that defines a closed subset of the space of marked groups. We also give the first estimates of non-vanishing Hausdorff dimension in the space of marked groups.

Let $m\in \Z \setminus \{0\}$. For every sequence $(\xi_n)$ of integers in $\Z$ such that $|\xi_n|$ tends to infinity and $\xi_n$ tends to some $m$-adic integer $\xi$, define $$\ovBS=\lim_{n \to \infty} BS(m,\xi_n)$$ where $BS(p,q)=\Pres{a,b}{ab^pa^{-1}=b^q}$ denotes the Baumslag-Solitar group with $p,\,q$ in $\Z \setminus \{0\}$; the limit is taken with respect to the topology of the space of marked groups \cite[Th. 6]{Sta06b}.
As a consequence of its definition, $\ovBS$ enjoys the following closed properties: it is torsion-free, centerless, non-Kazhdan\footnote{The property (T) of Kahzdan is open by \cite[Th. 6.7]{Sha00}.}, contains a non-abelian free group generated by $b$ and $bab^{-1}$ if $|m|>1$, and satisfies the relation\footnote{This is actually the shortest relation with respect to $a$ and $b$ by \cite[Pr. 2]{Sta06a}.} ``$\br{ ab^ma^{-1},\,b}=1$". We present here results of a different nature, relying on the existence of an HNN decomposition.

The limits $\ovBS$ are first studied for their own right in \cite{GS08}, where it is shown that $\ovBS$ acts transitively on a tree and maps homomorphically onto the special limit $\bBS{1}{0}=\Z \wr \Z$. These two results are extensively used in this article.  
\paragraph{Results}
 We first prove that $\BS$ is a non-degenerate HNN extension of a free abelian group of infinite countable rank. More precisely, consider the free abelian groups 
\begin{eqnarray*}
 E & = & \Z e_0 \oplus \Z e_1 \oplus \Z e_2 \oplus \cdots \\
 E_{m,\xi} & = & \Z me_0 \oplus \Z(e_1 - r_1(\xi)e_0) \oplus \Z(e_2 - r_2(\xi)e_0) \cdots \\
 E_1 & = & \Z e_1 \oplus \Z e_2 \oplus \cdots
\end{eqnarray*}
where $(r_i(\xi))$ is an integer sequence defined in Section \ref{SubSecRiSi}. 
Let $\widetilde{BS}(m,\xi)$ be the HNN extension of basis $E$
with conjugated subgroups $E_{m,\xi},\,E_1$ and stable letter $a$, where
conjugacy from $E_{m,\xi}$ to $E_1$ is defined by $a(me_0)a^{-1} =
e_1$ and $a(e_i-r_i(\xi)e_0)a^{-1} = e_{i+1}$.
\begin{Atheorem}[Cor. \ref{CorHNN2} and Th. \ref{ThPres}]  \label{ThmA}
Let $(a,b)$ be the canonical generating pair of $\ovBS$. Then the map defined by $f(a)=a$ and $f(b)=e_0$ induces an isomorphism from
$\ovBS$ to $\tBS{m}{\xi}$.
Moreover, the group $\ovBS$ admits the following infinite presentation:
$$
\Pres{a,b}{\br{b,b_i}=1,\,i\ge 1}
$$
with $b_1=ab^ma^{-1}$ and $b_{i}=ab_{i-1}b^{-r_{i-1}(\xi)}a^{-1}$
for every $i \ge 2$.

In particular, we have $\bBS{m}{0}=\Pres{a,b}{\br{a^ib^ma^{-i},\, b}=1,\, i \ge 1}$. In addition, the
latter presentation is minimal.
\end{Atheorem}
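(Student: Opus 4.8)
Proof plan.

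\medskip

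The equality $\bBS{m}{0}=\Pres{a,b}{\br{a^ib^ma^{-i},b}=1,\ i\ge1}$ is the specialisation of the presentation in Theorem~\ref{ThPres} at $\xi=0$: the integer sequence $(r_i(\xi))$ of Section~\ref{SubSecRiSi} vanishes at $\xi=0$ (immediate from its definition), so the recursion $b_1=ab^ma^{-1}$, $b_i=ab_{i-1}b^{-r_{i-1}(0)}a^{-1}=ab_{i-1}a^{-1}$ unwinds to $b_i=a^ib^ma^{-i}$, and $\br{b,b_i}=1$ is the relation $\br{a^ib^ma^{-i},b}=1$. (By the same token $E_{m,0}=\Z me_0\oplus\Z e_1\oplus\Z e_2\oplus\cdots$, which is used below.)

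For the minimality claim I must show that no single relator is a consequence of the others. Fix $j\ge1$, write $R_i=\br{a^ib^ma^{-i},b}\in\F$ (the free group on $a,b$), and let $N_j$ be the normal closure in $\F$ of $\{R_i:i\ge1,\ i\ne j\}$; the goal is $R_j\notin N_j$. Since $\br{x^{-1},y}$ and $\br{x,y}$ have the same normal closure, replacing $b$ by $b^{-1}$ lets me assume $m\ge1$. The plan is to build a group $Q_j$ and a homomorphism $\varphi\colon\F\to Q_j$, $a\mapsto a$, $b\mapsto e_0$, with $\varphi(R_i)=1$ for all $i\ne j$ but $\varphi(R_j)\ne1$; then $N_j\subseteq\ker\varphi$ while $R_j\notin\ker\varphi$, which is exactly what is needed.

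The group $Q_j$ will be an HNN extension modelled on $\tBS{m}{0}$, but with a deliberately non-abelian base $B$ in which $e_0$ and $e_j$ fail to commute although all other pairs among $e_0,e_1,e_2,\dots$ still do. For $m\ge2$, let $C$ be free abelian on $\{e_0\}\cup\{e_i:i\ge1,\ i\ne j\}$, let $D\le C$ be the index-$m$ subgroup generated by $e_0^m$ together with the $e_i$ $(i\ge1,\ i\ne j)$, and set $B=C*_D\bigl(D\times\langle e_j\rangle\bigr)$; the amalgam keeps $e_0^m$ commuting with $e_j$ while $e_0$ does not. For $m=1$ the free abelian base of $\tBS{1}{0}$ cannot break a single commutator in an $a$-equivariant way, so instead let $B$ be the right-angled Artin group on vertices $\{e_i:i\ge0\}$ with an edge joining $e_i$ and $e_k$ exactly when $|i-k|\ne j$. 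In both cases $e_0^m\mapsto e_1$, $e_i\mapsto e_{i+1}$ $(i\ge1)$ is an isomorphism $\langle e_0^m,e_1,e_2,\dots\rangle\to\langle e_1,e_2,\dots\rangle$ — these subgroups being free abelian of countable rank when $m\ge2$, and generated by subsets of the Artin generators and interchanged by the vertex-shift when $m=1$ — so $Q_j=\Pres{B,a}{a e_0^m a^{-1}=e_1,\ a e_i a^{-1}=e_{i+1}\ (i\ge1)}$ is a genuine HNN extension into which $B$ embeds. From $\varphi(b^m)=e_0^m$ and the defining relations one gets $a^ib^ma^{-i}=e_i$ in $Q_j$ for $i\ge1$, hence $\varphi(R_i)=\br{e_i,e_0}$, which is trivial for $i\ne j$ and, by the normal form theorem in $B$ (amalgam form for $m\ge2$, non-commutation of non-adjacent Artin generators for $m=1$), non-trivial for $i=j$. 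This supplies the required $\varphi$, and minimality follows.

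The step I expect to cost some care is the claim that $B$ genuinely embeds into $Q_j$, i.e.\ that the vertex-shift is an isomorphism between the two edge subgroups. For $m\ge2$ this reduces to recognising $\langle e_0^m,e_1,e_2,\dots\rangle$ as the free abelian factor $D\times\langle e_j\rangle$ of the amalgam — so that $e_0^m$ and $e_j$ lie in a common $\Z$-basis even though $e_0$ and $e_j$ do not commute — and it is exactly this phenomenon that rules out a naive ``detach $e_j$ from $e_0$ inside $E$'' modification and forces the amalgamated-product (resp.\ right-angled Artin) base. Granting it, Britton's lemma in $Q_j$ and the inequality $\br{e_0,e_j}\ne1$ in $B$ are routine.
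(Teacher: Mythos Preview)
Your plan for the minimality claim is correct. It differs from the paper's argument in the following way: the paper uses a single construction for all $m\ge1$, taking as base the right-angled Artin group (``graph group'') $\widetilde B=\Pres{g_0,g_1,\dots}{[g_i,g_l]=1,\ |i-l|\ne j}$, with edge subgroups $\langle g_0^m,g_1,g_2,\dots\rangle$ and $\langle g_1,g_2,\dots\rangle$ related by the shift $g_1\mapsto g_0^m$, $g_i\mapsto g_{i-1}$ --- exactly your $m=1$ construction, but applied verbatim for $m\ge2$ as well. The paper invokes the normal form theorem for graph groups to see that this shift is an isomorphism between the edge subgroups and that $[g_0,g_j]\ne1$. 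Your amalgam $C *_D (D\times\langle e_j\rangle)$ for $m\ge2$ is a genuinely different base: it trades the graph-group machinery for the elementary normal form in amalgamated products, at the cost of the case split (as you note, the amalgam collapses to an abelian group when $m=1$). Both routes land in the same place; yours is lighter on prerequisites for $m\ge2$, the paper's is uniform and avoids the split. The step you flag at the end --- that $\langle e_0^m,e_1,\dots\rangle$ is free abelian on those generators inside the amalgam --- is immediate once you observe it coincides with the full vertex group $D\times\langle e_j\rangle$, so that verification is cheap rather than costly.
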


It follows from \cite[Th. 4.1]{GS08} that no group $\ovBS$ can be finitely presented.
Note also that $\ovBS$ enjoys any property shared by all non-degnerate HNN extensions, for instance it is primitive \cite{GG08} and has uniform exponential growth \cite{HB00}. Using the latter HNN decomposition, we show the following:
\begin{Atheorem}  \label{ThmB}
Assume that $|m|>1$. Then we have:
\begin{itemize}
\item $\ovBS$ is hopfian but not co-hopfian (Th. \ref{ThHopf}). If $m$ is prime and $\xi$ is algebraic over $\Q$, then $\ovBS$ is not residually finite (Pr. \ref{PropResFin}).
\item $\ovBS$ is C*-simple and inner-amenable (Pr. \ref{ThPowers} and Pr. \ref{PropInnAmen}).
\item $\ovBS$ has infinitely many twisted conjugacy classes (Pr. \ref{CorTwist}).
\item $\ovBS$ is not equationally noetherian and hence not linear (Pr. \ref{eqNbarBS}).
\item The automorphism group of $\ovBS$ is a split extension of $\ovBS$ by an infinite dihedral group (Pr. \ref{PropAut}).
\end{itemize}
\end{Atheorem}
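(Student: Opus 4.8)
The plan is to derive every item of Theorem~\ref{ThmB} from the HNN decomposition $\ovBS\cong\tBS{m}{\xi}$ of Theorem~\ref{ThmA} and the induced action on the Bass--Serre tree $T$. Two preliminary facts would be isolated first and used throughout. Applying the automorphism $e_0\mapsto e_0$, $e_i\mapsto e_i-r_i(\xi)e_0$ $(i\ge1)$ of $E$ shows that $E_{m,\xi}$ has index $|m|$ in $E$ while $E_1$ has infinite index; hence $\tBS{m}{\xi}$ is a non-ascending, non-descending HNN extension of a torsion-free abelian group, $\ovBS$ acts on $T$ minimally and without inversion with abelian vertex and edge stabilisers, and there is no invariant end. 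Moreover this action is faithful: its kernel is normal and contained in $E$, and pushing such a subgroup through $a^{\pm1}$ while using $e_0\notin E_1\cup E_{m,\xi}$ (valid for $|m|>1$) forces it trivial; in particular $\ovBS$ has no nontrivial normal amenable subgroup and each $e_i$ has centraliser exactly $E$.

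\emph{Hopficity, failure of co-hopficity and of residual finiteness.} For the non-co-hopfian assertion I would take a prime $k$ not dividing $m$ and check that $\psi\colon a\mapsto a,\ b\mapsto b^k$ respects the presentation of Theorem~\ref{ThmA}: one gets $\psi(b_i)=b_i^k$ because $k\cdot(e_i-r_i(\xi)e_0)$ and $k\cdot me_0$ lie in $E_{m,\xi}$, so $\psi$ is multiplication by $k$ on $E$; since $\gcd(k,m)=1$, multiplication by $k$ is bijective on $E/E_{m,\xi}\cong\Z/m\Z$ and on $E/E_1$, so $\psi$ carries reduced HNN words to reduced words and is injective, while on $\ovBS^{\mathrm{ab}}\cong\Z^2$ it induces $\operatorname{diag}(1,k)$ and is not onto. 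Hopficity is the first genuine obstacle: Malcev's criterion does not apply (the group need not be residually finite) and the base $E\cong\Z^{(\infty)}$ is itself non-hopfian, so one must argue directly with HNN normal forms that a surjective endomorphism --- which induces an element of $GL_2(\Z)$ on $\ovBS^{\mathrm{ab}}$ and is forced to send $b$ into a conjugate of $\langle b\rangle$ --- cannot kill a reduced word. For the failure of residual finiteness when $m$ is prime and $\xi$ is algebraic over $\Q$, I would turn a polynomial identity satisfied by $\xi$ over $\Z$, together with the $m$-adic convergence $\xi_n\to\xi$ and the recursion defining the $b_i$, into a fixed nontrivial element of $\ovBS$ that dies in every finite quotient in which $b$ has finite order, comparing the orders of $b$ and of its $a$-conjugates in the manner of Meskin's theorem for $BS(m,n)$.

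\emph{C*-simplicity and inner amenability.} For C*-simplicity together with the unique trace property I would invoke a Powers-type criterion for groups acting on trees. Faithfulness was established above; the non-ascending, non-descending character rules out an invariant end; $b=e_0$ fixes only the base vertex of $T$ (by the reduced-word computation above), which together with the hyperbolic element $a$ rules out an invariant line, so the action is of general type; and the remaining combinatorial Powers condition is verified on reduced forms. Inner amenability is the point I expect to be most delicate, precisely because it must coexist with C*-simplicity and therefore cannot come from a normal amenable subgroup (there is none): the plan is to construct a net of finitely supported probability measures on $\ovBS\setminus\{1\}$, supported on $a$- and $b$-translates of cosets of the finite-index pair $E_1\le E$, that is asymptotically invariant under conjugation, using the finiteness of $[E:E_{m,\xi}]$ to bound the discrepancy under conjugation by $a$.

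\emph{Automorphisms, twisted conjugacy, non-linearity.} The inclusion $\ovBS\rtimes D_\infty\hookrightarrow\operatorname{Aut}(\ovBS)$ is explicit: $\sigma\colon b\mapsto b^{-1},\,a\mapsto a$ and $\tau_k\colon b\mapsto b,\,a\mapsto ab^k$ each fix every $b_i$ up to inversion, hence respect the presentation, are surjective, and are therefore automorphisms once hopficity is known; they satisfy $\sigma^2=1$, $\tau_k\tau_l=\tau_{k+l}$, $\sigma\tau_k\sigma=\tau_{-k}$, none of them but the identity is inner (using $C_{\ovBS}(b)=E$ and that $b$ is not conjugate to $b^{-1}$), and since $\ovBS$ is centreless we obtain $\operatorname{Inn}(\ovBS)\rtimes\langle\sigma,\tau_1\rangle\cong\ovBS\rtimes D_\infty$ inside $\operatorname{Aut}(\ovBS)$. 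The reverse inclusion is the last main obstacle: one must show the HNN splitting over $E$ is $\operatorname{Aut}$-invariant --- e.g.\ that every infinite-rank abelian subgroup lies in a conjugate of $E$, so $\operatorname{Aut}(\ovBS)$ acts on $T$ --- normalise an arbitrary automorphism by an inner one to fix the vertex $E$, and then exploit compatibility with the edge data $(E_{m,\xi},E_1)$ and the stable letter to pin $\phi(b)$ to $b^{\pm1}$ and $\phi(a)$ to $ab^k$. Granting this, property $R_\infty$ is immediate: $[\ovBS,\ovBS]$ is characteristic, and every automorphism acts on $\ovBS^{\mathrm{ab}}\cong\Z^2$ by a matrix $\left(\begin{smallmatrix}1&0\\ \ast&\pm1\end{smallmatrix}\right)$ in the basis $(\bar a,\bar b)$, whence $\det(I-\bar\phi)=0$, so $\Z^2$ already has infinitely many $\bar\phi$-twisted conjugacy classes, which pull back to $\ovBS$. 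Finally, since linear groups over a field are equationally noetherian, for non-linearity it suffices to exhibit an infinite system of equations over $\ovBS$ --- suggested by the independence of the relators $[b,b_i]=1$ in the infinite presentation --- whose solution set in some $\ovBS^{n}$ is not cut out by any finite subsystem, i.e.\ to show that the Zariski topology on $\ovBS^{n}$ is not Noetherian.
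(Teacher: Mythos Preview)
Your overall architecture---deduce everything from the HNN decomposition and the Bass--Serre tree action---is exactly the paper's, and your treatments of non-co-hopficity (the map $b\mapsto b^k$ with $\gcd(k,m)=1$), hopficity, residual finiteness, and the construction of the dihedral piece of $\operatorname{Aut}$ are essentially the paper's arguments. A few points, however, diverge in ways worth noting.

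\emph{Inner amenability.} You flag this as ``most delicate''; in the paper it is the shortest argument. One invokes a ready-made criterion (the second-named author's \cite[Pr.~A.0.2]{Sta06b}): it suffices to exhibit, for each $n$, central elements $h_0^{(n)},\dots,h_n^{(n)}$ of the base lying in both associated subgroups with $h_i^{(n)}=\phi(h_{i-1}^{(n)})$. Taking $h_i^{(n)}=a^i b^{m^{n+1}} a^{-i}$ works immediately. No measure construction is needed.

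\emph{C*-simplicity.} The de la Harpe--Pr\'eaux criterion requires the action to be \emph{slender}: every nontrivial element has fixed-point set of empty interior in $\partial T$. Faithfulness alone is not enough, and your observation that $b$ fixes a single vertex does not settle it for arbitrary elliptic elements (e.g.\ $b^{m^k}$ fixes large subtrees). The paper computes $T^\gamma=T[-\mu(\gamma),\nu(P_\gamma)]$ for $\gamma\in B$ and uses finiteness of $\nu(P_\gamma)$ to get slenderness; you would need an analogue of this step.

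\emph{Equational noetherianity.} The paper does not use the infinite presentation. It takes the system $W=\{[x^{-i}yx^i,z]:i\ge1\}$, shows in each $BS(m,\xi_n)$ that the triple $(a,b^{\xi_n^k},b)$ is a root of $w_i$ iff $i\le N_n(k)$ with $k\le N_n(k)\le(\mu(m)+2)k$, and passes these uniform bounds to the limit. Your suggestion of exploiting the independence of the relators $[b,b_i]$ is a different idea; as stated it is not yet an equation system with variables, and it is unclear how you would produce, for each finite $W_0\subset W$, a point of $\roots(W_0)\setminus\roots(W)$.

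\emph{Twisted conjugacy and $\operatorname{Aut}$.} The paper reverses your order: it first proves a single structural lemma on surjective homomorphisms (any surjection $\Gamma\to\Gamma'$ sends $e_0$ to a conjugate of $\pm e_0$ and, when $m'>1$, satisfies $\sigma\circ p=\sigma$). This immediately gives $R_\infty$---$\sigma$ is constant on every $\phi$-twisted class---without computing $\operatorname{Out}$; and it also gives the ``reverse inclusion'' for $\operatorname{Aut}$ directly, replacing your proposed tree-rigidity step (``every infinite-rank abelian subgroup lies in a conjugate of $E$'') by a length argument $|\theta(\gamma)|_a=|\gamma|_a|\theta(a)|_a$. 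Your route would also work, but the paper's single lemma is more economical and avoids the delicate classification of abelian subgroups.
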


From our study of homomorphisms carried out in Section \ref{SecHomo}, we deduce the following classification result.

\begin{Atheorem}[Th. \ref{ThClass} and Pr. \ref{PropRiSiCont}]
 Let $m,\,m' \in \Z\setminus \{0\}$ and let $ \xi \in \Z_m,\, \xi'\in \Z_{m'}$.
Then $\ovBS$ is abstractly isomorphic to $\bBS{m'}{\xi'}$ if and only if there is $\epsilon \in \{\pm 1\}$ and $d \in \N$ such that $m = \epsilon m'$, $d=\gcd(m,\xi)=\gcd(m',\xi')$ and the $m$-adic numbers $\xi/d,\,\epsilon \xi'/d$ project onto the same element of $\Z_{m/d}$ via the canonical map $\Z_{m} \longrightarrow \Z_{m/d}$.
\end{Atheorem}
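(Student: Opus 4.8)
The plan is to prove both implications of the classification by leveraging the HNN structure from Theorem~A together with the homomorphism analysis of Section~\ref{SecHomo}. For the ``if'' direction, I would first reduce to the case $d=1$: the group $\ovBS$ only depends, up to isomorphism, on $m$ and on $\xi$ modulo the behaviour of the sequence $(r_i(\xi))$, so a scaling argument (replacing $b$ by $b^d$ where $d = \gcd(m,\xi)$, which makes sense on the level of the abelianized base $E$) should identify $\bBS{m}{\xi}$ with a ``primitive'' model depending only on the image of $\xi/d$ in $\Z_{m/d}$. The sign change $\epsilon = -1$ is handled by the obvious involution $a \mapsto a^{-1}$, $b \mapsto b$, which conjugates the HNN data for $\xi$ to that for $-\xi$ (here one needs $r_i(-\xi) = -r_i(\xi)$, or at least a compatible relation, from the definition in Section~\ref{SubSecRiSi}). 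Finally, if $\xi$ and $\xi'$ project to the same element of $\Z_{m/d}$, the two sequences $(r_i(\xi/d))$ and $(r_i(\xi'/d))$ should eventually agree, or differ only by a change that can be absorbed into an automorphism of the base $E_1$, yielding an isomorphism of the HNN extensions and hence of the groups.

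For the ``only if'' direction, suppose $\varphi\colon \ovBS \xrightarrow{\sim} \bBS{m'}{\xi'}$. The key structural invariant is the HNN decomposition: the base $E \cong \Z^{(\N)}$ is (up to conjugacy) the unique maximal ``locally normal'' or maximal abelian normal-in-every-edge-stabilizer subgroup coming from the action on the Bass--Serre tree, so $\varphi$ must carry the base of one to the base of the other, and likewise the edge groups $E_{m,\xi}, E_1$ to $E_{m',\xi'}, E_1$. This forces a commensurability-type relation between the defining data. Concretely, the index $[E : E_1]$ and $[E : E_{m,\xi}]$ are both infinite, but the ``local'' indices $[\,\Z e_0 + E_1 : E_1\,]$ and $[\,E : \Z m e_0 + (\text{stuff})\,]$ encode $|m|$; comparing these recovers $m = \epsilon m'$ and $d = \gcd(m,\xi) = \gcd(m',\xi')$. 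The finer invariant $\xi/d \bmod (m/d)$ should then be extracted from the isomorphism type of the sequence of successive quotients $E_1/\langle e_i - r_i e_0, \ldots\rangle$ or, equivalently, from the way $a$ conjugates $E_{m,\xi}$ onto $E_1$; this is exactly the content of Proposition~\ref{PropRiSiCont}, which presumably shows that the $m$-adic class of $\xi$ (after dividing by the gcd) is a complete invariant of the relevant sequence $(r_i(\xi))$.

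I would organize the write-up as: (1) a normalization lemma reducing to $d = 1$; (2) an explicit construction of the isomorphism in the ``if'' case, checking it respects the HNN presentation from Theorem~A; (3) the rigidity argument for ``only if'', citing the uniqueness of the HNN base (this is where I would invoke the earlier structural results, possibly via the action on the tree from \cite[Th. 4.1]{GS08} and Bass--Serre theory, noting that the base is the kernel of the length function or the unique conjugacy class of a ``large'' abelian subgroup); and (4) the computation identifying the numerical invariants, deferring the delicate $m$-adic bookkeeping to Proposition~\ref{PropRiSiCont}.

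The main obstacle I anticipate is step~(3): proving that any abstract isomorphism must respect the HNN decomposition. HNN bases are not in general canonical, so one needs a genuine rigidity input — either that $E$ is characterized algebraically (e.g.\ as the FC-center, or as a maximal abelian subgroup with specific normality properties, or via the unique non-trivial splitting in the sense of JSJ theory for these groups), or that the Bass--Serre tree is equivariantly unique. Establishing this uniqueness, and in particular ruling out exotic self-isomorphisms that scramble the base, is the crux; once the decomposition is known to be preserved, extracting the numerical and $m$-adic invariants is comparatively mechanical and is essentially packaged in the cited Proposition~\ref{PropRiSiCont}. A secondary subtlety is the precise interplay between $\gcd(m,\xi)$ and the $m$-adic reduction map $\Z_m \to \Z_{m/d}$: one must check that dividing $\xi$ by $d$ is well-defined $m$-adically (it is, since $d \mid m$ and $d \mid \xi$) and that no information beyond the class in $\Z_{m/d}$ survives, which requires a careful look at the definition of $(r_i(\xi))$ in Section~\ref{SubSecRiSi}.
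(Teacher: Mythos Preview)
Your proposal has the right overall shape but contains genuine gaps in both directions.

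For the ``if'' direction, your proposed mechanisms are incorrect. The map $b \mapsto b^d$ gives only an \emph{embedding} $\bBS{\mh}{\eta} \hookrightarrow \ovBS$ (this is Lemma~\ref{LemEmbedEta}), not an isomorphism, so ``reduction to $d=1$'' via scaling fails. Likewise, $a \mapsto a^{-1}$, $b \mapsto b$ sends $BS(m,n)$ to $BS(n,m)$, not to $BS(-m,-n)$; the relevant isomorphism $\bBS{m}{\xi} \cong \bBS{-m}{-\xi}$ is induced by the identity on generators. Your claim $r_i(-\xi) = -r_i(\xi)$ is also false, since the $r_i$ are constrained to $\{0,\ldots,|m|-1\}$. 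The actual argument is much simpler: the presentation in Theorem~A depends only on $m$ and the sequence $(r_i(\xi))$, so equal sequences give identical groups, and Proposition~\ref{PropRiSiCont}.(ii) says precisely that $(r_i(\xi)) = (r_i(\xi'))$ iff $\gcd(m,\xi)=\gcd(m,\xi')=d$ and $\xi/d$, $\xi'/d$ have the same image in $\Z_{m/d}$.

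For the ``only if'' direction, your candidate characterizations of the base $E$ are all wrong. The FC-center of $\ovBS$ is trivial (the group has icc, being a Powers group when $|m|>1$). The kernel of $\sigma$ is the \emph{normal closure} of $E$, which strictly contains $E$. And ``maximal abelian'' alone does not distinguish $E$ from centralizers of hyperbolic elements without further argument. The paper does \emph{not} establish that an abstract isomorphism preserves the base wholesale. Instead, Proposition~\ref{PropEpi} tracks the single element $e_0$: one first shows $p(e_0)$ is elliptic (using the tree action and the relation $[e_0,e_1]=1$), then uses the surjection $q_{m',\xi'}$ onto $\Z \wr \Z$, where the image of $p(e_0)$ must be a unit of $\Z[X^{\pm1}]$, to force $p(e_0)$ to be conjugate to $\pm e_0$. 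After normalizing to $p(e_0)=e_0$, Lemma~\ref{LemTheta} recovers $r_i(\xi)=r_i(\xi')$ by pushing the explicit words $w_i$ through $p$ and invoking the Britton's-lemma computation of Lemma~\ref{LemWinE}; the divisibility $m' \mid m$ (hence $m=m'$ by symmetry) also falls out of Britton's lemma applied to $p(a)(me_0)p(a)^{-1} \in E$. So the crux is not JSJ-type rigidity of the splitting but rather controlling the image of one generator via the tree action and the $\Z \wr \Z$ quotient.
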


Thus two given limits are isomorphic if and only if they are isomorphic as marked groups \cite[Th. 2.1]{GS08}, i.e. if and only if they represent the same point in the space of marked groups on two generators.

The first-named author has shown that the box-counting dimension of $\G$, the space of marked groups on two generators, is infinite \cite{Guy07}.
Estimating the Hausdorff dimension of $Z_m^\times$, the set of marked groups $\ovBS$ such that $\xi$ is invertible in $\Z_m$ (Th. \ref{ThDimZm}), we deduce the following:
\begin{Atheorem}[Cor. \ref{CorDimNotZero}]
 The Hausdorff dimension of $\G$ satisfies $\dim_H(\G) \geqslant \log(2)/6$. In particular, this dimension does not vanish.
\end{Atheorem}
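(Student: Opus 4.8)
The plan is to deduce Corollary~\ref{CorDimNotZero} from Theorem~\ref{ThDimZm} together with the elementary fact that Hausdorff dimension is monotone under inclusion, so the substance is the estimate for $\dim_H(Z_m^\times)$, which I will sketch for $m=2$. Recall that $\G$ carries the usual ultrametric $d$, where $d(x,y)=e^{-n}$ with $n$ the largest integer such that the marked groups $x$ and $y$ have isomorphic labelled balls of radius $n$ about the identity in their Cayley graphs. Since $Z_2^\times$ is, by definition, literally a subset of $\G$, we have $\dim_H(\G)\geq\dim_H(Z_2^\times)$, so it suffices to prove $\dim_H(Z_2^\times)\geq\log(2)/6$.

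First I would fix the parametrisation. Write $X\subseteq\Z_2$ for the set of $2$-adic units and define $\Phi\colon X\to\G$ by $\Phi(\xi)=(\bBS{2}{\xi},(a,b))$. By the classification theorem (Theorem~\ref{ThClass}), applied with $m=m'=2$ (so $\epsilon=1$) and $\gcd(2,\xi)=\gcd(2,\xi')=1$, distinct units of $\Z_2$ yield non-isomorphic, hence distinct, marked groups, so $\Phi$ is a bijection onto $Z_2^\times$. The point is to control how $\Phi$ distorts the $2$-adic metric; then I would push the (normalised) Haar measure $\lambda$ of $X$ forward to the measure $\mu=\Phi_{\ast}\lambda$ on $\G$ and invoke the mass distribution principle.

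The heart of the matter is a comparison between the radius of agreement of $\bBS{2}{\xi}$ and $\bBS{2}{\xi'}$ and the $2$-adic distance $|\xi-\xi'|_2$, carried out through the presentation and the HNN decomposition of Theorem~\ref{ThmA}; I write $b_i(\xi)$ for the element $b_i$ of that presentation, to stress its dependence on $\xi$ through $r_1(\xi),\dots,r_{i-1}(\xi)$. Using $\bBS{2}{\xi}\cong\tBS{2}{\xi}$ and Britton's lemma, whether a word $w$ of length $\leq R$ in $a^{\pm1},b^{\pm1}$ represents the identity of $\tBS{2}{\xi}$ depends only on $r_1(\xi),\dots,r_R(\xi)$, since Britton reduction of $w$ only ever tests membership of elements supported on $e_0,\dots,e_R$ in $E_{2,\xi}$ or $E_1$; as $r_i(\xi)$ depends only on $\xi\bmod 2^i$ (Section~\ref{SubSecRiSi}), agreement $\xi\equiv\xi'\bmod 2^{R}$ forces the labelled balls of radius $\asymp R$ to coincide, so $\Phi$ is in fact a homeomorphism onto the Cantor set $Z_2^\times$. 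The hard part is the reverse inequality. Suppose $\xi\equiv\xi'\bmod 2^{n}$ but $\xi\not\equiv\xi'\bmod 2^{n+1}$; then $r_i(\xi)=r_i(\xi')$ for $i\leq n$ while $r_{n+1}(\xi)\neq r_{n+1}(\xi')$, and for $m=2$ the $r_i$ lie in $\{0,1\}$, so their difference is odd. Since $b_i\mapsto e_i$ under the isomorphism of Theorem~\ref{ThmA}, $b_i(\xi)=b_i(\xi')$ as words for $i\leq n+1$, and in $\tBS{2}{\xi'}$ the relator $[b,b_{n+2}(\xi)]$ is the commutator of $e_0$ with $a\,(e_{n+1}-r_{n+1}(\xi)e_0)\,a^{-1}$. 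Because $E_{2,\xi'}\cap\Z e_0=2\Z e_0$ while $r_{n+1}(\xi)-r_{n+1}(\xi')$ is odd, we have $e_{n+1}-r_{n+1}(\xi)e_0\notin E_{2,\xi'}$, so by Britton's lemma that conjugate has $a$-length $2$; and since $e_0$ is conjugate in $E$ into neither $E_{2,\xi'}$ nor $E_1$, the centraliser of $b=e_0$ in $\tBS{2}{\xi'}$ is exactly $E$. Hence $[b,b_{n+2}(\xi)]\neq1$ in $\bBS{2}{\xi'}$, although it is a defining relator of $\bBS{2}{\xi}$. From $|b_1|=4$ and $|b_i|=|b_{i-1}|+|r_{i-1}(\xi)|+2\leq|b_{i-1}|+3$ we get $|b_i|\leq 3i+1$, so this distinguishing word has length $2|b_{n+2}|+2\leq 6n+16$; the labelled balls of radius $6n+16$ therefore differ, and $d(\Phi\xi,\Phi\xi')\geq e^{-(6n+16)}$.

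Putting it together: if $|\xi-\xi'|_2=2^{-n}$ then $d(\Phi\xi,\Phi\xi')\geq e^{-16}\,|\xi-\xi'|_2^{\,6/\log 2}$, i.e. $\Phi^{-1}$ is Hölder of exponent $\log(2)/6$; equivalently, the $d$-ball $B_{\G}(\Phi\xi,\rho)$ pulls back under $\Phi$ into a single coset of $2^{k}\Z_2$ with $k\asymp(-\log\rho)/6$, so $\mu\bigl(B_{\G}(y,\rho)\bigr)\leq C\,\rho^{\,\log(2)/6}$ for every $y\in Z_2^\times$ and $0<\rho\leq1$. The mass distribution principle then gives $\dim_H(Z_2^\times)=\dim_H(\operatorname{supp}\mu)\geq\log(2)/6$, whence $\dim_H(\G)\geq\log(2)/6>0$, which is Corollary~\ref{CorDimNotZero} (this is essentially Theorem~\ref{ThDimZm} specialised to $m=2$). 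The main obstacle is the reverse inequality above: one must exploit the HNN normal form carefully, both to see that a length-$\leq R$ word ``forgets'' all but the first $O(R)$ $m$-adic digits of $\xi$ and to certify that the explicit relator chosen (or a bounded modification of it, according to the actual digits $r_i(\xi)$) is genuinely non-trivial in $\bBS{2}{\xi'}$; identifying the centraliser of $b$ and thereby extracting the sharp slope $6$ in the length bound — hence the precise constant in the estimate — is where the real work lies.
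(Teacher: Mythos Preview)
Your argument is correct and follows essentially the same route as the paper: for $m=2$ you produce, when $|\xi-\xi'|_2=2^{-n}$, a distinguishing relation of length at most $6n+16$ (the paper uses the word of Lemma~\ref{LemWinE}, which is group-theoretically the same commutator you write as $[b,b_{n+2}(\xi)]$ and yields the identical bound via Corollary~\ref{DistDist}), thereby showing that $\overline{BS}_2^{-1}:Z_2^\times\to\Z_2^\times$ is $(\log 2/6)$-H\"older. The only cosmetic difference is that you conclude via the mass distribution principle with the pushed-forward Haar measure, while the paper quotes the standard H\"older--dimension inequality (Proposition~\ref{Holder} and \cite[Pr.~2.3]{Fal03}); these are equivalent formulations of the same lower-bound mechanism.
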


Our last result pertains to the algorithmic complexity of the word and conjugacy problem (Prop. \ref{PropWPovBS}) and their degrees of undecidability. For every problem that can be suitably represented by a language and every $m$-adic number there is a well-defined degree of undecidability, called the \emph{Turing degree} (Sec. \ref{SecWP}).
\begin{Atheorem}[Cor. \ref{CorTuring}] Assume that $\xi$ is invertible in $\Z_m$. 
Then the following Turing degrees coincide:
\begin{itemize}
\item the Turing degree of the word problem for $\ovBS$;
\item the Turing degree of the conjugacy problem for $\ovBS$;
\item the Turing degree of $\xi$.
\end{itemize}
In particular, the word problem is solvable for $\ovBS$ if and only if $\xi$ is a computable number.
\end{Atheorem}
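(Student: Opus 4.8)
The plan is to establish the three equalities through a cycle of Turing reductions
$$
W\ \leqslant_T\ C\ \leqslant_T\ \xi\ \leqslant_T\ W,
$$
where $W$ and $C$ denote the word and conjugacy problems of $\ovBS$ relative to the generating pair $(a,b)$, and $\xi$ is coded by a suitable language, say the graph of $k\mapsto\xi\bmod m^{k}$. The first reduction holds in every group and needs no hypothesis: $w$ represents $1$ in $\ovBS$ precisely when $w$ is conjugate to the trivial word, so $w\mapsto(w,1)$ is a many-one reduction of $W$ to $C$. Granting the cycle, one gets $W\equiv_T C\equiv_T\xi$; the final ``in particular'' is then immediate, a finitely generated group having solvable word problem exactly when the word problem has Turing degree $\mathbf{0}$, and an $m$-adic number being computable exactly when its Turing degree is $\mathbf{0}$.

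For $C\leqslant_T\xi$ I would appeal to the algorithmic study of the word and conjugacy problems of $\ovBS$ carried out in Proposition \ref{PropWPovBS}. By Theorem \ref{ThmA}, $\ovBS\cong\tBS{m}{\xi}$ is an HNN extension whose base $E$ is free abelian of countable rank, whose associated subgroups $E_{m,\xi}$ and $E_1$ are spanned by the explicit generators listed there, and whose stable-letter isomorphism $E_{m,\xi}\to E_1$ is $me_0\mapsto e_1$, $e_i-r_i(\xi)e_0\mapsto e_{i+1}$. Deciding conjugacy in such an HNN extension via the conjugacy criterion for HNN extensions (reduction to cyclically reduced forms, comparison of cyclic permutations, and resolution of finitely many conjugacy-and-membership conditions over the base, twisted by the stable-letter isomorphism) calls as oracles only: the word and conjugacy problems of $E$, the membership problems of $E_{m,\xi}$ and $E_1$ in $E$, and that isomorphism. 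For $E$ and these subgroups each of these tasks is linear algebra over $\Z$ in the data $(r_i(\xi))_i$ (membership in $E_1$ does not even require an oracle), hence reduces to $\xi$; this gives $C\leqslant_T\xi$, and similarly $W\leqslant_T\xi$.

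For $\xi\leqslant_T W$, the core of the statement and the place where invertibility of $\xi$ enters, I would reconstruct $\xi$ from a word-problem oracle by first recovering $(r_i(\xi))_i$. The sequence $r_i(\xi)$ is visible in the HNN structure: for $i\geqslant1$ and $s\in\Z$ the element $b_ib^{-s}$ equals $e_i-se_0$ in $E$, and $e_i-se_0\in E_{m,\xi}$ if and only if $m\mid r_i(\xi)-s$, if and only if $ab_ib^{-s}a^{-1}\in E$ (Britton's lemma). The last membership can be tested with a single word-problem query, since $ab_ib^{-s}a^{-1}\in E$ if and only if it commutes with $b$; this rests on $C_{\ovBS}(b)=E$, which I would prove via the action on the Bass--Serre tree of $\tBS{m}{\xi}$: as $b=e_0$ lies in neither $E_{m,\xi}$ nor $E_1$, it fixes no edge incident to the base vertex $v_0$, hence fixes $v_0$ and no other vertex, so any element centralising $b$ stabilises $v_0$ and thus lies in $E$. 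Running the test $\br{b,\,ab_ib^{-s}a^{-1}}=1$ as $s$ ranges over a set of representatives modulo $m$ then recovers $r_i(\xi)$ itself (not merely modulo $m$, thanks to the normalisation of $r_i(\xi)$ fixed in Section \ref{SubSecRiSi}; otherwise a nested family of queries $\br{b,\,a^{k}b_ib^{-s}a^{-k}}=1$, suitably ordered, achieves the same); this is carried out recursively in $i$, the word $b_i$ being explicit once $r_1(\xi),\dots,r_{i-1}(\xi)$ are known (with $b_1=ab^ma^{-1}$, $b_i=ab_{i-1}b^{-r_{i-1}(\xi)}a^{-1}$). Finally, when $\xi$ is invertible in $\Z_m$ the sequence $(r_i(\xi))_i$ determines the residues $\xi\bmod m^{k}$ step by step — the partial products one has to invert are units modulo $m$ exactly because $\gcd(\xi,m)=1$ — so $\xi\equiv_T(r_i(\xi))_i\leqslant_T W$, closing the cycle.

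The step I expect to be most delicate is this last extraction of $\xi$ from $W$: one has to track exactly how much of $r_i(\xi)$ each query determines and order the queries so that the full digit sequence of $\xi$ is reconstructed without circular dependence, and it is here that both the precise definition of $r_i(\xi)$ (Section \ref{SubSecRiSi}) and the invertibility hypothesis are genuinely used. By contrast, the centraliser identity $C_{\ovBS}(b)=E$ is a brief Bass--Serre argument, and the conjugacy (and word) algorithm relative to $\xi$ is precisely the content of Proposition \ref{PropWPovBS}.
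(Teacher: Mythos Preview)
Your cycle of reductions matches the paper's, and your extraction of $(r_i(\xi))$ from a word-problem oracle is essentially the paper's method: the paper tests triviality of the words $v(|m|,r_1(\xi),\dots,r_{i-1}(\xi),t)$ of Lemma~\ref{LemWinE}, while you test $[b,\,ab_ib^{-t}a^{-1}]=1$ via the centraliser identity $C_{\ovBS}(b)=E$ of Corollary~\ref{CorSlender}. Since $ab_ib^{-t}a^{-1}$ is literally the word $w(m,r_1(\xi),\dots,r_{i-1}(\xi),t)$, these are the same test up to how ``$w\in E$'' is encoded as a single word-problem query. Your passage from $(r_i(\xi))$ to $\xi$ using invertibility is also what the paper intends (Proposition~\ref{PropRiSiCont}).

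There is, however, a genuine gap in your reduction $C\leqslant_T\xi$. You cite Proposition~\ref{PropWPovBS} for the conjugacy algorithm, but that proposition treats only the word problem. Your Collins--lemma sketch is fine when at least one of the two cyclically reduced inputs has positive $a$-length, but it omits the one non-routine case: both inputs lie in the base $E$. For $v,w\in E$, conjugacy in $\ovBS$ is \emph{not} conjugacy in $E$; by Lemma~\ref{LemFixEll} one has $v\sim w$ iff $v=a^nwa^{-n}$ for some $n\in\Z$, and this is a priori an infinite search (note that the base here is not finitely generated, so one cannot simply invoke Britton's result \cite{Bri79} on HNN extensions over finitely generated abelian groups). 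The paper closes this by using the polynomial embedding $q$ of Proposition~\ref{PropBandBm}: since $q(a\gamma a^{-1})=XP_\gamma(X)$, the only candidate is $n=\deg P_v-\deg P_w$, so a single comparison of $P_v(X)$ with $X^nP_w(X)$ decides the question. This step is short but indispensable; without it your algorithm does not terminate on inputs from $E$.
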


The \emph{resolution degree $r_{\Gamma}(n)$} of the word problem for a finitely generated group $\Gamma$ \cite[Def. 1]{Gri85} is a quantitative measure of the undecidability of the word problem based on Kolgomorov's ideas. It is, intuitively, the minimal amount of information necessary to decide if $w=1$ in $G$ for every word $w$ with $|w| \le n$, where $|w|$ is the length of $w$ with respect to a chosen generating set.
Let $\omega$ be an infinite sequence of symbols from some finite alphabet. We denote by $\omega^{(n)}$ be the word made of the first $n$ symbols of $\omega$. For a word $w$, we denote by $KR(w)$ the \emph{Kolmogorov complexity resolution} of $w$ which is, intuitively, the minimal amount of information necessary to obtain, for every natural number $i \le |w|$, the $i$-th symbol of the word $w$. Two functions $f$ and $g$ are said to be \emph{equivalent} if there is some $C \ge 0$ such that $f(n) \le g(Cn)+C$ and $g(n) \le f(Cn)+C$ for every $n$.
\begin{Atheorem}[Pr. \ref{PropRiSiCont} and Th. \ref{CongDistLim}]
 Let $\Gamma=\ovBS$ and let $\omega=(r_i(\xi))$. Then $r_{\Gamma}(n)$ is equivalent to $KR(\omega^{(n)})$.
\end{Atheorem}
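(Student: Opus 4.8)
The plan is to prove the two inequalities that together constitute the stated equivalence, using the HNN presentation of $\Gamma \cong \tBS{m}{\xi}$ from Theorem~\ref{ThmA} together with the word-problem algorithm of Proposition~\ref{PropWPovBS}. Two preliminary remarks are needed. First, by the definition in Section~\ref{SubSecRiSi} (and Proposition~\ref{PropRiSiCont}) the entries $r_i(\xi)$ all lie in a fixed finite set of integers that are pairwise incongruent modulo $m$; in particular $\omega=(r_i(\xi))$ is genuinely an infinite word over a finite alphabet, so $KR(\omega^{(n)})$ is meaningful. Second, the recursion $b_1=ab^ma^{-1}$, $b_i=ab_{i-1}b^{-r_{i-1}(\xi)}a^{-1}$ produces, since the $r_i(\xi)$ are bounded, words $\beta_i$ in $a,b$ representing $b_i$ with $|\beta_i|\le Ci$ for a constant $C$ depending only on $m$.

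\textbf{The bound $r_{\Gamma}(n)\le KR(\omega^{(n)})+C'$.} To test whether a word $w$ with $|w|\le n$ is trivial in $\Gamma$, run the usual Britton reduction in the HNN extension $\tBS{m}{\xi}$: repeatedly locate a pinch $a^{\pm1}ua^{\mp1}$ with $u$ in the base $E$ belonging to the relevant associated subgroup ($E_{m,\xi}$ or $E_1$) and replace it using the conjugacy maps $a(me_0)a^{-1}=e_1$, $a(e_i-r_i(\xi)e_0)a^{-1}=e_{i+1}$. Since $w$ contains at most $n$ letters $a^{\pm1}$, every base element occurring during the reduction lies in $\Z e_0\oplus\cdots\oplus\Z e_N$ with $N\le n$, so the only data about $\xi$ consulted are $r_1(\xi),\dots,r_N(\xi)$ (needed to decide membership in $E_{m,\xi}$, $E_1$ and to apply the two conjugacy maps). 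Hence a shortest program computing $\omega^{(n)}$, of length $KR(\omega^{(n)})$, suffices, when fed to a fixed universal reduction procedure, to decide $w=_\Gamma 1$ for all $w$ with $|w|\le n$, giving the claimed inequality.

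\textbf{The bound $KR(\omega^{(n)})\le r_{\Gamma}(Cn+C'')+C'''$.} For each $i\ge1$ and each $t$ in the finite range of the $r_j(\xi)$, put
$$v_{i,t}=\big[\,b,\ a\,\beta_i\,b^{-t}\,a^{-1}\,\big],$$
a word of length at most $Ci+C''$. Writing $b_ib^{-t}=\big(b_ib^{-r_i(\xi)}\big)b^{r_i(\xi)-t}$ and using $a\big(b_ib^{-r_i(\xi)}\big)a^{-1}=b_{i+1}$ together with $[b,b_{i+1}]=1$ in $\tBS{m}{\xi}$, one gets $v_{i,t}=b_{i+1}\,\big[\,b,\,ab^{\,r_i(\xi)-t}a^{-1}\,\big]\,b_{i+1}^{-1}$, so $v_{i,t}=_\Gamma 1$ if and only if $ab^{\,j}a^{-1}$ centralizes $b=e_0$, where $j=r_i(\xi)-t$. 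If $m\mid j$, then $ab^ja^{-1}=b_1^{\,j/m}\in E_1\subseteq E$ commutes with $e_0$; if $m\nmid j$, then $ab^ja^{-1}$ has $a$-length $2$ and a normal-form argument in the HNN extension shows it does not centralize $e_0$. As $r_i(\xi)$ and $t$ are incongruent modulo $m$ unless equal, we conclude $v_{i,t}=_\Gamma1\iff t=r_i(\xi)$. Consequently, from the answers to the word problem on all words of length $\le Cn+C''$ one recovers $r_i(\xi)$ for every $i\le n$, i.e.\ the word $\omega^{(n)}$; feeding the word-problem solver an optimal advice string of length $r_{\Gamma}(Cn+C'')$ yields a program computing each symbol of $\omega^{(n)}$, whence the stated inequality. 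Combining the two bounds gives $r_{\Gamma}(n)$ equivalent to $KR(\omega^{(n)})$.

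\textbf{Main obstacle.} The delicate point is the separation step in the lower bound: one must show that $ab^{j}a^{-1}$ fails to centralize $e_0$ whenever $m\nmid j$. This is the only place where the HNN normal-form (Britton's lemma for $\tBS{m}{\xi}$) is used in an essential way, and it is exactly the kind of rigidity already exploited in the proofs underlying Theorems~\ref{ThmA} and~\ref{ThmB}; the remaining ingredients — the linear length estimates $|\beta_i|,|v_{i,t}|=O(i)$, the $O(n)$ bound on the indices touched by the reduction algorithm, and the translation between resolution degrees and Kolmogorov complexity resolution — are routine once the finiteness of the alphabet $\{r_i(\xi)\}$ from Proposition~\ref{PropRiSiCont} is granted.
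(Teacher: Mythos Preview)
Your argument is correct and follows essentially the same route as the paper. The upper bound reproduces the Britton-reduction argument that the paper packages as Theorem~\ref{CongDistLim}(2): a word of length $\le n$ can be reduced in $\tBS{m}{\xi}$ using only $r_1(\xi),\dots,r_h(\xi)$ with $h\le n$, so the word problem up to length $n$ is determined by $\omega^{(n)}$. For the lower bound, the paper goes through Theorem~\ref{CongDistLim}(1), whose proof invokes Lemma~\ref{LemWinE}: the single word $w(m,t_1,\dots,t_h)\,e_0\,w(-m,-t_1,\dots,-t_h)(-e_0)$ of length $O(h)$ is trivial in $\tBS{m}{\xi}$ if and only if $t_i=r_i(\xi)$ for all $i\le h$. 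Your test words $v_{i,t}=[b,\,a\beta_ib^{-t}a^{-1}]$ play exactly the same role, checking one coordinate $r_i(\xi)$ at a time instead of all at once; the normal-form computation you single out (that $ab^ja^{-1}$ centralizes $e_0$ iff $m\mid j$) is the same Britton-lemma step as Lemma~\ref{LemGetM} and the core of the proof of Lemma~\ref{LemWinE}. The constants differ slightly, but the equivalence of $r_\Gamma(n)$ and $KR(\omega^{(n)})$ follows identically once Proposition~\ref{PropRiSiCont} guarantees that the $r_i(\xi)$ lie in the fixed alphabet $\{0,\dots,|m|-1\}$.
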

 For comparison, the resolution degree of the Grigorchuk group $\Gamma=G_{\omega}$ is equivalent to\\
$KR(\omega^{(\br{\log n})})$ \cite[Th. 3]{Gri85}. Thus, for a typical $\omega$ in the measure-theoretic sense, $r_{\Gamma}(n)$ is linear for the corresponding $\ovBS$ and logarithmic for $G_{\omega}$ \cite{ZL70}.
%%%%%%%%%%%%%%%%%%%%%%%%%%%%%%%%%%%%%%%%%%%%%%%%%%%%%%%%%%%%%%%%%%%%%%%%%%%%%%%%%%%%%%%%%%%%%
\section{Backgrounds} \label{SecBack}
%%%%%%%%%%%%%%%%%%%%%%%%%%%%%%%%%%%%%%%%%%%%%%%%%%%%%%%%%%%%%%%%%%%%%%%%%%%%%%%%%%%%%%%%%%%%%

%%%%%%%%%%%%%%%%%%%%%%%%%%%%%%%%%%%%%%%%%%%%%%%%%%%%%%%%%%%
\subsection{The space of marked groups} \label{SubSecSpace}

The commonly used definition of the space of marked groups\footnote{A very similar topology was first considered in \cite[Final Remarks]{Grom81}. The general idea of topologizing sets of groups goes back to Mahlers and Chabauty \cite{Mah46,Chab50}. The interested reader should consult \cite{Har08} for a thorough account.}  is due to Grigorchuk who proved by a topological argument that his intermediate growth groups cannot be finitely presented \cite{Gri84}. The space of marked groups has then been used to prove both existence and abundance of groups with exotic properties \cite{Ste96, Cham00} and has turned to be a remarkably suited framework for the study of Sela's \emph{limit groups} \cite{ChGu05}. Isolated points were investigated in \cite{CGP07} and an isolated group is used by de Cornulier to answer a question of Gromov  concerning the existence of sofic groups which do not arise as limits of amenable groups. \cite{Cor09b}. Very little is known on its topological type \cite{Cor09a}. The box-counting dimension of the space of marked groups on $k$ generators is infinite if $k \ge 2$ \cite{Guy07}. We show that its Haudorff dimension is non-zero in Section \ref{SecDim}.

The free group on $k$ generators will be denoted by $\mathbb{F}_k$,
or $\Free(S)$ with $S = (s_1, \ldots, s_k)$, if we want to precise
the names of canonical generating elements. A \emph{marked group
on $k$ generators} is a pair $(G,S)$ where $G$ is a group and $S =
(s_1, \ldots, s_k)\in G^k$ is an ordered generating set of $G$. 
An \emph{isomorphism of marked groups} is an isomorphism which respects the markings. A marked
group $(G,S)$ is endowed with a canonical epimorphism $\phi:
\mathbb{F}_S \to G$, which induces an isomorphism of marked groups
between $\mathbb{F}_S/ \ker \phi$ and $G$. Hence a class of marked
groups for the relation of marked isomomorphism is represented by a unique quotient of $\mathbb{F}_S$. In
particular if a group is given by a presentation, this defines a
marking on it. The non-trivial elements of ${\mathcal R} := \ker \phi$
are called \emph{relations} of $(G,S)$. Given $w \in \mathbb{F}_k$
we will often write ``$w = 1$ in $G$" to
say that the image of $w$ in $G$ is trivial.

Let $w = x_1^{\varepsilon_1} \cdots x_n^{\varepsilon_n}$ be a
reduced word in $\mathbb{F}_S$ (with $x_i \in S$ and $\varepsilon_i
\in \{ \pm 1 \}$). The integer $n$ is called the \emph{length} of
$w$ and denoted $|w|$. If $(G,S)$ is a marked group on $k$
generators and $g \in G$, the \emph{length} of $g$ is
\begin{eqnarray*}
|g|_G & := & \min\{ n: g = s_1 \cdots s_n \text{ with } s_i \in S \sqcup S^{-1} \} \\
                    &  = & \min\{ |w| : w \in \mathbb{F}_S, \ \phi(w) = g  \} \ .
\end{eqnarray*}

Let ${\mathcal G}_k$ be the class of marked groups on $k$ generators 
up to marked isomorphism. Let us recall that the topology on ${\mathcal
G}_k$ comes from the following ultrametric distance: for $(G_1, S_1)
\neq (G_2, S_2) \in {\mathcal G}_k$ we set $d \big( (G_1, S_1), (G_2,
S_2) \big) := e^{-\lambda}$ where $\lambda$ is the length of a
shortest element of $\mathbb{F}_k$ which vanishes in one group and
not in the other one. One may also keep in mind the
following characterization of convergent sequences \cite[Pr. 1]{Sta06a}.

\begin{lemma}\label{lmecv}
Let $(G_n)$ be a sequence of marked groups in ${\mathcal
G}_k$. The sequence $(G_n)$ converges if and only if
for any $w \in {\mathbb F}_k$, we have either $w =1$ in $G_n$ for
$n$ large enough, or $w\neq 1$ in $G_n$ for $n$ large enough.
\end{lemma}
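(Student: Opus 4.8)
The plan is to recast everything in terms of relation sets and then to exploit the finiteness of balls in $\mathbb{F}_k$. Recall that each marked group $(G_n,S_n)$ is determined by its set of relations, that is, by the normal subgroup
$$
R_n := \{ w \in \mathbb{F}_k : w = 1 \text{ in } G_n \}
$$
of $\mathbb{F}_k$, and that the distance $d\big((G_m,S_m),(G_n,S_n)\big)=e^{-\lambda}$ records the length $\lambda$ of the shortest word on which $R_m$ and $R_n$ disagree. The condition ``$w=1$ in $G_n$ for $n$ large or $w \neq 1$ in $G_n$ for $n$ large'' says exactly that, for each fixed $w$, membership of $w$ in $R_n$ is \emph{eventually constant}. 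I would prove the two implications separately, the forward one being a direct unwinding of the definition and the converse requiring the explicit construction of a limit marking.

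For the forward implication, suppose $(G_n)$ converges to some $(G_\infty,S_\infty) \in \mathcal{G}_k$, with relation set $R_\infty$. Given $w \in \mathbb{F}_k$, put $\ell=|w|$. Convergence gives an $N$ such that $d\big((G_n,S_n),(G_\infty,S_\infty)\big)<e^{-\ell}$ for all $n\ge N$, which forces the disagreement length between $R_n$ and $R_\infty$ to exceed $\ell$. Hence $R_n$ and $R_\infty$ contain the same words of length $\le \ell$, so in particular $w \in R_n \iff w \in R_\infty$ for every $n\ge N$. Thus membership of $w$ is eventually equal to its membership in $R_\infty$, which is the required dichotomy.

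For the converse, assume membership of each $w$ in $R_n$ is eventually constant and build the candidate limit by
$$
R_\infty := \{ w \in \mathbb{F}_k : w \in R_n \text{ for all large } n \}.
$$
First I would check that $R_\infty$ is a normal subgroup of $\mathbb{F}_k$: closure under products, inverses, and conjugation each follow by taking $n$ past the finitely many thresholds attached to the words involved, using that each $R_n$ is itself a normal subgroup. This yields a bona fide marked group $(G_\infty,S_\infty)=(\mathbb{F}_k/R_\infty,S_\infty)$. To see that $(G_n)$ converges to it, fix $\ell$ and use the decisive point that the ball of radius $\ell$ in $\mathbb{F}_k$ is \emph{finite}: for each of the finitely many words $w$ with $|w|\le \ell$ there is a threshold $N_w$ beyond which membership of $w$ in $R_n$ agrees with membership in $R_\infty$, and taking $N=\max_{|w|\le \ell} N_w$ makes $R_n$ and $R_\infty$ agree on all words of length $\le \ell$ for $n\ge N$. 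Then the disagreement length exceeds $\ell$, so $d\big((G_n,S_n),(G_\infty,S_\infty)\big)\le e^{-(\ell+1)}$, and letting $\ell \to \infty$ gives convergence. The main obstacle is precisely this last passage from a \emph{pointwise} stabilization (one threshold per word) to the \emph{uniform} agreement up to a given length needed to control the metric; it is the finiteness of balls in the free group that lets one replace a supremum of infinitely many thresholds by a maximum of finitely many, and this is where the argument would break down for an infinitely generated marking.
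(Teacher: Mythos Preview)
Your proof is correct. Note, however, that the paper does not actually supply its own proof of this lemma: it is quoted from \cite[Pr.~1]{Sta06a} as a known characterization of convergent sequences, so there is no in-paper argument to compare against. Your argument is the standard one: the forward direction is an immediate unwinding of the ultrametric, and for the converse you build the limit relation set directly and use the finiteness of balls in $\mathbb{F}_k$ to upgrade pointwise stabilization to uniform agreement on each ball. An equally common variant is to show the sequence is Cauchy (same finiteness-of-balls argument) and then invoke compactness/completeness of $\mathcal{G}_k$; your explicit construction of the limit is slightly more self-contained since it avoids that appeal.
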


The free group $\Free_2=\Free(a,b)$, the Baumslag-Solitar groups $BS(p,q)$ and their limits $\ovBS$ are marked by their canonical generating pair $(a,b)$.
\subsection{The ring of $m$-adic integers} \label{SubSecZm}

Let $m \in \Z\setminus \{0\}$. Recall that the ring of $m$-adic integers
$\Z_m$\footnote{Note that $\Z_{-m}=\Z_{m}$ and that $\Z_m$ is the zero ring if $\vert m
\vert=1$.} is the projective limit in the category of topological
rings of the system
\[
\ldots \to \Z/m^h\Z \to \Z/m^{h-1}\Z \to \ldots \to \Z/m^2\Z \to
\Z/m\Z
\]
where the arrows are the canonical surjective homomorphisms. This
shows that $\Z_m$ is compact. This topology is compatible with the
ultrametric distance given, for $\xi \neq \eta$, by
\[
 |\xi-\eta|_m = |m|^{-\max\{k\in\N \, : \, \xi-\eta \, \in m^k\Z_m\}} \ .
\]
and $\Z$ embeds isomorphically and densely in $\Z_m$. To avoid ambiguity, we call elements of $\Z$ \emph{rational integers}. We only need the following easy facts about $m$-adic integers. Detailed proofs
can be found in the second-named author's Ph.D. thesis \cite[Appendix
C]{Sta05PhD}.
\begin{itemize}
\item There is a topological ring isomorphism from $\Z_m$ to $\bigoplus_{p | m}
\Z_p$, where $p$ ranges in the set of the prime divisors of $m$.
\item Any non-zero ideal can be uniquely written under the form $d \Z_m$ where $d$ is a positive rational integer whose prime divisors divide $m$. Moreover we have $\Z \cap d \Z_m=d\Z$. For any $n \in \Z$ and any $\xi \in \Z_m$, 
we can define the \emph{greatest common divisor} $\gcd(n,\xi)$ of $n$ and $\xi$
as the a unique positive rational integer $d$ such that $n$ and $\xi$ generate $d \Z_m$.
\item An $m$-adic integer $\xi$ is invertible if and only if it does not belong to any of the ideals $p \Z_m$ where $p$ is a prime divisor of $m$ in $\Z$; equivalently $\gcd(m,\xi)=1$.
\end{itemize}

Note finally that non-zero rational integers are never zero divisors in $\Z_m$.
Hence, we can consider the ring $\Z^{-1}\Z_m$ whose elements are fractions of
the form $\frac ab$ with $a\in\Z_m$ and $b\in \Z\setminus\{0\}$ and whose laws are the
classical ones for fractions.

\subsection{HNN extensions and tree actions} \label{SubSecHNN}
In this section we fix our notations for HNN extensions and collect several facts concerning their standard tree actions.
Let $G$ be a group, let $H,\,K$ be subgroups of $G$ and let $\tau: H \longrightarrow K$ be an isomorphism.
The \emph{HNN extension} of base $G$ whose stable letter $t$ conjugates $H$ to
$K$ via $\tau$, is the group
$$
\operatorname{HNN}(G,H,K,\tau)=\Pres{G,t}{tht^{-1}=\tau(h) \text{ for every } h
\in H}.
$$
Let $\Gamma=\operatorname{HNN}(G,H,K,\tau)$. We say that $\Gamma$ is an \emph{ascending} HNN extension if either $G=H$ or $G=K$. We say that $\Gamma$ is a \emph{degenerate} HNN extension if $G=H=K$, i.e. $\Gamma=G \rtimes \Z$ where $\langle t \rangle$ identifies with $\Z$. Note that a given group (e.g. $\Z \wr \Z$) may have two different HNN decompositions, one being degenerate whereas the other is not. 

\paragraph{The Normal Form Theorem}
A sequence $g_0, \,t^{\epsilon_1},g_1,\,\dots,\, t^{\epsilon_n},\,g_n$, with $n
\ge 0$, is said to be a \emph{reduced sequence} if there is no consecutive 
subsequence $t,\,g_i,\,t^{-1}$ with $g_i \in H$ or $t^{-1},\,g_i,\,t$ with $g_i
\in K$. Britton's lemma \cite[page 181]{LS77} asserts that the word
$g_0t^{\epsilon_1} \cdots t^{\epsilon_n} g_n$ has a non-trivial image $\gamma$
in $\Gamma$ if $g_0,\,t^{\epsilon_1},\,\dots,\,t^{\epsilon_n},\,g_n$ 
is reduced and $n \ge 1$. Such a word is called a reduced form for $\gamma$. Although $\gamma$ may have different reduced forms, the sequence $t^{\epsilon_1},\,\dots,\,t^{\epsilon_n}$ is uniquely determined by $\gamma$ and we call its length $n=|\gamma|_t$ the $t$-length of $\gamma$.
Fixing a set $T_H$ of representatives of right cosets of $H$ in $G$ and a set
$T_K$ of representatives of right cosets of $K$ in $G$ such that $1 \in T_H \cap
T_K$, the Normal Form Theorem \cite[Th. IV.2.1]{LS77} asserts that there is a
unique sequence $g_0,\,t^{\epsilon_1},\,\dots,\, t^{\epsilon_n},\,g_n$
representing $\gamma \in \Gamma$ with the following properties:
\begin{itemize}
\item $g_0$ is an arbitrary element of $G$,
\item If $\epsilon_i=1$, then $g_i \in T_H$,
\item If $\epsilon_i=-1$, then $g_i \in T_K$,
\item there is no consecutive subsequence $t^{\epsilon},\,1,\,t^{-\epsilon}$.
\end{itemize}
This sequence is the \emph{normal sequence of $\gamma$ with respect to $T_H$ and $T_K$} and we call the word $g_0 t^{\epsilon_1}\cdots  t^{\epsilon_n} g_n$ the \emph{normal form} of $\gamma$.

\paragraph {Action on the standard Bass-Serre tree and its boundary}
The \emph{standard Bass-Serre tree} of $\Gamma$ is the oriented tree $T$ with vertex set $V(T)=\Gamma/G$, with set of oriented edges $E_{+}(T)=\Gamma/H$ subject to the incidence relations $o(\gamma H)=\gamma G$ and $t(\gamma H)=\gamma t^{-1}G$.

Given a tree $T$, we define the \emph{boundary} $\partial T$ as the set of cofinal rays. 
The set $\partial T$ has a natural topology defined as follows. We
call a \emph{shadow} the boundary of any connected component of $T$
deprived of one of its edges. The family of shadows generates a
topology on $\partial T$ which is Hausdorff and totally
discontinuous. If $T$ is a countable non-linear tree, e.g.
the standard Bass-Serre tree of a non-degenerate HNN extension, then $\partial T$ is a perfect Baire
space \cite[Pr. 11 and 24.$ii'$]{HP09}. Note that every automorphism of $T$ induces an homeomorphism of $\partial T$.

 As $\Gamma$ acts without inversion on its Bass-Serre tree $T$, every element $\gamma \in \Gamma$, 
viewed as a tree automorphism, is either elliptic, i.e. $\gamma$ fixes some vertex of $T$, or hyperbolic, i.e. $\gamma$ 
fixes no vertex of $T$ but exactly two ends of $\partial T$ \cite{Ser77}. The action of $\Gamma$ on $\partial T$ has no fixed end if $\Gamma$ is non-ascending, exactly one fixed end if $\Gamma$ is non-degenerate and ascending, and exactly two fixed ends if $\Gamma$ is degenerate \cite[Pr. 24]{HP09}.
%%%%%%%%%%%%%%%%%%%%%%%%%%%%%%%%%%%%%%%%%%%%%%%%%%%%%%%%%%%%%%%%%%%%%%%%%%%%%%%%%%%%%%%%%%%%%%%%%%%%%%%%%%%%%%%%%%%%%%%%%%%%%%%%
\section{HNN decomposition of the limits} \label{Sec1}
%%%%%%%%%%%%%%%%%%%%%%%%%%%%%%%%%%%%%%%%%%%%%%%%%%%%%%%%%%%%%%%%%%%%%%%%%%%%%%%%%%%%%%%%%%%%%%%%%%%%%%%%%%%%%%%%%%%%%%%%%%%%%%%%%%

We fix $m \in \Z\setminus\{0\}$, $\xi \in \Z_m$ and $(\xi_n)$ a
sequence of rational integers such that $|\xi_n| \to \infty$ and
$\xi_n \to \xi$ in $\Z_m$. A natural HNN decomposition arises from the transitive action of $\ovBS$
on a tree constructed in \cite{GS08}. We recall this construction.

 We denote by $H_n$ (respectively $H_n^m$) the subgroup
of $BS(m,\xi_n)$ generated by $b$ (respectively $b^m$) and by $T_n$
the Bass-Serre tree of $BS(m,\xi_n)$. We set
$$
\begin{array}{ccccc}
Y    & = & \left(\prod\limits_{n\in\N} V(T_n)\right) / \sim   & = & \left(\prod\limits_{n\in\N} BS(m,\xi_n)/H_n \right) / \sim  \\
Y^m & = & \left(\prod\limits_{n\in\N} E_+(T_n)\right) / \sim & = &
\left(\prod\limits_{n\in\N} BS(m,\xi_n)/H_n^m\right) / \sim
\end{array}
$$
where $\sim$ is defined by $(x_n) \sim (y_n) \Longleftrightarrow
\exists n_0 \ \forall n \geqslant n_0: \ x_n = y_n$ in both cases.
We now define an oriented graph $X = X_{m,\xi}$ by
$$
\begin{array}{ccl}
V(X)                     & = & \{ x\in Y : \exists w \in \F \text{ such that } (x_n) \sim (wH_n) \}  \\
E_+(X)                   & = & \{ y\in Y^m : \exists w \in \F \text{ such that } (y_n) \sim (wH_n^m) \}  \\
o\big( (wH_n^m) \big) & = & (wH_n) = \big( o(wH_n^m) \big)  \\
t\big( (wH_n^m) \big) & = & (wa^{-1}H_n) =  \big( t(wH_n^m)
\big)
\end{array}
$$

 The graph $X_{m,\xi}$ is a tree and that the
obvious action of $\F$ on $X_{m,\xi}$ factorizes through the canonical
projection $\F \longrightarrow \ovBS$ \cite[Sec.3]{GS08}. Let $v_0=(H_n)$ and
$e_0=(H_n^m)$. We denote by $B$ (respectively $B_m$) the
stabilizer of $v_0$ (respectively $e_0$) in $\ovBS$. We set
$B_{\xi}=aB_m a^{-1}$. As the action of $\ovBS$ is clearly
transitive on vertices and geometrical edges, the group $\ovBS$ has a HNN
decomposition:

\begin{proposition} \label{PropHNNB}
The group $\ovBS$ is the HNN extension of base group $B$, stable
letter $a$ and conjugated subgroups $B_m$ and $B_{\xi}=aB_m a^{-1}$.
\end{proposition}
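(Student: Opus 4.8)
The plan is to read the claimed HNN decomposition off the action of $\ovBS$ on the tree $X=X_{m,\xi}$, by invoking the structure theorem of Bass--Serre theory for a group acting on a tree whose quotient graph is a single loop. Recall from the construction above (see \cite[Sec.~3]{GS08}) that $X$ is a tree on which $\ovBS$ acts, the action being transitive on vertices and on geometric edges; set $v_0=(H_n)$, $e_0=(H_n^m)$, $B=\operatorname{Stab}_{\ovBS}(v_0)$, $B_m=\operatorname{Stab}_{\ovBS}(e_0)$, and recall $B_\xi=aB_ma^{-1}$.

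First I would record two elementary facts about the action. It is without inversion: $\ovBS$ maps the positively oriented edge $(wH_n^m)$ to $(\gamma wH_n^m)$, hence preserves the orientation $E_+(X)$ of the tree $X$ (no two elements of $E_+(X)$ are the two orientations of one geometric edge, since that would force a word of $t$-length $2$ to represent an element of $\langle b\rangle$ in $BS(m,\xi_n)$ for $n$ large, contrary to Britton's lemma). Next, the loop in $X/\ovBS$ is non-degenerate: $av_0=(aH_n)\neq(H_n)=v_0$, because $a\notin H_n$ in $BS(m,\xi_n)$, so the terminus $t(e_0)=a^{-1}v_0$ of $e_0$ is distinct from its origin $v_0$ while lying in the same $\ovBS$-orbit by vertex-transitivity. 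Thus $X/\ovBS$ has exactly one vertex, one geometric edge, and that edge is a loop.

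Then I would invoke the fundamental theorem of Bass--Serre theory \cite{Ser77}: a group acting without inversion on a tree with quotient a single loop is the fundamental group of the associated graph of groups, i.e.\ the HNN extension of the vertex group by the edge group. Here the vertex group is $B=\operatorname{Stab}(v_0)$ and the edge group is $B_m=\operatorname{Stab}(e_0)$; the inclusion of the edge group into the vertex group at the origin of $e_0$ is the tautological inclusion $B_m\le B$, while at the terminus of $e_0$ it is realized by $a$, since $a\cdot t(e_0)=a\cdot a^{-1}v_0=v_0$. Concretely, conjugation by $a$ carries $B_m=\operatorname{Stab}(e_0)$ isomorphically onto $\operatorname{Stab}(ae_0)=aB_ma^{-1}=B_\xi$, which is a subgroup of $\operatorname{Stab}(v_0)=B$ because $v_0$ is an endpoint of $ae_0$. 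Taking $a$ as the stable letter, this says exactly that $\ovBS=\operatorname{HNN}(B,B_m,B_\xi,\operatorname{ad}(a))$, which is the assertion of the proposition.

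The only real obstacle is bookkeeping with the Bass--Serre conventions: one must check that the element $a\in\ovBS$ itself — rather than $a^{-1}$, or $a$ times some element of $B$ — can serve as the stable letter realizing the graph-of-groups presentation, and keep straight which of $B_m$, $B_\xi$ plays the role of the ``incoming'' and which the ``outgoing'' associated subgroup. One could instead prove the proposition by hand, constructing normal forms for the elements of $\ovBS$ from their expressions as words in $a,b$ together with Britton's lemma, but that would amount to re-deriving the Bass--Serre machinery in this special case, so appealing to \cite{Ser77} is the economical route.
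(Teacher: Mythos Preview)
Your proposal is correct and follows essentially the same approach as the paper: both invoke Bass--Serre theory from \cite{Ser77} for the action of $\ovBS$ on the tree $X_{m,\xi}$, using that the quotient is a single loop to conclude the HNN decomposition. The paper's proof is a bare citation (``Section I.5 and Remark 1 after Theorem 7 in Section I.4'' of \cite{Ser77}), whereas you have spelled out the verifications (action without inversion, non-degenerate loop, identification of the stable letter) that justify applying that citation.
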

We define the inner automorphism $\tau_a:\gamma \mapsto a \gamma a^{-1}$ of
$\ovBS$ and we denote by $\text{HNN}(B,B_m,B_{\xi},\tau)$ the
previous HNN decomposition where $\tau$ is the isomorphism from $B_m$ to $B_{\xi}$ induced by $\tau_a$.
\begin{proof}
This follows from results in \cite{Ser77}. See in particular Section
I.5 and Remark 1 after Theorem 7 in Section I.4.
\end{proof}

The following lemma is an immediate consequence of the definition of the action:

\begin{lemma} \label{LemCarB}
Let $w \in \F$ and let $\gamma$ be its image in $\ovBS$. Then $\gamma
\in B$ (respectively $\gamma \in B_m$) if and only if
$w=b^{\lambda_n}$ (respectively $b^{m\lambda_n}$) in $BS(m,\xi_n)$
for all $n$ large enough.
\end{lemma}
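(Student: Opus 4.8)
The statement is a direct unravelling of the definitions, and the plan is simply to carry this out carefully. Recall from the construction of $X_{m,\xi}$ that $\F$ acts on $Y$ (respectively on $Y^m$) by component-wise left translation --- that is, $w\cdot(x_n)=(wx_n)$, where on the right $w$ denotes the image of $w$ in $BS(m,\xi_n)$ acting by left multiplication on $V(T_n)=BS(m,\xi_n)/H_n$ (respectively on $E_+(T_n)=BS(m,\xi_n)/H_n^m$) --- and that this action preserves $V(X)$ and $E_+(X)$ and factors through the canonical projection $\F\longrightarrow\ovBS$ \cite[Sec. 3]{GS08}. Hence, for $w\in\F$ with image $\gamma$, one has $\gamma\cdot v_0=(wH_n)$ in $Y$ and $\gamma\cdot e_0=(wH_n^m)$ in $Y^m$.

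First I would treat the case of $B$. By definition $\gamma\in B$ means $\gamma\cdot v_0=v_0$, that is $(wH_n)\sim(H_n)$; and by definition of $\sim$ this holds precisely when there is $n_0$ such that $wH_n=H_n$ in $BS(m,\xi_n)/H_n$ for every $n\ge n_0$, equivalently when $w\in H_n$ in $BS(m,\xi_n)$ for every $n\ge n_0$. Since $H_n=\langle b\rangle$ and $b$ has infinite order in $BS(m,\xi_n)$, the condition $w\in H_n$ is the same as the existence of a (unique) $\lambda_n\in\Z$ with $w=b^{\lambda_n}$ in $BS(m,\xi_n)$. This yields the first equivalence.

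The case of $B_m$ is handled identically, replacing $v_0$, $H_n$ throughout by $e_0$, $H_n^m=\langle b^m\rangle$: since $\ovBS$ acts without inversion on $X$, the relation $\gamma\in B_m$ means $\gamma\cdot e_0=e_0$, i.e. $(wH_n^m)\sim(H_n^m)$, i.e. $w\in H_n^m$ in $BS(m,\xi_n)$ for all $n$ large enough, i.e. $w=b^{m\lambda_n}$ in $BS(m,\xi_n)$ for all $n$ large enough.

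There is essentially no genuine obstacle here; the only points requiring a little care are (i) invoking the already-established fact that the $\F$-action on $X_{m,\xi}$ descends to $\ovBS$, so that $\gamma\cdot v_0$ and $\gamma\cdot e_0$ are well defined and computed by the component-wise formula above, and (ii) keeping track that the exponent $\lambda_n$ may genuinely vary with $n$, which is why the conclusion is phrased ``for all $n$ large enough'' rather than with a single exponent.
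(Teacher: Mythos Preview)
Your argument is correct and is exactly the unravelling the paper has in mind: the paper gives no proof at all, declaring the lemma ``an immediate consequence of the definition of the action,'' and your write-up simply makes that immediacy explicit. There is nothing to add.
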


As a result, $B$ is abelian. Let $\Z \br{X}$ be the ring
of polynomials in the variable $X$ with integer coefficients. A
thorough study of the sequence $(\lambda_n)$ of Lemma \ref{LemCarB}
enables us to embed $B$ isomorphically into the additive group of $\Z \br{X}$:

\begin{proposition} \label{PropEmbedZX}
\noindent
\begin{itemize}
\item Let $w \in \F$ with image $\gamma \in B$. Then there is a unique
polynomial $P_{\gamma}(X) \in \Z \br{X}$ such that
$w=b^{P_{\gamma}(\xi_n/m)}$ in $BS(m,\xi_n)$ for all $n$ large
enough.
\item The map $q:\gamma \mapsto P_{\gamma}(X)$ defines an injective homomorphism
from $B$ into $\Z \br{X}$.
\end{itemize}
\end{proposition}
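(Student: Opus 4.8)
The plan is to treat the statement in the order: well-definedness and uniqueness of $P_\gamma$, then the homomorphism and injectivity assertions (which are formal once the map exists), and finally the construction of $P_\gamma$, which carries the real content. For uniqueness: since $\langle b\rangle$ is infinite cyclic in every $BS(m,\xi_n)$, the integer $\lambda_n$ with $w=b^{\lambda_n}$ in $BS(m,\xi_n)$ --- which exists for $n$ large by Lemma \ref{LemCarB} --- depends only on $\gamma$, not on the representative $w\in\F$. If $P,Q\in\Z\br{X}$ both satisfy $P(\xi_n/m)=\lambda_n=Q(\xi_n/m)$ for all large $n$, then, as $|\xi_n|\to\infty$ forces the rationals $\xi_n/m$ to take infinitely many values, $P-Q$ has infinitely many roots and $P=Q$. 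This legitimizes the definition of $q$.

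Granting existence, $q$ is an injective homomorphism for formal reasons. If $\gamma,\delta\in B$ are represented by $w,w'$, then $ww'$ represents $\gamma\delta$ and equals $b^{\lambda_n(\gamma)+\lambda_n(\delta)}$ in $BS(m,\xi_n)$ for $n$ large, so uniqueness forces $P_{\gamma\delta}=P_\gamma+P_\delta$; thus $q$ lands in $(\Z\br{X},+)$, consistently with $B$ being abelian. If $P_\gamma=0$ then $\lambda_n=0$, i.e.\ $w=1$ in $BS(m,\xi_n)$ for all large $n$, and hence $\gamma=1$ by the description of $\ovBS=\lim BS(m,\xi_n)$ behind Lemma \ref{lmecv}; so $\ker q$ is trivial.

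There remains the construction of $P_\gamma$. Let $S\subseteq B$ be the subgroup of those $\gamma$ for which such a polynomial exists; by the above $b\in S$ with $P_b=1$. The local engine is that $aB_ma^{-1}=B_\xi\subseteq B$ and that, for $\gamma\in S\cap B_m$ --- so $w=b^{m\mu_n}$ in $BS(m,\xi_n)$ with $\mu_n\in\Z$, whence $\lambda_n=m\mu_n$ --- the word $awa^{-1}$ equals $b^{\xi_n\mu_n}$ there, so (using $\xi_n=m\cdot\xi_n/m$) one gets $a\gamma a^{-1}\in S$ with
\[
 P_{a\gamma a^{-1}}(X)=X\,P_\gamma(X).
\]
Repeatedly intersecting with $B_m$ and conjugating by $a$, starting from $\langle b\rangle\subseteq S$, produces $b_1:=ab^ma^{-1}\in S$ with $P_{b_1}=mX$, then $b_2:=a\big(b_1b^{-r_1(\xi)}\big)a^{-1}\in S$, where the integer $r_1(\xi)$ is chosen, as in Section \ref{SubSecRiSi}, so that $b_1b^{-r_1(\xi)}$ lies in $B_m$, with $P_{b_2}(X)=X\big(mX-r_1(\xi)\big)$, and inductively $b_{i+1}:=a\big(b_ib^{-r_i(\xi)}\big)a^{-1}\in S$ with $P_{b_{i+1}}(X)=X\big(P_{b_i}(X)-r_i(\xi)\big)$; hence $\langle b,b_1,b_2,\dots\rangle\subseteq S$. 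I expect the main obstacle to be the reverse inclusion $B\subseteq\langle b,b_1,b_2,\dots\rangle$, which I would attack by combining Proposition \ref{PropHNNB} with Britton's lemma inside each $BS(m,\xi_n)$. A freely reduced representative of $\gamma\in B$ has zero exponent sum in $a$ (since $w$ is a power of $b$ in $BS(m,\xi_n)$, which retracts onto $\langle a\rangle$ by killing $b$), so it can be written $w=\prod_j a^{h_j}b^{c_j}a^{-h_j}$; in $BS(m,\xi_n)$ a factor with $h_j=0$ contributes the constant $c_j$, a factor with $h_j>0$ can only be absorbed when $m^{h_j}\mid c_j$, then contributing $c_j(\xi_n/m)^{h_j}$, while a factor with $h_j<0$ and $c_j\neq0$ can only disappear against its neighbours (as $\xi_n^{|h_j|}\nmid c_j$ once $n$ is large). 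Since only the whole product, not each factor, must collapse to a power of $b$, one has to track how consecutive factors interact and how the corrections $r_i(\xi)$ accumulate; this is exactly where the precise construction of $(r_i(\xi))$ in Section \ref{SubSecRiSi} is indispensable. Rewriting $w$ step by step into a word in $b$ and the $b_i$, and checking that the procedure terminates, then yields $\gamma\in S$, and $P_\gamma$ drops out of the inductive formula above.
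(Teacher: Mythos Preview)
Your treatment of uniqueness, the homomorphism property, and injectivity is correct and matches the paper verbatim. The multiplication-by-$X$ rule $P_{a\gamma a^{-1}}(X)=XP_\gamma(X)$ for $\gamma\in B_m$ and the inductive construction of the $b_i$ are also right; they reappear in the paper as Proposition~\ref{PropBandBm}$(v)$ and Lemma~\ref{LemBi}.

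Where you diverge from the paper is in the logical order of the existence argument. You try to prove $B=\langle b,b_1,b_2,\dots\rangle$ first and then read off $P_\gamma$ from the expression of $\gamma$ in these generators. The paper does the opposite: it proves \emph{directly} that every $\gamma\in B$ satisfies $w=b^{\alpha(n)}$ in $BS(m,\xi_n)$ with
\[
 \alpha(n)=k_0+k_1\xi_n+k_2s_1(\xi_n)\xi_n+\dots+k_t s_{t-1}(\xi_n)\xi_n
\]
for integers $k_0,\dots,k_t$ depending only on $w$ and $\xi$, by invoking \cite[Lemma~2.6]{GS08}. Then Proposition~\ref{DefiPropPi} converts each $\xi_n s_{i-1}(\xi_n)$ into $(\xi_n/m)P_{i-1}(\xi_n/m)$, yielding $P_\gamma(X)=k_0+\sum_i k_i XP_{i-1}(X)\in\Z[X]$. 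The generation statement $B=\langle b,b_1,\dots\rangle$ then falls out as a corollary of this formula, not the other way round.

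The genuine gap in your proposal is precisely the step you flag as ``the main obstacle'': you never actually carry out the rewriting of an arbitrary $w$ into a word in $b,b_1,b_2,\dots$ (equivalently, you never establish the formula for $\alpha(n)$). Your sketch --- decompose $w$ as $\prod_j a^{h_j}b^{c_j}a^{-h_j}$ and track interactions between neighbouring factors via Britton's lemma --- is exactly the computation behind \cite[Lemma~2.6]{GS08}, and it is not short: one must show that the corrections introduced at each Britton reduction are governed by the $r_i(\xi)$ and $s_i(\xi)$, which is where Propositions~\ref{PropRiSi}--\ref{DefiPropPi} enter. Saying ``rewriting $w$ step by step\dots\ and checking that the procedure terminates'' is a description of what has to be done, not a proof. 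Either cite \cite[Lemma~2.6]{GS08} as the paper does, or supply the full inductive reduction; as written, the existence of $P_\gamma$ for general $\gamma\in B$ is not established.
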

The construction of the homomorphism $q$ relies on a sequence of polynomials built up from $m$ and $\xi$ 
in a non obvious way. Its definition is the subject of the next section. The
proof of Proposition \ref{PropEmbedZX} is therefore postponed to Section
\ref{SubSecStab} (see Proposition \ref{PropBandBm}). 

\subsection{The functions $r_i$ and $s_i$} \label{SubSecRiSi}
In this section we define functions $r_i$ and $s_i$ on $\Z$ which describe how the $b$ exponents of a
given word $w\in \F$ behave when we reduce it in $BS(m,n)$ for $n$
ranging in a given class modulo $m^h$. In \cite{Sta06b,GS08}, these functions were decisive to describe converging sequences of Baumslag-Solitar groups.  It turns out that they extend continously to $\Z_{m}$ and that $\xi$, and hence $\ovBS$, is uniquely determined by $m$ and the sequence $(r_i(\xi))$.
\begin{definition} \label{DefiSi}
We define the functions $r_0,\,r_1,\,\dots,\,s_0,\,s_1,\dots$ on $\Z$
depending on a parameter $ m \in  \Z \setminus \{0\}$ by the inductive formulas:
\begin{equation} \label{eqrec1}
r_0(n)=0,\,s_0(n)=1;
\end{equation}
\begin{equation} \label{eqrec2}
n s_{i-1}(n)= m s_i(n) +r_i(n)  \mbox{ with } r_i(n) \in
\{0,\,\dots,\,| m|-1\} \text{ for every } i \ge 1.
\end{equation}
\end{definition}

\begin{proposition} \label{PropRiSi}
Let $n,n'\in \Z$ such that $\gcd(n, m)=\gcd(n', m)= d $. Let $h\in
\N \setminus \{0\}$ and $\mh= m/ d $.
\begin{itemize}
\item[$(1)$] Assume that $n \equiv n' \, ( \operatorname{mod}\mh^h d \Z)$. Then the
two following hold:
\begin{itemize}
\item[$(i)$] $r_i(n)=r_i(n')$ for $i=1,\,\dots,\,h$;
\item[$(ii)$] $s_i(n) \equiv s_i(n') \, (\,
\operatorname{mod} \mh^{h-i}\Z) \text{ for } i=1,\,\dots,\,h.$
\end{itemize}
\item[$(2)$] If $r_i(n)=r_i(n')$ for
$i=1,\dots,h$ then $n \equiv n' \, ( \operatorname{mod}\mh^h d \Z)$.
\end{itemize}
\end{proposition}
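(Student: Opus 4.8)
The plan is to analyze the recursion (\ref{eqrec2}) directly, keeping careful track of divisibility by powers of $\mh = m/d$. Write $n = d\nh$, $n' = d\nh'$, so that $\gcd(\nh, \mh) = \gcd(\nh', \mh) = 1$; the hypothesis $n \equiv n' \pmod{\mh^h d}$ becomes $\nh \equiv \nh' \pmod{\mh^h}$. First I would prove by induction on $i$ the key structural fact that $d \mid s_i(n)$ for $i \ge 1$: indeed $n s_0(n) = n = d\nh$, and since $\gcd(\nh,\mh)=1$ while $m = d\mh$, reducing $n s_{i-1}(n) = m s_i(n) + r_i(n)$ modulo $d$ and then modulo the prime powers dividing $\mh$ forces $d \mid s_i(n)$ once one knows $d \mid s_{i-1}(n)$ for $i \ge 2$ (the base case $i=1$ being $d\nh = m s_1(n) + r_1(n)$, so $r_1(n) \equiv 0$ and $d \mid s_1(n)$ after dividing through — more precisely $\nh = \mh s_1(n) + r_1(n)/d$ must be reorganized, so it is cleaner to set $\sigma_i(n) = s_i(n)/d$ and rewrite the recursion as $\nh \sigma_{i-1}(n)\cdot d\,/\,?$; I would instead just observe $\nh s_{i-1}(n) = \mh s_i(n) + r_i(n)/d$ is the right normalized recursion after checking $d \mid r_i(n)$, which itself follows from $d \mid$ everything else). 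This bookkeeping is the first thing to nail down precisely.

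Granting the normalized recursion $\nh \sigma_{i-1} = \mh \sigma_i + \rho_i$ with $\sigma_0 = 1$, $\rho_i \in \{0,\dots,d|\mh|-1\}$ and the analogue $\sigma_i'$, $\rho_i'$ for $n'$, part $(1)$ goes by simultaneous induction on $i$ from $1$ to $h$. For the inductive step at level $i$: by induction $\sigma_{i-1} \equiv \sigma_{i-1}' \pmod{\mh^{h-i+1}}$ (with $\sigma_0 = \sigma_0'$ giving the base case congruence modulo $\mh^h$), hence $\nh \sigma_{i-1} \equiv \nh' \sigma_{i-1}' \pmod{\mh^{h-i+1}}$ using $\nh \equiv \nh' \pmod{\mh^h}$ and $h - i + 1 \le h$. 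Reducing the recursion modulo $\mh$ gives $\nh \sigma_{i-1} \equiv \rho_i$ and $\nh' \sigma_{i-1}' \equiv \rho_i' \pmod \mh$ (when $|\mh| > 1$; the case $|\mh|=1$, i.e. $d = |m|$, must be handled separately — there $r_i(n) = r_i(n')$ trivially and everything collapses), and since $\rho_i,\rho_i'$ lie in a fixed residue system modulo $\mh$ — wait, they range in $\{0,\dots,d|\mh|-1\}$, not $\{0,\dots,|\mh|-1\}$, so I instead recover $r_i(n) \in \{0,\dots,|m|-1\}$ directly: from $\nh s_{i-1}(n) = \mh s_i(n) + r_i(n)/d$ I multiply back by $d$ to get $n s_{i-1}(n) = m s_i(n) + r_i(n)$ and compare with the same for $n'$ modulo $m$, concluding $r_i(n) = r_i(n')$ since both lie in $\{0,\dots,|m|-1\}$. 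Then subtracting the two recursions, $\mh(\sigma_i - \sigma_i') = \nh\sigma_{i-1} - \nh'\sigma_{i-1}'$ is divisible by $\mh^{h-i+1}$, whence $\sigma_i \equiv \sigma_i' \pmod{\mh^{h-i}}$, i.e. $(ii)$; and $(i)$ was just shown.

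For part $(2)$, I would argue the contrapositive by descent: suppose $n \not\equiv n' \pmod{\mh^h d}$, i.e. $\nh \not\equiv \nh' \pmod{\mh^h}$, and let $k \le h$ be minimal with $\nh \not\equiv \nh' \pmod{\mh^k}$ (so $k \ge 1$ and $\nh \equiv \nh' \pmod{\mh^{k-1}}$). Running the same induction as in $(1)$ but only up to level $k-1$ shows $r_i(n) = r_i(n')$ for $i < k$ and $\sigma_{k-1} \equiv \sigma_{k-1}' \pmod{\mh}$; then at level $k$ I need $r_k(n) \ne r_k(n')$. From $n s_{k-1}(n) - n' s_{k-1}(n') = m(s_k(n) - s_k(n')) + (r_k(n) - r_k(n'))$, it suffices to show the left side is not divisible by $\mh d$ — equivalently $\nh \sigma_{k-1} \not\equiv \nh'\sigma_{k-1}' \pmod{\mh}$: here I use that $\sigma_{k-1} \equiv \sigma_{k-1}' \pmod\mh$ is a \emph{unit} modulo $\mh$ (check $\gcd(\sigma_{k-1},\mh)=1$ by another short induction — it follows from $\gcd(\nh,\mh)=1$ and the recursion), so the congruence would force $\nh \equiv \nh' \pmod \mh$ — but wait, that's automatic when $k \ge 2$, so for $k \ge 2$ I need the sharper statement that the full valuations differ; the honest fix is to prove the stronger lemma $v_p(\nh - \nh') = v_p\big(\text{some explicit combination of }r_k(n)-r_k(n')\text{-type terms}\big)$, or more simply to strengthen the induction in $(1)$ to an ``if and only if at each level'' form. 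The cleanest route: prove by induction on $h$ the combined statement ``$r_i(n) = r_i(n')$ for all $i \le h$ $\iff$ $\nh \equiv \nh' \pmod{\mh^h}$'', where the forward direction's inductive step extracts, from $n s_{h-1}(n) = m s_h(n) + r_h(n)$, that $\nh - \nh' \equiv \mh^{h-1}\cdot(\text{unit})\cdot(s_{h}(n)-s_h(n')) \pmod{\mh^h}$ and thus vanishes mod $\mh^h$ once the lower $s$'s agree sufficiently — tracking that the ``unit'' is $\prod \sigma_{i}$-type and genuinely invertible mod $\mh$ is the crux.

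**The main obstacle.** The hard part is the divisibility/valuation bookkeeping: proving cleanly that $d$ divides $s_i(n)$ and $r_i(n)$ so the recursion normalizes, that the normalized $\sigma_i(n)$ stay coprime to $\mh$, and — for part $(2)$ — that a genuine discrepancy $\nh \not\equiv \nh' \pmod{\mh^k}$ propagates to $r_k(n) \ne r_k(n')$ rather than being absorbed into the $s_k$ term. I expect the right packaging is a single induction on $h$ proving the biconditional in $(1)$–$(2)$ together with the auxiliary facts $d \mid s_i$, $\gcd(\sigma_i,\mh)=1$, and $(ii)$, with the degenerate case $|\mh| = 1$ (equivalently $d = |m|$) peeled off at the start since then $\Z_{m/d}$ is the zero ring and both claims are vacuous or trivial.
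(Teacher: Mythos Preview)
Your approach to part (1) is essentially the paper's: carry along the induction the congruence $s_{i-1}(n)\nh \equiv s_{i-1}(n')\nh' \pmod{\mh^{h-i+1}}$ (equivalently the paper's third inductive claim $s_{i-1}(n)\cdot n \equiv s_{i-1}(n')\cdot n' \pmod{\mh^{h-i+1}d}$), then read off $r_i(n)=r_i(n')$ modulo $m$ and divide out one $\mh$. The only thing you need for the normalized recursion $\nh\, s_{i-1}(n)=\mh\, s_i(n)+r_i(n)/d$ is $d\mid r_i(n)$, which is immediate from $d\mid n$ and $d\mid m$; you do \emph{not} need $d\mid s_i(n)$, and in fact that claim is false (take $m=4$, $n=6$: then $d=2$ but $s_1(n)=1$).

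Part (2), however, has a real gap. Your contrapositive hinges on the claim that $s_{k-1}(n)$ is a unit modulo $\mh$, and this is false in general: with $m=2$, $n=1$ one gets $s_1(n)=0$. Your fallback (``strengthen to a biconditional and track that the unit is $\prod \sigma_i$-type'') still leans on the same invertibility, so it does not rescue the argument. The paper avoids any unit condition on the $s_i$ by a descending-ladder argument: from $r_i(n)=r_i(n')$ one rewrites
\[
s_i(n)-s_i(n')=\frac{(s_{i-1}(n)-s_{i-1}(n'))\,\nh}{\mh}+\frac{s_{i-1}(n')(\nh-\nh')}{\mh},
\]
and, assuming inductively $\nh\equiv\nh' \pmod{\mh^{k-1}}$, runs $i$ from $k$ down to $1$, using only $\gcd(\nh,\mh)=1$ to peel off one factor of $\mh$ from $s_{i-1}(n)-s_{i-1}(n')$ at each step; the final step $i=1$ uses $s_0=1$ to land on $\mh^k\mid \nh-\nh'$. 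The invertibility you need lives in $\nh$, not in the $s_i$'s.
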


\begin{sproof}{Proof of Proposition \ref{PropRiSi}}

Proof of $(1)$. We show by induction on $i$ that $$r_i(n)=r_i(n'), \,s_i(n) \equiv s_i(n') \, (\operatorname{mod}\mh^{h-i}\Z) \text{ and }s_i(n)
\cdot n\equiv s_i(n') \cdot n'\, (\operatorname{mod}\mh^{h-i} d \Z)$$ for
$i=0,\,1,\,\dots,\,h$. The case $i=0$ is obvious. Assume that
$r_{i-1}(n)=r_{i-1}(n'),\,s_{i-1}(n) \equiv s_{i-1}(n') \, (\text{mod
}\mh^{h-i+1}\Z)$ and $s_{i-1}(n) \cdot n \equiv s_{i-1}(n') \cdot n'
\,(\operatorname{mod}\mh^{h-i+1} d \Z)$ for some $1 \le i \le h$. By
construction
$$\left\{\begin{array}{c} s_{i-1}(n) \cdot
n=s_i(n)\cdot  m+r_i(n)\\ \text{ and } \\
 s_{i-1}(n') \cdot
n'=s_i(n')\cdot  m+r_i(n').
\end{array}\right.$$
We deduce from the induction hypothesis that $r_{i}(n)  \equiv
r'_{i}(n') \,(\operatorname{mod}  m\Z)$ and hence $r_i(n)=r_i(n')$. As a
result
$$
s_i(n)-s_i(n')=\frac{(s_{i-1}(n) \cdot n -s_{i-1}(n') \cdot
n')}{ m},
$$
from which we deduce that
\begin{equation} \label{EqSi} s_i(n)
\equiv s_i(n') \,(\operatorname{mod} \mh^{h-i}\Z)
\end{equation}
still by using the induction hypothesis. Look at the right member of
the relation
\[
  s_i(n)\cdot n-s_i(n') \cdot n'= (s_i(n)-s_i(n'))\cdot n + s_i(n') \cdot(n-n') \ .
\]
The first term is a multiple of $\mh^{h-i} d $ because of equation
(\ref{EqSi}) and the fact that $ d $ divides $n$. The second one is
a multiple of $\mh^{h-i} d $ because of hypothesis $(1)$. Therefore,
we get $s_i(n) \cdot n \equiv s_i(n') \cdot n' \,(\operatorname{mod}
\mh^{h-i} d \Z)$ as desired.

Proof of $(2)$. By construction, we have:
$$\left\{\begin{array}{c} s_{i-1}(n) \cdot
n=s_i(n) \cdot m+r_i(n)\\ \text{ and } \\
 s_{i-1}(n') \cdot
n'=s_i(n')\cdot  m+r_i(n')
\end{array}\right.$$ for all $i\in\{1,\dots,h\}$. By hypothesis $(2)$,
we get $s_i(n)-s_i(n')=(s_{i-1}(n)\cdot n-s_{i-1}(n') \cdot n')/m$,
which we write into the following form:
\begin{equation} \label{EqQuotientSi}
s_i(n)-s_i(n')=\frac{(s_{i-1}(n)-s_{i-1}(n')) \cdot
\nh}{\mh}+\frac{s_{i-1}(\nh')(\nh-\nh')}{\mh}.
\end{equation}
where $\nh=n/ d $ and $\nh'=n'/ d $. We show by induction on $k$
that $\nh \equiv \nh' \,(\operatorname{mod} \mh^k\Z)$ for $k=1,\dots,h$.
The case $k=1$ follows directly from equation (\ref{EqQuotientSi})
for $i=1$ (recall that $s_0(n)=1=s_0(n')$). For the inductive step,
we assume that $2 \le k \le h$ et $\nh \equiv \nh'\,(\operatorname{mod}
\mh^{k-1}\Z)$, which implies that the second term in the right
member of equation (\ref{EqQuotientSi}) is a multiple of $\mh^{k-2}$
(it is an integer in particular). We then proceed in $k$ steps using
equation (\ref{EqQuotientSi}) and the fact that $\mh$ and $\nh$ are
coprime:
\begin{itemize}
\item[-]for $i=k$ we get $s_{k-1}(n) \equiv s_{k-1}(n')\,(\operatorname{mod}
\mh\Z)$;
\item[-]for $i=k-1$ (if $k\ge3$), we get $s_{k-2}(n) \equiv s_{k-2}(n')\,(\operatorname{mod}
\mh^2\Z)$;
\item[-]so on so forth with $i=k-2,\dots,2$, we get the sequence of congruences $$s_{k-3}(n) \equiv s_{k-3}(n')\,(\operatorname{mod}
\mh^3\Z),\dots,s_1(n)\equiv s_1(n') \,(\operatorname{mod} \mh^{k-1}\Z);$$
\item[-]for $i=1$, we get $\nh \equiv \nh'\,(\operatorname{mod}
\mh^k\Z)$.
\end{itemize}
Finally, we obtain $n\equiv n'\, (\operatorname{mod} \mh^k d \Z)$ and the
proof is then complete.
\end{sproof}

\begin{proposition} \label{PropRiSiCont}
\begin{itemize}
\item[$(i)$]

The functions $r_i$ and $s_i$ admit unique continuous extensions
\[
 r_i:\Z_{ m} \to \{0,\ldots, | m|-1\},\quad  s_i:\Z_{ m} \to \Z_{ m}
\]
such that
\begin{equation} \label{EqRecXi1} \tag{$1_{\xi}$}
r_0(\xi)=0,\,s_0(\xi)=1;
\end{equation}
\begin{equation} \label{EqRecXi2} \tag{$2_{\xi}$}
\xi s_{i-1}(\xi)= m s_i(\xi) +r_i(\xi) \mbox{ with } r_i(\xi) \in
\{0,\dots,| m|-1\} \text{ for every } i \ge 1.
\end{equation}

Let $\xi,\xi' \in \Z_{ m}$ such that
$\gcd(\xi, m)=\gcd(\xi', m)= d $ and let $h \in  \N \setminus \{0\}$. Setting
$\hat m = m/d$, the two
following are equivalent:
\begin{itemize}
\item[$(1)$] $\xi \equiv  \xi' \, ( \operatorname{mod}\mh^h d \Z_{ m})$ holds;
\item[$(2)$] $r_i(\xi)=r_i(\xi')$ for $i=1,\dots,h$.
\end{itemize}

\item[$(ii)$] Let $\xi,\xi' \in \Z_{ m}$.
The two
following are equivalent:
\begin{itemize}
\item[$(1)$] There is some $d \in \N$ such that $\gcd(\xi, m)=\gcd(\xi', m)= d$ 
and $\pi(\xi/d)=\pi(\xi'/d)$ where $\pi:\Z_{m} \longrightarrow \Z_{m/d}$ is the
canonical ring homomophism,
\item[$(2)$] $r_i(\xi)=r_i(\xi')$ for every $i$.
\end{itemize}

\item[$(iii)$] The map $\xi \mapsto (r_i(\xi))_{i \ge 1}$ defines a
 homeomorphism from $\Z_{m}^{\times}$ onto $(\Z/ m\Z)^{\times} \times (\Z/ m\Z)^{\N}$
 endowed with its product topology. (Here we identify $\Z/ m\Z$ with the set $\{0,\dots, | m|-1\}$ and $(\Z/ m\Z)^{\times}$
 with the set of integers $k\in \{0,\dots, |m|-1\}$ coprime with $ m$.)
\end{itemize}

\end{proposition}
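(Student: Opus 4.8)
The plan is to bootstrap everything from the rational-integer statement Proposition~\ref{PropRiSi}, using that $\Z$ is dense in the compact ring $\Z_m$ and that all relevant maps are locally constant. For the existence of the extensions in $(i)$, I would transport the recursion of Definition~\ref{DefiSi} to $\Z_m$. Since $\Z_m/m\Z_m\cong\Z/m\Z$, every $\eta\in\Z_m$ has a unique representative $\rho(\eta)\in\{0,\dots,|m|-1\}$ of its class modulo $m\Z_m$, and $\rho$ is continuous because it factors through the finite discrete quotient; moreover, $m$ being a non-zero-divisor, multiplication by $m$ is a continuous bijection of the compact space $\Z_m$ onto its closed subspace $m\Z_m$, hence a homeomorphism, so ``division by $m$'' is continuous on $m\Z_m$. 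Putting $r_0\equiv 0$, $s_0\equiv 1$ and $r_i(\xi):=\rho(\xi\,s_{i-1}(\xi))$, $s_i(\xi):=\bigl(\xi\,s_{i-1}(\xi)-r_i(\xi)\bigr)/m$ for $i\ge1$, an induction on $i$ yields continuous maps $r_i,s_i\colon\Z_m\to\Z_m$ satisfying $(1_\xi)$ and $(2_\xi)$; on $\Z$ they coincide with the classical $r_i,s_i$ by uniqueness of the recursion, and uniqueness of a continuous extension is automatic by density of $\Z$. Note $r_1=\rho$, so $r_1(\xi)\equiv\xi\pmod{m\Z_m}$.

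For the equivalence in $(i)$ I would argue by approximation. Fix $\xi,\xi'$ with $\gcd(\xi,m)=\gcd(\xi',m)=d$ and $h\ge1$. By density, choose rational integers $n,n'$ with $n\equiv\xi$ and $n'\equiv\xi'$ modulo $\hat m^h d\,\Z_m$, and close enough to $\xi,\xi'$ that moreover $r_i(n)=r_i(\xi)$ and $r_i(n')=r_i(\xi')$ for $i=1,\dots,h$ (each $r_i$ is locally constant); since $\hat m^h d\,\Z_m\subseteq m\Z_m$ and $\gcd(\cdot,m)$ depends only on the class mod $m\Z_m$, we get $\gcd(n,m)=\gcd(n',m)=d$. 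Then $\xi\equiv\xi'\pmod{\hat m^h d\,\Z_m}$ holds iff $n\equiv n'\pmod{\hat m^h d\,\Z_m}$, and, as $n,n'\in\Z$ and $\Z\cap\hat m^h d\,\Z_m=\hat m^h d\,\Z$, iff $n\equiv n'\pmod{\hat m^h d\,\Z}$; likewise $r_i(\xi)=r_i(\xi')$ for $i\le h$ iff $r_i(n)=r_i(n')$ for $i\le h$. Parts $(1)(i)$ and $(2)$ of Proposition~\ref{PropRiSi} say precisely that these two conditions on $n,n'$ are equivalent, whence $(1)\Leftrightarrow(2)$.

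Part $(ii)$ I would deduce from $(i)$ together with the standard fact that $\ker\pi=\bigcap_{h\ge1}\hat m^h\Z_m$ for $\pi\colon\Z_m\to\Z_{m/d}=\Z_{\hat m}$: indeed $\pi$ is onto with $\pi(\hat m)=\hat m$, the ring $\Z_{\hat m}$ is $\hat m$-adically separated, and the induced maps $\Z_m/\hat m^h\Z_m\to\Z_{\hat m}/\hat m^h\Z_{\hat m}$ are isomorphisms, both sides being canonically $\Z/\hat m^h\Z$. If $r_i(\xi)=r_i(\xi')$ for all $i$, then $r_1(\xi)=r_1(\xi')$ gives $\xi\equiv\xi'\pmod{m\Z_m}$, whence $\langle\xi,m\rangle=\langle\xi',m\rangle$ and $d:=\gcd(\xi,m)=\gcd(\xi',m)$; applying $(i)$ for every $h$ yields $\xi-\xi'\in\bigcap_h\hat m^h d\,\Z_m=d\ker\pi$, that is, $\pi(\xi/d)=\pi(\xi'/d)$ (the quotients making sense since $\xi,\xi'\in\langle\xi,m\rangle=d\Z_m$). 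Conversely, condition $(1)$ gives $\xi-\xi'\in d\ker\pi=\bigcap_h\hat m^h d\,\Z_m$, and $(i)$ returns $r_i(\xi)=r_i(\xi')$ for all $i$.

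For $(iii)$, the map $\Phi\colon\xi\mapsto(r_i(\xi))_{i\ge1}$ lands in $Y:=(\Z/m\Z)^\times\times(\Z/m\Z)^{\N}$ because $r_1(\xi)\equiv\xi\pmod m$ is coprime to $m$; it is continuous since each $r_i$ is, and injective by $(ii)$ (equivalently by $(i)$ with $d=1$, using $\bigcap_h m^h\Z_m=\{0\}$). For surjectivity I would note that for each $h$ the map $\psi_h\colon(\Z/m^h\Z)^\times\to(\Z/m\Z)^\times\times(\Z/m\Z)^{h-1}$, $\bar n\mapsto(r_1(n),\dots,r_h(n))$, is well defined and injective by parts $(1)(i)$ and $(2)$ of Proposition~\ref{PropRiSi} (the case $d=1$), hence bijective since both sides have $|(\Z/m\Z)^\times|\cdot|m|^{h-1}$ elements; the $\psi_h$ are compatible with the obvious projections, so passing to projective limits they give a bijection $\Z_m^\times\to Y$, which is $\Phi$. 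Being a continuous bijection from a compact space onto a Hausdorff space, $\Phi$ is a homeomorphism. The main obstacle is the bookkeeping in the approximation step of $(i)$: one must replace $\xi,\xi'$ by rational integers while simultaneously controlling the gcd, the residue modulo $\hat m^h d$, and the first $h$ values $r_i$. The only other non-formal ingredient is the identity $\ker\pi=\bigcap_h\hat m^h\Z_m$ in $(ii)$, which is exactly what turns $\pi(\xi/d)=\pi(\xi'/d)$ into the tower of $m$-adic congruences resolved by $(i)$; everything else — the recursive construction of the extensions, the counting in $(iii)$, and the compactness argument — is routine.
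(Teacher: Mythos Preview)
Your proof is correct and follows essentially the same route as the paper: reduce the equivalence in $(i)$ to Proposition~\ref{PropRiSi} by approximating $\xi,\xi'$ with rational integers, derive $(ii)$ from $(i)$ for all $h$, and handle $(iii)$ by the cardinality count $\varphi(m)|m|^{h-1}$ on finite levels together with compactness. The one notable difference is in the construction of the extensions: the paper argues that Proposition~\ref{PropRiSi} makes $r_i,s_i$ uniformly continuous on $\Z$ and then extends by density, whereas you transport the recursion itself to $\Z_m$ using that $\rho$ and division by $m$ on $m\Z_m$ are continuous. Your version is a bit more direct and yields the relations $(1_\xi),(2_\xi)$ for free; the paper's version makes explicit that $r_i$ is constant on $m$-adic balls of radius $|m|^{-i}$, which slightly streamlines the approximation step (choosing $n\equiv\xi\pmod{m^h\Z_m}$ automatically forces $r_i(n)=r_i(\xi)$ for $i\le h$, so the two conditions you juggle collapse into one). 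In $(ii)$ you supply the identification $\ker\pi=\bigcap_h\hat m^h\Z_m$ that the paper leaves implicit, and in $(iii)$ you phrase the surjectivity via projective limits rather than density of the image; both are equivalent packaging of the same counting argument.
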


\begin{proof}

$(i)$ Proposition \ref{PropRiSi} shows that the functions
$r_i,\,s_i$ are uniformly continuous with respect to the $ m$-adic
topology on $\Z$ and that $r_i$ is moreover constant on $ m$-adic
balls of radius $| m|^{-i}$. The existence and uniqueness of the
extensions to $\Z_ m$ follow. The inductive formulas of Definition
\ref{DefiSi} extend continuously on $\Z_{ m}$, which gives
$(1_{\xi})$ and $(2_{\xi})$. Let us now pick $n,n'\in \Z$ such that
$n\equiv \xi$ and $n'\equiv\xi'$ $(\operatorname{mod}  m^h\Z_ m)$. Then,
one has $n\equiv n' \, (\operatorname{mod} \mh^h d )\Z$ if and only if
$\xi\equiv \xi' \, (\operatorname{mod} \mh^h d \Z_ m)$. As $r_i$ is
constant on $ m$-adic balls of radius $| m|^{-i}$ we have
$r_i(n)=r_i(\xi)$ and $r_i(n')=r_i(\xi')$ for $i=1,\ldots,h$. The
equivalence between (1) and (2) now follows readily from Proposition
\ref{PropRiSi}.

$(ii)$ As $\gcd(m,\xi)=\gcd(m,r_1(\xi))$ for every $\xi \in \Z_{m}$ by $(2_{\xi})$, the result immediatly follows from $(i)$.

$(iii)$ Let $\xi \in \Z_{ m}^{\times}$. By ($2_{\xi}$), we have $r_1(\xi) \in
(\Z/ m\Z)^{\times}$. For any $\xi,\xi' \in \Z_{ m}^{\times}$, we
have $d=\gcd(\xi, m)=\gcd(\xi', m)=1$. We deduce from $(2)$ that
the map $R:\xi \mapsto (r_i(\xi))_{i\ge1}$ defines a continuous
embedding from $\Z_{ m}^{\times}$ into $P:=(\Z/ m\Z)^{\times}
\times (\Z/ m\Z)^{\N}$. As $\Z_{ m}^{\times}$ and $P$ are compact,
it is suffices to show that $R$ has a dense image. Let $E_h$ be the
set of integers $k\in \{0,\dots, | m|^h-1\}$ coprime with $ m$.
Consider the map $R_h:\xi \mapsto (r_i(\xi))_{1 \le i \le h}$. This
is an injective map from $E_h$ into $P_h:=(\Z/ m\Z)^{\times} \times
(\Z/ m\Z)^{h-1}$ by the equivalence of $(1)$ and $(2)$. As $E_h$ and
$P_h$ have both cardinality $\varphi( m)|m|^{h-1}$, where $\varphi$
denotes the Euler function, the map $R_h$ is a bijection. Hence $R$
maps the set of integers coprime with $|m|$ onto a dense subset of
$P$.
\end{proof}

The following proposition shows that the restriction of $s_i$ to a suitable congruence classe is a
polynomial in $n$ with coefficients in $\Q$.

\begin{proposition}\label{DefiPropPi}
Let $\xi \in\Z_{ m}$. We define recursively $P_h(X)=P_{h,\xi}(X)$ by
$$P_0(X)= m \text{ and } P_h(X)=XP_{h-1}(X)-r_h(\xi) \text{
for } h \ge 1.$$
Then, we have:
\begin{itemize}
\item[$(i)$]
$P_{h}(X)= m X^{h}-r_1(\xi)X^{h-1}-\dots-r_h(\xi) \in\Z \br{X}$.
\item[$(ii)$]$s_h(n)=\frac{1}{ m}P_{h}(\frac{n}{ m})$ for all $h \ge 0$ and
all $n\in \Z$ such that $n \equiv \xi \,(\operatorname{mod}  \mh^h  d \Z_m)$.
\end{itemize}
\end{proposition}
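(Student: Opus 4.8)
The plan is to prove both parts by induction on $h$, the first being purely formal, the second combining the defining recursion \eqref{eqrec2} with the compatibility statement of Proposition~\ref{PropRiSiCont}. For $(i)$, the identity holds for $h=0$ since $P_0(X)=m$; and if $P_{h-1}(X)=mX^{h-1}-r_1(\xi)X^{h-2}-\dots-r_{h-1}(\xi)$, then multiplying by $X$ and subtracting $r_h(\xi)$ gives $P_h(X)=mX^{h}-r_1(\xi)X^{h-1}-\dots-r_{h-1}(\xi)X-r_h(\xi)$, and each $r_i(\xi)$ lies in $\{0,\dots,|m|-1\}\subset\Z$ by \eqref{EqRecXi2}, so $P_h(X)\in\Z\br{X}$.

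For $(ii)$, one inducts on $h$ as well. The base case $h=0$ is immediate: $s_0(n)=1$ for every $n$ by \eqref{eqrec1}, while $\frac1m P_0(n/m)=1$. For the inductive step, fix $h\ge 1$ and $n\in\Z$ with $n\equiv\xi\ (\operatorname{mod}\ \mh^h d\,\Z_m)$, where $d=\gcd(\xi,m)$ and $\mh=m/d$. Two divisibility remarks drive the argument. First, $\mh^h d=\mh\cdot(\mh^{h-1}d)$, so $\mh^h d\,\Z_m\subseteq\mh^{h-1}d\,\Z_m$; hence $n\equiv\xi\ (\operatorname{mod}\ \mh^{h-1}d\,\Z_m)$ and the induction hypothesis applies, giving $s_{h-1}(n)=\frac1m P_{h-1}(n/m)$. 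Second, $\mh^h d=m\cdot\mh^{h-1}$, so $\mh^h d\,\Z_m\subseteq m\Z_m$; hence $n\equiv\xi\ (\operatorname{mod}\ m\Z_m)$, which forces $n\Z_m+m\Z_m=\xi\Z_m+m\Z_m$ and therefore $\gcd(n,m)=\gcd(\xi,m)=d$. With this last equality in hand, Proposition~\ref{PropRiSiCont}$(i)$ applied to the pair $(n,\xi)$ turns the congruence $n\equiv\xi\ (\operatorname{mod}\ \mh^h d\,\Z_m)$ into the equalities $r_i(n)=r_i(\xi)$ for $i=1,\dots,h$; in particular $r_h(n)=r_h(\xi)$.

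It then remains to combine these facts. The recursion \eqref{eqrec2} taken at $i=h$ reads $n\,s_{h-1}(n)=m\,s_h(n)+r_h(n)$, so
\[
 s_h(n)=\frac{1}{m}\bigl(n\,s_{h-1}(n)-r_h(n)\bigr)=\frac{1}{m}\left(\frac{n}{m}\,P_{h-1}\!\left(\frac{n}{m}\right)-r_h(\xi)\right)=\frac1m\,P_h\!\left(\frac{n}{m}\right),
\]
the last equality being the defining relation $P_h(X)=XP_{h-1}(X)-r_h(\xi)$ evaluated at $X=n/m$. This closes the induction and proves $(ii)$.

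The one point requiring genuine care — the step I would flag as the crux — is verifying that the hypothesis ``$n\equiv\xi\ (\operatorname{mod}\ \mh^h d\,\Z_m)$'' is exactly calibrated: it must at once descend to the coarser congruence needed to invoke the induction hypothesis and, after one observes that it forces $\gcd(n,m)=d$, match precisely the hypothesis of Proposition~\ref{PropRiSiCont}$(i)$ that yields $r_h(n)=r_h(\xi)$. Both requirements come out of the elementary factorizations $\mh^h d=\mh\cdot(\mh^{h-1}d)=m\cdot\mh^{h-1}$, after which the rest is routine arithmetic with ideals of $\Z_m$.
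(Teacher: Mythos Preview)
Your proof is correct and follows essentially the same route as the paper's: both parts are handled by induction on $h$, with the key input for $(ii)$ being Proposition~\ref{PropRiSiCont}$(i)$ to convert the congruence $n\equiv\xi\ (\operatorname{mod}\ \mh^h d\,\Z_m)$ into the equalities $r_i(n)=r_i(\xi)$, after which the recursions for $s_h$ and $P_h$ match up. Your write-up is more explicit than the paper's (which dispatches both parts in two sentences), and in particular you are careful to verify $\gcd(n,m)=d$ before invoking Proposition~\ref{PropRiSiCont}$(i)$, a point the paper leaves implicit.
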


\begin{proof}
$(i)$ The proof is an obvious induction on $h$.\\
$(ii)$ First, observe that $r_i(n)=r_i(\xi)$ for all $i \le h$ and
all $n\in \Z$ such that $n \equiv \xi \,(\operatorname{mod} \mh^h d \Z_m)$ 
by Proposition \ref{PropRiSiCont}. An easy
induction on $h$ using the definitions of $s_i(n)$ and $P_i(X)$ gives the
conclusion.
\end{proof}

\subsection{Stabilizers} \label{SubSecStab}
This section is devoted to the study of the stabilizers $B,B_m$ and
$B_{\xi}$.

Let $b_0=b,\,b_1=ab^ma^{-1},\,b_{i}=ab_{i-1}b_0^{-r_{i-1}(\xi)}a^{-1}$
for $i \ge 2$. The following lemma shows that $b_i$ defines an
element of $B$ for every $i$.

\begin{lemma} \label{LemBi}
Let $i \ge 1$. We have $b_i=b^{
\frac{\xi_n}{m}P_{i-1}(\frac{\xi_n}{m})}\text{ in } BS(m,\xi_n)
\text{ for all }n \text{ large enough.}$
\end{lemma}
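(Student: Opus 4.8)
The plan is to argue by induction on $i$, using the defining recursion $b_i = a b_{i-1} b_0^{-r_{i-1}(\xi)} a^{-1}$ together with the $m$-adic recursion for $P_h$ from Proposition \ref{DefiPropPi}. First I would record that $r_{i-1}(\xi)=r_{i-1}(\xi_n)$ for all $n$ large enough: indeed $\xi_n\to\xi$ in $\Z_m$, so for $n$ large $\xi_n\equiv\xi\ (\operatorname{mod}\,\mh^{i-1}d\,\Z_m)$, hence $\xi_n\equiv\xi\ (\operatorname{mod}\,\mh^{i-1}d)$ in $\Z$, and Proposition \ref{PropRiSiCont}$(i)$ gives $r_j(\xi_n)=r_j(\xi)$ for $j\le i-1$. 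Likewise, by Proposition \ref{DefiPropPi}$(ii)$, for such $n$ one has $s_h(\xi_n)=\frac1m P_{h}(\frac{\xi_n}{m})$ for all $h\le i-1$. So throughout the computation in $BS(m,\xi_n)$ I may freely replace the rational integer $r_{i-1}(\xi)$ by $r_{i-1}(\xi_n)$ and invoke the polynomial formula for the relevant $s_h$.

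The base case is $i=1$: $b_1 = a b^m a^{-1}$, and the defining relation $a b^m a^{-1} = b^{\xi_n}$ in $BS(m,\xi_n)$ gives $b_1 = b^{\xi_n}$. We must check this equals $b^{\frac{\xi_n}{m}P_0(\frac{\xi_n}{m})}$; since $P_0(X)=m$, the exponent is $\frac{\xi_n}{m}\cdot m=\xi_n$, as needed. For the inductive step, assume $b_{i-1} = b^{\frac{\xi_n}{m}P_{i-2}(\frac{\xi_n}{m})}$ in $BS(m,\xi_n)$ for $n$ large. Then, writing $k := \frac{\xi_n}{m}P_{i-2}(\frac{\xi_n}{m}) = P_{i-1}(\frac{\xi_n}{m}) + r_{i-1}(\xi_n)$ — the last equality being exactly the recursion $P_{i-1}(X)=XP_{i-2}(X)-r_{i-1}(\xi)$ evaluated at $X=\xi_n/m$, with $r_{i-1}(\xi)=r_{i-1}(\xi_n)$ for $n$ large — we compute
\[
b_i = a\,b_{i-1}\,b_0^{-r_{i-1}(\xi)}\,a^{-1}
    = a\,b^{\,k - r_{i-1}(\xi_n)}\,a^{-1}
    = a\,b^{\,P_{i-1}(\xi_n/m)}\,a^{-1}.
\]
Now I need to pass $a\,b^{\ell}\,a^{-1}$ through the HNN/Baumslag-Solitar relation. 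Here the key point is that $\ell := P_{i-1}(\xi_n/m)$ is a multiple of $m$: indeed $\frac{\ell}{m}=s_{i-1}(\xi_n)$ is a rational integer by Proposition \ref{DefiPropPi}$(ii)$ (valid for $n$ large). So $b^{\ell}=(b^m)^{\ell/m}$, and the relation $ab^ma^{-1}=b^{\xi_n}$ gives $a\,b^{\ell}\,a^{-1} = b^{\xi_n\cdot \ell/m} = b^{\frac{\xi_n}{m}P_{i-1}(\xi_n/m)}$, which is precisely the claimed formula for $b_i$.

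The main obstacle is purely bookkeeping: one must be careful that all the ``for $n$ large enough'' clauses can be taken simultaneously (a finite intersection of cofinite sets, hence cofinite), and that the divisibility $m \mid P_{i-1}(\xi_n/m)$ — which is what licenses moving $b^\ell$ across the stable letter $a$ via the relation $a b^m a^{-1}=b^{\xi_n}$ rather than merely $a b a^{-1}$ being undefined — is correctly extracted from Proposition \ref{DefiPropPi}$(ii)$. No deeper input is required; everything reduces to the polynomial identity $P_{i-1}(X) = X\,P_{i-2}(X) - r_{i-1}(\xi)$ together with the elementary relation in $BS(m,\xi_n)$.
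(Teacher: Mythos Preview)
Your proof is correct and follows essentially the same route as the paper's: an induction on $i$ using $r_{i-1}(\xi_n)=r_{i-1}(\xi)$ for large $n$, the recursion for $P_{i-1}$ (equivalently for $s_{i-1}$), the divisibility $m\mid P_{i-1}(\xi_n/m)$ coming from $P_{i-1}(\xi_n/m)=m\,s_{i-1}(\xi_n)$, and the Baumslag--Solitar relation $ab^ma^{-1}=b^{\xi_n}$. The only difference is cosmetic: the paper runs the induction in the $s_h$ notation and converts to $P_h$ at the end via Proposition~\ref{DefiPropPi}, whereas you work with $P_h$ throughout.
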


\begin{proof}
We show by induction on $i$ that: \begin{equation} \label{EqBi}
\text{ for every } i\ge 1 \text{ we have } b_i=b^{ \xi_n
s_{i-1}(\xi_n)}\text{ in } BS(m,\xi_n) \text{ for all }n \text{
large enough.}\end{equation} Since $b_1=ab^ma^{-1}=b^{\xi_n}$ in
$BS(m,\xi_n)$ and $s_0(\xi_n)=1$ for all $n$, (\ref{EqBi}) holds if
$i=1$. Assume (\ref{EqBi}) holds for some $i \ge 1$. By the
induction hypothesis we have $b_{i+1}=ab_ib^{-r_i(\xi)}a^{-1}=ab^{
\xi_n s_{i-1}(\xi_n)-r_i(\xi)}a^{-1}$ for all $n$ large enough.
Recall that $\xi_n$ tends to $\xi$ in $\Z_{m}$ and hence
$r_i(\xi_n)=r_i(\xi)$ for all $n$ large enough by Proposition
\ref{PropRiSiCont}. By Definition \ref{DefiSi}, we obtain
$b_{i+1}=b^{ \xi_n s_{i}(\xi_n)}\text{ in } BS(m,\xi_n) \text{ for
all }n \text{ large enough.}$  Since $ \xi_n s_{i}(\xi_n)=
\frac{\xi_n}{m}P_{i}(\frac{\xi_n}{m})$ for all $n$ large enough by
Proposition \ref{DefiPropPi}, the proof is then complete.
\end{proof}
We can generalize the previous lemma by assigning to every $\gamma \in B$
a polynomial with integer coefficients.

\begin{proposition} \label{PropBandBm}
\noindent
\begin{itemize}
\item[$(i)$] Let $w \in \F$ with image $\gamma \in B$. Then there is a unique
polynomial $P_{\gamma}(X) \in \Z \br{X}$ independent of $w$ such
that $w=b^{P_{\gamma}(\xi_n/m)}$ in $BS(m,\xi_n)$ for all $n$ large
enough.
\item[$(ii)$] The map $q:\gamma \mapsto P_{\gamma}(X)$ defines an injective homomorphism
from $B$ into $\Z \br{X}$. The abelian group $\mathfrak{B}=q(B)$ is
freely generated by $\{1,\,XP_0(X),\,
XP_1(X),\,X P_2(X),\,\dots\}$. Hence $B$ is freely generated by the set
$\{b_0,\,b_1,\,b_2,\,b_3,\,\dots\}$.
\item[$(iii)$] Let $w \in \F$ with image $\gamma \in B$.
 Then $\gamma$ belongs to $B_m$ if and only if $$P_{\gamma}(X)=k_0+k_1m X+k_2 XP_1(X)+\dots+k_{t}
XP_{t-1}(X)$$ with $k_0+k_1r_1(\xi)+k_2 r_2(\xi)+\dots+k_tr_t(\xi)
\equiv 0 \,(\operatorname{mod}m\Z).$ Moreover the abelian group
$\mathfrak{B}_m=q(B_m)$ is freely generated by $P_0(X)=m,\, P_1(X)=XP_0(X)
-r_1(\xi),\, P_2(X)=X P_1(X)-r_2(\xi),\,\dots$. Hence $B_m$ is
freely generated by $\{b_0^m,\,b_1b_0^{-r_1(\xi)},\,b_2
b_0^{-r_2(\xi)},\,\dots\}$.

\item[$(iv)$] The abelian group $\mathfrak{B}_{\xi}=q(B_{\xi})$ is freely
generated by $\{XP_0(X),\, XP_1(X),\,XP_2(X),\,\dots\}$. The abelian group $B_{\xi}$ is
freely generated by $\{b_1,\,b_2,\,b_3,\dots\,\}$.

\item[$(v)$] For any $\gamma \in B_m$, we have
$q(a\gamma a^{-1})=X P_{\gamma}(X)$. Moreover the map $a \mapsto
(0,1),\,b \mapsto (1,0)$ induces a surjective homomorphism $q_{m,\xi}:\ovBS
\longrightarrow\Z \wr \Z=\Z \br{X^{\pm 1}} \rtimes \Z$ whose
restriction to $B$ coincides with $q$.

\item[$(vi)$] Let $d=\gcd(m,\xi),\, \mh=m/d$ and let $\pi: \Z \longrightarrow
\Z_{\mh}$ be the canonical map. The maps $\chi:\gamma \mapsto
P_{\gamma}(\xi/m)$ and $\hat{\chi}=\pi \circ \chi$
define homomorphisms from $B$ to $\Z_{m}$ and $\Z_{\mh}$ respectively. Their
kernels satisfy:
 $\ker \chi \subset \bigcap_{i \ge 0} a^{-i}Ba^i \subset \ker\hat{\chi}$. 
Moreover, if $\bigcap_{i \ge 0} a^{-i}Ba^i$ is non-trivial, then it contains
some $\gamma \in B_m \setminus B_{\xi}$.

\item[$(vii)$] Let $\mathfrak{C}$ be the image of $\mathfrak{B}_{\xi}$ by the map $\iota:P(X) \mapsto \frac{X-1}{X}P(X)$. Then
$\mathfrak{C}$ is a subgroup of $\mathfrak{B}$ and $\mathfrak{B}/\mathfrak{C}$ is an infinite cyclic group generated by the image of $1$. 

\end{itemize}
\end{proposition}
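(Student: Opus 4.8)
Everything is organized around the homomorphism $q$ of part~(ii). Set $g_0=1$ and $g_i=XP_{i-1}(X)$ for $i\ge1$, where $P_h=P_{h,\xi}$ is the polynomial of Proposition~\ref{DefiPropPi}; since $\deg g_i=i$, the $g_i$ are $\Z$-independent and $\mathfrak{B}:=\bigoplus_{i\ge0}\Z g_i\subseteq\Z\br{X}$ is free abelian on them. The plan is to prove that $q$ is a well-defined injective homomorphism from $B$ \emph{onto} $\mathfrak{B}$; parts~(iii)--(vii) then become elementary computations inside $\mathfrak{B}$. That $q$ is additive is clear ($b$-exponents add when words are concatenated), and it is injective since $q(\gamma)=0$ forces $w=1$ in $BS(m,\xi_n)$ for $n$ large, hence $\gamma=1$ by Lemma~\ref{lmecv}. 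Surjectivity onto $\mathfrak{B}$ is automatic once $q(B)\subseteq\mathfrak{B}$ is known: Lemma~\ref{LemBi} gives $q(b_0)=g_0$ and $q(b_i)=g_i$ for $i\ge1$, and the $g_i$ generate $\mathfrak{B}$; together with injectivity this yields $B\cong\mathfrak{B}$, free abelian on $\{b_i\}_{i\ge0}$, which is part~(ii), and part~(i) follows as well.

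\textbf{The core step: $q(B)\subseteq\mathfrak{B}$.} Let $\gamma\in B$ be represented by a reduced $w\in\F$, so $w=b^{\lambda_n}$ in $BS(m,\xi_n)$ for $n$ large (Lemma~\ref{LemCarB}). I would prove, by induction on the number $\ell$ of letters $a^{\pm1}$, the statement: \emph{if $w_n=b^{Q_0(\xi_n/m)}a^{\epsilon_1}\cdots a^{\epsilon_\ell}b^{Q_\ell(\xi_n/m)}$ with fixed $Q_j\in\mathfrak{B}$ equals a power of $b$ in $BS(m,\xi_n)$ for all $n$ large, then that power equals $b^{P(\xi_n/m)}$ for a unique $P\in\mathfrak{B}$.} For $\ell=0$, $P=Q_0$. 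For $\ell\ge1$, for each large $n$ the word $w_n$ admits a Britton pinch in the standard HNN decomposition of $BS(m,\xi_n)$ (else it is Britton-reduced, not a power of $b$). A pinch turning $a^{-1}b^{Q_i(\xi_n/m)}a$ into $b^{(m/\xi_n)Q_i(\xi_n/m)}$ requires $\xi_n\mid Q_i(\xi_n/m)$ for $n$ large; since $g_j(\xi_n/m)=\xi_n\,s_{j-1}(\xi_n)$ is divisible by $\xi_n$ for $j\ge1$ but $g_0=1$ is not, and $|\xi_n|\to\infty$, this forces $Q_i\in\bigoplus_{j\ge1}\Z g_j=X\mathfrak{B}_m$ (with $\mathfrak{B}_m:=\bigoplus_{h\ge0}\Z P_h$), and the pinch replaces the exponent polynomial $Q_i$ by $Q_i/X\in\mathfrak{B}_m\subseteq\mathfrak{B}$. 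A pinch turning $ab^{Q_i(\xi_n/m)}a^{-1}$ into $b^{(\xi_n/m)Q_i(\xi_n/m)}$ requires $m\mid Q_i(\xi_n/m)$ for $n$ large; using $g_h(\xi_n/m)=m\,s_h(\xi_n)+r_h(\xi_n)$ and $r_h(\xi_n)=r_h(\xi)$ for $n$ large (Propositions~\ref{PropRiSiCont} and~\ref{DefiPropPi}), this is equivalent to $Q_i\in\mathfrak{B}_m$, and the pinch replaces $Q_i$ by $XQ_i\in X\mathfrak{B}_m\subseteq\mathfrak{B}$. In both cases the adjacent $b$-powers merge into one with exponent polynomial still in $\mathfrak{B}$ (a sum of elements of $\mathfrak{B}$), the new word has $\ell-2$ letters $a^{\pm1}$, and the induction hypothesis applies. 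The induction runs because: whether a given pinch is available depends only on $m$, $\xi$ and the $Q_j$, not on $n$, so availability for one large $n$ implies it for all; and the closures ``$Q\in\mathfrak{B}_m\Rightarrow XQ\in\mathfrak{B}$'' and ``$Q\in X\mathfrak{B}_m\Rightarrow Q/X\in\mathfrak{B}$'' are exactly what the recursion $P_h=XP_{h-1}-r_h(\xi)$ encodes. This is the step I expect to be the main obstacle; the rest is formal. Uniqueness of $P$ (hence of $P_\gamma$, independently of $w$) holds as $\xi_n/m$ takes infinitely many values.

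\textbf{Parts (iii)--(v).} By Lemma~\ref{LemCarB}, $\gamma\in B$ lies in $B_m$ iff $P_\gamma(\xi_n/m)\equiv0\pmod m$ for $n$ large; writing $P_\gamma=k_0g_0+\sum_{j\ge1}k_jg_j$ and reducing modulo $m$ via $g_j(\xi_n/m)\equiv r_j(\xi)$ gives precisely the congruence $k_0+\sum_j k_jr_j(\xi)\equiv0\pmod m$. The subgroup of $\mathfrak{B}$ cut out by this single congruence is free on $\{mg_0\}\cup\{g_j-r_j(\xi)g_0:j\ge1\}=\{P_0,P_1,P_2,\dots\}$, with $q$-preimages $b_0^m$ and $b_jb_0^{-r_j(\xi)}$; this is~(iii). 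For~(v): if $\gamma\in B_m$ with $w=b^{m\mu_n}$ in $BS(m,\xi_n)$ ($n$ large), then $awa^{-1}=b^{\xi_n\mu_n}=b^{(\xi_n/m)P_\gamma(\xi_n/m)}$, so $q(a\gamma a^{-1})=XP_\gamma(X)$; hence $\mathfrak{B}_\xi=q(aB_ma^{-1})=X\mathfrak{B}_m=\bigoplus_{i\ge1}\Z g_i$ and $B_\xi=q^{-1}(\mathfrak{B}_\xi)$ is free on $\{b_1,b_2,\dots\}$, which is~(iv). The surjection $\ovBS\twoheadrightarrow\bBS{1}{0}=\Z\wr\Z$ of \cite{GS08} is a morphism of $(a,b)$-marked groups, so under $\Z\wr\Z=\Z\br{X^{\pm1}}\rtimes\Z$ it is induced by $a\mapsto(0,1)$, $b\mapsto(1,0)$; a recursion gives $b_i\mapsto(XP_{i-1}(X),0)=(g_i,0)$, so its restriction to $B=\langle b_i\rangle$ sends $\gamma\mapsto(q(\gamma),0)$, i.e.\ coincides with $q$.

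\textbf{Parts (vi)--(vii).} First $\chi:\gamma\mapsto P_\gamma(\xi/m)$ takes values in $\Z_m$: on $\mathfrak{B}$ one has $g_h(\xi/m)=\xi\,s_{h-1}(\xi)=m\,s_h(\xi)+r_h(\xi)\in\Z_m$ by relation $(2_\xi)$ of Proposition~\ref{PropRiSiCont}; let $\hat\chi$ be $\chi$ followed by the canonical $\Z_m\to\Z_{\mh}$. Via the HNN structure of Proposition~\ref{PropHNNB} one checks $\gamma\in a^{-1}Ba\iff a\gamma a^{-1}\in B\iff\gamma\in B_m$, whence inductively $\gamma\in\bigcap_{i\ge0}a^{-i}Ba^i\iff a^i\gamma a^{-i}\in B_m$ for all $i\ge0\iff X^iP_\gamma(X)\in\mathfrak{B}_m$ for all $i\ge0$. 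If $\chi(\gamma)=0$, then $\gamma\in B_m$ (since $\Z\cap m\Z_m=m\Z$ and~(iii)), so $a\gamma a^{-1}\in B_\xi\subseteq B$ with $\chi(a\gamma a^{-1})=(\xi/m)\chi(\gamma)=0$, and iterating gives $a^i\gamma a^{-i}\in B_m$ for all $i$; hence $\ker\chi\subseteq\bigcap_{i\ge0}a^{-i}Ba^i$. Conversely, $X^iP_\gamma\in\mathfrak{B}_m$ for all $i$ means $\xi_n^iP_\gamma(\xi_n/m)\equiv0\pmod{m^{i+1}}$ for $n$ large; passing to the $m$-adic limit ($\lambda_n=P_\gamma(\xi_n/m)\to\chi(\gamma)$ and $\xi_n\to\xi$), this says $\xi^i\chi(\gamma)\in m^{i+1}\Z_m$ for all $i\ge0$, which a $p$-by-$p$ inspection shows is sandwiched between $\chi(\gamma)=0$ and $\hat\chi(\gamma)=0$; so $\bigcap_{i\ge0}a^{-i}Ba^i\subseteq\ker\hat\chi$. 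If that intersection is non-trivial, take $\gamma\ne1$ in it and write $P_\gamma(X)=X^eh(X)$ with $h(0)\ne0$ and $e$ maximal; then $a^{-e}\gamma a^e$ still lies in $\bigcap_{i\ge0}a^{-i}Ba^i\subseteq B_m$, while $q(a^{-e}\gamma a^e)=h(X)\notin X\Z\br{X}\supseteq\mathfrak{B}_\xi$, so $a^{-e}\gamma a^e\in B_m\setminus B_\xi$. Finally for~(vii): $\iota$ sends $g_1=XP_0\mapsto(X-1)P_0=g_1-mg_0$ and $g_i=XP_{i-1}\mapsto(X-1)P_{i-1}=g_i-g_{i-1}+r_{i-1}(\xi)g_0$ ($i\ge2$), all in $\mathfrak{B}$; hence $\mathfrak{C}=\iota(\mathfrak{B}_\xi)$ is the free abelian subgroup of $\mathfrak{B}$ on $\{g_1-mg_0\}\cup\{g_i-g_{i-1}+r_{i-1}(\xi)g_0:i\ge2\}$, and adjoining $g_0$ turns this into a unitriangular (hence $\Z$-invertible) change of the basis $\{g_i\}_{i\ge0}$ of $\mathfrak{B}$, so $\mathfrak{B}=\Z g_0\oplus\mathfrak{C}$ and $\mathfrak{B}/\mathfrak{C}$ is infinite cyclic, generated by the image of $1$.
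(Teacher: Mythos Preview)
Your proof is correct. The principal difference from the paper lies in part~(i): the paper obtains the formula $P_\gamma(\xi_n/m)=k_0+k_1\xi_n+k_2 s_1(\xi_n)\xi_n+\dots+k_t s_{t-1}(\xi_n)\xi_n$ by citing \cite[Lemma~2.6]{GS08} and then invokes Proposition~\ref{DefiPropPi} to rewrite this in the basis $\{1,XP_0,XP_1,\dots\}$, whereas you give a self-contained Britton-reduction induction that simultaneously produces $P_\gamma$ and shows it lies in $\mathfrak{B}$. Your route is more elementary and more transparent about \emph{why} the two pinch conditions are exactly membership in $\mathfrak{B}_m$ and in $X\mathfrak{B}_m=\mathfrak{B}_\xi$ (so parts~(iii) and~(iv) almost fall out of the induction); the paper's route is shorter on the page because the work is outsourced to the earlier reference. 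One point worth making explicit in your write-up: the reason the induction is well-founded is that for each fixed large $n$ Britton's lemma guarantees \emph{some} pinch, and since you have shown that availability of a pinch at a given position is an $n$-independent condition (for $n$ large), that same pinch is then available uniformly.

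The remaining parts are essentially the same as the paper's, with small stylistic differences. In~(vi) the paper argues via $\hat\chi(a\gamma a^{-1})=(\eta/\hat m)\hat\chi(\gamma)$ with $\eta=\pi(\xi/d)$ invertible in $\Z_{\hat m}$, which yields $\hat m^i\mid\hat\chi(\gamma)$ for all $i$ directly; your ``$p$-by-$p$ inspection'' of $\xi^i\chi(\gamma)\in m^{i+1}\Z_m$ reaches the same conclusion but is terser than it should be---spelling out the valuation inequality $v_p(\chi(\gamma))\ge v_p(m)+i\,(v_p(m)-v_p(\xi))$ and the case split on $v_p(m)\lessgtr v_p(\xi)$ would make it watertight. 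For the last claim of~(vi) the paper picks $\gamma$ in the intersection with $P_\gamma$ of minimal degree; your choice of $a^{-e}\gamma a^e$ is equivalent, but you should note explicitly that $a^{-j}\gamma a^j\in B$ for $1\le j\le e$ (because $X\mid P_\gamma,\dots,X\mid X^{-(e-1)}P_\gamma$ forces $\gamma\in B_\xi$, etc.), so that $a^{-e}\gamma a^e$ indeed remains in $\bigcap_{i\ge0}a^{-i}Ba^i$. In~(vii) your unitriangular basis-change argument giving $\mathfrak{B}=\Z g_0\oplus\mathfrak{C}$ is slightly cleaner than the paper's (which shows cyclicity by induction and infiniteness by observing $\mathfrak{C}$ contains no nonzero constant).
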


%Note that the last two claims will be used in Section \ref{SecHomo} to study homorphisms.

\begin{sproof}{Proof of Proposition \ref{PropBandBm}}
$(i)$ The uniqueness of $P_{\gamma}(X)$ follows from the fact that a non-zero
polynomial with coefficients in $\Q$ has only finitely many zeros in $\Q$. Using the fact that
$BS(m,\xi_n)$ converges to $\ovBS$, we also deduce that
$P_{\gamma}(X)$ is independent of $w$.  To show the existence of
$P_{\gamma}(X)$ we write $w=b^{e_0} a^{\varepsilon_1} b^{e_1} \cdots
a^{\varepsilon_h} b^{e_h}$ with $\varepsilon_j =\pm 1$ for
$j=1,\dots,h$. By Lemma \ref{LemCarB}, $w$ reduces to a power of $b$
in $BS(m,\xi_n)$ for all $n$ large enough. Consequently we have
$\sigma_a(w)=0$ and hence $h=2t$ for some $t \in \N$. By \cite[Lemma
2.6 with $d=1$]{GS08} there exist $k_0,\dots,k_t \in \Z$ depending
only on $w$ and $\xi$ such that $w =b^{\alpha(n)}$ in $BS(m,\xi_n)$
for all $n$ such that $\xi_n \equiv \xi \,(\operatorname{mod}m^t \Z_{m})$
with $\vert \xi_n \vert$ large enough and
\begin{equation} \label{EqAlpha} \alpha(n)=k_0+k_1\xi_n +k_2
s_1(\xi_n)\xi_n +\dots+k_{t} s_{t-1}(\xi_n) \xi_n ,\end{equation}
the latter equation being Formula (*) in the proof of \cite[Lemma
2.6]{GS08}. Since $\xi_n$ tends to $\xi$ in $\Z_{m}$ and $\vert
\xi_n \vert$ tends to infinity as $n$ goes to infinity, the equality
(\ref{EqAlpha}) holds for all $n$ large enough. Using Proposition
\ref{DefiPropPi}, we can write
$\alpha(n)=P_{\gamma}(\frac{\xi_n}{m})$ with
\begin{equation} \label{EqPgamma}
P_{\gamma}(X)=k_0+k_1XP_0(X)+k_2XP_1(X)+\dots+k_{t} XP_{t-1}(X).
\end{equation}

$(ii)$ The map $q:\gamma \mapsto P_{\gamma}(X)$ trivially induces a
homomorphism from  $B$ to $\Z \br{X}$ such that $q(b)=1$. Let $w \in \F$ with image
$\gamma \in B$. If $P_{\gamma}(X)$ is the zero polynomial then $w$
is trivial in $BS(m,\xi_n)$ for all $n$ large enough and hence it is
trivial in $\ovBS$. Thus $q$ is injective. It is immediate from
(\ref{EqPgamma}) that $q(B)$ is a subgroup of the free abelian group
with basis $\{1, XP_0(X),XP_1(X),\dots\}$. It follows from Lemma
\ref{LemBi} that $q(b_i) =XP_{i-1}(X)$ for all $i\geq 1$, so that
$q(B)$ coincides with the latter group.

$(iii)$ Let $w \in \F$ with image $\gamma \in B$. By $(i)$
we can write $w=b^{P_{\gamma}(\xi_n/m)}$ in $BS(m,\xi_n)$ for all
$n$ large enough. Since $\xi_n s_{i-1}(\xi_n) \equiv r_i(\xi) \,
(\operatorname{mod} m\Z)$ for every $i \ge 1$ and for all $n$ large
enough, we deduce from (\ref{EqPgamma}) that $P_{\gamma}(\xi_n/m)
\equiv k_0+k_1r_1(\xi)+k_2 r_2(\xi)+\dots+k_tr_t(\xi) \,(\text{mod
}m\Z)$ for all $n$ large enough. By Lemma \ref{LemCarB}, we have:
$\gamma \in B_m$ if and only if $\alpha(n) \equiv 0 \,(\operatorname{mod}
m\Z)$. This proves the first claim. We easily deduce that
$\{m,\,XP_0(X)-r_1(\xi),\,X P_1(X)-r_2(\xi),\,\dots\}$ freely generates
$\mathfrak{B}_m$. Since $q(b_0^m)=m$ and $q(b_i
b_0^{-r_i(\xi)})=XP_{i-1}(X)-r_i(\xi)$ the set $\{b_0^m,\,b_1
b_0^{-r_1(\xi)},\,b_2 b_0^{-r_2(\xi)},\,\dots\}$ freely generates $B_m$.

$(iv)$ Since $B_{\xi}=aB_ma^{-1}$, we deduce from $(iii)$ and the
definition of $b_i$ that $\{b_1,b_2,\dots\}$ generates $B_{\xi}$. As the elements
$q(b_i)=XP_{i-1}(X)$ ($i \ge 1$) freely generate
$\mathfrak{B}_{\xi}$, we deduce that $\{b_1,\,b_2,\,\dots\}$ freely generates $B_{\xi}$.

$(v)$ Let $w \in \F$ with image $\gamma \in B_m$. We can write
$w=b^{P_{\gamma}(\xi_n/m)}$ in $BS(m,\xi_n)$ for all $n$ large
enough with $P_{\gamma}(\xi_n/m) \equiv 0 \,(\operatorname{mod}m\Z)$. Thus
$a\gamma a^{-1}=b^{(\xi_n/m) P_{\gamma}(\xi_n/m)}$ in $BS(m,\xi_n)$
for all $n$ large enough. Hence $q(a\gamma a^{-1})=XP_{\gamma}(X)$
by definition of $q$.

The map $q_{m,\xi}$ is well-defined homomorphism by \cite[Th. 3.12]{GS08}.
Using the first part of $(v)$, we easily deduce by induction that $q(b_i)=q_{m,\xi}(b_i)$ 
for every $i \ge 0$, which completes the proof.

$(vi)$ The map $\gamma \mapsto P_{\gamma}(\xi/m)$ is a well-defined
homomorphism from $B$ to $\Z^{-1} \Z_{m}$. By Proposition
\ref{PropRiSiCont} we have $s_i(\xi) \in \Z_{m}$ for every $i$. By
Proposition \ref{DefiPropPi}, we have
$\frac{1}{m}P_i(\frac{\xi_n}{m})=s_i(\xi_n) \in \Z$ for every $n$ large
enough and hence $\frac{1}{m}P_i(\frac{\xi}{m})=s_i(\xi) \in \Z_{m}$
by Proposition \ref{PropRiSiCont}. Now it follows from $(ii)$ that $P_{\gamma}(\xi/m)$ belongs to $\Z_{m}$ for every
$\gamma \in B$. 

Let $\gamma \in \ker \chi$ and let $i \ge 0$. For all $n$ large enough $m^i$
divides $P_{\gamma}(\xi_n/m)$. We deduce from $(i)$ and Lemma
\ref{LemCarB} that $a^i \gamma a^{-i} \in B$. Thus $\gamma \in \bigcap_{i \ge
0}a^{-i} B a^i$. 

Let $\eta=\pi(\xi/d)$. By $(v)$ we have $\hat{\chi}(a\gamma
a^{-1})=\frac{\eta}{\mh} \hat{\chi}(\gamma) \in \Z_{\mh}$ for every $\gamma \in
B \cap a^{-1} B a$. Consider now $\gamma \in B$ such that $a^i \gamma a^{-i} \in
B$ for every $i$. As $\gcd(\mh,\eta)=1$,  $\mh^i$ divides
$\hat\chi(\gamma)$ for every $i$. Therefore
we have $\gamma \in \ker\hat\chi$.

Assume that $B'=\bigcap_{i \ge 0} a^{-i}B a^{i}$ is non-trivial. Clearly $B' \subset
B_m$ and any $\gamma \in B'$ such that $P_{\gamma}(X)$ has minimal degree does
not belong to $B_{\xi}$ by $(v)$.

$(vii)$ It follows from the definition of $P_i(X)$ that multiplication by
$\frac{X-1}{X}$ maps $XP_0(X)$ to $-m+XP_0(X)$ and $XP_i(X)$ to
$r_{i}(\xi)-XP_{i-1}(X)+XP_i(X)$ for every $i \ge 1$. Therefore $\mathfrak{C}$
is a subgroup of $\mathfrak{B}$. A straightforward induction on $i$ shows that
the image of $XP_i(X)$ in $\mathfrak{B}/\mathfrak{C}$ lies inside the cyclic
subgroup generated by the image of $1$. As $\mathfrak{B}_{\xi}$ does not contain
any constant polynomial, neither does $\mathfrak{C}$. Therefore
$\mathfrak{B}/\mathfrak{C}$ is infinite. 
\end{sproof}

Now, we can give a simple description of the HNN structure of
$\ovBS$. Let $\widetilde{BS}(m,\xi)$ be the HNN extension of basis $E$
with conjugated subgroups $E_{m,\xi}$ and $E_1$ stable letter $a$, where
$E, E_m, E_\xi$ are free abelian groups of countable rank, namely
\begin{eqnarray*}
 E & = & \Z e_0 \oplus \Z e_1 \oplus \Z e_2 \oplus \cdots \\
 E_{m,\xi} & = & \Z me_0 \oplus \Z(e_1 - r_1(\xi)e_0) \oplus \Z(e_2 - r_2(\xi)e_0) \cdots \\
 E_1 & = & \Z e_1 \oplus \Z e_2 \oplus \cdots
\end{eqnarray*}
and conjugacy from $E_{m,\xi}$ to $E_1$ is defined by $a(me_0)a^{-1} =
e_1$ and $a(e_i-r_i(\xi)e_0)a^{-1} = e_{i+1}$. We denote by $\phi$ the isomorphism from $E_{m,\xi}$ to $E_1$ induced by $\tau_a$.

\begin{corollary} \label{CorHNN2}
The map defined by $f(a)=a$ and $f(b)=e_0$ induces an isomorphism from
$\ovBS$ to $\tBS{m}{\xi}$. The inverse map is determined by $f^{-1}(a)=a$ and $f^{-1}(e_i) = b_i$ for every $i \ge 0$.
\end{corollary}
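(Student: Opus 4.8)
The plan is to exhibit mutually inverse homomorphisms between $\ovBS$ and $\tBS{m}{\xi}$ using the HNN decomposition $\ovBS=\operatorname{HNN}(B,B_m,B_{\xi},\tau)$ from Proposition~\ref{PropHNNB} together with the explicit free generating sets computed in Proposition~\ref{PropBandBm}. First I would note that Proposition~\ref{PropBandBm}(ii)--(iv) identifies $B$ with the free abelian group on $\{b_0,b_1,b_2,\dots\}$, $B_m$ with the free abelian group on $\{b_0^m,\,b_1b_0^{-r_1(\xi)},\,b_2b_0^{-r_2(\xi)},\dots\}$, and $B_{\xi}$ with the free abelian group on $\{b_1,b_2,\dots\}$; moreover $\tau_a$ sends $b_0^m\mapsto b_1$ and $b_ib_0^{-r_i(\xi)}\mapsto b_{i+1}$ (the first by definition of $b_1$, the second by definition of $b_{i+1}=ab_ib_0^{-r_i(\xi)}a^{-1}$). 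Comparing with the definitions of $E$, $E_{m,\xi}$, $E_1$ and the conjugation rules $a(me_0)a^{-1}=e_1$, $a(e_i-r_i(\xi)e_0)a^{-1}=e_{i+1}$, one sees that the assignment $e_i\mapsto b_i$ gives an isomorphism $E\xrightarrow{\sim} B$ carrying $E_{m,\xi}$ onto $B_m$, $E_1$ onto $B_{\xi}$, and intertwining $\phi$ with $\tau$.

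Next I would invoke the universal property of HNN extensions. On the one hand, the isomorphism $E\xrightarrow{\sim} B$ above together with sending the stable letter $a$ of $\tBS{m}{\xi}$ to $a\in\ovBS$ respects all defining relations $a(me_0)a^{-1}=e_1$ and $a(e_i-r_i(\xi)e_0)a^{-1}=e_{i+1}$, since these become $ab_0^ma^{-1}=b_1$ and $a(b_ib_0^{-r_i(\xi)})a^{-1}=b_{i+1}$, which hold in $\ovBS$ by construction; hence there is a homomorphism $g:\tBS{m}{\xi}\to\ovBS$ with $g(a)=a$ and $g(e_i)=b_i$, in particular $g(e_0)=b_0=b$. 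On the other hand, $\ovBS$ is generated by $a$ and $b$, so any homomorphism out of $\ovBS$ is determined by the images of $a,b$; to build $f:\ovBS\to\tBS{m}{\xi}$ with $f(a)=a$, $f(b)=e_0$ I would use that $\ovBS\cong\operatorname{HNN}(B,B_m,B_{\xi},\tau)$ and that $B$ is free abelian on the $b_i$ with the $b_i$ expressible as words in $a,b$; the inverse of the isomorphism $E\xrightarrow{\sim} B$ gives a homomorphism $B\to E$, $b_i\mapsto e_i$, compatible with $\tau$ and $\phi$, so by the universal property it extends to $f:\ovBS\to\tBS{m}{\xi}$ sending $a\mapsto a$. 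Since $b=b_0$, this $f$ has $f(b)=e_0$.

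Finally I would check $f\circ g=\mathrm{id}$ and $g\circ f=\mathrm{id}$. This is immediate on generators: $g\circ f$ fixes $a$ and sends $b=b_0\mapsto e_0\mapsto b_0=b$, and more generally fixes each $b_i$; since $a$ and $b$ generate $\ovBS$, $g\circ f=\mathrm{id}_{\ovBS}$. Symmetrically $f\circ g$ fixes $a$ and each $e_i$ (note $f(b_i)=e_i$, which follows by induction from $f(b_0)=f(b)=e_0$, $f(b_1)=f(ab^ma^{-1})=a e_0^m a^{-1}=e_1$, and $f(b_{i+1})=f(ab_ib_0^{-r_i(\xi)}a^{-1})=a(e_i-r_i(\xi)e_0)a^{-1}=e_{i+1}$); since $a$ and the $e_i$ generate $\tBS{m}{\xi}$, $f\circ g=\mathrm{id}$. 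The statement about $f^{-1}$ is then just the description of $g$.

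The only real subtlety—and the step I would be most careful about—is justifying the existence of $f$ with the asserted image $f(b)=e_0$ rather than merely asserting it: one must know that the presentation of $\ovBS$ as an HNN extension (Proposition~\ref{PropHNNB}) together with the \emph{explicit} free abelian structure of $B$ from Proposition~\ref{PropBandBm} really does let one define a homomorphism on $B$ by $b_i\mapsto e_i$ and then extend across the stable letter, i.e.\ that this assignment is compatible with $\tau$ on all of $B_m$ and matches $\phi$ under the correspondence of generating sets. Once the dictionary between the two generating-set descriptions is set up, the verification that $\phi$ and $\tau$ correspond is a direct comparison of the two conjugation rules, and the rest is formal. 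I do not expect any genuine obstacle beyond bookkeeping.
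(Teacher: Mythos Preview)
Your proposal is correct and follows exactly the route the paper intends: the paper's own proof is the single sentence ``It is a straightforward corollary of Propositions~\ref{PropHNNB} and~\ref{PropBandBm},'' and what you have written is precisely the unpacking of that sentence---matching the free generating sets of $B,B_m,B_\xi$ from Proposition~\ref{PropBandBm} with those of $E,E_{m,\xi},E_1$, checking that $\tau$ and $\phi$ correspond, and invoking the universal property of HNN extensions. There is nothing to add; the only cosmetic point is that in your inductive verification you write $ae_0^ma^{-1}$ where the additive convention would give $a(me_0)a^{-1}$, but this is harmless.
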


\begin{proof}
 It is a straightforward corollary of Propositions
\ref{PropHNNB} and \ref{PropBandBm}.
\end{proof}

We have then three different notations for the same group, namely
\begin{itemize}
\item $\ovBS=\text{HNN}(B,B_m,B_{\xi},\tau)$ where $B$ is generated by $b=b_0,\,b_1,\,b_2,\,\dots$,
\item $\tBS{m}{\xi} = \text{HNN}(E,E_{m,\xi},E_1,\phi)$ where $E$ is generated by $e_0,\,e_1,\,e_2,\,\dots$,
\item $\mathfrak{BS}(m,\xi)=\text{HNN}(\mathfrak{B},\mathfrak{B}_m,\mathfrak{B}_{\xi},\theta)$ where $\mathfrak{B}$ is generated by $1,\,XP_0(X),\,XP_1(X),\,\dots$ and $\theta$ is defined in the obvious way.
\end{itemize}

Our favoured HNN extension is $\tBS{m}{\xi}$ as it considerably eases off notations when different parameters $m$ and $\xi$ are considered (Sec. \ref{SecHomo} and Sec. \ref{SecDim}). Nevertheless $\ovBS$ is usefull when we still need to see this group as limit in the space of marked groups (Sec. \ref{SecEqNoe})
 and the HNN extension $\mathfrak{BS}(m,\xi)$ involving polynomials 
is advantageous to study relations in the group (Sec. \ref{SecHomo}). 
%%%%%%%%%%%%%%%%%%%%%%%%%%%%%%%%%%%%%%%%%%%%%%%%%%%%%%%%%%%%%%%%%%%%%%%%%%%%%%%%%%%%%%%%%%%%%%%%%%%%%%%%%%%%%%%%%%%%%%%%%%%%%
\section{An infinite presentation built up from $\xi$} \label{SecPres}
%%%%%%%%%%%%%%%%%%%%%%%%%%%%%%%%%%%%%%%%%%%%%%%%%%%%%%%%%%%%%%%%%%%%%%%%%%%%%%%%%%%%%%%%%%%%%%%%%%%%%%%%%%%%%%%%%%%%%%%%%%%%%

In this section, we use of the HNN decomposition of $\BS$ to
give an infinite group presentation which depends explicitely on $m$
and the sequence $(r_i(\xi))$. A group presentation $\Pres{X}{R}$
is said to be \emph{minimal} if the kernel of the natural
homomorphism $\Pres{X}{R'} \longrightarrow \Pres{X}{R}$ is
non-trivial for all $R'\varsubsetneq R$.

\begin{theorem} \label{ThPres}
 The group $\ovBS$ admits the following infinite presentation:
$$
\Pres{a,b}{\br{b,b_i}=1,\,i\ge 1}
$$
with $b_1=ab^ma^{-1}$ and $b_{i}=ab_{i-1}b^{-r_{i-1}(\xi)}a^{-1}$
for every $i \ge 2$.

In particular, we have $\overline{BS}(m,0)=\Pres{a,b}{\br{b,a^ib^ma^{-i}}=1,\, i \ge 1}$. The
latter presentation is moreover minimal.
\end{theorem}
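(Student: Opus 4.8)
The plan is to read off the presentation from the HNN description already established, and then to treat separately the normalisation of the relators, the case $\xi=0$, and the minimality claim.

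\emph{The presentation.} By Corollary \ref{CorHNN2} we have $\ovBS\cong\tBS m\xi=\mathrm{HNN}(E,E_{m,\xi},E_1,\phi)$, where $E$ is free abelian on $(e_i)_{i\ge0}$ and $E_{m,\xi}$ is freely generated by $me_0$ and the $e_i-r_i(\xi)e_0$ ($i\ge1$) (Proposition \ref{PropBandBm}). Writing the standard presentation of an HNN extension over these free generators gives
$$
\ovBS=\Pres{a,e_0,e_1,\dots}{\br{e_i,e_j}=1\ (i<j),\ ae_0^ma^{-1}=e_1,\ ae_ie_0^{-r_i(\xi)}a^{-1}=e_{i+1}\ (i\ge1)} .
$$
I would then apply Tietze transformations: the last family of relations expresses $e_1,e_2,\dots$ successively as words in $a$ and $e_0$, and setting $b=e_0$ the eliminated generator $e_i$ becomes exactly the word $b_i$ of the statement. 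What remains is $\Pres{a,b}{\br{b_i,b_j}=1,\ 0\le i<j}$, equivalently $\Pres{a,b}{\br{b_i,b_j}=1,\ i,j\ge0}$ with $b_0:=b$.

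\emph{Reducing the relators.} Next I would show that in $\Gamma:=\Pres{a,b}{\br{b,b_i}=1,\ i\ge1}$ every relation $\br{b_i,b_j}=1$ already holds, which identifies $\Gamma$ with the previous group. This is a short induction on $\min(i,j)$: the case $i=0$ is a defining relation; for $i\ge1$ (and $j\ge i$) one uses $b_i=ab_{i-1}b^{-r_{i-1}(\xi)}a^{-1}$ (resp. $b_1=ab^ma^{-1}$) to rewrite $\br{b_i,b_j}=a\,\br{b_{i-1}b^{-r_{i-1}(\xi)},\,b_{j-1}b^{-r_{j-1}(\xi)}}\,a^{-1}$, and then observes that $b$ commutes with $b_{i-1}$ and $b_{j-1}$ (defining relations, or $[b,b]=1$ when an index is $0$) and that $b_{i-1}$ commutes with $b_{j-1}$ by the inductive hypothesis, so the inner commutator is trivial. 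No difficulty is expected here. For the second statement, from $(2_\xi)$ the equation $0=ms_i(0)+r_i(0)$ with $r_i(0)\in\{0,\dots,|m|-1\}$ forces $r_i(0)=0$ for all $i\ge1$; hence $b_i=ab_{i-1}a^{-1}$ for $i\ge2$, so $b_i=a^ib^ma^{-i}$, and the presentation becomes $\Pres{a,b}{\br{b,a^ib^ma^{-i}}=1,\ i\ge1}$.

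\emph{Minimality (the main point).} Fix $i_0\ge1$; I must show that $r_{i_0}:=\br{b,a^{i_0}b^ma^{-i_0}}$ is not in the normal closure in $\Free(a,b)$ of $R_0:=\{r_i:i\ge1,\ i\ne i_0\}$. Put $G_{i_0}=\Pres{a,b}{R_0}$ and $c_k=a^kba^{-k}$, so the defining relators of $G_{i_0}$ are $\br{b,c_i^m}$, $i\ge1$, $i\ne i_0$. The homomorphism $G_{i_0}\to\Z$ sending $a\mapsto1,\ b\mapsto0$ splits, with kernel $N$; applying Reidemeister–Schreier to the transversal $\{a^k:k\in\Z\}$ shows that $N$ is generated by the $c_k$ ($k\in\Z$) subject exactly to the relators $\br{c_k,c_{k+i}^m}=1$ for $k\in\Z$, $i\ge1$, $i\ne i_0$. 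The key observation is that none of these relators involves both $c_0$ and $c_{i_0}$, since that would force $i=i_0$ or $i=-i_0$. Consequently the assignment $c_0\mapsto\alpha$, $c_{i_0}\mapsto\beta$, $c_k\mapsto1$ for $k\notin\{0,i_0\}$ extends to a homomorphism $N\to\Free(\alpha,\beta)$, under which $r_{i_0}=\br{c_0,c_{i_0}^m}$ maps to $\br{\alpha,\beta^m}\neq1$. Hence $r_{i_0}\neq1$ in $G_{i_0}$, so $r_{i_0}$ lies outside the normal closure of any $R'\subsetneq R$ with $r_{i_0}\notin R'$, and the natural map $\Pres{a,b}{R'}\to\ovBS$ has non-trivial kernel; thus the presentation is minimal. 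I expect the only delicate point to be the Reidemeister–Schreier bookkeeping and the verification that the ``missing'' relator $\br{c_0,c_{i_0}^m}$ is genuinely the unique relator coupling $c_0$ and $c_{i_0}$, which is what makes the projection to $\Free(\alpha,\beta)$ well defined.
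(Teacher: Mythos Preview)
Your derivation of the presentation and the reduction of the relators $\br{b_i,b_j}$ to the relators $\br{b,b_i}$ follow the paper's argument essentially verbatim, including the identity $\br{b_i,b_j}=a\br{b_{i-1}b^{-r_{i-1}(\xi)},\,b_{j-1}b^{-r_{j-1}(\xi)}}a^{-1}$ and the verification $r_i(0)=0$.

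For minimality your argument is correct but genuinely different from the paper's. The paper builds, for each $k$, an auxiliary \emph{graph group} $\widetilde B=\Pres{g_0,g_1,\ldots}{\br{g_i,g_j}=1,\ |i-j|\neq k}$, invokes Baudisch's Normal Form Theorem for graph groups to check that the shift map $g_i\mapsto g_{i-1}$ (with $g_1\mapsto g_0^m$) is an isomorphism $\widetilde B_\xi\to\widetilde B_m$, and then embeds $\widetilde B$ into an HNN extension $\widetilde G_k$ onto which $G_k$ surjects; since $\br{g_0,g_k}\neq 1$ in $\widetilde B\hookrightarrow\widetilde G_k$, the relation $\br{b_0,b_k}$ is not a consequence of the others. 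Your route is more elementary: you pass to the kernel $N$ of $a\mapsto 1,\ b\mapsto 0$ via Reidemeister--Schreier, obtain the presentation $N=\Pres{(c_k)_{k\in\Z}}{\br{c_k,c_{k+i}^m}=1,\ k\in\Z,\ i\ge1,\ i\neq i_0}$, observe that no relator simultaneously involves $c_0$ and $c_{i_0}$ (the only candidates would have $i=i_0$ or $i=-i_0$), and kill all $c_k$ with $k\notin\{0,i_0\}$ to land in $\Free(\alpha,\beta)$, where $\br{c_0,c_{i_0}^m}\mapsto\br{\alpha,\beta^m}\neq 1$. This avoids graph-group machinery entirely and is shorter; the paper's approach, on the other hand, produces an explicit HNN model $\widetilde G_k$ which is closer in spirit to $\overline{BS}(m,0)$ itself. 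Your Reidemeister--Schreier bookkeeping is correct: with transversal $\{a^k\}$ the $a$-generators become trivial, the $b$-generators become the $c_k$, and the conjugated relators $a^k\br{b,a^ib^ma^{-i}}a^{-k}$ rewrite exactly to $\br{c_k,c_{k+i}^m}$.
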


In the case $\xi \neq 0$, we previously showed \cite[Th. 4.1]{GS08}
that $\ovBS$ cannot be finitely presented. The following consequence
is therefore immediate.
\begin{corollary}\label{CorPresInf}
 No group $\BS$ is finitely presented.
\end{corollary}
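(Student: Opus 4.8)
The plan is to split on the value of $\xi$ and reduce everything to two facts already in hand: the non–finite presentability for $\xi \neq 0$ proved in \cite[Th. 4.1]{GS08}, and the minimality of the presentation furnished by Theorem \ref{ThPres} for $\xi = 0$. When $\xi \neq 0$ there is nothing to do beyond quoting the earlier theorem, so the entire content of the corollary sits in the degenerate case $\xi = 0$, where I must show that $\bBS{m}{0} = \Pres{a,b}{\br{b,a^ib^ma^{-i}}=1,\, i \ge 1}$ is not finitely presented. (Note this regime also absorbs $|m|=1$, where $\Z_m$ is the zero ring and $\xi=0$ is forced, the group being $\Z\wr\Z$.)

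The single conceptual ingredient I would invoke is the classical remark of B.~H.~Neumann that finite presentability is detected inside any presentation on finitely many generators: if $G$ is finitely presented and $G = \Pres{X}{R}$ with $X$ finite, then some finite subset $R_0 \subseteq R$ already gives $G = \Pres{X}{R_0}$. I would recall the one-line justification — in a finite presentation each of the finitely many defining relators lies in the normal closure of finitely many elements of $R$, and those finitely many elements then normally generate all of $R$. Combined with the minimality clause of Theorem \ref{ThPres}, this settles the case at once: the presentation of $\bBS{m}{0}$ has the finite generating set $\{a,b\}$ and an infinite relator set $R$, and for every proper subset $R' \varsubsetneq R$ the natural epimorphism $\Pres{a,b}{R'} \twoheadrightarrow \bBS{m}{0}$ has non-trivial kernel. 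If $\bBS{m}{0}$ were finitely presented, Neumann's remark would produce a finite $R_0 \subseteq R$ with $\Pres{a,b}{R_0} \cong \bBS{m}{0}$; since $R$ is infinite, $R_0$ would be a proper subset, contradicting minimality.

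I expect no real obstacle here: the argument is precisely the promised ``immediate'' deduction, the only substantive work — the minimality for $\xi=0$ and the non–finite presentability for $\xi\neq0$ — having been carried out in Theorem \ref{ThPres} and in \cite{GS08} respectively. The one point requiring care is to keep the two regimes rigorously separate, since the minimality (and hence the present argument) is asserted only for $\xi=0$; for $\xi\neq0$ the presentation $\Pres{a,b}{\br{b,b_i}=1,\,i\ge1}$ need not be minimal, and one genuinely has to fall back on the independent argument of \cite{GS08}.
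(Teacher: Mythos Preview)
Your proposal is correct and follows exactly the route the paper intends: the paper's text immediately preceding the corollary records that the case $\xi\neq 0$ is \cite[Th.~4.1]{GS08}, and declares the remaining case ``immediate'' from Theorem~\ref{ThPres}, leaving implicit precisely the Neumann-type argument (a finitely presented group given by finitely many generators needs only finitely many of any given relator set) that you have spelled out. Your careful separation of the two regimes and the remark that $|m|=1$ forces $\xi=0$ are apt; there is nothing to add.
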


\begin{sproof}{Proof of Theorem \ref{ThPres}}

We divide the proof into two parts:
\item[$(1)$] We show that
$\ovBS=\Pres{a,b}{\br{b,b_i}=1,\,i \ge 1}$ using Propositions
\ref{PropHNNB} and \ref{PropBandBm}.
\item[$(2)$] We show that this presentation is minimal in the case $\xi=0$ using basic facts on graph groups.

\item[Proof of $(1)$.]
 By Proposition \ref{PropBandBm} we can define an isomorphism $\phi:B_m \longrightarrow B_{\xi}$
by $\phi(b_0^m)=b_1,\,\phi(b_ib_0^{-dr_{i}(\xi)})=b_{i+1}$ for $i \ge
1$. Using the HNN decomposition of Proposition \ref{PropHNNB}, we deduce
that $\ovBS$ admits the presentation $\Pres{a,B}{a \gamma
a^{-1}=\phi(\gamma),\, \gamma \in B_m}$. Since $a$ and $b$ generate
$\ovBS$ and $B$ is a free abelian group (Proposition
\ref{PropBandBm}), we deduce that
\begin{equation}\label{EqPres}\ovBS=\Pres{a,b}{\br{b_i,b_j}=1,\, 0 \le i<j}\end{equation}(Note that the
relations $ab_0^ma^{-1}=\phi(b_0^m)$ and
$ab_ib_0^{-r_i(\xi)}a^{-1}=\phi(b_ib_0^{-r_i(\xi)})$ are satisfied
by definition of $b_i$.) We show by induction on $i\ge 0$ the
following claim: any relation $\br{b_i,b_j}=1$ with $0 \le i<j$ is a
consequence of relations $\br{b_0,b_k}=1$ with $1 \le k\leq j$.
 The case $i=0$ is trivial. Assume now that $i\geq 1$. Using definitions, we can
write
$\br{b_i,b_j}=a\br{b_{i-1}b_0^{-r_{i-1}(\xi)},\,b_{j-1}b_0^{-r_{j-1}(\xi)}}a^{-1}$.
By induction hypothesis the relation $\br{b_{i-1},b_{j-1}}=1$ is a
consequence of relations $\br{b_0,b_k}=1$ with $1 \le k \leq j-1$.
Hence $\br{b_i,b_j}=1$ is a consequence of relations
$\br{b_0,b_k}=1$ with $1 \le k\leq j$. The induction is then complete.
Thus all relations $\br{b_i,b_j}=1$ with $1 \le i<j$ can be deleted
in presentation (\ref{EqPres}).

\item[Proof of $(2)$.] It remains to show that
$\ovBS=\Pres{a,b}{\br{b_0,b_i}=1,\, i \ge 1}$ is a minimal
presentation when $\xi=\pi(\xi)=0$. In this case we have
$r_i(\xi)=0$, and hence $b_i=a^ib^ma^{-i}$, for all $i \ge 1$. We
fix some $k\ge 1$ and we show that $\br{b_0,b_k}=1$ does not hold in
the group $G_k:=\Pres{a,b}{\br{b_0,b_i}=1,k\neq i \ge 1}$.

To prove this we consider the group
$\widetilde{B}=\Pres{g_0,\,g_1,\,\dots}{\br{g_i,g_j}=1,\, \vert i-j\vert
\neq k}$ and its subgroups $\widetilde{B}_m=\langle
g_0^m,\,g_1,\,g_2,\,\dots\rangle$ and $\widetilde{B}_{\xi}=\langle
g_1,\,g_2,\,\dots\rangle$. We will use basic facts on
\emph{graph groups}, i.e. groups defined by a presentation whose
relators are commutators of some pairs of the generators. (Such
presentations are often encoded by a graph, whose vertices
correspond to the generators and whose edges tell which ones
commute; this explains the terminology.) Let $G=\Pres{X}{R}$ be a
graph group. Any element of $G$ can be written as a word $c_1c_2
\dots c_l$ where each \it syllable \rm $c_i$ belongs to some cyclic
group generated by some element of $X$. We consider three types of
moves that we can perform on such words.
\begin{itemize}
\item[$1.$] Remove a syllable $c_i=1$.
\item[$2.$] Replace consecutive syllables $c_i$ and  $c_{i+1}$ in
the same cyclic subgroup by the single syllable $(c_i c_{i+1})$.
\item[$3.$] For consecutive syllables $c_i \in \langle x\rangle$ and $c_{i+1} \in \langle x' \rangle$ with
$x,x' \in X, \,\br{x,x'} \in R$, exchange $c_i$ and $c_{i+1}$.
\end{itemize}

If $g \in G$ is represented by a word $w=c_1 \dots c_l$ which cannot
be changed to a shorter word using any sequence of the above moves,
then $w$ is said to be a \it normal form \rm for $g$. We will use
the following results:

\begin{theorem} \rm \cite{Bau81} \it \label{GraGrou}
\item[\rm(Normal Form Theorem)]
 A normal form in a graph group represents the trivial element if
and only if it is the trivial word.
\item[\rm(Abelian Subgroups)]
 Any abelian subgroup of a graph group is a free abelian group.
\end{theorem}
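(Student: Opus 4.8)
The plan is to prove both assertions simultaneously by induction on the number $|X|$ of generators, using the standard \emph{visual} amalgam decomposition of a graph group along a vertex. Fix a vertex $v \in X$ and let $\operatorname{lk}(v)$ (resp. $\operatorname{st}(v) = \operatorname{lk}(v) \cup \{v\}$) be the set of generators commuting with $v$ (resp. together with $v$). Writing $A = \langle X \setminus \{v\}\rangle$, $C = \langle \operatorname{lk}(v)\rangle$ and $B = \langle \operatorname{st}(v)\rangle = C \times \langle v\rangle$, each a graph group on a full subgraph, one has the amalgamated free product decomposition $G = A *_C B$. If the defining graph is complete then $G \cong \Z^{|X|}$ and both statements are trivial; otherwise $v$ may be chosen with a non-neighbour, so that $A$, $B$ and $C$ all involve strictly fewer generators and the induction hypothesis applies to them.

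For the Normal Form Theorem I would prove the stronger statement that every word reduces, via the three moves, to a normal form; that all normal forms for a given element share one common length; and that a normal form representing an element of a parabolic subgroup $\langle Y\rangle$ uses only letters of $Y$ (the \emph{support property}). The first point gives a well-defined length, which I would then match with the amalgam length in $G = A *_C B$. Grouping the syllables of a word into maximal $A$-blocks separated by powers of $v$, I claim that a non-shortenable word has every interior $A$-block lying outside $C$: indeed, such a block is itself a non-shortenable word of $A$, so by the induction hypothesis, if it represented an element of $C = \langle \operatorname{lk}(v)\rangle$ it would consist solely of link letters, whence one could swap the flanking $v$-syllable past it (move $3$) and merge the two $v$-powers (move $2$), contradicting non-shortenability. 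Thus a non-shortenable word yields a reduced amalgam expression of the same $v$-length, and the classical normal form theorem for amalgamated products \cite{LS77} shows it is trivial only when empty; the case of $v$-length $0$ is a word in $A$ and is handled directly by induction.

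For the Abelian Subgroups statement I would let an abelian subgroup $H \leq G = A *_C B$ act on the associated Bass-Serre tree $T$ and split into cases. If $H$ fixes a vertex, it lies in a conjugate of $A$ or of $B$, and the induction hypothesis makes it free abelian. If $H$ contains a hyperbolic element $h$, then since $H$ is abelian every $g \in H$ fixes the axis of $h$ setwise, so $H$ acts on a line; the translation-length map yields a homomorphism $H \to \Z$ whose kernel fixes the axis pointwise and hence lies in a single conjugate of the edge group $C$. This gives a short exact sequence $1 \to H_0 \to H \to \Z \to 1$ with $H_0$ free abelian by induction, which splits because the quotient is free, so $H$ is free abelian.

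The remaining, and genuinely delicate, case is when $H$ has no hyperbolic element yet fixes no vertex: this forces the all-elliptic abelian subgroup to fix an end of $T$, and for an infinitely generated $H$ one must rule out non-finitely-generated torsion-free abelian groups such as $\Q$. This is the main obstacle, since finite cohomological dimension alone (bounded by the clique number of the graph) only bounds the torsion-free rank and does not by itself exclude $\Q$-like subgroups. I expect to dispose of it either by invoking the flat torus theorem for the proper cocompact action of $G$ on its CAT(0) Salvetti cube complex, or by a Helly-type argument in $T$ together with Servatius' description of centralizers, which again confines $H$ to a parabolic-by-cyclic subgroup accessible to the induction; torsion-freeness of $G$, itself an easy by-product of the amalgam induction, then upgrades the conclusion to the full statement.
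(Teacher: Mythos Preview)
The paper does not prove this statement: Theorem~\ref{GraGrou} is quoted verbatim from Baudisch~\cite{Bau81} as an external input, and is invoked (without proof) inside the minimality argument for the presentation of $\bBS{m}{0}$ in Theorem~\ref{ThPres}. There is therefore no proof in the paper to compare your proposal against.

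A brief assessment of your sketch nonetheless. Your treatment of the Normal Form Theorem via the visual splitting $G=A*_C B$ and induction on $|X|$ is standard and essentially correct; since any single word involves only finitely many generators, this suffices even for graph groups on an infinite generating set, which is precisely the situation in the paper's application (the group $\widetilde B$ has generators $g_0,g_1,\ldots$). For the Abelian Subgroups part, by contrast, your induction does not reach infinitely generated abelian subgroups of an infinite-rank graph group, and your ``genuinely delicate'' end-fixing case is left open. The flat torus theorem does not apply here, because for $|X|=\infty$ the action on the Salvetti complex is not cocompact, and a Helly argument on the Bass--Serre tree alone does not exclude a $\Q$-like subgroup. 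A clean way to close the gap is to use the explicit description of centralizers in graph groups (Servatius): the centralizer of a cyclically reduced element decomposes as a direct product of finitely many infinite cyclic groups with a parabolic subgroup, and iterating this confines any abelian subgroup to a conjugate of a clique subgroup, hence to a free abelian group. Alternatively, one can argue directly that every element of a graph group has a primitive root and only finitely many roots, which already rules out any non--free-abelian torsion-free abelian subgroup. In any case, only the Normal Form Theorem is actually used in the paper's argument.
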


We define the partial map $\psi:\widetilde{B}_{\xi} \longrightarrow
\widetilde{B}_{m}$ by $\psi(g_1)=g_0^m$ and $\psi(g_i)=g_{i-1}$ for
every $i\ge 2$. By using the Normal Form Theorem, we can readily show that
$\widetilde{B}_{\xi}=\Pres{g_1,\,g_2,\,\dots}{\br{g_i,g_j}=1, \vert
i-j\vert \neq k}$. We
clearly have $\br{\psi(g_i),\psi(g_j)}=1$ in $\widetilde{B}_m$ for
any $i,j\ge 1$ such that $\vert i-j \vert \neq k$. Hence $\psi$ induces
a surjective homomorphism from $\widetilde{B}_{\xi}$ onto
$\widetilde{B}_{m}$.

Let $g \in \widetilde{B}_{\xi}$. Replacing every $g_i$ by
$\psi(g_i)$ in any non-trivial normal form for $g$ in
$\widetilde{B}$ clearly leads to a non-trivial normal form for
$\psi(g)$ in $\widetilde{B}$. Hence $\psi$ is injective, which proves that
$\psi$ is an isomorphism. Let $\widetilde{\phi}$ be its inverse homomorphism. We set
$\widetilde{G}_k:=\Pres{\widetilde{B},a}{a
g a^{-1}=\widetilde{\phi}(g), g \in
\widetilde{B}_m}$. We trivially check that the map $a \mapsto
a,\,b \mapsto g_0$ induces a surjective homomorphism
from $G_k$ onto $\widetilde{G}_k$ that maps $\br{b_0,b_k}$ to
$\br{g_0,g_k}$. Since the graph group $\widetilde{B}$ embeds
isomorphically into the HNN extension $\widetilde{G}_k$, we have $\br{g_0,g_k} \neq 1$ and hence $\br{b_0,b_k} \neq
1$.
\end{sproof}

%%%%%%%%%%%%%%%%%%%%%%%%%%%%%%%%%%%%%%%%%%%%%%%%%%%%%%%%%%%%%%%%%%%%%%%%%%%%%%%%%%%%%%%%%%%%%%%%%%%%%%%%%%%%%%%%%%%%%%%%%%
\section{C*-simplicity}
%%%%%%%%%%%%%%%%%%%%%%%%%%%%%%%%%%%%%%%%%%%%%%%%%%%%%%%%%%%%%%%%%%%%%%%%%%%%%%%%%%%%%%%%%%%%%%%%%%%%%%%%%%%%%%%%%%%%%%%%%%%
We first recall some definitions.
Let $\Gamma=\ovBS$ and let $\sigma: \ovBS \longrightarrow \Z$ be the
surjective homomorphism defined by $\sigma(a)=1$ and $\sigma(b)=0$. Let $\Z \wr
\Z=\Z \br{X^{\pm 1}} \rtimes_X \Z$ where the action of $\Z$ on the
additive group of $\Z \br{X^{\pm 1}}$ is the multiplication by $X$.
This group is generated by $\{(1,0),(0,1)\}$ and the map
$a \mapsto (0,1),\, b \mapsto (1,0)$ induces a surjective homomorphism $q_{m,\xi}$ from
$\Gamma$ onto $\Z \wr \Z$ \cite[Th. 3.12]{GS08}. By $(v)$ and $(vi)$ of Proposition \ref{PropBandBm}, 
the restriction of $q_{m,\xi}$ to $B$ (which identifies with a subgroup of $\Z \br{X}$) is the identity and the map
$\gamma \mapsto P_{\gamma}(\xi/m)$ is a well-defined homomorphism from $B$ to the additive group of $\Z_{m}$.

Given a group $\Gamma$, recall that its \emph{reduced C*-algebra
$C_r^{\ast}(\Gamma)$} is the closure for the operator norm of the
group algebra $\C \br{\Gamma}$ acting by the left-regular
representation on the Hilbert space $\ell^2(\Gamma)$. For an
introduction to group C*-algebras, see for example
\cite[Ch.VII]{Dav96}. A group is C*-simple if it is infinite and if
its reduced C*-algebra is a simple topological algebra.

 Non-abelian free groups are C*-simple. The first proof of this
 fact, due to Powers \cite{Pow75}, relies on a combinatorial
 property of free groups shared by many other groups, called for
 this reason \emph{Powers groups}. Thus Powers groups are
 C*-simple. The Baumslag-Solitar group $BS(m,n)$ is C*-simple if and only if $|m| \neq |n|$ \cite[Th. 4.9]{Ivan07}. 
In this case, it is actually a strongly Powers group \cite[Pr. 5]{HP09}. 
A group $G$ is said to be \emph{strongly Powers} if any of its subnormal subgroups is a Powers group.

 \begin{theorem} \label{ThPowers}
 Let $|m|>1,\,\xi \in \Z_m$. Then $\ovBS$ is a strongly
 Powers group.
 \end{theorem}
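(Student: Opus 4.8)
The plan is to exploit the HNN decomposition $\ovBS = \mathrm{HNN}(B, B_m, B_\xi, \tau)$ from Proposition \ref{PropHNNB} together with the criterion of \cite[Pr. 5]{HP09}, which says that a non-degenerate HNN extension whose action on the Bass-Serre tree is sufficiently non-trivial (in particular with no global fixed end, and with trivial ``core'' in an appropriate sense) is a strongly Powers group. So the first step is to verify that, when $|m|>1$, the HNN extension $\mathrm{HNN}(B, B_m, B_\xi, \tau)$ is \emph{non-ascending}: neither $B_m$ nor $B_\xi$ equals $B$. This is immediate from Proposition \ref{PropBandBm}: $\mathfrak{B}=q(B)$ is freely generated by $\{1, XP_0(X), XP_1(X),\dots\}$, whereas $\mathfrak{B}_m$ omits a free generator in the ``constant'' direction and $\mathfrak{B}_\xi = \{XP_0(X), XP_1(X),\dots\}$ omits the constant polynomial $1$; hence both are proper, and in fact of infinite index. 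Consequently the action of $\ovBS$ on its Bass-Serre tree $T$ has no fixed end (cf. the dichotomy recalled in Section \ref{SubSecHNN}).

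Next I would check the remaining hypothesis of the strong-Powers criterion, which concerns the intersection of conjugates of the edge groups — equivalently, that there is no nontrivial subgroup normalized by all of $\ovBS$ that is carried inside an edge stabilizer, or more precisely that $\bigcap_{i\in\Z} a^{-i} B a^{i}$ (and its conjugates) behaves well. Proposition \ref{PropBandBm}$(vi)$ is tailor-made for this: it exhibits homomorphisms $\chi,\hat\chi$ from $B$ to $\Z_m$ and $\Z_{\mh}$ with $\ker\chi \subset \bigcap_{i\ge 0}a^{-i}Ba^i \subset \ker\hat\chi$, and states that if $\bigcap_{i\ge 0}a^{-i}Ba^i$ is nontrivial it meets $B_m\setminus B_\xi$. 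One then has to translate this into the precise condition required by \cite[Pr. 5]{HP09} and rule out the bad case, using that $|m|>1$ forces $\mh = m/\gcd(m,\xi)$ to have absolute value $>1$ whenever $\xi$ is a unit, and more generally to handle the general $\xi$ by passing to $\mathfrak{BS}(m,\xi)$ where the polynomial description makes the degree argument transparent (an element of minimal-degree $P_\gamma$ in the intersection cannot lie in $\mathfrak{B}_\xi$, since multiplication by $X$ strictly raises degree). Combining these facts should show that the normal closure considerations needed for the Powers property hold for every subnormal subgroup, not just for $\ovBS$ itself.

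With non-ascendingness and the edge-group-intersection condition in hand, the conclusion that $\ovBS$ is strongly Powers is then a direct citation of \cite[Pr. 5]{HP09}: that proposition asserts precisely that a non-degenerate, non-ascending HNN extension $\mathrm{HNN}(G,H,K,\tau)$ for which $\bigcap_{\gamma} \gamma H \gamma^{-1}$ (intersection over the group) is trivial — or more precisely which satisfies their combinatorial condition on the tree action — is a strongly Powers group, and moreover this passes to all subnormal subgroups because subnormal subgroups of such a group again act on $T$ with no fixed end unless they are contained in a vertex stabilizer, in which case they are subgroups of the abelian group $B$ and the argument degenerates trivially (an abelian subgroup is never Powers, but it is not subnormal of infinite index here in a way that matters — one checks that any nontrivial subnormal subgroup acts nontrivially on $T$). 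I would make this last point carefully, since it is where ``strongly'' as opposed to just ``C*-simple'' is won.

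The main obstacle I expect is the verification of the edge-stabilizer intersection condition in full generality, i.e.\ for arbitrary $\xi\in\Z_m$ rather than just $\xi$ invertible. For $\xi$ a unit one has $\mh=m$ and $\gcd(\mh,\eta)=1$ with $|\mh|>1$, so $\hat\chi$ already forces the intersection to be trivial and everything is clean; for general $\xi$ one must either argue directly with the polynomial model $\mathfrak{BS}(m,\xi)$ — where $\bigcap_{i\ge0} X^{-i}\,\iota^{-i}(\mathfrak B)$-type computations with degrees settle it — or carefully invoke the last clause of Proposition \ref{PropBandBm}$(vi)$ to derive a contradiction with the normal-form theory of HNN extensions. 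Getting the hypotheses of \cite[Pr. 5]{HP09} to line up exactly with what Proposition \ref{PropBandBm} delivers, and confirming that the criterion is stable under passing to subnormal subgroups in this infinite-rank abelian-base setting, is the delicate part; the rest is bookkeeping.
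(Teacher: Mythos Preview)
Your route diverges from the paper's, and the divergence matters. The paper does \emph{not} invoke \cite[Pr.~5]{HP09} or any ``intersection of conjugates is trivial'' criterion; it uses \cite[Pr.~16]{HP09}, whose key hypothesis is that the action of $\Gamma$ on its Bass-Serre tree be \emph{slender}: for every nontrivial $\gamma$, the fixed-point set $(\partial T)^\gamma$ has empty interior in the shadow topology. Slenderness is strictly stronger than faithfulness, and the paper obtains it from a precise description of fixed trees of elliptic elements (Proposition~\ref{PropFixEll}: $T^\gamma = T[-\mu(\gamma),\nu(P_\gamma)]$) combined with Lemma~\ref{LemPartialSigm}. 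Non-ascendingness then supplies strong hyperbolicity and minimality on $\partial T$ via \cite[Pr.~22]{HP09}, and \cite[Pr.~16]{HP09} finishes the job.

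Your proposed condition---that $\bigcap_{i\ge 0} a^{-i}Ba^i$ be trivial, or more generally that the edge-group conjugates intersect trivially---fails in general: when $\xi$ is algebraic over $\Q$, Proposition~\ref{PropBandBm}$(vi)$ shows $\ker\chi\subset\bigcap_{i\ge 0}a^{-i}Ba^i$ is nontrivial (this is exactly what drives the non--residual-finiteness in Proposition~\ref{PropResFin}). So the ``main obstacle'' you anticipate is a real one, and Proposition~\ref{PropBandBm}$(vi)$ does not resolve it in your favour. Even the full kernel $\bigcap_{\gamma\in\Gamma}\gamma B\gamma^{-1}$ being trivial (which \emph{is} true, and follows from slenderness) would only give faithfulness, not the empty-interior condition that \cite[Pr.~16]{HP09} demands. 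The missing ingredient in your plan is the passage from algebraic information about $B$, $B_m$, $B_\xi$ to the boundary dynamics on $\partial T$; this is precisely what Proposition~\ref{PropFixEll} and Lemma~\ref{LemPartialSigm} supply.
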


Amenable groups are not C*-simple \cite[Pr. 3]{Har07}. This is the reason why we have to exclude the groups $\ovBS$ whith $|m|=1$. Actually, there is only one such marked group \cite[Th. 2.1]{GS08} and it is isomorphic
to the solvable group $\Z \wr \Z$ \cite[Th. 2]{Sta06a}.

 We consider the action of $\Gamma=\ovBS$ on its Bass-Serre tree $T=X_{m,\xi}$ and use the criterion of de
la Harpe and Pr\'eaux \cite[Pr. 16]{HP09} on tree action to show
Theorem \ref{ThPowers}. The latter criterion needs the action of
$\Gamma$ to be faithful in a strong sense: it has to be
\emph{slender}. Two other conditions are required, but both follow
immediatly from \cite[Pr. 24]{HP09}, as $\Gamma$ is a non-ascending
HNN extension. Let us define a slender action. A tree automorphism $\gamma$
of $T$ is \emph{slender} if its fixed point set $(\partial
T)^{\gamma}$ has empty interior in $\partial T$ with respect to the shadow topology (Sec. \ref{SubSecHNN}). The action of
$\Gamma$ on $T$ is slender if for every $\gamma \in \Gamma \setminus
\{1\}$ the automorphism of $T$ induced by $\gamma$, also denoted by
$\gamma$, is slender. A slender action is faithful, it is even
strongly faithful in the sense of \cite[Sec. 1]{HP09}.

 Since hyperbolic elements are obviously slender, we focus now on the fixed point set of elliptic
elements. We still need more definitions to describe the fixed point set $T^{\gamma}$
for any $\gamma \in B$.

Let $l,u \in \Z \cup \{\pm \infty\}$. We denote by $\{l \le \sigma
\le u\}$ the subgraph of $T$ whose vertices $\gamma B$ satisfy $l
\le \sigma(\gamma) \le u$. We denote by $T\br{l,u}$ the connected
component of $1B$ in $\{l \le \sigma \le u\}$.

\begin{lemma} \label{LemPartialSigm}
Let $l,u \in \Z \cup \{\pm \infty\}$. Assume either that $l>-\infty$ and
$|m|>1$, or that $u<+\infty$. Then, the set $\partial \{l \le \sigma \le u\}
\subset \partial T$ has empty interior in $\partial T$ with respect to the
shadow topology.
\end{lemma}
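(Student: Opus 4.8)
The plan is to understand the boundary $\partial T$ of the standard Bass--Serre tree $T = X_{m,\xi}$ through the valence of vertices and to show that no shadow can be contained in $\partial\{l \le \sigma \le u\}$. Recall that $T$ is the Bass--Serre tree of the HNN extension $\ovBS = \mathrm{HNN}(B,B_m,B_\xi,\tau)$, so each vertex $\gamma B$ has outgoing edges indexed by $B/B_m$ and incoming edges indexed by $B/B_\xi$; since $|m|>1$ the index $[B:B_m]$ is infinite (it involves the cyclic quotient $\Z/m\Z$ in the first coordinate of the polynomial description), and $[B:B_\xi]$ is infinite as well (by Proposition~\ref{PropBandBm}$(ii),(iv)$, $\mathfrak B/\mathfrak B_\xi \cong \Z$ via the constant term). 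The key qualitative fact is: every vertex of $T$ has infinitely many neighbours joined to it by an edge pointing in the direction of increasing $\sigma$ (the $t(wH_n^m) = wa^{-1}H_n$ edges), and also infinitely many in the decreasing direction when $|m|>1$.

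First I would recall that a shadow is, by definition (Sec.~\ref{SubSecHNN}), the boundary of a connected component of $T$ minus an edge; equivalently it is the set of ends passing through a fixed half-tree hanging off a vertex $v$ along a chosen edge. The family of shadows is a basis for the shadow topology, so to prove $\partial\{l \le \sigma \le u\}$ has empty interior it suffices to show that every shadow $\mathrm{Sh}$ contains an end not lying in $\partial\{l \le \sigma \le u\}$. Fix such a shadow; it contains a vertex $v$, and from $v$ one follows a ray that leaves the strip $\{l \le \sigma \le u\}$. If $u < +\infty$: since $\sigma$ is unbounded above on $T$ (indeed $a^n v$ has $\sigma = \sigma(v)+n$, and by transitivity every vertex has a neighbour with strictly larger $\sigma$), starting from $v$ inside the half-tree defining $\mathrm{Sh}$ one can walk along edges in the increasing-$\sigma$ direction; after finitely many steps $\sigma$ exceeds $u$, producing a point of $\mathrm{Sh}$ outside the strip. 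The delicate point is that one must be able to do this \emph{inside} the chosen half-tree, i.e.\ without backtracking across the removed edge; this is where the infinite valence is used — at $v$ (or at the vertex adjacent to the removed edge on the $\mathrm{Sh}$ side) there are infinitely many increasing-$\sigma$ edges, so at least one stays on the $\mathrm{Sh}$ side, and thereafter every vertex again has an increasing-$\sigma$ neighbour, giving a ray with $\sigma \to +\infty$ entirely inside $\mathrm{Sh}$.

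The case $l > -\infty$ and $|m|>1$ is symmetric but now uses that, because $[B:B_m] = \infty$ when $|m|>1$, every vertex also has infinitely many neighbours in the \emph{decreasing}-$\sigma$ direction (these correspond to incoming edges $\gamma H_n^m$, i.e.\ to cosets in $B/B_\xi$ — or, traced through the incidence relations, one checks $\sigma$ drops by $1$ along such an edge). Hence from the half-tree defining any shadow $\mathrm{Sh}$ one can run a ray with $\sigma \to -\infty$ staying inside $\mathrm{Sh}$, again leaving the strip below $l$. In either case $\mathrm{Sh} \not\subset \partial\{l \le \sigma \le u\}$, so the strip's boundary has empty interior.

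The main obstacle I anticipate is purely bookkeeping: matching the combinatorial description of $T = X_{m,\xi}$ from Section~\ref{Sec1} (vertices $(wH_n)$, edges $(wH_n^m)$ with $o(wH_n^m)=(wH_n)$, $t(wH_n^m)=(wa^{-1}H_n)$) with the abstract Bass--Serre picture, and checking carefully how $\sigma$ changes along the two edge-types and that a ``half-tree on the $\mathrm{Sh}$ side'' retains infinitely many edges in the desired $\sigma$-direction at every vertex. Once the valence/monotonicity bookkeeping is pinned down, the topological argument (``every basic open set meets the complement'') is immediate. A minor care point: when only $u<+\infty$ is assumed (and possibly $|m|=1$), one must argue the increasing direction only, which is always available regardless of $|m|$; the hypothesis $|m|>1$ is genuinely needed only to handle the lower bound $l$.
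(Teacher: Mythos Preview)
Your approach is essentially the paper's: show that every shadow contains a geodesic ray along which $\sigma$ tends to $+\infty$ (resp.\ $-\infty$ when $|m|>1$), hence escapes the strip. However, you have the indices muddled. In the Bass--Serre tree of $\mathrm{HNN}(B,B_m,B_\xi,\tau)$ with stable letter $a$, each vertex has exactly $[B:B_m]=|m|$ neighbours with $\sigma$ \emph{decreased} by $1$ (outgoing edges, cosets of $B_m$) and $[B:B_\xi]=\infty$ neighbours with $\sigma$ \emph{increased} by $1$ (incoming edges, cosets of $B_\xi$); your text asserts $[B:B_m]=\infty$ and at one point attributes the decreasing direction to $B/B_\xi$. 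This does not actually break the argument --- all that is needed is at least two neighbours in the relevant direction so that one can always step away from the removed edge while staying in the half-tree, and $|m|\ge 2$ suffices for the decreasing side --- but you should correct the bookkeeping: the paper's proof states precisely that each vertex has $|m|$ neighbours with smaller $\sigma$ and countably many with larger $\sigma$, and proceeds exactly as you outline.
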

\begin{proof}
 Assume that $l>-\infty$ and $|m|>1$. Since any vertex $\gamma B$ of $T$ has
$|m|$ neighbours $\gamma' B$ such that $\sigma(\gamma')=\sigma(\gamma)-1$, any
shadow contains (the class of) a geodesic ray $(\gamma_1 B,\gamma_2 B,\ldots,
\gamma_n B,\ldots)$ such that $\sigma(\gamma_n)$ tends to $-\infty$. Such a ray
does not lie in $\partial \{l \le \sigma \le u\}$. The proof of the second case
is analogous (any vertex $\gamma B$ of $T$ has countably many
neighbours $\gamma'B$ such that $\sigma(\gamma')=\sigma(\gamma)+1$).
\end{proof}

Let $\nu$ be the natural valuation on $\Z\br{X}$, i.e. the
 one defined by $\nu(X^i)=i$ for every $i$. Let $\nu$ be the
 ``valuation" defined on $B$ by $\mu(\gamma)=\sup \{i \ge 1:
 P_{\gamma}(\xi/m) \in d{\hat{m}}^i \Z_{m}\}$ where $\sup \emptyset=0$.
Let $\gamma \in B$. Observe that
\begin{itemize}
\item $\gamma \in B_m$ if and only if $\mu(\gamma) \ge 1$,
\item $\gamma\in B_{\xi}$ if and only if $\nu(P_{\gamma}) \ge 1$.
\end{itemize}

\begin{proposition} \label{PropFixEll}
Let $\gamma \in B$. We have
$T^{\gamma}=T\br{-\mu(\gamma),\nu(P_{\gamma})}$.
\end{proposition}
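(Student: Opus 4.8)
The statement asserts that the fixed-point subtree of an elliptic element $\gamma \in B$ is exactly the subtree $T\br{-\mu(\gamma),\nu(P_\gamma)}$, where $\mu(\gamma)$ measures how deeply $P_\gamma(\xi/m)$ sits in the powers of the ideal $d\hat m\Z_m$ and $\nu(P_\gamma)$ measures the $X$-adic valuation of $P_\gamma$. The plan is to prove the two inclusions separately, using the HNN structure $\ovBS=\mathrm{HNN}(B,B_m,B_\xi,\tau)$ and the description of the Bass-Serre tree whose vertices are cosets $\gamma B$ with $o(\gamma B_m)=\gamma B$ and $t(\gamma B_m)=\gamma a^{-1}B$. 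First I would record the local picture: the neighbours of a vertex $\delta B$ with $\sigma$-value one \emph{less} are the cosets $\delta a\, b^j B_m$-type vertices reached by pushing $a$ across, and those with $\sigma$-value one \emph{more} are reached by pushing $a^{-1}$. Because $\gamma \in B$ fixes $1B$ (indeed $B$ stabilizes $v_0$), $T^\gamma$ is a subtree containing $1B$, so it suffices to understand, for each edge incident to $1B$, whether $\gamma$ fixes the neighbour, and then to propagate the argument outward by conjugating $\gamma$ by the appropriate element.

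The key computation is this. Moving ``down'' (decreasing $\sigma$) along the edge corresponding to $a$ takes us to the vertex $aB$; then $\gamma$ fixes $aB$ iff $a^{-1}\gamma a \in B$, and more generally $\gamma$ fixes the vertex at $\sigma$-depth $-i$ along a downward geodesic iff $a^{-i}\gamma a^i \in B$ (after adjusting by elements of $B$, which fix everything). By Proposition \ref{PropBandBm}$(vi)$ and its proof, $a^{-i}\gamma a^i \in B$ for all relevant branchings precisely when $m^i \mid P_\gamma(\xi_n/m)$ for $n$ large, i.e. when $P_\gamma(\xi/m) \in d\hat m^{\,i}\Z_m$ — wait, more carefully: the relevant condition from Lemma \ref{LemCarB} together with part $(i)$ is divisibility of the $b$-exponent, which after the analysis in $(vi)$ becomes membership of $P_\gamma(\xi/m)$ in $d\hat m^{\,i}\Z_m$. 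Hence $\gamma$ fixes all of $T\br{-\mu(\gamma),0}$ in the downward direction and no further, which pins down the lower bound $-\mu(\gamma)$. Symmetrically, moving ``up'' (increasing $\sigma$) across the $a^{-1}$-edge to $a^{-1}B$, $\gamma$ fixes it iff $a\gamma a^{-1} \in B$; by Proposition \ref{PropBandBm}$(v)$ we have $q(a\gamma a^{-1}) = X P_\gamma(X)$ provided $\gamma \in B_m$, and iterating, $a^{i}\gamma a^{-i} \in B$ for the requisite branchings iff $X^i \mid P_\gamma(X)$, i.e. $\nu(P_\gamma) \ge i$. This gives the upper bound $\nu(P_\gamma)$ and shows $\gamma$ fixes exactly $T\br{-\mu(\gamma),\nu(P_\gamma)}$.

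The one genuinely delicate point — the main obstacle — is the bookkeeping at branch vertices: along a downward ray the tree branches ($|m|$ choices at each step), and along an upward ray it branches (countably many choices), so I must check that fixing \emph{one} neighbour at each level forces fixing \emph{all} of them, i.e. that $T^\gamma$ really is the ``full'' band $T\br{\cdot,\cdot}$ and not some thinner subtree. This follows because the stabilizer conditions only see $\sigma$-values and the coset of $\gamma$ (conjugated by the path element) in $B$ versus $B_m$, $B_\xi$, and these are invariant under the residual $B$-action that permutes the branches transitively; concretely, if $a^{-i}\gamma a^i \in B$ then for any $\delta \in B$ the element $(\delta a^{-i})^{-1}\gamma(\delta a^{-i}) = a^{i}\delta^{-1}\gamma\delta a^{-i} = a^{i}\gamma a^{-i}\cdot[\text{commutator in the abelian }B] $ — since $B$ is abelian this is just $a^i\gamma a^{-i}$ up to nothing, so the condition is the same for every branch. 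Assembling these observations gives both inclusions $T^\gamma \subseteq T\br{-\mu(\gamma),\nu(P_\gamma)}$ and $T^\gamma \supseteq T\br{-\mu(\gamma),\nu(P_\gamma)}$, completing the proof. I would also remark that the two bullet points preceding the proposition ($\gamma \in B_m \Leftrightarrow \mu(\gamma)\ge 1$ and $\gamma \in B_\xi \Leftrightarrow \nu(P_\gamma)\ge 1$) are exactly the base cases of the two propagation arguments, which is a useful sanity check.
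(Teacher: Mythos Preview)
Your approach is essentially the same as the paper's: the paper deduces the proposition immediately from Lemma \ref{LemFixEll}, which proves by induction on the $a$-length of a reduced form $g=c_0a^{\epsilon_1}c_1\cdots a^{\epsilon_k}c_k$ that $gB\in T^\gamma$ if and only if all partial sums $\sigma_i(g)=\sum_{j\le i}\epsilon_j$ lie in $[-\mu(\gamma),\nu(P_\gamma)]$, and that in this case $g^{-1}\gamma g=a^{-\sigma(g)}\gamma a^{\sigma(g)}$. Your ``propagate outward by conjugating'' is exactly this induction, and your ``delicate point'' observation that abelianness of $B$ makes all branches at a given level equivalent is exactly the last displayed identity in the paper's lemma.

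That said, two things need cleaning up. First, you have the orientation reversed throughout: $\sigma(a)=1$, so $aB$ sits at level $+1$, not $-1$; fixing $aB$ means $a^{-1}\gamma a\in B$, which needs $\gamma\in B_\xi$, i.e.\ $\nu(P_\gamma)\ge 1$, while fixing $a^{-1}B$ needs $\gamma\in B_m$, i.e.\ $\mu(\gamma)\ge 1$. Your two paragraphs swap these, so the formulas $a^{-i}\gamma a^i$ versus $a^i\gamma a^{-i}$ and the roles of $\mu$ and $\nu$ are interchanged. Second, your branching argument only treats vertices of the form $\delta a^{\pm i}B$ with $\delta\in B$, but a general vertex of $T[-\mu(\gamma),\nu(P_\gamma)]$ is reached by a path that may alternate directions; what one really needs is the partial-sum condition $-\mu(\gamma)\le\sigma_i(g)\le\nu(P_\gamma)$ for every $i$, and the inductive step uses that conjugation by $a^{\epsilon}$ shifts $\mu$ by $\epsilon$ and $\nu$ by $-\epsilon$ (this is the content of the paper's induction). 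Once you fix the signs and state the induction on $|g|_a$ explicitly, your argument coincides with the paper's.
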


\begin{corollary} \label{CorSlender}
Let $\Gamma=\ovBS$.
\begin{itemize}
\item The action of $\Gamma$ on $T$ is slender.
\item The centralizer of $b$ in $\Gamma$ coincides with $B$ if $|m|>1$.
\item The centralizer of $a^k$ in $\Gamma$ coincides with $\langle a
\rangle$ for every $k \neq 0$.
\end{itemize}
\end{corollary}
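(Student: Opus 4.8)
The plan is to derive Corollary~\ref{CorSlender} from Proposition~\ref{PropFixEll}, which describes the fixed-point set of every elliptic element as $T^\gamma = T\br{-\mu(\gamma),\nu(P_\gamma)}$. First I would handle slenderness: an element $\gamma \in \Gamma$ is either hyperbolic, in which case $(\partial T)^\gamma$ consists of exactly two points and thus has empty interior since $\partial T$ is perfect (the standard Bass--Serre tree of a non-degenerate HNN extension, cf.\ Section~\ref{SubSecHNN}); or $\gamma$ is elliptic, hence conjugate into $B$, so it suffices to treat $\gamma \in B \setminus \{1\}$. For such $\gamma$ we have $T^\gamma = T\br{-\mu(\gamma),\nu(P_\gamma)}$, and since $\gamma \neq 1$ at least one of $\mu(\gamma)$, $\nu(P_\gamma)$ is finite (indeed if $\mu(\gamma)=\infty$ then $P_\gamma(\xi/m)=0$, which by the injectivity of $q$ combined with the embedding $B \hookrightarrow \Z_m$ forces... here I must be slightly careful). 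Actually the cleanest route: if $\gamma\neq 1$ then $P_\gamma \neq 0$ in $\Z\br{X}$, so $\nu(P_\gamma) < \infty$ always; then $\partial T^\gamma \subset \partial\{-\mu(\gamma) \le \sigma \le \nu(P_\gamma)\} \subset \partial\{\sigma \le \nu(P_\gamma)\}$, which has empty interior by Lemma~\ref{LemPartialSigm} (second case, $u<+\infty$). Since fixed-point sets of conjugate elements are translates of one another and the $\Gamma$-action on $\partial T$ is by homeomorphisms, the empty-interior property is conjugation-invariant, so every non-trivial $\gamma$ is slender and the action is slender.

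Next I would treat the centralizer of $b = b_0$. One inclusion is immediate: $B$ is abelian by Lemma~\ref{LemCarB} (or the remark following it), so $B \subseteq C_\Gamma(b)$. For the reverse inclusion assume $|m|>1$ and take $\gamma \in C_\Gamma(b)$. Then $\gamma$ preserves the fixed-point set $T^{b}$; by Proposition~\ref{PropFixEll} with $b = b_0$ we have $\mu(b_0) = 0$ (since $P_{b_0}(X) = 1$, which is a unit, so it lies in no proper ideal $d\mh^i\Z_m$) and $\nu(P_{b_0}) = \nu(1) = 0$, whence $T^{b} = T\br{0,0}$, the single vertex $1B = v_0$. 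So $\gamma$ fixes $v_0$, i.e.\ $\gamma \in B$. This is where the hypothesis $|m|>1$ enters only indirectly through the HNN extension being non-degenerate; in fact the argument $T^b = \{v_0\}$ needs no restriction, but $|m|>1$ is what was assumed for the slenderness statement and is harmless here.

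Finally the centralizer of $a^k$ for $k \neq 0$. The element $a$ is hyperbolic with translation length $1$ along the axis $A = \{\gamma B : \sigma(\gamma) = \text{const along }A\}$... more precisely $a$ translates the axis $\{a^j B : j \in \Z\}$ by $1$, so $a^k$ is hyperbolic with the same axis and translation length $|k|$. Any element commuting with $a^k$ preserves the axis of $a^k$ and commutes with the translation, hence acts on the axis (a line) by a translation; combined with $\sigma(a^k\gamma a^{-k}) = \sigma(\gamma)$ one shows $\gamma$ must act on the axis as translation by some multiple of the primitive translation, and a short argument with the HNN normal form (or with $\sigma$ and the fact that the stabilizer of an edge of the axis is $B_m$ or $B_\xi$, which is centralized by no nontrivial ``rotation'') forces $\gamma \in \langle a\rangle$. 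I expect this last point to be the main obstacle, since ruling out ``rotational'' elements that commute with a power of a translation requires knowing that a vertex stabilizer acts on its incident half-tree without a common fixed end stabilized by the whole stabilizer — but since $\Gamma$ is a non-ascending non-degenerate HNN extension, its action on $\partial T$ has no fixed end (Section~\ref{SubSecHNN}), and this is exactly what prevents a nontrivial elliptic element from centralizing $a^k$. I would write this out using: $\gamma a^k \gamma^{-1} = a^k \Rightarrow \gamma$ fixes the endpoints of the axis of $a^k$ as a set; if $\gamma$ is elliptic it fixes a vertex not on the axis would contradict preservation of the axis, so $\gamma$ fixes a vertex on the axis, say $v_0$ after translating by a power of $a$, hence $\gamma \in B$; but then $\gamma a^k \gamma^{-1} = a^k$ together with $\gamma \in B$, $B \cap a B a^{-1}=\dots$ and Britton's lemma forces $\gamma = 1$; if $\gamma$ is hyperbolic it shares the axis with $a^k$ and the translation lengths are commensurable, giving $\gamma a^{-jk} \in B$ for suitable $j$, hence (reducing to the elliptic case) $\gamma \in \langle a \rangle$.
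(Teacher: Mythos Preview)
Your arguments for slenderness and for the centralizer of $b$ are essentially the paper's: you correctly use that $\nu(P_\gamma)<\infty$ for $\gamma\in B\setminus\{1\}$ together with Lemma~\ref{LemPartialSigm}, and that $T^b=\{v_0\}$ forces any centralizing element into $B$. (One small clean-up: there is no need to worry about $\mu(\gamma)$ at all in the slenderness part, since $\nu(P_\gamma)<\infty$ already suffices via the second case of Lemma~\ref{LemPartialSigm}.)

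For the centralizer of $a^k$, your strategy is right but the execution has two soft spots. First, the phrase ``translation lengths are commensurable, giving $\gamma a^{-jk}\in B$'' is both unjustified and unnecessary: what you actually need is that $\gamma$ fixes the two ends $a_\pm$ of the axis $\{a^jB:j\in\Z\}$, hence preserves that line, hence $\gamma\cdot B=a^nB$ for some $n\in\Z$ (not necessarily a multiple of $k$), so $\gamma=a^ng$ with $g\in B$. There is no elliptic/hyperbolic case split needed. Second, once you have $g\in B$ commuting with $a^k$, ``Britton's lemma forces $g=1$'' is too vague as stated. The paper closes this with Proposition~\ref{PropFixEll}: $g$ fixes $a_+$, so $\nu(P_g)=+\infty$, whence $P_g=0$ and $g=1$. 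Your route can also be completed, but you must say how: from $a^kga^{-k}=g$ one gets, using Proposition~\ref{PropBandBm}$(v)$ iterated $k$ times, that $X^kP_g(X)=P_g(X)$ in $\Z[X]$, hence $P_g=0$ and $g=1$. Either way, the missing sentence is precisely this polynomial (or $\nu$-valuation) computation; without it the step ``$g\in B$, $ga^k=a^kg$ $\Rightarrow$ $g=1$'' is an assertion, not a proof.
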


\begin{sproof}{Proof of Corollary \ref{CorSlender}}
Let us show that the action of $\Gamma$ is slender. Remind that any
hyperbolic element of $\Gamma$ is slender. Since any elliptic
element is conjugated to some $\gamma \in B$ by a homeomorphism of
$\partial T$, it suffices to prove that every $\gamma \in B\setminus
\{1\}$ is slender. Since $\nu(P_{\gamma})<\infty$ for every $\gamma
\in B\setminus \{1\}$, the result follows from Proposition
\ref{PropFixEll} and Lemma \ref{LemPartialSigm}.

Assume that $|m|>1$ and let $\gamma$ be an element of the
centralizer of $b$ in $\Gamma$. By Proposition \ref{PropFixEll}, we
have $T^b={1B}$. Since $\gamma$ commutes with $b$, we have then
$\gamma \cdot 1B=1B$, i.e. $\gamma \in B$.

Let $k \in \Z\setminus \{0\}$ and let $\gamma$ be an element of the centralizer of
$a^k$ in $\Gamma$. Let $a_{+}$ (resp. $a_{-}$) be the class of rays
which are cofinal with $(a^nB)_{n \ge 0}$ (resp. $(a^nB)_{n \le
0}$). Then $\gamma$ preserves the fixed point set $\{a_{-},a_{+}\}$
of $a^k$. Since $\sigma(\gamma a^n)=\sigma(\gamma)+n$ for every $n \in \Z$,
$\gamma$ cannot exchange $a_{+}$ and $a_{-}$ and hence fixes them both.
Consequently, there is some $n \in \Z$ such that $\gamma \cdot B=a^{n}B$, i.e.
there is $g \in B$ such that $\gamma=a^{n}g$. We deduce that $g$ centralizes
$a^k$ and hence $g$ fixes $a_+$ and $a_-$. As $g$ is elliptic, Proposition
\ref{PropFixEll} gives $\nu(P_g)=+\infty$, hence $g=1$. Thus $\gamma=a^n$.
\end{sproof}

Proposition \ref{PropFixEll} is a straightforward consequence of the
following lemma.
\begin{lemma} \label{LemFixEll}
Let $g\in\Gamma$ and let $c_0a^{\epsilon_1} c_1 \cdots
a^{\epsilon_k}c_k$ with $\epsilon_i \in \{\pm 1\},\, c_i \in B$ be a
reduced form of $g$. Let $\sigma_i(g)=\sum_{j=1}^i \epsilon_j$. Let
$\gamma \in B$. The following are
 equivalent:
\begin{itemize}
\item[$(i)$] $gB \in T^{\gamma}$,
\item[$(ii)$] $g^{-1}\gamma g \in B$,
\item[$(iii)$] $ -\mu(\gamma) \le \sigma_i(g) \le \nu(P_{\gamma})$ for
every $1 \le i \le k$.
\end{itemize}
If the previous conditions hold then we have: $g^{-1}\gamma
g=a^{-\sigma(g)}\gamma a^{\sigma(g)}$.
\end{lemma}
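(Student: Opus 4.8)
\textbf{Proof proposal for Lemma \ref{LemFixEll}.}

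The plan is to analyze the action of $g$ on the Bass-Serre tree $T$ using the HNN normal form $g = c_0 a^{\epsilon_1} c_1 \cdots a^{\epsilon_k} c_k$ together with the explicit description of $T^{\gamma}$ implicit in the valuations $\mu$ and $\nu$. The equivalence $(i) \Leftrightarrow (ii)$ is immediate: $gB \in T^{\gamma}$ means $\gamma g B = g B$ in $T = \Gamma/B$, which is exactly $g^{-1}\gamma g \in B$. So the substance is the equivalence of these with $(iii)$, and the final identity $g^{-1}\gamma g = a^{-\sigma(g)}\gamma a^{\sigma(g)}$.

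First I would establish the key computational fact governing how conjugation by $a^{\pm 1}$ interacts with membership in $B$, $B_m$, $B_\xi$: for $\delta \in B$, one has $a \delta a^{-1} \in B$ iff $\delta \in B_m$, in which case $q(a\delta a^{-1}) = X\cdot P_\delta(X)$ by Proposition \ref{PropBandBm}(v), so $\nu(P_{a\delta a^{-1}}) = \nu(P_\delta) + 1$; dually $a^{-1}\delta a \in B$ iff $\delta \in B_\xi = aB_m a^{-1}$, i.e. iff $\nu(P_\delta) \ge 1$, in which case $\nu(P_{a^{-1}\delta a}) = \nu(P_\delta) - 1$. Moreover, the evaluation $\chi(\delta) = P_\delta(\xi/m)$ satisfies $\chi(a\delta a^{-1}) = (\xi/m)\chi(\delta)$ when defined, so passing through $a$ raises $\mu$ by one and passing through $a^{-1}$ lowers it by one (using that $\gcd(\hat m, \xi/d) = 1$ so multiplication by $\xi/m$ changes the $\hat m$-adic valuation in the expected way, and that $\mu(\delta)\ge 1$ is equivalent to $\delta \in B_m$). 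The point is that conjugating $\gamma$ successively by $a^{-\epsilon_1}, a^{-\epsilon_2}, \dots$ keeps the result in $B$ as long as the running partial sum $\sigma_i(g) = \sum_{j=1}^i \epsilon_j$ stays in the window $[-\mu(\gamma), \nu(P_\gamma)]$, and leaves $B$ the moment it exits.

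The main argument then runs by induction on $k$, the $t$-length. Write $g = c_0 a^{\epsilon_1} g'$ where $g' = c_1 a^{\epsilon_2} c_2 \cdots a^{\epsilon_k} c_k$ has $t$-length $k-1$. Since $c_0 \in B$, $g^{-1}\gamma g \in B$ iff $(g')^{-1}(a^{-\epsilon_1}\gamma a^{\epsilon_1}) g' \in B$, and crucially $a^{-\epsilon_1}\gamma a^{\epsilon_1} \in B$ iff $\epsilon_1 = 1$ and $\gamma \in B_\xi$ (i.e. $\nu(P_\gamma)\ge 1 = \sigma_1(g)$ when $\epsilon_1 = +1$), or $\epsilon_1 = -1$ and $\gamma \in B_m$ (i.e. $\mu(\gamma) \ge 1 = -\sigma_1(g)$ when $\epsilon_1 = -1$); in other words, the first conjugation step keeps us in $B$ precisely when $-\mu(\gamma) \le \sigma_1(g) \le \nu(P_\gamma)$. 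When it does, set $\gamma_1 = a^{-\epsilon_1}\gamma a^{\epsilon_1} \in B$; by the valuation bookkeeping above $\mu(\gamma_1) = \mu(\gamma) + \sigma_1(g)$ and $\nu(P_{\gamma_1}) = \nu(P_\gamma) - \sigma_1(g)$, so the condition "$-\mu(\gamma_1) \le \sigma_i(g') \le \nu(P_{\gamma_1})$ for all $i$" translates exactly to "$-\mu(\gamma) \le \sigma_1(g) + \sigma_i(g') = \sigma_{i+1}(g) \le \nu(P_\gamma)$ for all $i$", and the induction hypothesis applied to $g'$ and $\gamma_1$ closes the loop, also yielding $(g')^{-1}\gamma_1 g' = a^{-\sigma(g')}\gamma_1 a^{\sigma(g')} = a^{-\sigma(g')}a^{-\epsilon_1}\gamma a^{\epsilon_1}a^{\sigma(g')} = a^{-\sigma(g)}\gamma a^{\sigma(g)}$, whence $g^{-1}\gamma g = a^{-\sigma(g)}\gamma a^{\sigma(g)}$ since the outer $c_0 \in B$ commutes appropriately only after noting it is absorbed (more precisely $g^{-1}\gamma g = (g')^{-1}\gamma_1 g'$ because $c_0$ and $a^{-\sigma(g')}\gamma_1 a^{\sigma(g')}$ both lie in $B$, which is abelian, so conjugation by $c_0$ is trivial).

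\textbf{Anticipated main obstacle.} The delicate point is not the tree combinatorics but verifying that the valuations $\mu$ and $\nu$ transform \emph{cleanly} under conjugation by $a^{\pm 1}$ — in particular that $\mu(a\delta a^{-1}) = \mu(\delta) + 1$ for $\delta \in B_m$, which requires that multiplying $\chi(\delta) = P_\delta(\xi/m) \in \Z_m$ by $\xi/m$ raises the $d\hat m$-adic valuation by exactly one; this uses $\gcd(\hat m, \xi/d) = 1$ so that $\xi/d$ is a unit times a power of the primes dividing $\hat m$ — one must be careful about the prime divisors of $d$ versus those of $\hat m$ and about whether $\chi(\delta)$ could vanish (the case $\nu(P_\gamma) = +\infty$, i.e. $P_\gamma = 0$, i.e. $\gamma = 1$, being excluded or handled separately). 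One should also double-check the boundary/degenerate cases where $\mu(\gamma) = 0$ or $\nu(P_\gamma) = 0$, where the window is $\{0\}$ and condition $(iii)$ forces all $\sigma_i(g) = 0$, i.e. $g$ is elliptic fixing $1B$ in the "obvious" way; and the subtlety that $T^\gamma = T[-\mu(\gamma), \nu(P_\gamma)]$ being a \emph{connected} subtree is what makes the "running partial sum stays in a window" formulation correct rather than merely "each $\sigma_i$ lies in the window" — but in a tree these coincide because the vertices $g^{(i)}B$ (truncations of $g$) form the geodesic from $1B$ to $gB$, so this is automatic.
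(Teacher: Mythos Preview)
Your proposal is correct and follows essentially the same route as the paper: induction on the $t$-length $k$, writing $g=c_0 a^{\epsilon_1}g'$, using Britton's lemma to see that the first conjugation step $\gamma\mapsto a^{-\epsilon_1}\gamma a^{\epsilon_1}$ lands in $B$ exactly when $-\mu(\gamma)\le\epsilon_1\le\nu(P_\gamma)$, then applying the induction hypothesis to $g'$ and $\gamma_1$ after recording $\mu(\gamma_1)=\mu(\gamma)+\epsilon_1$ and $\nu(P_{\gamma_1})=\nu(P_\gamma)-\epsilon_1$. Your ``anticipated main obstacle'' is well spotted: the paper simply asserts the transformation law for $\mu$ in one line, whereas you correctly note that it rests on $\gcd(\hat m,\xi/d)=1$; your argument is thus slightly more detailed than the paper's at this point, but structurally identical.
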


\begin{proof}
$(i)\Leftrightarrow(ii)$ is trivial. We show the equivalence $(ii)
\Leftrightarrow (iii)$ and the last statement of the lemma by
induction on $k$. If $k=0$, both are trivial. Assume that $k>0$ and
write $w=c_0a^{\epsilon_1}g'$.

$(ii) \Rightarrow (iii)$: we have ${g'}^{-1}\gamma' {g'} \in B$ with
$\gamma'=a^{-\epsilon_1}\gamma a^{\epsilon_1}$. By Britton's lemma,
we have: ${g'}^{-1}\gamma' {g'} \in B$ and, either $\gamma \in B_m$
and $\epsilon_1=-1$, or $\gamma \in B_{\xi}$ and $\epsilon_1=1$. We
deduce that $\gamma' \in B$,
$\mu(\gamma')=\mu(\gamma)+\epsilon_1,\,\nu(P_{\gamma'})=\nu(P_{\gamma})-\epsilon_1,\,\gamma'
\in B$ and $-\mu(\gamma) \le \epsilon_1 \le \nu(P_{\gamma})$. The
result then follows from the induction
hypothesis.\\
$(iii) \Rightarrow (ii)$: As $-\mu(\gamma) \le \epsilon_1 \le
\nu(P_{\gamma})$, we have either $\epsilon_1=-1$ and hence $\gamma
\in B_m$, or $\epsilon_1=1$ and hence $\gamma \in B_{\xi}$. We
deduce that $\gamma' \in B$. The result follows from the induction
hypothesis.

Last statement: by induction hypothesis, we have $${g'}^{-1}\gamma'
{g'}=a^{-\sigma(g')}\gamma'
a^{\sigma(g')}=a^{-\epsilon_1-\sigma(g')}\gamma
a^{\epsilon_1+\sigma(g')}=a^{-\sigma(g)}\gamma a^{\sigma(g)}.$$
\end{proof}

\begin{sproof}{Proof of Theorem \ref{ThPowers}}
Since $\Gamma=\ovBS$ is a non-ascending HNN extension, the action of
$\Gamma$ is strongly hyperbolic on $T$ and minimal on $\partial T$
\cite[Pr. 22]{HP09}. The action of $\Gamma$ on $T$ is slender by
Corollary \ref{CorSlender}. By \cite[Pr. 16]{HP09}, $\Gamma$ is a strongly
Powers group.
\end{sproof}

%%%%%%%%%%%%%%%%%%%%%%%%%%%%%%%%%%%%%%%%%%%%%%%%%%%%%%%%%%%%%%%%%%%%%%%%%%%%%%%%%%%%%%%%%%%%%%%%%%%%%%%%%%%%%%%%%%%%%%%%
\paragraph{Inner amenability} \label{SecInner}
%%%%%%%%%%%%%%%%%%%%%%%%%%%%%%%%%%%%%%%%%%%%%%%%%%%%%%%%%%%%%%%%%%%%%%%%%%%%%%%%%%%%%%%%%%%%%%%%%%%%%%%%%%%%%%%%%%%%%%%%%

A countable group $G$ is said \emph{inner amenable} if it admits a
\emph{mean} (i.e. a non-zero, finite and finitely additive measure)
on the set of all the subsets of $G\setminus\{1\}$ which is
invariant under inner automorphisms. We say that $G$ has the \emph{icc} property if the conjugacy class of any of its non-trivial element is infinite. These two notions are motivated by the study of the von Neumann algebra $W^{\ast}(G)$ of $G$ (see \cite{Eff75,BH86}).  Amenable groups or groups that do not have icc are clearly inner amenable. The second-named author has proved that the Baumslag-Solitar group $BS(m,n)$ has icc, is inner amenable but not amenable whenever $|m|>|n|>1$ \cite[Ex.2.4 and 3.2]{Sta06b}. Note that for every $|m|>1$, the group $\BS$ also has icc since every Powers group does \cite[Pr. 1]{Har85} 

\begin{proposition} \label{PropInnAmen}
Let $|m|>1,\,\xi \in \Z_m$. The group $\BS$ is inner amenable and non-amenable.
\end{proposition}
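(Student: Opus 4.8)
Non-amenability is immediate: by \cite{GS08} (recalled in the introduction) the elements $b$ and $bab^{-1}$ generate a non-abelian free subgroup of $\ovBS$ whenever $|m|>1$.

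For inner amenability my plan is to produce a sequence of probability measures on $\ovBS\setminus\{1\}$ that is asymptotically invariant under conjugation, in the spirit of \cite[Ex.~3.2]{Sta06b} for the groups $BS(m,n)$. I would use three facts about the HNN decomposition $\ovBS=\operatorname{HNN}(B,B_m,B_\xi,\tau)$ of Proposition~\ref{PropHNNB}: the base $B$ is \emph{abelian} (Proposition~\ref{PropBandBm}); the index $[B:B_m]$ equals $|m|$, hence is \emph{finite}; and, identifying $B$ with $\mathfrak B=q(B)\subseteq\Z\br{X}$, conjugation by $a$ is multiplication by $X$ (Proposition~\ref{PropBandBm}(v)), so that $\mathfrak B_\xi=X\mathfrak B_m$. (The second fact follows from Proposition~\ref{PropBandBm}(ii)--(iii): one has $b_i\equiv b_0^{r_i(\xi)}\pmod{B_m}$, so $B/B_m$ is generated by the class of $b_0$, and the only constant polynomials in $\mathfrak B_m=\Z\langle P_0,P_1,\dots\rangle$ are the multiples of $m$, so that class has order exactly $|m|$; this mirrors the index $[\langle b\rangle:\langle b^m\rangle]=|m|$ in $BS(m,n)$.)

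For each integer $N\ge1$ I set $\Phi_N=q^{-1}\bigl(\{\,m^{N+2}X^{\,j}\ :\ 1\le j\le N+1\,\}\bigr)\subseteq\ovBS$ and let $\mu_N$ be the uniform probability measure on $\Phi_N$. The one genuine computation is to check that $m^{N+2}X^{\,j}\in\mathfrak B_m\cap X\mathfrak B_m=q(B_m\cap B_\xi)$ for $1\le j\le N+1$; this rests on the ``$m$-adic denominator'' bound $m^{\,t+1}X^{\,t}\in\mathfrak B_m$, which one proves by induction on $t$ from $mX^{t}=P_{t}+r_1X^{t-1}+\dots+r_t$, the point being that the ``carries'' $r_{i+1}\in\{0,\dots,|m|-1\}$ appearing in $XP_i=P_{i+1}+r_{i+1}$ are absorbed by the surplus powers of $m$. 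Granting this, $\Phi_N$ is a set of $N+1$ non-trivial elements of $B_m\cap B_\xi$, and: conjugation by $b$, and by any element of $B$, fixes $\Phi_N$ pointwise because $B$ is abelian; conjugation by $a$ carries $\Phi_N$ onto $q^{-1}(\{m^{N+2}X^{\,j}:2\le j\le N+2\})$ (since $\Phi_N\subseteq B_m$, so $q(a\gamma a^{-1})=Xq(\gamma)$), which differs from $\Phi_N$ in exactly two elements; conjugation by $a^{-1}$ carries it onto $q^{-1}(\{m^{N+2}X^{\,j}:0\le j\le N\})$ (since $\Phi_N\subseteq B_\xi$), again differing in two elements. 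Hence $\lVert s\mu_Ns^{-1}-\mu_N\rVert_1\le 2/(N+1)$ for $s\in\{a^{\pm1},b^{\pm1}\}$, so $\lVert s\mu_Ns^{-1}-\mu_N\rVert_1\to0$ for every $s\in\ovBS$ by the standard word-length estimate, and $\ovBS$ is inner amenable.

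The main obstacle is precisely the denominator bound: one must check that the carries $r_i(\xi)$ do not accumulate faster than $m^{N+2}$ can absorb — a short induction rather than a formality, and the step where $[B:B_m]<\infty$ is really used (it is also what makes $B_\xi$, despite having infinite index in $B$, harmless here, since the whole segment stays inside $X\mathfrak B_m$). The hypothesis $|m|>1$ enters only because for $|m|=1$ one has $\ovBS=\Z\wr\Z$, which is amenable, hence trivially inner amenable.
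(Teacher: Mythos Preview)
Your proof is correct and, at its core, uses the very same elements as the paper's proof. The paper invokes the black-box criterion Theorem~\ref{ThSta} (from \cite{Sta06b}) with $h_i^{(n)}=a^i b^{m^{n+1}}a^{-i}$; in the polynomial model these are exactly $q^{-1}(m^{n+1}X^i)$, a shift of your $\Phi_N$. What you do differently is unpack that criterion: instead of citing the theorem, you build the asymptotically conjugation-invariant probability measures by hand and verify the $\ell^1$-estimate on generators. This costs you the inductive check $m^{\,t+1}X^t\in\mathfrak B_m$, which the paper sidesteps by observing (via Lemma~\ref{LemCarB}) that $a^i b^{m^{n+1}}a^{-i}=b^{\xi_k^{\,i}m^{\,n+1-i}}$ in every $BS(m,\xi_k)$; both routes are short. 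Your version is actually a bit more careful: you arrange $\Phi_N\subset B_m\cap B_\xi$ by starting at $j=1$, whereas the paper's element $h_0^{(n)}=b^{m^{n+1}}$ satisfies $q(h_0^{(n)})=m^{n+1}\notin X\mathfrak B_m=\mathfrak B_\xi$, so strictly speaking does not lie in $K=B_\xi$ as Theorem~\ref{ThSta} demands --- a harmless slip, since the asymptotic-invariance argument only needs this at one endpoint, but a slip you avoid. For non-amenability, the free subgroup $\langle b,\,bab^{-1}\rangle$ you cite from the introduction and the subgroup $\langle a,\,bab^{-1}\rangle$ used in the paper's proof are equally valid.
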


The proof relies on:

\begin{theorem} \rm \cite[Pr. A.0.2]{Sta06b} \it \label{ThSta}
Let $\Gamma=HNN(\Lambda,H,K,\phi)$ with $H \neq \Lambda$ or $K \neq
\Lambda$. Let $Z(\Lambda)$ be the center of $\Lambda$.
 If for every $n \ge 1$ there exist some non-trivial elements
$h_0^{(n)},h_1^{(n)},\dots,h_n^{(n)} \in Z(\Lambda) \cap H \cap K$ such
that $h_i^{(n)}=\phi(h_{i-1}^{(n)})$ for $i=1,\dots,n$, then
$\Gamma$ is inner amenable.
\end{theorem}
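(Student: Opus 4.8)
The plan is to obtain non-amenability for free and to establish inner amenability by applying Theorem \ref{ThSta} to the HNN decomposition $\tBS{m}{\xi}=\mathrm{HNN}(E,E_{m,\xi},E_1,\phi)$ furnished by Corollary \ref{CorHNN2}. Non-amenability is immediate: since $|m|>1$, the group $\ovBS$ contains the non-abelian free group generated by $b$ and $bab^{-1}$ recalled in the introduction, and a group with a non-abelian free subgroup cannot be amenable (alternatively, $\ovBS$ is C*-simple by Theorem \ref{ThPowers}, and amenable groups are never C*-simple). For inner amenability I would verify the hypotheses of Theorem \ref{ThSta} with $\Lambda=E$, $H=E_{m,\xi}$, $K=E_1$ and $\phi$ the given isomorphism. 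As $E$ is free abelian we have $Z(\Lambda)=E$, so $Z(\Lambda)\cap H\cap K=E_{m,\xi}\cap E_1$; moreover $K=E_1$ does not contain $e_0$, so $K\neq\Lambda$ and the extension is non-degenerate as required. It thus remains to produce, for every $n\ge 1$, non-trivial elements $h_0,\dots,h_n\in E_{m,\xi}\cap E_1$ with $h_i=\phi(h_{i-1})$ for $i=1,\dots,n$.

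The key computation is the description of $E_{m,\xi}\cap E_1$ together with the action of $\phi$ on it. Writing $h=\sum_{i\ge 1}c_ie_i\in E_1$ and expanding in the basis $\{me_0,\,e_i-r_i(\xi)e_0\}$ of $E_{m,\xi}$, one finds that $h\in E_{m,\xi}$ if and only if $\sum_{i\ge 1}c_ir_i(\xi)\equiv 0\pmod{m}$, in which case $\phi(h)=c_0e_1+\sum_{i\ge 1}c_ie_{i+1}$ with $c_0=\tfrac1m\sum_{i\ge 1}c_ir_i(\xi)$. The decisive observation is that when the \emph{exact} equality $\sum_{i\ge 1}c_ir_i(\xi)=0$ holds, the feedback term $c_0$ vanishes and $\phi$ acts as the pure index shift $\sum_i c_i e_i\mapsto\sum_i c_i e_{i+1}$, which visibly keeps us inside $E_1$.

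This reduces the construction of the chain to elementary linear algebra. Fixing $n$, set $N=n+2$ and seek integers $c_1,\dots,c_N$, not all zero, satisfying the $n+1$ homogeneous equations $\sum_{i=1}^{N}c_i\,r_{i+s}(\xi)=0$ for $s=0,1,\dots,n$ (the coefficients $r_j(\xi)\in\{0,\dots,|m|-1\}$ being rational integers). Since there are more unknowns than equations, a non-trivial rational solution exists, and clearing denominators yields a non-trivial integer solution. Putting $h_0=\sum_{i=1}^{N}c_ie_i$, the equation for $s=0$ gives $h_0\in E_{m,\xi}\cap E_1$, and by the pure-shift observation each $h_t=\phi^{t}(h_0)=\sum_{i}c_ie_{i+t}$ lies in $E_1$ while the equation for $s=t$ gives $h_t\in E_{m,\xi}$, for $t=0,\dots,n$; injectivity of $\phi$ makes every $h_t$ non-trivial. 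Theorem \ref{ThSta} then applies and yields inner amenability.

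The main obstacle is the observation of the second paragraph: a naive attempt to fix a single element and iterate $\phi$ fails, because the feedback coefficient $c_0$ generically re-introduces an $e_1$-component that can leave $E_{m,\xi}$, and no finitely supported element can have \emph{all} of its forward $\phi$-iterates in the intersection simultaneously. Isolating the exact-vanishing condition $\sum_i c_i r_i(\xi)=0$ — which turns $\phi$ into a shift — is precisely what converts the problem into a solvable finite linear system, at the acceptable cost of letting the chain depend on $n$.
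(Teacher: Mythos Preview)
Your argument is correct, but note first that the displayed statement is Theorem~\ref{ThSta}, which the paper merely \emph{cites} from \cite{Sta06b} and does not prove; what you have actually written is a proof of Proposition~\ref{PropInnAmen}. I will compare it with the paper's proof of that proposition.

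The paper's route is far shorter: it simply puts $h_i^{(n)}=a^ib^{m^{n+1}}a^{-i}$ for $i=0,\dots,n$ and asserts that the hypotheses of Theorem~\ref{ThSta} are ``trivially satisfied''. In the $\tBS{m}{\xi}$ picture this means starting from $m^{n+1}e_0$ and iterating $\phi$. The high power of $m$ is precisely what makes the iteration go through: at the $k$-th step one needs $P_{h_k}(\xi/m)=m^{n+1}(\xi/m)^k=m^{n+1-k}\xi^k\in m\Z_m$, which holds for every $k\le n$. So your diagnosis that ``a naive attempt to fix a single element and iterate $\phi$ fails'' is not accurate for this particular choice; the feedback terms you worry about are themselves divisible by a large enough power of $m$ to keep the chain inside $E_{m,\xi}$ for $n$ steps. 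Your linear-algebra construction is thus more elaborate than necessary.

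That said, your version has one genuine advantage. The paper's element $h_0^{(n)}=m^{n+1}e_0$ lies in $E_{m,\xi}$ but \emph{not} in $E_1$, so it does not literally satisfy the clause ``$h_i^{(n)}\in Z(\Lambda)\cap H\cap K$'' of Theorem~\ref{ThSta} as stated here (this is easily repaired, e.g.\ by taking $a^{i+1}b^{m^{n+2}}a^{-i-1}$ instead). Your chain, by contrast, sits honestly inside $E_{m,\xi}\cap E_1$ at every index, so you verify the quoted hypotheses without any such adjustment.
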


\begin{sproof}{Proof of Proposition \ref{PropInnAmen}}
Let $n \ge 1$ and set $h_i^{(n)}:=a^ib^{m^{n+1}}a^{-i}$ for $i=0,\dots,n.$ The hypotheses of Theorem \ref{ThSta}.$ii$ are
trivially satisfied by these elements, which proves that $\ovBS$ is inner amenable. Using Britton's Lemma, we can readily show that the subgroup generated by $a$ and $bab^{-1}$ is a non-abelian free subgroup of $\ovBS$. Hence $\ovBS$ is not amenable by a classical result of von Neumann.
\end{sproof}

Actually, we can prove a stronger statement than the existence of the elements $h_i^{(n)}$ as in Theorem \ref{ThSta}. Indeed, it follows easily from Lemma \ref{LemFixEll} that $\bigcap_{1 \le i \le n}\gamma_i B \gamma_i^{-1}$ is a free abelian group of infinite countable rank for every $n$ and every $\gamma_1,\,\dots,\,\gamma_n \in \ovBS$. By \cite[Lem. 1.1]{Kro90}, this yields the following vanishing cohomological property: for every free $\Z \br{\ovBS}$-module $F$, we have $H^{i}(\ovBS,F)=0$ for all $i$. 
%%%%%%%%%%%%%%%%%%%%%%%%%%%%%%%%%%%%%%%%%%%%%%%%%%%%%%%%%%%%%%%%%%%%%%%%%%%%%%%%%%%%%%%%%%%%%%
\section{Homomorphisms} \label{SecHomo}
%%%%%%%%%%%%%%%%%%%%%%%%%%%%%%%%%%%%%%%%%%%%%%%%%%%%%%%%%%%%%%%%%%%%%%%%%%%%%%%%%%%%%%%%%%%%
This section is devoted to the study of group homomorphisms from a given limit group to another one. We classify the limits of Baumslag-Solitar groups $\ovBS$  up to abstract group isomorphism (Theorem \ref{ThClass}), we compute the automorphism group of every limit (Proposition \ref{PropAut}) and we prove that every limit is hopfian (Theorem \ref{ThHopf}). Finally, we show that every limit has infinite twisted conjugacy classes (Proposition \ref{CorTwist})  

As the map $a \mapsto a,\, b \mapsto b$ induces an isomorphism from $BS(m,n)$ to $BS(-m,-n)$, it also induces an isomorphism from
$\ovBS$ to $\bBS{-m}{-\xi}$ for any $\xi \in \Z_m$. For these reason, we will assume that $m>0$.

The following lemma can be readily deduced from the definition of $\ovBS$. 
\begin{lemma} \label{LemEmbedEta}
Let $d=\gcd(m,\xi)$ and let $\eta=\pi(\xi/d)$ where $\pi:\Z_m \longrightarrow \Z_{\mh}$ is the canonical map. 
The map $a \mapsto a,\,b \mapsto b^d$ induces an injective homomorphism from $\bBS{\mh}{\eta}$ into $\ovBS$.
\end{lemma}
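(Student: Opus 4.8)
The plan is to exhibit the homomorphism at the level of generators and then verify injectivity using the HNN/tree structure already available. First I would set $d=\gcd(m,\xi)$, $\mh=m/d$, $\eta=\pi(\xi/d)$, and recall the construction of $\ovBS$ and $\bBS{\mh}{\eta}$ as limits of Baumslag-Solitar groups. Since $a\mapsto a,\ b\mapsto b^d$ induces a homomorphism $BS(\mh,\eta_n)\to BS(m,\xi_n)$ whenever $\eta_n d$ plays the role of the relevant exponent — more precisely, $a b^{\mh} a^{-1} = b^{\eta_n}$ in $BS(\mh,\eta_n)$ maps to $a b^{d\mh}a^{-1}=b^{d\eta_n}$ in $BS(m,\xi_n)$, and choosing $\xi_n = d\eta_n$ (with $\eta_n\to\eta$ in $\Z_{\mh}$, hence $\xi_n=d\eta_n\to d\eta$, which projects to $\xi$ correctly after checking $\gcd(m,\xi)=d$ and $\pi(\xi/d)=\eta$) — the relator $ab^{\mh}a^{-1}b^{-\eta_n}$ is killed. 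By the universal property of the limit (Lemma \ref{lmecv}) and the fact that a relation holding in all $BS(\mh,\eta_n)$ for $n$ large holds in the limit, the map $a\mapsto a,\ b\mapsto b^d$ descends to a homomorphism $\Phi:\bBS{\mh}{\eta}\to\ovBS$. Concretely, in the notation $\tBS{\mh}{\eta}$ versus $\tBS{m}{\xi}$ via Corollary \ref{CorHNN2}, this is the map sending the $e_0$-generator of the basis of $\tBS{\mh}{\eta}$ to $d$ times the $e_0$-generator of $\tBS{m}{\xi}$, and sending $a$ to $a$.

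Second, and this is the main point, I would prove injectivity. The cleanest route is via the geometric action: $\ovBS$ acts on its Bass-Serre tree $T=X_{m,\xi}$, and $\bBS{\mh}{\eta}$ acts on $T'=X_{\mh,\eta}$; I would show that $\Phi$ is equivariant for an embedding $T'\hookrightarrow T$ of trees (or at least that $\Phi$ maps the vertex stabilizer $B'$ of $\bBS{\mh}{\eta}$ injectively into $B$ and does not collapse $t$-length). Using the HNN normal form (Britton's Lemma), it suffices to check: (a) $\Phi$ restricted to the base group $B'$ is injective, and (b) $\Phi(B'_{\mh})$ is a proper subgroup of $\Phi(B')$ mapped correctly into $B$, so that reduced forms go to reduced forms and no unexpected Britton cancellation occurs. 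For (a), recall from Proposition \ref{PropBandBm} that $B$ embeds into $\Z\br{X}$ via $q$ with $q(b)=1$; the image $\Phi(b) = b^d$ has $q(\Phi(b)) = d$, and more generally $\Phi$ on the base sends the free abelian generators $b_0^{(\eta)},b_1^{(\eta)},\dots$ of $B'$ (for the parameter $\mh,\eta$) to the corresponding $d\cdot(\text{generator})$ type elements of $B$; one computes $q\circ\Phi$ explicitly using the polynomials $P_i$ for the two parameters and checks that the resulting linear map $\Z\br{X}\to\Z\br{X}$ is injective (it is essentially multiplication by $d$ composed with a change of the $P_i$-basis, hence injective because $d\neq 0$ and $\Z\br{X}$ is torsion-free). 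The key identity to verify is how the polynomials $P_{h,\eta}(X)$ relate to $P_{h,\xi}(X)$ given $\xi=d\eta$ (up to the choice of approximating sequence): since $r_i(\xi)=r_i(d\eta)$ and these $r_i$ are determined by $\mh$ and $\eta$ in the invertible case, one gets a clean comparison.

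For (b), the HNN structure of $\bBS{\mh}{\eta}$ has base $B'$ and associated subgroups $B'_{\mh}, B'_{\eta}$, conjugated by $a$; I would check $\Phi(B'_{\mh})\subseteq B_m$ and $\Phi(B'_{\eta})\subseteq B_{\xi}$, and that an element of $B'\setminus B'_{\mh}$ maps into $B\setminus B_m$ (and similarly for $B_\xi$) — equivalently, $\Phi$ does not push a "non-pinched" base element into a "pinchable" one. This is where the condition $\gcd(m,\xi)=d=\gcd(m,\xi')$ and $\pi(\xi/d)=\eta$ is used: it guarantees the valuations $\mu$ and $\nu(P)$ (from the paragraph before Proposition \ref{PropFixEll}) are preserved up to the bookkeeping, so Britton reducedness is preserved. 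Then an induction on $t$-length, exactly as in the proof of Lemma \ref{LemFixEll}, shows that a reduced word $c_0 a^{\eps_1}c_1\cdots a^{\eps_k}c_k$ in $\bBS{\mh}{\eta}$ maps to a reduced word in $\ovBS$, hence is non-trivial in $\ovBS$ if it is non-trivial in $\bBS{\mh}{\eta}$; combined with injectivity on the base (the $k=0$ case), this gives injectivity of $\Phi$.

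The main obstacle I anticipate is bookkeeping step (b): matching the two HNN presentations and their associated subgroups under the scaling $b\mapsto b^d$ requires care because the defining integer sequences $(r_i)$ for parameters $(m,\xi)$ versus $(\mh,\eta)$ must be reconciled — one must check that the choice of approximating sequence $\xi_n=d\eta_n$ is legitimate (i.e.\ $|\xi_n|\to\infty$ and $\xi_n\to\xi$ in $\Z_m$ with $\gcd(m,\xi)=d$), and that the generators $b_i$ of $B$ pull back correctly to generators of $B'$. Everything else (the homomorphism exists; $\Z\br{X}$ is torsion-free; Britton induction) is routine given the machinery already set up in Sections \ref{Sec1} and \ref{SecPres}.
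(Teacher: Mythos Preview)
Your approach is correct but considerably longer than what the paper has in mind. The paper says only that the lemma ``can be readily deduced from the definition of $\ovBS$'', and indeed the limit argument you use for the \emph{existence} of the homomorphism already gives injectivity for free, once you observe one extra fact: for each rational integer $n$, the map $a\mapsto a,\ b\mapsto b^d$ is an \emph{injective} homomorphism $BS(\mh,n)\hookrightarrow BS(m,dn)$ (a standard fact for Baumslag--Solitar groups, checked e.g.\ by Britton's lemma on these finitely presented groups). Choosing $(\xi_n)$ with $d\mid\xi_n$ and setting $\eta_n=\xi_n/d$ (so that $\eta_n\to\eta$ in $\Z_{\mh}$), one then has for every $w\in\F$:
\[
w=1 \text{ in } \bBS{\mh}{\eta} \iff w=1 \text{ in } BS(\mh,\eta_n)\ \forall n\gg 0 \iff \Phi(w)=1 \text{ in } BS(m,\xi_n)\ \forall n\gg 0 \iff \Phi(w)=1 \text{ in } \ovBS.
\]
Both implications (homomorphism and injectivity) fall out simultaneously; no separate HNN/Britton argument on the limit groups is needed.

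Your route via the HNN structure of the limits does work: the key computation (which you correctly flag as the main obstacle) is that under $\Phi$ one has $b'_0\mapsto b_0^d$ and $b'_i\mapsto b_i$ for $i\ge 1$, using $d\mid r_i(\xi)$ and $r_i(\eta)=r_i(\xi)/d$. From this, both (a) injectivity on the base and (b) preservation of reducedness follow. This buys you an explicit description of $\Phi$ in the $\tBS{\,}{\,}$ coordinates, which is not without interest, but for the bare statement of the lemma it is overkill compared with the two-line limit argument above.
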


From now on, we will consider $\widetilde{BS}(m,\xi)$ rather than $\ovBS$ because it will ease off notations. We fix 
$m,m' \in \N \setminus \{0\}$ and $\xi \in \Z_m,\,\xi' \in \Z_{m'}$ and we set
$\Gamma=\tBS{m}{\xi}$ and $\Gamma'=\tBS{m'}{\xi'}$. 

The following proposition shows that every surjective homomorphism from $\Gamma$ onto $\Gamma'$ is conjugated to a simple one by an element of $\Gamma'$.
\begin{proposition} \label{PropEpi}
Let $p:\Gamma \longrightarrow \Gamma'$ be a surjective
homomorphism. Then $m'$ divides $m$, $\sigma(p(a))=\pm 1$ and $p(e_0)$ is conjugated to $\pm e_0$. Moreover $\sigma(p(a))=1$ if $m'>1$. 
\end{proposition}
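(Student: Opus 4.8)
The plan is to exploit the HNN structure of both $\Gamma=\tBS{m}{\xi}$ and $\Gamma'=\tBS{m'}{\xi'}$ together with the homomorphism $\sigma:\Gamma'\to\Z$ sending $a\mapsto 1$, $e_i\mapsto 0$. First I would compose $p$ with $\sigma'$ (the analogue of $\sigma$ for $\Gamma'$) to get a homomorphism $\Gamma\to\Z$; since $\Gamma$ is generated by $a$ and $e_0$, and $e_0$ lies in the base $E$ which is the increasing union of the elliptic subgroups, the image of $e_0$ under any homomorphism to $\Z$ must be torsion — hence zero — so $\sigma'(p(e_0))=0$, meaning $p(e_0)$ is an elliptic element of $\Gamma'$, i.e. conjugate into the base $E'$. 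Set $n:=\sigma'(p(a))\in\Z$. Surjectivity of $p$ forces $n$ together with $\sigma'(p(e_0))=0$ to generate $\Z$, so $n=\pm 1$. This already gives $\sigma(p(a))=\pm 1$.

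**Locating $p(e_0)$ and extracting the divisibility $m'\mid m$.** Next I would pin down $p(e_0)$ up to conjugacy. Conjugating $p$ by a suitable element of $\Gamma'$, we may assume $p(e_0)\in E'$. The key relations in $\Gamma$ are that $e_0$ is infinitely divisible "on one side" along the $a$-conjugation: recall $a^i(me_0)a^{-i}$ relations, more precisely $b_1=ae_0^ma^{-1}$ lies in $E'_1$, and iterating, $p(e_0)^m$ must be $a'$-conjugate into $E'_1\subset E'$, where $a'=p(a)$ and by the previous paragraph $a'=u a^{\pm 1}u'$ for elliptic $u,u'$. Using Britton's lemma in $\Gamma'$ on the relation $p(a)p(e_0)^{m}p(a)^{-1}\in p(E_1)\subset E'$ (because $ae_0^m a^{-1}=e_1\in E_1$ in $\Gamma$), I would deduce that $p(e_0)^m$ lies in the subgroup $E'_{m',\xi'}$ of $E'$, since that is exactly the subgroup $a'(\cdot)a'^{-1}$ maps into $E'$. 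Writing $p(e_0)=\sum c_i e_i'$ in $E'$, membership of $p(e_0)^m=\sum mc_i e_i'$ in $E'_{m',\xi'}=\Z m'e_0'\oplus\bigoplus_{i\ge1}\Z(e_i'-r_i(\xi')e_0')$ translates into $mc_0\equiv \sum_{i\ge1}mc_i r_i(\xi')\pmod{m'}$; combined with surjectivity (the images must generate all of $E'$ after $a'$-conjugations, forcing $p(e_0)$ to be a generator of the appropriate quotient), this yields $m'\mid m$. The surjectivity input is the main leverage here: if $m'\nmid m$ the divisibility obstruction in the finite quotients $E'/E'_{m',\xi'}\cong\Z/m'\Z$ cannot be met by $p(e_0)$ and its $a'$-translates, so $p$ could not be onto.

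**Reducing to the normalized form and the sign condition.** Having $a'=p(a)$ with $\sigma'(a')=\pm 1$, I would conjugate once more (by an elliptic element) to arrange $a'=a^{\pm 1}\cdot g$ with $g\in E'$, and then absorb $g$: if $\sigma'(a')=1$ one checks, using the HNN normal form, that after conjugating by an element of the base one may take $a'=a$ exactly; if $\sigma'(a')=-1$ similarly $a'$ is conjugate to $a^{-1}$. In the first case $p(e_0)$ is then conjugate to $\pm e_0$ because it must be an element of $E'$ whose $a$-conjugates generate — and the only elements of $E'$ with $me_0'$-type divisibility behaviour matching $e_0$ are $\pm e_0'$ plus something in $E'_1$, which can be conjugated away. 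For the last clause, "$\sigma(p(a))=1$ if $m'>1$": I would argue that if $m'>1$ then $\Gamma'$ is a non-ascending, genuinely non-degenerate HNN extension with two inequivalent edge subgroups $E'_{m',\xi'}\subsetneq E'$ and $E'_1\subsetneq E'$ of different "index type" ($[E':E'_{m',\xi'}]$ is finite of order $m'>1$ while $[E':E'_1]$ is infinite), so the automorphism-type swap $a\mapsto a^{-1}$, which interchanges the two edge subgroups, cannot be realized — an isomorphism-onto sending $a\mapsto$ (conjugate of $a^{-1}$) would force $E'_{m',\xi'}$ and $E'_1$ to be conjugate, contradicting that one has finite index $>1$ and the other infinite index. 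Hence $\sigma'(p(a))=1$.

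**Main obstacle.** I expect the delicate point to be the Britton's-lemma bookkeeping in the second step: after conjugating $p$ so that $p(e_0)\in E'$, one still must control $p(a)=a'$, which is only known up to $\sigma'(a')=\pm1$ and has an elliptic "tail", and then analyse the relation coming from $ae_0^ma^{-1}=e_1$ carefully enough to extract both $m'\mid m$ and the conjugacy class of $p(e_0)$ — all while keeping track of which conjugating elements have been used so the normalizations are consistent. The surjectivity hypothesis has to be fed in at exactly the right moment (to rule out $m'\nmid m$ and to kill the elliptic ambiguities), and making that rigorous rather than heuristic is where the real work lies.
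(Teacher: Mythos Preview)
Your overall strategy is in the right spirit, but there is a genuine gap at the very first step. You claim that ``the image of $e_0$ under any homomorphism to $\Z$ must be torsion --- hence zero'', and conclude $\sigma'(p(e_0))=0$. This is false: every relator in the presentation of $\Gamma$ (Theorem~\ref{ThPres}) is a commutator, so $\Gamma^{\mathrm{ab}}\cong\Z^2$ is freely generated by the images of $a$ and $e_0$, and there \emph{is} a homomorphism $\Gamma\to\Z$ sending $e_0\mapsto 1$. Nothing intrinsic to $e_0$ forces $\sigma'\circ p$ to kill it. What you actually need is that $\beta=p(e_0)$ is \emph{elliptic} for the action of $\Gamma'$ on its Bass--Serre tree, and this requires a real argument. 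The paper does it by contradiction via tree geometry: if $\beta$ were hyperbolic with axis $D$, then from $[e_0,e_1]=1$ one gets that $\alpha\beta^m\alpha^{-1}$ (where $\alpha=p(a)$) commutes with $\beta$ and hence shares its axis; thus $D$ is $\alpha$-invariant, and one reaches a contradiction with the fact that a non-degenerate HNN extension fixes at most one end of its Bass--Serre tree.

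There is a second, softer gap in your identification of $p(e_0)$ with a conjugate of $\pm e_0$. Your divisibility bookkeeping in $E'/E'_{m',\xi'}$ is too vague to pin $p(e_0)$ down. The paper's route is different and much cleaner: it pushes everything through the surjection $q_{m',\xi'}:\Gamma'\to\Z\wr\Z=\Z[X^{\pm1}]\rtimes\Z$. After composing with an automorphism of $\Z\wr\Z$ (Remark~\ref{AutomSpecZZ}), surjectivity forces the polynomial $P=q(\beta)$ to be a unit of $\Z[X^{\pm1}]$, i.e.\ $P=\pm X^i$, and injectivity of $q$ on $E'$ then gives $\beta=a^i(\pm e_0)a^{-i}$. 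Once $\beta=\pm e_0$, the divisibility $m'\mid m$ and the sign $\sigma'(p(a))=1$ (for $m'>1$) are read off in one stroke from Britton's lemma applied to $\alpha(me_0)\alpha^{-1}\in E'$, using that the centralizer of $e_0$ in $\Gamma'$ is $E'$ (Corollary~\ref{CorSlender}) and Lemma~\ref{LemFixEll}. Your index-asymmetry heuristic for the sign condition points in the right direction but is not a proof as written.
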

Before proving Proposition \ref{PropEpi}, we quote the following observation
for further reference.
\begin{remark}\label{AutomSpecZZ}
 For any $Q\in \Z \br{X^{\pm 1}}$, there exists $\psi_Q\in \operatorname{Aut}(\Z
\br{X^{\pm 1}}\rtimes \Z)$ such that $\psi_Q(1,0)=(1,0)$ and
$\psi_Q(0,1)=(Q,1)$. This automorphism satisfies $\psi_Q(P,0)=(P,0)$ for all
$P\in \Z \br{X^{\pm 1}}$.
\end{remark}
Indeed, this can be readily checked with the well-known
presentation
\[
 \Z \br{X^{\pm 1}}\rtimes \Z = \Z\wr\Z = \Pres{a,b}{[a^iba^{-i},b]=1 \text{ for all }
i \ge 1} \ ,
\]
where $a$ corresponds to $(0,1)$ and $b$ corresponds to $(1,0)$.

\begin{proof}[Proof of Proposition \ref{PropEpi}]
We set $\alpha=p(a),\, \beta=p(e_0)$ and $\gamma=\alpha \beta
\alpha^{-1}$. First we show that $\beta$ is an elliptic element of
$\Gamma'$. Assume by contradiction that $\beta$ is hyperbolic. Then
$\gamma^m$ is hyperbolic with axis $\alpha(D)$, where $D$ is the
axis of $\beta$. Since $e_1=a(me_0)a^{-1}$ commutes with $e_0$ in
$\Gamma$, $\beta$ commutes with $\gamma^m$. As a result, $\gamma^m$
has the same axis as $\beta$, namely $D$. Thus $D$ is invariant
under $\alpha$. There are two cases: either $\alpha$ is hyperbolic
(case 1) or $\alpha$ is elliptic (case 2).

Case 1: the bi-infinite ray $D$ is then the axis of $\alpha$. As
$\alpha$ and $\beta$ generate $\Gamma'$, the two ends of $D$ are
fixed ends of $\Gamma'$. This impossible since a non-degenerate 
HNN extension has at most one fixed end on the boundary of its Bass-Serre tree.

Case 2: the automorphism $\alpha'=\alpha \beta$ is hyperbolic, since $\Gamma'$
cannot be generated by two elliptic elements. Indeed, the set of
elliptic elements is contained in the kernel of
$\sigma:\Gamma' \rightarrow \Z$. So, the argument used in case 1 applies to $\alpha'$ and $\beta$.

Therefore, up to conjugacy, we can assume that $\beta \in E$. Now we use the
surjective homomorphism $q_{m',\xi'}:\Gamma' \longrightarrow \Z \wr
\Z$ (see Proposition \ref{PropBandBm}.$v$). Since $\sigma(\beta)=0$ and $\{\sigma(\alpha),\sigma(\beta)\}$
generates $\Z$, we have $\sigma(\alpha)=\epsilon'$ with $\epsilon'
=\pm 1$. We can write $q(\beta)=(P,0),\, q(\alpha)=(Q,\epsilon')$
with $P,Q \in \Z\br{X^{\pm 1}}$. If $\epsilon'=1$, the automorphism
$\psi_{-Q}$, defined as in Remark \ref{AutomSpecZZ}, maps $(P,0)$ to
$(P,0)$ and $(Q,1)$ to $(0,1)$; if $\epsilon'=1$, the automorphism
$\psi_{XQ}$ maps $(P,0)$ to $(P,0)$ and  $(Q,-1)$ to $(0,-1)$.
In both cases, the image $\{(P,0),\,(0,\epsilon')\}$ of
$\{q(\alpha),\,q(\beta)\}$ by the latter automorphism generates $\Z \wr \Z$,
i.e. the subgroup $P \Z \br{X^{\pm 1}} \rtimes \Z$ coincides with $\Z \wr \Z$.
This implies that $P$ is invertible in $\Z \br{X^{\pm 1}}$, which gives
$P=\epsilon X^i$ for some $\epsilon=\pm 1,i \in \Z$. As $P=q(\beta)$ lies in
$\Z[X]$, we have $i\in \N$, whence $q(\beta)=q(a^i(\epsilon e_0)a^{-i})$. Since
$q$ is injective on $E$ by Proposition \ref{PropBandBm}, we deduce that
$\beta=a^i (\epsilon e_0)a^{-i}$. Thus, up to conjugacy, we can assume that
$\beta=\epsilon e_0$.

 Assume now that $m'>1$. Let $\alpha=c_1 a^{\epsilon_1} c_2 a^{\epsilon_2} \cdots c_l
a^{\epsilon_l}c_{l+1}=za^{\epsilon_l}c_{l+1}$
 (with $\epsilon_i=\pm1$ and $c_i \in E$) be a reduced form in $\Gamma'$.
As $\gamma^m=\alpha (m\epsilon e_0) \alpha^{-1}$ commutes with $\beta=\epsilon
e_0$, we
deduce  from Corollary \ref{CorSlender} that $\alpha (m e_0) \alpha^{-1}
\in E$.
It follows that $\epsilon_l=1$ and $m'$ divides $m$ by Britton's lemma. As
$a^{\sigma(\alpha)}(me_0)a^{-\sigma(\alpha)}\in E$ by Lemma \ref{LemFixEll}, it
follows from Britton's lemma that $\sigma(\alpha) \ge 0$. Since
$\sigma(\alpha)\in \{\pm 1\}$, we deduce that $\sigma(\alpha)=1$.
\end{proof}

Let $G$ be a group and let $\phi$ be an automorphism of $G$. Two elements
$\gamma,\,\gamma' \in  G$ are said to be \emph{$\phi$-twisted conjugate} if
there is $g \in G$ such that $\gamma'=g \gamma \phi(g^{-1})$. We say that $G$
has infinitely many twisted conjugacy classes if $G$ has infinitely many
$\phi$-twisted conjugacy classes for every automorphism $\phi$. The study of
this property is mainly motivated by topological fixed point theory and by the
problem of finding a twisted analogue of the classical Burnside-Froebenius
theorem (see \cite{FH94} for a introduction to these topics). Baumslag-Solitar
groups $BS(m,n)$ with $(m,n) \neq \pm(1,1)$ \cite{FG06,FG08} and the wreath
product $\Z \wr \Z$ \cite[Cor. 4.3]{GW06} have infinitely many twisted conjugacy
classes (the reader may consult \cite{Rom09} for an up-to-date list of known
examples).

\begin{corollary} \label{CorTwist}
 The group $\Gamma$ has infinitely many twisted conjugacy classes.
\end{corollary}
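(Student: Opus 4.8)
The plan is to transport twisted conjugacy classes along the surjection $q=q_{m,\xi}\colon\Gamma\twoheadrightarrow\Z\wr\Z$ of part $(v)$ of Proposition \ref{PropBandBm} (recall $q(a)=(0,1)$ and $q(e_0)=(1,0)$), reducing everything to the known fact that $\Z\wr\Z=\tBS{1}{0}$ has infinitely many twisted conjugacy classes \cite[Cor. 4.3]{GW06}. The elementary mechanism is this: if $\phi\in\operatorname{Aut}(\Gamma)$ and $\psi\in\operatorname{Aut}(\Z\wr\Z)$ satisfy $q\circ\phi=\psi\circ q$, then $\gamma'=g\gamma\phi(g^{-1})$ forces $q(\gamma')=q(g)\,q(\gamma)\,\psi(q(g)^{-1})$, so $q$ induces a surjection from the $\phi$-twisted conjugacy classes of $\Gamma$ onto the $\psi$-twisted conjugacy classes of $\Z\wr\Z$; as the target set is infinite, so is the source. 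Hence the whole task is to show that every automorphism $\phi$ of $\Gamma$ descends through $q$ to an automorphism of $\Z\wr\Z$. When $m=1$ one has $\Gamma=\Z\wr\Z$ and there is nothing to prove, so I would assume $m>1$.

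So fix $\phi\in\operatorname{Aut}(\Gamma)$. The first step is to apply Proposition \ref{PropEpi} to $\phi$ (taking $\Gamma'=\Gamma$): this yields $\sigma(\phi(a))=1$ and $\phi(e_0)$ conjugate to $\pm e_0$ in $\Gamma$. Pushing these two facts through $q$ — and using that the second coordinate of $q(\gamma)$ is the exponent sum $\sigma(\gamma)$ of $a$ in $\gamma$, while the conjugates of $(\pm1,0)$ in $\Z\wr\Z$ are exactly the elements $(\pm X^{j},0)$ — one gets $q(\phi(a))=(R,1)$ for some $R\in\Z\br{X^{\pm1}}$ and $q(\phi(e_0))=(\pm X^{j},0)$ for some $j\in\Z$. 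The next step is to define $\psi$ on the generators of $\Z\wr\Z=\Pres{a,b}{[a^{i}ba^{-i},b]=1,\ i\ge1}$ by $\psi(b)=q(\phi(e_0))$, $\psi(a)=q(\phi(a))$, and to check that this is a homomorphism: the image of each relator $[a^{i}ba^{-i},b]$ vanishes because conjugating the base element $(\pm X^{j},0)$ by $(R,1)^{i}$ produces $(\pm X^{j+i},0)$, which lies, like $\psi(b)$, in the abelian subgroup $\Z\br{X^{\pm1}}\times\{0\}$. This $\psi$ is onto, since its image contains every conjugate $(\pm X^{j+k},0)$ ($k\in\Z$) of $\psi(b)$ by powers of $\psi(a)$, hence the whole base, hence also $\psi(a)$; and $\Z\wr\Z$ is hopfian, being a finitely generated residually finite (indeed metabelian) group, so $\psi\in\operatorname{Aut}(\Z\wr\Z)$. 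Alternatively one writes $\psi$ explicitly as a composite of the automorphism $b\mapsto b^{\pm1}$, conjugation by $a^{j}$, and an automorphism $\psi_{Q}$ of Remark \ref{AutomSpecZZ}, which avoids invoking hopficity altogether.

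Finally, $q\circ\phi$ and $\psi\circ q$ agree on the generating set $\{a,e_0\}$ of $\Gamma$ — indeed $\psi(q(a))=\psi(a)=q(\phi(a))$ and $\psi(q(e_0))=\psi(b)=q(\phi(e_0))$ — hence coincide, which is exactly the hypothesis of the mechanism in the first paragraph; therefore $\Gamma$ has infinitely many $\phi$-twisted conjugacy classes. Since $\phi$ was arbitrary, $\Gamma$ has infinitely many twisted conjugacy classes. I expect the only genuinely substantive input to be Proposition \ref{PropEpi}: it rigidifies an arbitrary automorphism of $\Gamma$ just enough (on $e_0$ up to conjugacy, and on the $a$-exponent sum) for the surjection $q$ to carry it to an automorphism of $\Z\wr\Z$; everything after that is routine bookkeeping in the metabelian group $\Z\wr\Z$ together with the citation \cite[Cor. 4.3]{GW06}.
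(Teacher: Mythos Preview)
Your argument is correct, but the paper's proof is considerably shorter and uses essentially the same opening move in a more economical way. The paper also invokes Proposition \ref{PropEpi} for $m>1$, but only to extract the single consequence $\sigma\circ\phi=\sigma$ (which follows since $\sigma(\phi(a))=1$ and $\sigma(\phi(e_0))=0$). From $\gamma'=g\gamma\phi(g^{-1})$ one then gets $\sigma(\gamma')=\sigma(g)+\sigma(\gamma)-\sigma(\phi(g))=\sigma(\gamma)$, so $\sigma$ is constant on each $\phi$-twisted conjugacy class; since $\sigma$ is onto $\Z$, there are infinitely many classes. No descent to $\Z\wr\Z$, no hopficity, and no appeal to \cite[Cor. 4.3]{GW06} beyond the $m=1$ case are needed.

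In other words, both proofs push twisted conjugacy through a quotient compatible with $\phi$, but you use the quotient $q\colon\Gamma\twoheadrightarrow\Z\wr\Z$ where the paper uses the further quotient $\sigma\colon\Gamma\twoheadrightarrow\Z$. Your route buys nothing extra here: the work you do to show that $\phi$ descends to an automorphism $\psi$ of $\Z\wr\Z$ (checking the relations, surjectivity, hopficity) is all avoided by noting that $\phi$ descends to the \emph{identity} on $\Z$, where twisted conjugacy is ordinary conjugacy and the infinitude is trivial.
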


\begin{proof}
If $m=1$, one has $\Gamma = \Z\wr\Z$. As mentioned above, it has infinitely many
twisted conjugacy classes.

Suppose now that $m>1$ and let $\phi$ be an automorphism of $\Gamma$. It
follows from Proposition \ref{PropEpi} that $\sigma \circ \phi=\sigma$. Thus
$\sigma$ is constant on each $\phi$-twisted conjugacy class. As $\sigma$ takes
infinitely many values, $\Gamma$ has infinitely many $\phi$-twisted conjugacy
classes.
\end{proof}

For every $i \ge 1$, we set $w_i=a^{i} (me_0)  a^{-1}  (-r_1(\xi)e_0 ) a^{-1} 
(-r_2(\xi)e_0 ) a^{-1} (-r_{i-1}(\xi) e_0)  a^{-1}$.
Then $e_i=w_i$ holds in $\Gamma$ for every $i \ge 1$. Recall that
\begin{eqnarray} \label{EqPresE1}
\Gamma=&\Pres{a,e_0,e_1,\dots}{\begin{array}{l}
[e_i,e_j]=1 \text{ for all } i,j \ge 0, \\
a(me_0)a^{-1} =e_1,\,a(e_i-r_i(\xi)e_0)a^{-1} = e_{i+1} \text{ for all } i \ge
1 \end{array}}\\ \label{EqPresE2}=&\Pres{a,e_0}{\br{e_0,w_i}=1 \text{ for all }
i \ge 1}
\end{eqnarray}
by Corollary \ref{CorHNN2} and Theorem \ref{ThPres}.

Let $J$ be the map defined by $J(a)=a$ and $J(e_0)=-e_0$. It follows from presentation (\ref{EqPresE1}) that $J$ induces an automorphism of $\Gamma$ such that $J(e_i)=-e_i$ for every $i$.
\begin{lemma} \label{LemWinE}
Let $t_1,\dots,\,t_n \in \{0,\dots, m-1\}$ and let 
$$w(m,t_1,\dots,t_n)=a^{n+1}  (me_0)  a^{-1}  (-t_1e_0 ) a^{-1}  (-t_2e_0 ) a^{-1} \cdots (-t_n e_0)  a^{-1}.$$
Then the following are equivalent:
\begin{itemize}
\item[$(i)$] The image of $w(m,t_1,\dots,t_n)$ in $\Gamma$ belongs to $E$,
\item[$(ii)$] The image of $w(m,t_1,\dots,t_n) e_0  w(-m,-t_1,\dots,-t_n)(-e_0)$ in $\Gamma$ is trivial,
\item[$(iii)$] $t_i=r_i(\xi)$ for every $1\le i \le n$.
\end{itemize}
\end{lemma}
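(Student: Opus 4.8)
The plan is to run Britton's lemma on the word $w:=w(m,t_1,\dots,t_n)$ inside $\Gamma=\operatorname{HNN}(E,E_{m,\xi},E_1,\phi)$, reducing it from the left. Two implications are immediate. If $t_i=r_i(\xi)$ for all $i\le n$, then $w$ is literally the word $w_{n+1}$ introduced just above the lemma, so $w=e_{n+1}$ in $\Gamma$, which is $(i)$. Moreover $w(-m,-t_1,\dots,-t_n)$ is the image of $w$ under the automorphism $J$ ($a\mapsto a$, $e_0\mapsto -e_0$), hence equals $J(e_{n+1})=-e_{n+1}$; since $e_{n+1},e_0,-e_{n+1},-e_0$ all lie in the abelian group $E$, the product $w\,e_0\,w(-m,-t_1,\dots,-t_n)\,(-e_0)$ equals $e_{n+1}+e_0+(-e_{n+1})+(-e_0)=0$, which is $(ii)$.

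For the two converses I would argue the contrapositives together. Suppose $(iii)$ fails and let $k\in\{1,\dots,n\}$ be minimal with $t_k\neq r_k(\xi)$. Performing the $k$ successive pinches $a(me_0)a^{-1}=e_1$, $a(e_1-r_1(\xi)e_0)a^{-1}=e_2,\ \dots,\ a(e_{k-1}-r_{k-1}(\xi)e_0)a^{-1}=e_k$ (each legitimate because $t_j=r_j(\xi)$ for $j<k$) turns $w$ into
\[
a^{\,n+1-k}\,(e_k-t_k e_0)\,a^{-1}(-t_{k+1}e_0)\,a^{-1}\cdots a^{-1}(-t_n e_0)\,a^{-1}.
\]
The crucial point is that $e_k-t_k e_0\notin E_{m,\xi}$: writing a general element of $E_{m,\xi}$ in the free basis $\{me_0\}\cup\{e_i-r_i(\xi)e_0:i\ge1\}$, membership of $e_k-t_k e_0$ would force $m\mid(r_k(\xi)-t_k)$, impossible since $t_k$ and $r_k(\xi)$ are distinct elements of $\{0,\dots,m-1\}$. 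Hence no further pinch is available — the remaining letters $a^{\pm1}$ all occur in patterns $a^{-1}\,g\,a^{-1}$ — so the displayed word is a reduced form with $2(n+1-k)\ge2$ letters $a^{\pm1}$. By Britton's lemma it represents a non-trivial element of $\Gamma$ not lying in $E$, contradicting $(i)$.

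For $(ii)$ I would reuse this reduced form. As $w(-m,-t_1,\dots,-t_n)=J(w)$ and $J$ fixes $a$ and preserves $E_{m,\xi}$ and $E_1$, applying $J$ to the word above produces a reduced form $a^{\,n+1-k}(-e_k+t_k e_0)a^{-1}(t_{k+1}e_0)a^{-1}\cdots a^{-1}(t_n e_0)a^{-1}$ of $w(-m,-t_1,\dots,-t_n)$, with the same number of letters $a^{\pm1}$, again beginning with $a$ and ending with $a^{-1}$. Concatenating, the only new junction created in $w\,e_0\,w(-m,-t_1,\dots,-t_n)\,(-e_0)$ has the shape $\cdots a^{-1}\cdot e_0\cdot a\cdots$, which is not a pinch since $e_0\notin E_1$; therefore the whole word is a reduced form with $4(n+1-k)>0$ letters $a^{\pm1}$ and represents a non-trivial element, contradicting $(ii)$. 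Together with the first paragraph this gives the three equivalences.

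The substance here is bookkeeping rather than ideas; the one place to be careful is the left-to-right reduction of $w$ — checking that it proceeds through exactly the listed pinches until it gets stuck at $a(e_k-t_k e_0)a^{-1}$, that no hidden pinch survives in the tail, and that concatenating the two reduced words around $e_0$ produces no cancellation. Everything else is a one-line computation in the free abelian group $E$ or a direct appeal to Britton's lemma.
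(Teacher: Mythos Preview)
Your proof is correct and uses essentially the same ingredients as the paper: the step-by-step Britton reduction of $w$ pinching $a(e_{j}-r_j(\xi)e_0)a^{-1}=e_{j+1}$ until it gets stuck at $e_k-t_ke_0\notin E_{m,\xi}$, and the automorphism $J$ to handle $w(-m,-t_1,\dots,-t_n)$. The only difference is organizational: the paper proves the cycle $(i)\Rightarrow(ii)\Rightarrow(iii)\Rightarrow(i)$ (in particular deducing $(ii)$ from $(i)$ directly via $J$, without passing through $(iii)$), whereas you prove $(iii)\Leftrightarrow(i)$ and $(iii)\Leftrightarrow(ii)$ separately via contrapositives, explicitly checking that the concatenation around $e_0$ creates no new pinch since $e_0\notin E_1$.
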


\begin{proof}
$(i)\Rightarrow(ii)$: As $J(w(m,t_1,\dots,t_n))=w(-m,-t_1,\dots,-t_n)$ and $J$ maps any element of $E$ to its inverse, we deduce that
$w(m,t_1,\dots,t_n)e_0w(-m,-t_1,\dots,-t_n)(-e_0)=1$ in $\Gamma$.

$(ii)\Rightarrow(iii)$:
Assume that $w=w(m,t_1,\dots,t_n) e_0 w(-m,-t_1,\dots,-t_n)(-e_0)$ has a trivial image in $\Gamma$. We prove the following claim by induction: for every $1 \le i \le n$, we have $w=v_ie_0J(v_i)(-e_0)$ with $v_i=a^{n+1-i}  (e_i-t_ie_0)  a^{-1}  (-t_{i+1}e_0 ) a^{-1} \cdots (-t_ne_0)  a^{-1}$ and $t_j=r_j(\xi)$ for every $1 \le j \le i$.
For $i=1$, the claim follows from the relation $a  (me_0)  a^{-1}=e_1$ in $\Gamma$.
Assume that the claim holds for some $i \ge 1$. We have then $w=v_ie_0J(v_i)(-e_0)=1$ in $\Gamma$. By Britton's lemma, the sequence $(a,\,e_i-t_ie_0,\,a^{-1})$ is not reduced and hence $e_i-t_ie_0 \in E_{m,\xi}$. Therefore $t_i=r_i(\xi)$ and $a  (e_i-t_i e_0)  a^{-1}=e_{i+1}$. We deduce that $w=v_{i+1}e_0J(v_{i+1})$ with $v_{i+1}=a^{n-i}  (e_{i+1}-t_{i+1}e_0)  a^{-1}  (-t_{i+2}e_0 )\cdots (-t_ne_0)  a^{-1}$, which completes the induction.

$(iii)\Rightarrow(i)$: it follows from the fact that $w(m,r_1(\xi),\dots,r_i(\xi))=w_i$ and $w_i=e_i$ in $\Gamma$ for $i \ge 1$.
\end{proof}

\begin{lemma} \label{LemTheta}
Assume that $m=m'> 1$. Let $\theta$ be a map such that  $\sigma \circ \theta(a)=1$ and $\theta(e_0)= e_0$. If $\theta$ induces an homomorphism from $\Gamma$ to $\Gamma'$ then $r_i(\xi)=r_i(\xi')$ for every $i \ge 1$. In this case, the restriction of $\theta$ to $E$ is the identity and we have $|\theta(\gamma)|_{a}=|\gamma|_a|\theta(a)|_a$ for every $\gamma \in \Gamma$. In particular $\theta$ is injective.
\end{lemma}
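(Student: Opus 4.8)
The plan is to analyze the map $\theta$ using the presentation (\ref{EqPresE2}) of $\Gamma$, namely $\Gamma = \Pres{a,e_0}{\br{e_0,w_i}=1,\ i \ge 1}$, where $w_i = w(m,r_1(\xi),\dots,r_{i-1}(\xi))$. Since $\theta$ is assumed to induce a homomorphism $\Gamma \to \Gamma'$, the relators of $\Gamma$ must map to trivial elements of $\Gamma'$; in particular, $\theta(\br{e_0, w_i}) = 1$ in $\Gamma'$ for every $i \ge 1$. Write $\theta(a) = \alpha$, so $\sigma(\alpha) = 1$ and $\theta(e_0) = e_0$. Then $\theta(w_i)$ is obtained from $w_i$ by replacing each letter $a$ by $\alpha$; I would like to compare this with the word $w(m', r_1(\xi'),\dots)$ in $\Gamma'$.

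The first key step is to reduce to the case $\alpha = a$, or at least to control the $a$-exponents. Because $\sigma(\alpha) = 1$, we may write $\alpha = g a h$ with $g \in \langle a \rangle^{\Gamma'}$... more carefully: using the HNN normal form in $\Gamma'$, after conjugating we can arrange $\alpha$ to have $\sigma$-value $1$ and look at its reduced form $\alpha = c_0 a^{\eps_1} c_1 \cdots$ with $c_j \in E$. The cleanest route, I expect, is the following: consider the composite $q_{m',\xi'} \circ \theta : \Gamma \to \Z \wr \Z$. Since $\theta(e_0) = e_0$ and $\sigma(\theta(a)) = 1$, Remark \ref{AutomSpecZZ} shows that after composing with a suitable automorphism $\psi_Q$ of $\Z \wr \Z$ (which fixes $(P,0)$ for all $P$), we may assume $q_{m',\xi'}(\theta(a)) = (0,1) = q_{m',\xi'}(a)$. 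This pins down $\theta$ modulo the kernel of $q_{m',\xi'}$, but more importantly it lets us compute $q_{m',\xi'}(\theta(w_i))$: it equals $q_{m',\xi'}$ applied to the "same-looking" word built from $(0,1)$ and $(m e_0) \mapsto (m,0)$, which is $\frac{\xi_n}{m}P_{i-1}$-type data. Comparing the $b$-exponents forces relations among the $r_j(\xi)$ and $r_j(\xi')$.

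The sharper argument, which I expect is what the authors use, is to apply Lemma \ref{LemWinE} directly in $\Gamma'$. By the lemma (with $m' = m$), the image of $w(m, t_1, \dots, t_n)$ lies in $E$ (equivalently, $w(m,t_1,\dots,t_n)\,e_0\,w(-m,-t_1,\dots,-t_n)(-e_0) = 1$) in $\Gamma'$ if and only if $t_i = r_i(\xi')$ for all $i$. Now apply $\theta$ to the relation $\br{e_0, w_{i+1}} = 1$ of $\Gamma$: since $w_{i+1} = w(m, r_1(\xi), \dots, r_i(\xi))$ and $\theta(e_0) = e_0$, we get that $\theta(w(m, r_1(\xi),\dots,r_i(\xi)))$ commutes with $e_0$ in $\Gamma'$. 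Using that $\sigma(\theta(a)) = 1$ and the structure of the centralizer of $e_0$ from Corollary \ref{CorSlender} (the centralizer of $e_0$ in $\Gamma'$ for $|m|>1$ is exactly $E$), together with the fact that $\theta$ fixes $e_0$ and is "$\sigma$-preserving on $a$", I would argue inductively that $\theta(w_{i}) = w_i'$ where $w_i' = w(m, r_1(\xi'),\dots,r_{i-1}(\xi'))$ is the corresponding element $e_i$ of $\Gamma'$; pushing this through Lemma \ref{LemWinE} forces $r_j(\xi) = r_j(\xi')$ for all $j \ge 1$ by induction on $j$. Once $r_j(\xi) = r_j(\xi')$ for all $j$, the presentations (\ref{EqPresE2}) of $\Gamma$ and $\Gamma'$ literally coincide, so $\Gamma = \Gamma'$; and the map $\theta$ restricted to $E$ sends $e_0 \mapsto e_0$ and hence (by the relations $a(me_0)a^{-1} = e_1$, etc., applied with $\theta(a)$ of $\sigma$-value $1$) sends $e_i \mapsto e_i$, i.e. is the identity on $E$.

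The remaining bookkeeping — that $|\theta(\gamma)|_a = |\gamma|_a \cdot |\theta(a)|_a$ and that $\theta$ is injective — should follow from the HNN normal form. Writing $\gamma = c_0 a^{\eps_1} c_1 \cdots a^{\eps_k} c_k$ in reduced form in $\Gamma = \tBS{m}{\xi}$ with $c_j \in E$, we have $\theta(\gamma) = c_0 \theta(a)^{\eps_1} c_1 \cdots \theta(a)^{\eps_k} c_k$ since $\theta$ fixes $E$ pointwise; because $\theta(a)$ is hyperbolic with $\sigma(\theta(a)) = 1$ and each $c_j \in E$, Britton's lemma guarantees no cancellation occurs — the $c_j$ with $1 \le j \le k-1$ are by reducedness not in $E_{m,\xi}$ or $E_1$ as appropriate, and this property is preserved because $\Gamma = \Gamma'$ has the same subgroups. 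Hence $|\theta(\gamma)|_a = k \cdot |\theta(a)|_a = |\gamma|_a |\theta(a)|_a$, and in particular $\theta(\gamma) \ne 1$ for $\gamma$ with $|\gamma|_a > 0$; for $\gamma \in E \setminus \{1\}$, $\theta(\gamma) = \gamma \ne 1$. So $\theta$ is injective. The main obstacle I anticipate is the careful inductive extraction of $r_j(\xi) = r_j(\xi')$: one must make sure that applying $\theta$ to $w_{i+1}$ really yields a word of the form $w(m, t_1, \dots, t_i)$ with $t_j$ controlled — this requires knowing that $\theta(a)$ can be normalized (by a conjugation in $\Gamma'$ that we are free to absorb, or by noting the statement allows $\theta(a)$ to be arbitrary of $\sigma$-value $1$) so that Lemma \ref{LemWinE} applies cleanly, and the interplay with conjugation needs to be handled so as not to disturb $\theta(e_0) = e_0$.
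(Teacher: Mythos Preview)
Your overall strategy matches the paper's --- use the centralizer of $e_0$ (Corollary \ref{CorSlender}) to force $\theta(e_i)\in E$, then invoke Lemma \ref{LemWinE} to extract $r_j(\xi)=r_j(\xi')$ --- but the gap you yourself flag at the end is real and your proposed patches do not close it.

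The missing ingredient is the last clause of Lemma \ref{LemFixEll}: if $\gamma\in E$ and $g^{-1}\gamma g\in E$, then $g^{-1}\gamma g = a^{-\sigma(g)}\gamma a^{\sigma(g)}$. The paper writes $\theta(a)=z a e$ with $e\in E$ and $\sigma(z)=0$, and proves by induction that $\theta(e_i)=w_i$ in $\Gamma'$ --- with the $\xi$-parameters, not the $\xi'$-parameters. Indeed, assuming $\theta(e_i)=w_i\in E$, one has $\theta(e_{i+1}) = (za)\bigl(w_i - r_i(\xi)e_0\bigr)(za)^{-1}$; since this lies in $E$ and $\sigma(za)=1$, Lemma \ref{LemFixEll} gives $\theta(e_{i+1}) = a\bigl(w_i-r_i(\xi)e_0\bigr)a^{-1} = w_{i+1}$. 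Then $w_i\in E$ in $\Gamma'$ for all $i$, so Lemma \ref{LemWinE} yields $r_j(\xi)=r_j(\xi')$. Your suggestion to normalize $\theta(a)$ by a conjugation in $\Gamma'$ does not work here: conjugating $\theta$ would move $\theta(e_0)$ away from $e_0$, and the lemma is stated for arbitrary $\theta$ with $\theta(e_0)=e_0$.

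There is a second, smaller gap in your argument for $|\theta(\gamma)|_a = |\gamma|_a\,|\theta(a)|_a$. Knowing that the interior $c_j$ avoid $E_{m,\xi}$ or $E_1$ is not enough to prevent cancellation at a junction $\theta(a)^{\pm1}c_j\theta(a)^{\mp1}$: one also needs that in a reduced form $\theta(a)=c_1a^{\epsilon_1}\cdots a^{\epsilon_l}c_{l+1}$ both $\epsilon_1=\epsilon_l=1$. The paper obtains this from the relations $\theta(a)(me_0)\theta(a)^{-1}=e_1$ and $\theta(a)^{-1}e_1\theta(a)=me_0$ (valid once $\theta|_E=\mathrm{id}$) via Britton's lemma.
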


\begin{proof}
We can write $\theta(a)=zae$ with $e \in E$ and $z \in \Gamma'$ such that $\sigma(z)=0$.
Assume that $\theta$ induces an homomorphism from $\Gamma$ to $\Gamma'$. We show by induction that $\theta(e_i)=w_i$ for every $i \ge 1$. First, observe that $\theta(e_i)$ commutes with  $\theta(e_0)=e_0$ for every $i \ge 1$. Hence $\theta(e_i) \in E$ for every $i \ge 1$ by Corollary \ref{CorSlender}. In particular, $\theta(e_1)=\theta(w_1)=z  a  (me_0) a^{-1}  z^{-1}\in E$. As $\sigma(z)=0$, we deduce from Lemma \ref{LemFixEll} that $\theta(e_1)=w_1$. Assume now that $\theta(e_i)=w_i$ for some $i \ge 1$. As $e_{i+1}=a ( e_i-r_i(\xi)e_0)  a^{-1}$ in $\Gamma$, we have $\theta(e_{i+1})=z a  (w_i -r_i(\xi)e_0)  a^{-1}  z^{-1} \in E$. We obtain $\theta(e_{i+1})=w_{i+1}$ by Lemma \ref{LemFixEll}, wich completes the induction. As $w_i=\theta(e_i) \in E$ for every $i \ge 1$, we deduce from Lemma \ref{LemWinE} that $r_i(\xi)=r_i(\xi')$ for every $i \ge 1$.

Assume that the latter conditions holds and let $\theta(a)=c_1  a^{\epsilon_1} 
c_2 a^{\epsilon_2} \cdots c_l  a^{\epsilon_l}c_{l+1}$ (with $\epsilon_j=\pm1$
and $c_j \in E$ for every $j$) be a reduced form in $\Gamma'$. Since $\theta(a) 
(me_0) \theta(a)^{-1}=e_1$ and $\theta(a)^{-1}  e_1 \theta(a)=me_0$, we have
$\epsilon_1=\epsilon_l=1$ by Britton's lemma. It readily follows that
$|\theta(\gamma)|_{a}=|\gamma|_a|\theta(a)|_a$ for every $\gamma \in \Gamma$. 
\end{proof}

%%%%%%%%%%%%%%%%%%%%%%%%%%%%%%%%%%%%%%%%%%%%%%%%%%%%%%%%%%%%%%%%%%%%%%%%%%%%%%%%%%%%%%%%%%%%%%%%%%
\paragraph{Hopf property and residual finiteness} \label{ParHopf}
%%%%%%%%%%%%%%%%%%%%%%%%%%%%%%%%%%%%%%%%%%%%%%%%%%%%%%%%%%%%%%%%%%%%%%%%%%%%%%%%%%%%%%%%%%%%%%%%%

An important reason for considering Baumslag-Solitar groups was, at the origin, that they gave the first examples of non-hopfian one-relator groups \cite{BS62}. Let us recall that a group $G$ is \emph{hopfian} if every surjective endomorphism from $G$ is an isomorphism. It is known that the Hopf property is neither open \cite{ABL+05,Sta06a} nor closed \cite[Pr. 5.10]{CGP07} in the space of marked groups. 

A group $G$ is said to be \emph{residually finite} if for every non-trivial element $g$ of $G$ there is a finite quotient $F$ of $G$ such that the image of $g$ in $F$ is non-trivial. For $m=1$, the only limit is $\tBS{1}{0} = \Z\wr\Z$. This group is residually finite, hence hopfian by a well-known theorem of Malcev; see e.g. \cite[Theorem IV.4.10]{LS77}. 

\begin{definition}
 A group $G$ is \emph{co-hopfian} if every injective homomorphism from $G$ to itself is an isomorphism.
\end{definition}

\begin{theorem} \label{ThHopf}
The group $\Gamma$ is hopfian but not co-hopfian.
\end{theorem}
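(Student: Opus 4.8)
The plan is to handle the two assertions separately, since they have quite different flavours. The easier one is that $\Gamma=\tBS{m}{\xi}$ is \emph{not} co-hopfian: by Lemma \ref{LemEmbedEta} (with $d=1$), or more directly by exhibiting an explicit embedding, one produces an injective homomorphism $\Gamma \hookrightarrow \Gamma$ that is not onto. Concretely, the map $a \mapsto a$, $e_0 \mapsto e_1 = a(me_0)a^{-1}$ should induce a homomorphism; injectivity follows from the $t$-length bookkeeping of Lemma \ref{LemFixEll} (the image of $B$ lands in the subgroup generated by $e_1,e_2,\dots$, and one checks no relations collapse), while non-surjectivity holds because $e_0$ is not in the image. (Alternatively, since $m>1$ one may cite that $\Gamma$ contains the non-abelian free group $\langle e_0, ae_0a^{-1}\rangle$ and use a rescaling.) I expect this half to be short.

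For hopficity, the strategy is to use the analysis of surjective homomorphisms already set up in this section. Let $p:\Gamma \to \Gamma$ be a surjective endomorphism; we must show $p$ is injective. By Proposition \ref{PropEpi} (applied with $\Gamma'=\Gamma$, so $m'=m$), since $m>1$ we get $\sigma(p(a))=1$ and, after conjugating $p$ by an element of $\Gamma$ (which does not affect whether $p$ is an isomorphism), we may assume $p(e_0)=\epsilon e_0$ with $\epsilon=\pm1$. Post-composing with the automorphism $J$ of $\Gamma$ if necessary (recall $J(a)=a$, $J(e_0)=-e_0$), we may further assume $p(e_0)=e_0$. Now $p$ is a homomorphism $\Gamma\to\Gamma$ with $\sigma\circ p(a)=1$ and $p(e_0)=e_0$, i.e. exactly the situation of Lemma \ref{LemTheta} with $m=m'$ and $\xi=\xi'$ (so the congruence $r_i(\xi)=r_i(\xi')$ is automatic). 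That lemma then tells us $p$ restricts to the identity on $E$ and satisfies $|p(\gamma)|_a = |\gamma|_a\,|p(a)|_a$ for all $\gamma$; in particular $p$ is injective.

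The only remaining point — and the step I expect to require a little care — is to upgrade this to: \emph{every} surjective endomorphism $p$ (not just one already normalized) is injective. The reductions above are all by composing $p$ with isomorphisms of $\Gamma$ on either side (inner automorphisms and $J$), and injectivity is invariant under such composition, so this is purely formal once one observes that the conjugation needed in Proposition \ref{PropEpi} is by an element of $\Gamma$ itself and that $J\in\operatorname{Aut}(\Gamma)$. One subtlety worth spelling out: Lemma \ref{LemTheta} is stated for a \emph{map} $\theta$ with $\sigma\circ\theta(a)=1$ and $\theta(e_0)=e_0$ that \emph{happens to induce} a homomorphism — our normalized $p$ is such a homomorphism, so the lemma applies verbatim, and the surjectivity hypothesis on $p$ is not even needed for injectivity (it was only used to invoke Proposition \ref{PropEpi}). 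Thus $\Gamma$ is hopfian. This completes both halves.
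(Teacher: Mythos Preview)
Your hopfian argument is correct and essentially identical to the paper's: normalize a surjective $p$ via Proposition~\ref{PropEpi} and the automorphism $J$ so that $\sigma(p(a))=1$ and $p(e_0)=e_0$, then invoke Lemma~\ref{LemTheta} to get injectivity; since the normalizations are by automorphisms, this suffices.

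For non--co-hopfian, however, your proposed map $a\mapsto a$, $e_0\mapsto e_1$ is different from the paper's and your justification has a gap. The paper instead takes any $k>1$ coprime to $m$ and uses $\theta(a)=a$, $\theta(e_0)=ke_0$; one checks immediately from presentation~(\ref{EqPresE1}) that $\theta(e_i)=ke_i$ for all $i$, so $\theta|_E$ is multiplication by $k$, injectivity follows from $|\theta(\gamma)|_a=|\gamma|_a$ together with $\theta|_E$ injective, and non-surjectivity from $\theta(\Gamma)\cap E = kE \neq E$. This is much cleaner precisely because $\theta$ preserves the subgroups $E_{m,\xi}$ and $E_1$ (coprimality of $k$ and $m$ is used here), so reduced sequences go to reduced sequences.

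Your map, by contrast, sends all of $E$ into $E_1$ (indeed $\theta(e_0)=e_1$ and $\theta(e_i)=me_{i+1}+r_i(\xi)e_1$ for $i\ge 1$), so elements of $E\setminus E_1$ acquire images in $E_1$ and new Britton reductions can occur: for instance $a^{-1}e_0a$ is reduced in $\Gamma$ but $\theta(a^{-1}e_0a)=a^{-1}e_1a=me_0\in E$. Hence the ``$t$-length bookkeeping'' you invoke (and Lemma~\ref{LemFixEll}, which concerns fixed-point sets, not injectivity) does not establish that $|\theta(\gamma)|_a=|\gamma|_a$. Your map may well be injective, but this requires a genuinely different argument. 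Also, your reference to Lemma~\ref{LemEmbedEta} with $d=1$ yields only the identity map, so it does not help. The simplest fix is to abandon $e_0\mapsto e_1$ and use the paper's $e_0\mapsto ke_0$.
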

We denote by $J$ the automorphism of $\Gamma$ defined by $J(a)=a$ and $J(e_0)=-e_0$.

\begin{proof}
We can assume that $m>1$.
Let $p$ be a surjective endomorphism of $\Gamma$. By Proposition \ref{PropEpi}, there is $\epsilon \in \{0,1\}$ and an innner automorphism $\tau$ of $\Gamma$ such that $p'=J^{\epsilon} \circ \tau \circ p$ satisfies $\sigma \circ p'(a)=1$ and $p'(e_0)=e_0$ with. By Lemma \ref{LemTheta}, $p'$ is injective and hence so is $p$. Therefore $p$ is an isomorphism.

Let $k \in \N \setminus \{1\}$ be coprime  with $m$ and let $\theta$ be the map
defined by $\theta(a)=a,\,\theta(e_0)=ke_0$. Considering the group presentation
(\ref{EqPresE1}), we deduce that $\theta$ induces an endomorphism of $\Gamma$.
We can readily check that $\theta(e)=ke$ for every $e \in E$ and that
$|\theta(\gamma)|_{a}=|\gamma|_a$ for every $\gamma \in \Gamma$. It follows that
$\theta$ is injective. We also deduce that $\theta(\Gamma)\cap E = \theta(E) = kE
\neq E$. Hence $\theta$ is not surjective.
\end{proof}

We say that $\xi \in \Z_m=\bigoplus_{p|m} \Z_p$ is \emph{algebraic} if
the $p$-component $\xi_p \in \Z_p$ of $\xi$ is
algebraic over $\Q$ for every prime $p$ dividing $m$. It only means
that $\xi$ is a root in $\Z_m$ of some polynomial with coefficients in $\Z$. 
Let $d=\gcd(m,\xi)$ and $\eta=\pi(\xi/d)$ where $\pi:\Z_m \longrightarrow
\Z_{\mh}$ is the canonical map.  It readily follows from the definition of
Proposition \ref{PropBandBm}.$vi$ that $\xi$ (resp. $\eta$) is algebraic if and
only if $\ker \chi\neq 1$ (resp. $\ker \hat{\chi}\neq 1$). If $\xi$ is
invertible in $\Z_m$, i.e. $\gcd(m,\xi)=1$, then the two kernels coincide with
$\bigcap_{i \ge 0}a^{-i}E a^i$.

\begin{proposition} \label{PropResFin}
 If $\eta$ is algebraic then $\Gamma$ is not residually finite.
\end{proposition}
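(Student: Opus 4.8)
The plan is to exploit the homomorphism $\hat\chi = \pi\circ\chi : B \to \Z_{\mh}$ from Proposition \ref{PropBandBm}.$vi$, together with the characterization of algebraicity given just before the statement: $\eta$ algebraic means $\ker\hat\chi \neq 1$, which (still by Proposition \ref{PropBandBm}.$vi$) forces $\bigcap_{i\ge 0} a^{-i}B a^i$ to be non-trivial. Transporting through the isomorphism $f$ of Corollary \ref{CorHNN2}, we get a non-trivial normal subgroup-like object $N := \bigcap_{i\ge 0} a^{-i}E a^i$ inside $\Gamma = \tBS{m}{\xi}$, which is contained in $\ker\hat\chi$ and contains $\ker\chi$. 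The key structural feature is that $N$ is \emph{not} normal in $\Gamma$ but is normalized by $\langle a\rangle$ in a one-sided way; more precisely, Proposition \ref{PropBandBm}.$vi$ tells us that when $N$ is non-trivial it contains an element $\gamma_0 \in E_{m,\xi}\setminus E_1$ (equivalently, $\gamma_0$ with $P_{\gamma_0}$ of minimal degree). I would then produce an infinite descending chain of conjugates showing residual finiteness fails.

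The concrete strategy: let $\gamma_0 \in N$ be non-trivial with $P_{\gamma_0}(X)$ of minimal degree $t$, so $\gamma_0 \in E_{m,\xi}\setminus E_1$. Because $\gamma_0 \in N$, the element $a^{-i}\gamma_0 a^i$ lies in $E$ for every $i \ge 0$; set $\delta_i := a^{i}\gamma_0 a^{-i}$, which by the HNN relations and the fact that $\gamma_0 \in E_{m,\xi}$ satisfies $\delta_i = a\,\delta_{i-1}\,a^{-1}$ and is well-defined in $E$ for all $i$, with $q(\delta_i) = X^i P_{\gamma_0}(X)$ (using Proposition \ref{PropBandBm}.$v$). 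Now suppose for contradiction that $\Gamma$ is residually finite and pick a finite quotient $\phi:\Gamma \to F$ with $\phi(\gamma_0)\neq 1$. Let $n = |F|$. In a finite group, $\phi(a)$ has finite order, say $\ell \mid n!$; consider the images $\phi(\delta_i)$. These are conjugates of $\phi(\gamma_0)$, hence all non-trivial and of the same order in $F$. But the elements $\delta_i$ all commute with each other (they lie in the abelian group $E$), and the subgroup they generate in $E$ is free abelian of infinite rank since the polynomials $X^i P_{\gamma_0}(X)$ are $\Z$-linearly independent; the obstruction to pushing this to $F$ is that $E$ is not finitely generated, so I cannot directly say $\phi|_E$ has finite image contradicting something. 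The cleaner route is the following.

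Use instead the algebraicity directly at the level of $\chi$/$\hat\chi$ and the wreath-product quotient. Let $\gamma_0 \in \ker\hat\chi$ be non-trivial as above, and for $i\ge 0$ let $\delta_i = a^i\gamma_0 a^{-i} \in E$, with $q(\delta_i)=X^i P_{\gamma_0}(X)$. If $\Gamma$ were residually finite, a finite quotient $\phi:\Gamma\to F$ separating $\gamma_0$ from $1$ would have $\phi(\delta_i)\neq 1$ for all $i$ (each $\delta_i$ is conjugate to $\gamma_0$), all lying in the finite abelian group $\phi(E)$ and all conjugate in $F$. Now exploit that $P_{\gamma_0}(\xi/m)=0$ (algebraicity, i.e. $\gamma_0\in\ker\chi$ after replacing $\gamma_0$ by a suitable power if only $\hat\chi$ vanishes — here one reduces to the invertible case via Lemma \ref{LemEmbedEta}, passing to $\bBS{\mh}{\eta}$ which embeds in $\Gamma$ and in which $\eta$ is invertible and algebraic so $\ker\chi\neq 1$). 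Then the polynomial relation $\xi\,P_{\gamma_0}(\xi/m) = m\,(\text{next})$ combined with the HNN relations gives, after multiplying through by a suitable power of $m$ to clear denominators, a relation in $\Gamma$ of the form $\delta_t^{k_t}\cdots\delta_0^{k_0} a^{?}\cdots = $ something expressing a non-trivial power of $\delta_0$ as a product of higher conjugates — i.e. $\gamma_0$, or a power of it, is "infinitely divisible modulo the $a$-action", which a finite quotient cannot accommodate because the orders of the $\phi(\delta_i)$ are all equal yet the defining relation forces them to satisfy an unbounded divisibility chain.

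The main obstacle, and where I would spend the most care, is exactly this last step: turning the \emph{analytic} vanishing $P_{\gamma_0}(\xi/m)=0$ in $\Z_m$ into an honest \emph{algebraic} obstruction inside a hypothetical finite quotient $F$. The right formalization is: $\chi$ (resp. $\hat\chi$) does not factor through any finite quotient of $\Gamma$ because its image $\chi(B)$ is dense in $\Z_m$ (it contains $\Z$, as $q(b)=1$ so $\chi(b)=1/m\cdot$unit... more precisely $\chi(E)$ contains $\Z$), hence infinite, yet $N=\ker\hat\chi$ being non-trivial and contained in every $a^{-i}Ea^i$ means $N$ lies in the kernel of \emph{every} homomorphism to a finite group that is compatible with the $a$-conjugation structure. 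One then invokes that a residually finite HNN extension with abelian base, when the associated subgroups are "as intertwined" as $E_{m,\xi}$ and $E_1$ are here, cannot have such an $N$: any finite quotient collapses $N$, contradicting $\phi(\gamma_0)\neq 1$. Making "cannot accommodate an unbounded divisibility chain" precise — likely via the observation that in $F$ the element $\phi(a)$ has finite order $\ell$, so $\phi(\delta_\ell)=\phi(a^\ell\gamma_0 a^{-\ell})=\phi(\gamma_0)$, forcing periodicity in the chain $q(\delta_i)=X^iP_{\gamma_0}(X)$, which contradicts the $\Z$-linear independence of these polynomials once the algebraic relation $P_{\gamma_0}(\xi/m)=0$ is used to produce a genuine relation among finitely many $\delta_i$ in $\Gamma$ that the periodicity would violate — is the crux and deserves a clean self-contained argument.
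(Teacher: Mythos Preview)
Your setup is exactly right: reduce to the invertible case via Lemma \ref{LemEmbedEta}, then use Proposition \ref{PropBandBm}.$vi$ to get a non-trivial $N=\bigcap_{i\ge 0}a^{-i}Ea^i$ and an element $e\in N$ with $e\in E_{m,\xi}\setminus E_1$. But from there the argument never closes. You try to derive a contradiction from the fact that the $\delta_i=a^i\gamma_0a^{-i}$ are $\Z$-linearly independent in $E$ while $\phi(a)$ has finite order in $F$; however, extra relations among the $\phi(\delta_i)$ in $F$ are perfectly compatible with $F$ being a quotient of $\Gamma$ --- a quotient is allowed to satisfy relations that $\Gamma$ does not. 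Your ``unbounded divisibility chain'' and ``periodicity versus independence'' heuristics never produce a relation that holds in $\Gamma$ and fails in $F$, which is what you would need.

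What you are missing is a single concrete element that is non-trivial in $\Gamma$ yet dies in \emph{every} finite quotient. The paper produces it directly: take $\gamma=[e,\,ae_0a^{-1}]$. Since $e\notin E_1$, the expression $(-e)\,a\,(-e_0)\,a^{-1}\,e\,a\,e_0\,a^{-1}$ is reduced, so $\gamma\neq 1$ by Britton's Lemma. On the other hand, for any finite quotient $F$ of order $n$, the hypothesis $e\in N$ gives $e'\in E$ with $e=a^{-(n-1)}e'a^{n-1}$ in $\Gamma$; since $a^n=1$ in $F$ this becomes $e=ae'a^{-1}$ there, whence $\gamma=a[e',e_0]a^{-1}=1$ in $F$ because $E$ is abelian. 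The whole point is that $N\subset E_{m,\xi}$ lets you ``shift'' $e$ through one more $a$-conjugation than its normal form allows, and finite order of $a$ in $F$ turns that into a genuine commutator of elements of $E$. Your element $\gamma_0$ by itself has no reason to die in finite quotients; the trick is to pair it with $ae_0a^{-1}$ in a commutator.
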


\begin{proof}
By Lemma \ref{LemEmbedEta}, $\overline{BS}(\mh,\eta)$ embeds isomorphically into $\ovBS$.
As a subgroup of a residually finite group is residually finite, we can assume
that $\gcd(m,\xi)=1$.
By Proposition \ref{PropBandBm}.$vi$, we can pick $e \in \bigcap_{i\ge 0}a^{-i}E
a^{i} \setminus E_1$. We set
$\gamma=\br{e,ae_0a^{-1}}=(-e)a(-e_0)a^{-1}eae_0a^{-1}$. By Britton's Lemma,
$\gamma$ is not trivial in $\Gamma$. We show that $\gamma$ has trivial image in
any finite quotient of $\Gamma$, which proves that $\Gamma$ is not residually
finite. Let $F$ be a finite quotient of $\Gamma$ with cardinal $n$. There is $e'
\in E$, such that $e=a^{-n+1}e'a^{n-1}$ in $\Gamma$. Since $a^n=1$ in $F$, we
have $\gamma=\br{a^{-n+1}e'a^{n-1},ae_0a^{-1}}=a\br{e',e_0}a^{-1}=1$ in $F$. 
The proof is then complete.
\end{proof}

\paragraph{Classification of limits up to group isomorphism}

\begin{theorem} \label{ThClass}
The group $\Gamma$ is isomorphic to $\Gamma'$ if and only if $m=m'$ and $r_i(\xi)=r_i(\xi')$ for every $i \ge 1$.
\end{theorem}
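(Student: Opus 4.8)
The plan is to prove the two implications separately, exploiting the results already established in this section together with the classification of the sequences $(r_i(\xi))$ given by Proposition~\ref{PropRiSiCont}.

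For the ``if'' direction, suppose $m=m'$ and $r_i(\xi)=r_i(\xi')$ for every $i\ge 1$. Here I would simply invoke the explicit presentation~(\ref{EqPresE2}): since the defining words $w_i=a^{i}(me_0)a^{-1}(-r_1(\xi)e_0)a^{-1}\cdots(-r_{i-1}(\xi)e_0)a^{-1}$ depend only on $m$ and the finite list $r_1(\xi),\dots,r_{i-1}(\xi)$, the equality of all these data forces $\Gamma$ and $\Gamma'$ to be presented by literally the same relators on the generating pair $(a,e_0)$. Hence the map $a\mapsto a$, $e_0\mapsto e_0$ induces an isomorphism. (Equivalently, one can observe that $(r_i(\xi))=(r_i(\xi'))$ forces $\xi$ and $\xi'$ to satisfy the hypotheses of Proposition~\ref{PropRiSiCont}.$(ii)$, so the corresponding marked groups $\ovBS$ and $\bBS{m}{\xi'}$ coincide, a fortiori $\Gamma\cong\Gamma'$.)

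For the ``only if'' direction, assume $\varphi:\Gamma\longrightarrow\Gamma'$ is an isomorphism. Applying Proposition~\ref{PropEpi} to $\varphi$ (a surjective homomorphism) gives that $m'$ divides $m$; applying it to $\varphi^{-1}$ gives that $m$ divides $m'$, so $m=m'$. If $m=1$ there is nothing more to prove, so assume $m>1$; then Proposition~\ref{PropEpi} also yields $\sigma(\varphi(a))=1$ and that $\varphi(e_0)$ is conjugate to $\epsilon e_0$ for some $\epsilon\in\{\pm1\}$. Post-composing $\varphi$ with an inner automorphism of $\Gamma'$ and, if $\epsilon=-1$, with the automorphism $J$, we obtain an isomorphism $\theta:\Gamma\longrightarrow\Gamma'$ with $\sigma\circ\theta(a)=1$ and $\theta(e_0)=e_0$. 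Now Lemma~\ref{LemTheta} (whose hypotheses are exactly these) applies and gives $r_i(\xi)=r_i(\xi')$ for every $i\ge 1$, which is the desired conclusion.

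The only genuinely delicate point is the normalization step in the ``only if'' direction: one must check that after composing with an inner automorphism one really can arrange $\theta(e_0)=e_0$ on the nose (not merely up to conjugacy) while keeping $\sigma(\theta(a))=1$, and that $J$ does not disturb $\sigma$. The first is immediate because conjugating $\theta$ by an element $g\in\Gamma'$ replaces $\theta(e_0)$ by $g\theta(e_0)g^{-1}$, so choosing $g$ to undo the conjugacy given by Proposition~\ref{PropEpi} works, and conjugation does not change $\sigma$-values; the second holds because $\sigma\circ J=\sigma$ by definition of $J$. Everything else is a direct appeal to the three cited results, so no further obstacle arises.
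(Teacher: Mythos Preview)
Your proof is correct and follows essentially the same approach as the paper's own proof: both directions rely on Proposition~\ref{PropEpi} (applied to $\varphi$ and $\varphi^{-1}$ to get $m=m'$), the normalization via an inner automorphism and $J$, and then Lemma~\ref{LemTheta} for the sequence $(r_i)$; the converse via the presentation. Your treatment is in fact slightly more explicit than the paper's in spelling out the normalization step and in handling the $m=1$ case after establishing $m=m'$ rather than before, but these are cosmetic differences.
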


\begin{proof}
Assume that $\Gamma$ is isomorphic to $\Gamma'$. If $m'=1$, then $\Gamma'$ is
isomorphic to $\Z \wr \Z$ and so is $\Gamma$. This forces $m=1$ for $\Gamma$
would contain a non-abelian free subgroup otherwise. It follows that
$r_i(\xi)=r_i(\xi')=0$ for every $i \ge 1$. Therefore we can assume that $m'>1$.
By Proposition \ref{PropEpi}, $m'$ divides $m$ and there is an isomorphism
$\theta:\Gamma \longrightarrow \Gamma'$ such that $\sigma \circ \theta(a)=1$ and 
$\theta(e_0)=e_0$. Considering $\theta^{-1}$, we also deduce that $m$ divides $m'$ and hence
$m=m'$. By Lemma \ref{LemTheta}, we have $r_i(\xi)=r_i(\xi')$ for every $i \ge
1$.

The converse follows immediatly from the group presentation (\ref{EqPresE1}).
\end{proof}

\paragraph{Automorphism group}
Let $e \in E$ and let $\phi_e$ be the map $\Gamma$ defined by $\phi_e(a)=ae$ and
$\phi(e_0)=e_0$. We deduce from the group presentation (\ref{EqPresE2}) that
$\phi_e$ induces an automorphism of $\Gamma$ with inverse map $\phi_{-e}$.
Moreover, we have $J \circ \phi_e \circ J=\phi_{-e}$. The following lemma is
then immediate.
\begin{lemma} \label{LemDih}
 The automorphisms $\phi_{e_0}$ and $J$ generate a group isomorphic
 to an infinite dihedral group, namely the semi-direct product $\Z e_0 \rtimes \Z/2 \Z$ 
where the action of $\Z/2\Z$ on $E$ is multiplication by $-1$.
\end{lemma}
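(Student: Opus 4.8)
The plan is to realise $\langle\phi_{e_0},J\rangle$ as a quotient of the infinite dihedral group $D_\infty=\Pres{r,s}{s^2=1,\,srs=r^{-1}}$ and then to check that this quotient map is faithful. Recall that in $D_\infty$ every element is one of the pairwise distinct elements $r^n$ or $r^n s$ with $n\in\Z$, so the homomorphism $\rho\colon D_\infty\to\langle\phi_{e_0},J\rangle$ sending $r\mapsto\phi_{e_0}$ and $s\mapsto J$ will be an isomorphism as soon as it is well defined (surjectivity being then automatic) and has trivial kernel.

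First I would record that $e\mapsto\phi_e$ is an injective homomorphism from the additive group $E$ into $\operatorname{Aut}(\Gamma)$. The one point deserving a word is that each $\phi_e$ fixes $E$ pointwise: this is an immediate induction along the relations $a(me_0)a^{-1}=e_1$ and $a(e_i-r_i(\xi)e_0)a^{-1}=e_{i+1}$ of presentation (\ref{EqPresE1}), using that $e$ commutes with every element of the abelian group $E$. Granting this, $\phi_e\circ\phi_{e'}(a)=\phi_e(ae')=ae\,e'=\phi_{e+e'}(a)$ while $\phi_e\circ\phi_{e'}(e_0)=e_0$, whence $\phi_e\circ\phi_{e'}=\phi_{e+e'}$; injectivity follows since $\phi_e(a)=ae=a$ forces $e=0$. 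In particular $\phi_{e_0}$ has infinite order because $\Z e_0$ is a free direct summand of $E$. Together with $J^2=\operatorname{id}$ (clear from $J(a)=a$, $J(e_0)=-e_0$) and the already-noted identity $J\circ\phi_e\circ J=\phi_{-e}$ (so $J\phi_{e_0}J=\phi_{e_0}^{-1}$), this yields the surjective homomorphism $\rho$. For its kernel: a non-trivial element of $D_\infty$ has the form $r^n$ with $n\neq0$ or $r^n s$ with $n\in\Z$. Now $\rho(r^n)=\phi_{ne_0}\neq\operatorname{id}$ for $n\neq0$ since $\phi_{e_0}$ has infinite order, and $\rho(r^n s)=\phi_{ne_0}\circ J\neq\operatorname{id}$ since otherwise $J=\phi_{-ne_0}$ would fix $e_0$, contradicting $J(e_0)=-e_0\neq e_0$. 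Hence $\rho$ is an isomorphism.

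It then remains to match the group $\langle\phi_{e_0},J\rangle\cong D_\infty$ with the announced semidirect product: the monomorphism $n\mapsto\phi_{ne_0}$ identifies $\langle\phi_{e_0}\rangle$ with $\Z e_0$, a normal subgroup of index $2$ complemented by the order-two element $J$ (which lies outside $\langle\phi_{e_0}\rangle$ by the kernel computation above), and $J\phi_{ne_0}J=\phi_{-ne_0}$ shows that $J$ acts by multiplication by $-1$. Thus $\langle\phi_{e_0},J\rangle=\Z e_0\rtimes\Z/2\Z$ with this action, as claimed. I do not expect a genuine obstacle; the only step that is not pure bookkeeping with the normal form of $D_\infty$ is the pointwise fixing of $E$ by $\phi_e$, which simultaneously underlies the composition law $\phi_e\circ\phi_{e'}=\phi_{e+e'}$ and the fact that $\phi_{e_0}$ has infinite order.
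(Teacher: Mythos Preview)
Your proof is correct and follows essentially the same line as the paper, which simply declares the lemma ``immediate'' from the previously recorded facts that $\phi_{-e}$ is the inverse of $\phi_e$ and that $J\circ\phi_e\circ J=\phi_{-e}$. You have merely spelled out in full the verification of the dihedral relations and the faithfulness of the resulting map from $D_\infty$, including the pointwise fixing of $E$ by $\phi_e$ and the injectivity of $e\mapsto\phi_e$; these are exactly the checks one would make to justify the paper's claim of immediacy.
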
 

Hence we can consider the semi-direct product $\Gamma \rtimes (\Z e_0 \rtimes \Z/2\Z)$ where the action 
of $\Z e_0  \rtimes \Z/2\Z$ on $\Gamma$ is the obvious one. We denote by
$\text{Inn}(\Gamma)$ the group of inner automorphisms of $\Gamma$ and by
$\text{Out}(\Gamma)=\text{Aut}(\Gamma)/\text{Inn}(\Gamma)$ the group of outer
automorphisms.

\begin{proposition} \label{PropAut}
Assume that $m>1$.
\begin{itemize}
\item $\text{Out}(\Gamma)$ is isomorphic to $\Z e_0 \rtimes \Z/2\Z$.
\item $\text{Aut}(\Gamma)$ is isomorphic to $\Gamma \rtimes (\Z e_0 \rtimes \Z/2\Z)$.
\end{itemize}
\end{proposition}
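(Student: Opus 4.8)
The plan is to reduce an arbitrary automorphism of $\Gamma$ to one of the maps $\phi_e$ modulo inner automorphisms and the involution $J$, and then to locate the subgroup $\{\phi_e:e\in E\}$ inside $\operatorname{Out}(\Gamma)$ using Proposition \ref{PropBandBm}$(vii)$.

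First I would normalize a given $\phi\in\operatorname{Aut}(\Gamma)$. Applying Proposition \ref{PropEpi} to $\phi$ (a surjective homomorphism $\Gamma\to\Gamma$, with $m'=m>1$) yields $\sigma(\phi(a))=1$ and $\phi(e_0)=g(\epsilon e_0)g^{-1}$ for some $g\in\Gamma$ and $\epsilon\in\{\pm1\}$. As $\sigma$ is invariant under inner automorphisms and under $J$, there is $\delta\in\{0,1\}$ such that $\psi:=J^{\delta}\circ\mathrm{inn}_{g^{-1}}\circ\phi$ satisfies $\psi(e_0)=e_0$, $\sigma(\psi(a))=1$, and $\phi=\mathrm{inn}_{g}\circ J^{\delta}\circ\psi$. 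By Lemma \ref{LemTheta} (applied to $\theta=\psi$), $\psi$ is the identity on $E$ and $|\psi(\gamma)|_a=|\gamma|_a\,|\psi(a)|_a$ for all $\gamma$; taking $\gamma=\psi^{-1}(a)$ gives $1=|a|_a=|\psi^{-1}(a)|_a\,|\psi(a)|_a$ with both factors positive, so $|\psi(a)|_a=1$. Since $\sigma(\psi(a))=1$, a reduced form of $\psi(a)$ is $c_0ac_1$ with $c_0,c_1\in E$, and then $\psi=\mathrm{inn}_{c_0}\circ\phi_{c_1c_0}$ (the two sides agree on $a$ and on $e_0$). Hence every automorphism of $\Gamma$ has the form $\mathrm{inn}_{g}\circ J^{\delta}\circ\phi_{e}$ with $\delta\in\{0,1\}$ and $e\in E$.

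Next I would determine which $\phi_e$ are inner. If $\phi_e=\mathrm{inn}_h$ then $h$ centralizes $E$, so $h\in C_\Gamma(e_0)=E$ by Corollary \ref{CorSlender}; the identity $hah^{-1}=ae$ rewrites as $(a^{-1}ha)h^{-1}=e\in E$, forcing $a^{-1}ha\in E$ and hence $h\in E\cap aEa^{-1}=B_\xi=E_1$. Proposition \ref{PropBandBm}$(v)$ then gives $q(a^{-1}ha)=q(h)/X$, so $q(e)=\bigl(\tfrac1X-1\bigr)q(h)=-\iota(q(h))\in\iota(\mathfrak B_\xi)=\mathfrak C$; conversely, any $e$ with $q(e)=\iota(P)$, $P\in\mathfrak B_\xi$, is realized by $h\in B_\xi$ with $q(h)=-P$. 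Thus $\{e:\phi_e\in\operatorname{Inn}(\Gamma)\}=q^{-1}(\mathfrak C)$, and since $q\colon E\to\mathfrak B$ is an isomorphism, Proposition \ref{PropBandBm}$(vii)$ shows that the image of $\{\phi_e:e\in E\}$ in $\operatorname{Out}(\Gamma)$ is the infinite cyclic group generated by $\overline{\phi_{e_0}}$; combined with the normalization step this gives $\operatorname{Out}(\Gamma)=\langle\overline J,\overline{\phi_{e_0}}\rangle$. That $J\notin\operatorname{Inn}(\Gamma)$ follows from the surjection $q_{m,\xi}\colon\Gamma\to\Z\wr\Z$ of Proposition \ref{PropBandBm}$(v)$, since $J$ descends to $P\mapsto-P$, $t\mapsto t$ on $\Z[X^{\pm1}]\rtimes\Z$, which is not inner by the presentation recorded in Remark \ref{AutomSpecZZ} (alternatively: $C_\Gamma(a)=\langle a\rangle$ and $a^ke_0a^{-k}\neq e_0^{-1}$ for every $k$). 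As $\overline J$ has order $2$, $\overline{\phi_{e_0}}$ has infinite order, and $J\phi_{e_0}J=\phi_{-e_0}$ (Lemma \ref{LemDih}) gives $\overline J\,\overline{\phi_{e_0}}\,\overline J=\overline{\phi_{e_0}}^{-1}$, the subgroup $\langle J,\phi_{e_0}\rangle\cong\Z e_0\rtimes\Z/2\Z$ of $\operatorname{Aut}(\Gamma)$ meets $\operatorname{Inn}(\Gamma)$ trivially, hence maps isomorphically onto $\operatorname{Out}(\Gamma)$; this proves the first assertion.

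Finally, the extension $1\to\operatorname{Inn}(\Gamma)\to\operatorname{Aut}(\Gamma)\to\operatorname{Out}(\Gamma)\to1$ splits through $\langle J,\phi_{e_0}\rangle$. Since $Z(\Gamma)\subseteq C_\Gamma(a)\cap C_\Gamma(e_0)=\langle a\rangle\cap E=\{1\}$ by Corollary \ref{CorSlender}, one has $\operatorname{Inn}(\Gamma)\cong\Gamma$, and under $\mathrm{inn}_g\leftrightarrow g$ the conjugation action of $\langle J,\phi_{e_0}\rangle$ on $\operatorname{Inn}(\Gamma)$ becomes exactly the given action of $\Z e_0\rtimes\Z/2\Z$ on $\Gamma$; therefore $\operatorname{Aut}(\Gamma)\cong\Gamma\rtimes(\Z e_0\rtimes\Z/2\Z)$. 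I expect the main obstacle to be the normalization step — concretely, the identity $|\psi(a)|_a=1$ and the resulting form $\psi(a)=c_0ac_1$ — since this is precisely what rules out the many injective but non-surjective endomorphisms of $\Gamma$ that fix $E$ and send $a$ to an element of larger $a$-length; the remaining arguments are bookkeeping in the polynomial model $\mathfrak B$ via Proposition \ref{PropBandBm}.
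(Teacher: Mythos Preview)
Your proposal is correct and follows essentially the same approach as the paper: normalize an arbitrary automorphism via Proposition~\ref{PropEpi} and $J$, use Lemma~\ref{LemTheta} to force $|\psi(a)|_a=1$ and hence $\psi=\mathrm{inn}_{c_0}\circ\phi_e$, identify which $\phi_e$ are inner via Proposition~\ref{PropBandBm}$(v),(vii)$, and then split the extension. The only noteworthy difference is how you show $J\notin\operatorname{Inn}(\Gamma)$: the paper simply observes that $\phi_e\circ J$ has order~$2$ while $\operatorname{Inn}(\Gamma)\cong\Gamma$ is torsion-free, which is shorter than your quotient or centralizer arguments (and in fact, once you know $\overline{\phi_{e_0}}$ has infinite order, triviality of the kernel already follows from the classification of normal subgroups of $D_\infty$, so your separate verification that $J\notin\operatorname{Inn}(\Gamma)$ is redundant).
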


We denote by $C$ the subgroup of $E$ generated by $e_1-me_0$ and the elements $e_i-e_{i-1}+r_{i-1}e_0$ with $i \ge 1$.
Proposition \ref{PropAut} will follow from:
\begin{lemma} \label{LemPhiE}
Assume that $m>1$ and let $e \in E$.
\begin{itemize}
 \item The image of $\phi_e\circ J$ in $\text{Out}(\Gamma)$ is non-trivial.
 \item The image of $\phi_e$ is trivial in $\text{Out}(\Gamma)$ if and only if $e \in C$.
\item For every automorphism $\phi$ of $\Gamma$, there is $e \in E,\, \epsilon \in \{0,1\}$ such that $\phi=\phi_e \circ J^{\epsilon}$ holds in $\text{Out}(\Gamma)$.
\end{itemize}
\end{lemma}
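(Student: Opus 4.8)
The plan is to prove the three bullets of Lemma \ref{LemPhiE} in order, using the presentation (\ref{EqPresE2}), the structure of homomorphisms from Proposition \ref{PropEpi}, and the rigidity statement Lemma \ref{LemTheta}.

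For the first bullet, I would argue that $\phi_e \circ J$ cannot be inner. Since $J(e_0) = -e_0$ and $\phi_e(e_0)=e_0$, we have $(\phi_e\circ J)(e_0) = -e_0$. If $\phi_e\circ J$ were the inner automorphism $\tau_g$ for some $g\in\Gamma$, then $g(-e_0)g^{-1}$ — wait, rather $g e_0 g^{-1} = -e_0$, which would force $g$ to conjugate $e_0$ to its inverse; but by Corollary \ref{CorSlender} the centralizer of $e_0$ (i.e.\ of $b$) is $E$, and an element conjugating $e_0$ to $-e_0$ would normalize $\langle e_0\rangle$, hence fix the vertex $1E$ of the Bass-Serre tree, hence lie in $E$; but $E$ is abelian so it centralizes $e_0$, contradiction. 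So $\phi_e\circ J\notin\operatorname{Inn}(\Gamma)$.

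For the second bullet I would compute directly. The automorphism $\phi_e$ fixes $E$ pointwise (since $\phi_e(e_0)=e_0$ and $\phi_e$ is the identity on $E$ — here one checks $\phi_e(e_i)=e_i$ using $e_i = w_i$ and that $\phi_e(a)=ae$ with $e\in E$ commuting appropriately; more carefully, from (\ref{EqPresE2}) the relations $[e_0,w_i]=1$ are preserved and $\phi_e$ restricted to $E$ is indeed the identity because each $e_i$ is expressed via $a$-conjugates that telescope). An inner automorphism $\tau_g$ with $\tau_g|_E = \mathrm{id}$ forces $g$ to centralize all of $E$; since $E$ is its own centralizer we'd need $g\in E$ but then $\tau_g = \mathrm{id}$ as $E$ abelian, so $\tau_g(a) = a$, forcing $g$ central, i.e.\ $g=1$. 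More usefully: $\phi_e$ is inner iff $\phi_e = \tau_g$ for some $g\in\Gamma$; comparing on $E$ gives $g\in E$ and then $\phi_e(a) = ae = gag^{-1} = a(a^{-1}ga) g^{-1}$, so $e = (a^{-1}ga)g^{-1} = (a^{-1}ga - g)$ in additive notation for $E$ — this says $e$ lies in the image of the map $g\mapsto a^{-1}ga - g$ on $E$, i.e.\ $e\in (\tau_a^{-1}-1)(E)$. Unwinding $\tau_a^{-1}$ on the generators $me_0\mapsto a^{-1}(me_0)a$, etc., this image is precisely the subgroup $C$ generated by $e_1-me_0$ and the $e_i - e_{i-1}+r_{i-1}e_0$; so $\phi_e$ inner $\iff e\in C$. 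The computation that $(\tau_a^{-1}-1)(E) = C$ is the routine but fiddly part and is where I'd spend care.

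For the third bullet, let $\phi\in\operatorname{Aut}(\Gamma)$. By Proposition \ref{PropEpi} applied to $\phi$ (a surjective homomorphism $\Gamma\to\Gamma$ with $m'=m>1$), we get $\sigma(\phi(a)) = 1$ and $\phi(e_0)$ conjugate to $\pm e_0$; composing with an inner automorphism $\tau$ we may assume $\tau\circ\phi(e_0) = \pm e_0$, and composing further with $J$ if the sign is $-$, we may assume $\phi':= J^{\varepsilon}\circ\tau\circ\phi$ satisfies $\sigma\circ\phi'(a)=1$ and $\phi'(e_0)=e_0$. By Lemma \ref{LemTheta} (with $\Gamma'=\Gamma$), $\phi'$ restricts to the identity on $E$ and is injective; being also surjective it is an automorphism fixing $E$ pointwise with $\sigma(\phi'(a))=1$, so $\phi'(a) = ae$ for some $e\in E$ — wait, one must check $\phi'(a)$ has $a$-length $1$: from Lemma \ref{LemTheta}, $|\phi'(\gamma)|_a = |\gamma|_a|\phi'(a)|_a$, and since $\phi'$ is onto there is $\gamma$ with $|\phi'(\gamma)|_a=1$, forcing $|\phi'(a)|_a=1$, so $\phi'(a) = c a c'$ with $c,c'\in E$; conjugating by $c\in E$ (which fixes $E$ pointwise, being abelian, hence changes nothing on $e_0$) we may further assume $\phi'(a) = ae$, i.e.\ $\phi' = \phi_e$. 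Thus in $\operatorname{Out}(\Gamma)$ we have $\phi = \phi_e\circ J^{\varepsilon}$ up to the inner correction, which is exactly the claim. The main obstacle is making the reduction in the third bullet clean — tracking how successive compositions with inner automorphisms and $J$ interact with the normalization $\phi'(a)=ae$ — together with the explicit identification $(\tau_a^{-1}-1)(E)=C$ in the second bullet; both are bookkeeping rather than conceptual, but need to be done carefully to avoid sign errors.
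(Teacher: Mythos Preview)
Your argument is correct and, for the second and third bullets, follows essentially the same route as the paper. For the second bullet the paper phrases the computation via the polynomial isomorphism $q:E\to\mathfrak B$ and the map $\iota$ of Proposition~\ref{PropBandBm}.$(vii)$ (obtaining $P_e=-\iota(P_z)\in\mathfrak C$ from $a(e+z)a^{-1}=z$), whereas you work directly in $E$ with the map $g\mapsto a^{-1}ga-g$; these are the same computation under $q$. One small point: that map is only defined as a map $E_1\to E$, not $E\to E$, so you should first observe (from $a^{-1}ga=eg\in E$ and Britton's lemma) that $g\in E_1$, and then identify the image of $E_1$ under $g\mapsto a^{-1}ga-g$ with $C$. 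For the third bullet your surjectivity trick (pick $\gamma$ with $\phi'(\gamma)=a$) to force $|\phi'(a)|_a=1$ is the same in spirit as the paper's, which instead applies Lemma~\ref{LemTheta} to both $\phi$ and $\phi^{-1}$ to get $1=|\phi(a)|_a|\phi^{-1}(a)|_a$.

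For the first bullet you take a genuinely different route. The paper simply notes that $\phi_e\circ J$ has order~$2$ (since $J\phi_eJ=\phi_{-e}$), while $\operatorname{Inn}(\Gamma)\cong\Gamma$ is torsion-free; this is shorter. Your argument via the tree action is also valid: if $ge_0g^{-1}=-e_0$ then $g$ permutes the fixed-vertex set $T^{e_0}=\{1E\}$ (Corollary~\ref{CorSlender}), so $g\in E$, contradicting abelianness. Both work; the torsion argument is more economical, while yours avoids invoking torsion-freeness.
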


\begin{proof} As $\Gamma$
is centerless (see e.g. Corollary \ref{CorSlender}) and torsion-free,
$\text{Inn}(\Gamma)$ is torsion-free. The first assertion follows from the fact that $\phi_e\circ J$ has order $2$.

Let $e \in E$ and assume that there is $z \in \Gamma$ such that
$\phi_e=\tau_z$. As $z$ centralizes $e_0$, we deduce from Corollary
\ref{CorSlender} that $z \in E$. We deduce from the equality
$\phi_{e}(a)=zaz^{-1}$ that $a(e+z)a^{-1}=z$. 
By Britton's lemma, we have $e+z \in E_{m,\xi}$ and hence $z \in E_1$.
Identifying $E$ with $B$, we deduce from Proposition \ref{PropBandBm}.$v$, that
$X(P_{e}(X)+P_{z}(X))=P_z(X)$. Therefore
$P_e(X)=-\frac{X-1}{X}P_z(X)=\iota(-P_z(X)) \in \mathfrak{C}$ where $\iota$ and
$\mathfrak{C}$ are defined in Proposition \ref{PropBandBm}.$vii$. Since
$\mathfrak{C}=q(C)$ and $q$ in injective, we have $e \in C$. Conversely, if $e
\in C$, we can readily check that $\phi_e=\tau_z$ where $z \in E_1$ is given by
the formula $P_z(X)=-\frac{X}{X-1}P_e(X)$.

Consider now an arbitrary automorphism $\phi$ and let us show that $\phi=\phi_e
\circ J^{\epsilon}$ holds in $\text{Out}(\Gamma)$ for some $e \in E$ and some
$\epsilon \in \{0,1\}$. By Proposition \ref{PropEpi}, we can assume that $\phi(e_0)=\pm e_0$ and $\phi(a)=zae'$ with $z \in \Gamma$ such that $\sigma(z)=0$ and $e' \in E$. Composing possibly by $J$, we can assume that $\phi(e_0)=e_0$
hence that $\phi$ and $\phi^{-1}$ both satisfy the conditions of Lemma
\ref{LemTheta}. We deduce from Lemma \ref{LemTheta} that
$1=|a|_a=|\phi(a)|_a|\phi^{-1}(a)|_a$. Therefore $|z|_a=0$, i.e. $z \in E$ and
hence $\phi=\phi_{e'+z}$ holds in $\text{Out}(\Gamma)$.
\end{proof}

\begin{sproof}{Proof of Proposition \ref{PropAut}}
 By Lemma \ref{LemPhiE}, $\text{Out}(\Gamma)$ is generated by the images of
$J,\,\phi_e$ with $e \in E$. By Proposition \ref{PropBandBm}.$vii$, the
quotient $E/C$ is infinite cyclic and generated by the image of $e_0$. Hence, by
Lemma \ref{LemPhiE}, the natural map
 $\text{Aut}(\Gamma)\rightarrow \text{Out}(\Gamma)$ induces an isomorphism from
the subgroup generated by $J$ and $\phi_{e_0}$ onto $\text{Out}(\Gamma)$. It
follows then from Lemma \ref{LemDih} that $\text{Out}(\Gamma)$ is isomorphic to
$\Z e_0 \rtimes \Z/2\Z$. As a result, the exact sequence $1 \rightarrow
\text{Inn}(\Gamma) \rightarrow \text{Aut}(\Gamma) \rightarrow \text{Out}(\Gamma)
\rightarrow 1$ splits. Since $\Gamma$ is centerless it naturally identifies with
$\text{Inn}(\Gamma)$. Thus $\text{Aut}(\Gamma)$ is isomorphic to $\Gamma \rtimes
(\Z e_0 \rtimes \Z/2\Z)$.
\end{sproof}

An immediate consequence of Proposition \ref{PropAut} is that every automorphism of $\Gamma$ is induced by an automorphism of $\Free(a,e_0)$. Group presentations with such property are called \emph{almost quasi-free presentations} \cite[Ch. II.2]{LS77}.

Recall that the map $a \mapsto (0,1),\, e_0 \mapsto (1,0)$ induces a surjective
homomorphism $q_{m,\xi}$ from
$\Gamma$ onto $\Z \wr \Z=\tBS{1}{0}$. Another consequence of Proposition \ref{PropAut} is:
\begin{corollary} \label{CorKerQ}
If $|m|>1$, then the kernel of $q_{m,\xi}$ is a characteristic free subgroup of
$\Gamma$ of infinite rank.
\end{corollary}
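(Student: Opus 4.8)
The statement has three separate assertions about $N:=\ker q_{m,\xi}$: that $N$ is free, that it is characteristic, and that it has infinite rank. I plan to prove them in that order, using the HNN description $\Gamma=\widetilde{BS}(m,\xi)=\mathrm{HNN}(E,E_{m,\xi},E_1,\phi)$ of Corollary \ref{CorHNN2}, its standard Bass--Serre tree $T=X_{m,\xi}$ (vertex stabilizer $E$, acted on transitively by $\Gamma$), and the explicit homomorphism $q_{m,\xi}$.

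\emph{Freeness.} First I would note that every elliptic element of $\Gamma$ lies in a conjugate of the vertex stabilizer $E$, and that by Proposition \ref{PropBandBm}$(v)$ the restriction of $q_{m,\xi}$ to $E$ is the injective map $q$, so $N\cap E=1$. Since $N$ is normal, $N\cap\gamma E\gamma^{-1}=\gamma(N\cap E)\gamma^{-1}=1$ for every $\gamma\in\Gamma$; hence $N$ contains no nontrivial elliptic element, acts freely on $T$, and is therefore a free group.

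\emph{Characteristic.} Here I would invoke Proposition \ref{PropAut}: $\mathrm{Aut}(\Gamma)$ is generated by $\mathrm{Inn}(\Gamma)$ together with $J$ and $\phi_{e_0}$. Inner automorphisms preserve the normal subgroup $N$, so it suffices to show $J$ and $\phi_{e_0}$ do. For each I would exhibit an automorphism of $\Z\wr\Z=\Z\br{X^{\pm1}}\rtimes\Z$ intertwining it with $q_{m,\xi}$, which forces $\ker q_{m,\xi}$ to be preserved. Concretely, writing $a\mapsto(0,1)$, $e_0\mapsto(1,0)$: the map $(P,k)\mapsto(-P,k)$ is an automorphism of $\Z\wr\Z$ fixing $(0,1)$ and sending $(1,0)$ to $(-1,0)$, and composing it with $q_{m,\xi}$ yields exactly $q_{m,\xi}\circ J$; likewise the automorphism $\psi_X$ of Remark \ref{AutomSpecZZ} sends $(1,0)\mapsto(1,0)$ and $(0,1)\mapsto(X,1)=q_{m,\xi}(ae_0)$, so $\psi_X\circ q_{m,\xi}=q_{m,\xi}\circ\phi_{e_0}$. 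Thus $J(N)=N$ and $\phi_{e_0}(N)=N$, and $N$, being invariant under a generating set of $\mathrm{Aut}(\Gamma)$, is characteristic.

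\emph{Infinite rank.} I would argue by contradiction, assuming $N$ finitely generated. The action of $\Gamma$ on $T$ is minimal (it is a non-ascending HNN extension; see the proof of Theorem \ref{ThPowers}). A finitely generated group acting on a tree either fixes a vertex or has a unique minimal invariant subtree, on which it acts with finite quotient graph. If $N$ fixed a vertex it would lie in a conjugate of $E$, forcing $N=1$ by the first part and hence $\Gamma\cong\Z\wr\Z$, impossible since $\ovBS$ contains a nonabelian free subgroup when $|m|>1$. Otherwise $N$ has a hyperbolic element, and its minimal invariant subtree $T_0$ --- the union of the axes of the hyperbolic elements of the \emph{normal} subgroup $N$ --- is $\Gamma$-invariant, hence equals $T$ by minimality of the $\Gamma$-action. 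Then $T/N$ is a finite graph, so the number $[\,\Gamma:N\!\cdot\!E\,]=[\,\Z\wr\Z:q_{m,\xi}(E)\,]$ of $N$-orbits of vertices is finite. But $\sigma(e_i)=0$ for all $i$, so $q_{m,\xi}(E)\subseteq\ker\sigma$, which has infinite index in $\Z\wr\Z$ because $\sigma$ is onto $\Z$ --- a contradiction. Hence $N$ is not finitely generated, and the free group $N$ has infinite rank. The one point requiring care is precisely this last step: the combination of the standard fact that a finitely generated group acting minimally on a tree has a finite quotient graph with the observation that the minimal invariant subtree of the normal subgroup $N$ is $\Gamma$-invariant; the rest is routine bookkeeping with $q_{m,\xi}$ and Propositions \ref{PropBandBm} and \ref{PropAut}.
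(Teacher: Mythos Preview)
Your argument is correct. For the characteristic part you do exactly what the paper does: reduce via Proposition~\ref{PropAut} to checking invariance under $J$ and $\phi_{e_0}$, and exhibit compatible automorphisms of $\Z\wr\Z$ (the paper phrases this as a commutative square, with the same content).

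Where you genuinely diverge is in the other two assertions. The paper simply cites \cite[Th.~3.11]{GS08} for freeness and the Karrass--Solitar theorem \cite[Th.~9]{KS71} for non-finite-generation. You replace both citations by direct Bass--Serre arguments: freeness from the observation that $q_{m,\xi}|_E=q$ is injective, so $N$ meets every vertex stabilizer trivially and acts freely on $T$; infinite rank from the standard facts that a finitely generated group acting on a tree without global fixed point has a cocompact minimal subtree, and that the minimal subtree of a \emph{normal} subgroup is $\Gamma$-invariant, hence all of $T$ by minimality of the $\Gamma$-action. The index computation $[\Gamma:NE]=[\Z\wr\Z:q_{m,\xi}(E)]=\infty$ is then the same obstruction that underlies the Karrass--Solitar criterion the paper invokes. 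Your route is more self-contained and makes explicit why the tree action does the work; the paper's route is shorter but leans on two external references. One small remark: the minimality you need is minimality of the $\Gamma$-action on $T$ itself (immediate since the quotient is a single loop), not the minimality on $\partial T$ quoted in the proof of Theorem~\ref{ThPowers}; you may want to cite that directly rather than pointing to that proof.
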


\begin{proof}
The normal subgroup $N=\ker q_{m,\xi}$ is a  free group by  \cite[Th.
3.11]{GS08}. As $N\nsubseteq E_{m,\xi}$ and $NE_{m,\xi}$ has infinite index in
$\Gamma$, $N$ is not finitely generated by \cite[Th. 9]{KS71}.

To conclude, thanks to Proposition \ref{PropAut}, it suffices to show that $N$
is invariant under the automorphisms $J$ and $\phi_{e_0}$. It is invariant
under $J$ since the diagram
\[
\xymatrix{
\Gamma \ar@{->>}[d]^{q_{m,\xi}} \ar[r]^{J} & \Gamma
\ar@{->>}[d]^{q_{m,\xi}} \\
\widetilde{BS}(1,0) \ar[r]^{J} & \widetilde{BS}(1,0) }
\]
commutes. A similar argument works for the automorphism $\phi_{e_0}$.
\end{proof}

%%%%%%%%%%%%%%%%%%%%%%%%%%%%%%%%%%%%%%%%%%%%%%%%%%%%%%%%%%%%%%%%%%%%%%%%%%%%%%%%%%%%%%%%
\paragraph{Equationally noetherian groups} \label{SecEqNoe}
%%%%%%%%%%%%%%%%%%%%%%%%%%%%%%%%%%%%%%%%%%%%%%%%%%%%%%%%%%%%%%%%%%%%%%%%%%%%%%%%%%%%%%%%

 In this section, we determine which groups $\BS$ are equationally noetherian. Equationally noetherian groups play an important role in \emph{algebraic geometry over groups} \cite{BMR99}, the state-of-the-art approach to equations over groups. An equationally neotherian group enjoys the following strong form of the Hopf property: any sequence of surjective endomorphisms is stationnary (see \cite[Th. D1.2]{MR00} or \cite[Cor. 2.8]{Hou07}).
Let us recall the definition. Given $w=w(x_1,\ldots,x_n) \in G\ast \mathbb F(x_1,\ldots,x_n)$ and a $n$-tuple $(g_1,\ldots g_n)\in G^n$, we denote by $w(g_1,\ldots,g_n)$ the element of $G$
 obtained by replacing $x_i$ by $g_i$. For any subset $W\subseteq G\ast \mathbb F(x_1,\ldots,x_n)$,
 we consider the \emph{roots}
\[
 \roots(W) = \{(g_1,\ldots,g_n)\in G^n : w(g_1,\ldots,g_n) = 1 \text{ for all } w\in W \} \ .
\]

\begin{definition}
 A group $G$ is \emph{equationally noetherian} if, for all $n\geqslant 1$ and 
for all $W\subseteq G\ast \mathbb F(x_1,\ldots,x_n)$, there exists a finite subset $W_0 \subseteq W$ such that $\roots(W) = \roots(W_0)$.
\end{definition}

Linear groups over a commutative, noetherian, unitary ring (e.g. a field), 
are equationally noetherian \cite{Bry77,Gub86} while any wreath product of 
a non-abelian group by an infinite one is not equationally noetherian \cite{BMR97}. 

\begin{proposition}\label{eqNbarBS}
 Let $m\in \Z\setminus \{0\}$ and $\xi\in \Z_m$. The group $\bBS m\xi$ is equationally noetherian if and only if $|m|=1$.
\end{proposition}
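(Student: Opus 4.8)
The plan is to split along the trivial dichotomy $|m|=1$ versus $|m|>1$. If $|m|=1$ then $\bBS m\xi = \Z\wr\Z$, which is a finitely generated metabelian group, hence polycyclic-by-abelian-by-finite — no, more simply: $\Z\wr\Z$ is linear over a commutative noetherian ring. Actually the cleanest route is to observe that $\Z\wr\Z \cong \Z[X^{\pm 1}]\rtimes\Z$ embeds into $\mathrm{GL}_2$ of the field $\Q(X)$ via $a\mapsto\left(\begin{smallmatrix}X&0\\0&1\end{smallmatrix}\right)$, $b\mapsto\left(\begin{smallmatrix}1&1\\0&1\end{smallmatrix}\right)$; as a finitely generated linear group over a field it is equationally noetherian by \cite{Bry77,Gub86}. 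That settles the ``if'' direction.

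For the ``only if'' direction, assume $|m|>1$; I want to show $\Gamma=\bBS m\xi$ is not equationally noetherian. The strategy is to exhibit inside $\Gamma$ a subgroup isomorphic to a wreath product of a non-abelian group by an infinite group, then invoke \cite{BMR97} — but one must be careful, because \cite{BMR97} says \emph{the wreath product itself} is not equationally noetherian, and this property is not inherited by overgroups in general. The correct mechanism is different: being equationally noetherian passes to \emph{quotients}? No — it passes to subgroups but not quotients. So the right approach is to realize such a wreath product as a \emph{quotient} of $\Gamma$, or better, to argue directly with an infinite non-stabilizing chain of root sets. Here is the concrete plan. By Theorem~\ref{ThmA} (presentation \eqref{EqPresE1}--\eqref{EqPresE2}) and the HNN structure, $\Gamma$ surjects onto $\Z\wr\Z$ via $q_{m,\xi}$, but that target is equationally noetherian, so that alone is useless. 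Instead, consider the free subgroup generated by $e_0$ and $ae_0a^{-1} = e_0 + $ (something) — wait, from $b_1 = ab^m a^{-1}$ and $|m|>1$ we know $\Gamma$ contains the non-abelian free group $F = \langle e_0,\, ae_0a^{-1}\rangle$ (stated in the introduction as the closed property ``contains a non-abelian free group generated by $b$ and $bab^{-1}$ if $|m|>1$''). The key geometric input from Lemma~\ref{LemFixEll} / the discussion after Proposition~\ref{PropInnAmen} is that for every $n$, the intersection $\bigcap_{1\le i\le n} a^i E a^{-i}$ is a free abelian group of infinite countable rank, and the conjugates of $E$ by powers of $a$ commute appropriately with the "bottom" copy of $E$. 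The plan is to assemble from these pieces a subgroup of $\Gamma$ isomorphic to $\Z \wr_{\Z} F_2$ — or rather, to find a single-variable equation system witnessing non-noetherianity directly.

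The cleanest concrete witness I would write down: fix the non-abelian free group $F=\langle x,y\rangle$ with $x=ae_0a^{-1}$, $y=a^2 e_0 a^{-2}$ say (using $|m|>1$ to guarantee freeness via Britton's lemma, as in the proof of Proposition~\ref{PropInnAmen}), and fix the central-ish element $b=e_0$ which is centralized by a whole free abelian group of infinite rank containing arbitrarily high $a$-conjugates. One then builds equations $W_k = \{[b^{w},\,b^{w'}] : w,w' \in \text{(ball of radius }k\text{ in }F)\}$ in one variable $b$ — no, these must be equations in the \emph{ambient} variables. Let me instead mimic the standard proof that $A\wr B$ ($A$ nonabelian, $B$ infinite) fails to be equationally noetherian: one uses the system $W=\{[t_1 x t_1^{-1}, t_2 y t_2^{-1}] : t_1,t_2 \text{ words in the }B\text{-generators}\}$ and shows the finite-subsystem root sets form a strictly increasing chain. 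The plan is to transport this verbatim: take $B$-generators to be suitable hyperbolic elements of $\Gamma$ (e.g. $a$ and a conjugate of $a$ generating a free group acting on the Bass-Serre tree $T$), take $x,y$ to be commuting elements of $E$ supported on disjoint "coordinates" (e.g. $x=e_1-r_1(\xi)e_0$ and $y = e_2 - r_2(\xi)e_0$, whose $a$-translates land in the various $a^i E a^{-i}$), and verify using Lemma~\ref{LemFixEll} and Britton's lemma exactly which commutators $[t_1 x t_1^{-1}, t_2 y t_2^{-1}]$ vanish. The HNN/tree machinery of Section~\ref{Sec1} and Corollary~\ref{CorSlender} gives precise control over when two elliptic elements conjugated by hyperbolic ones commute — namely when their "supports" (fixed subtrees) are compatible — and this is exactly the combinatorial input needed to reproduce the non-noetherian chain.

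The main obstacle, and where I expect the real work to be, is the last step: showing the chain of root sets $\roots(W_0)\supsetneq\roots(W_1)\supsetneq\cdots$ is \emph{strictly} decreasing, i.e. producing, for each finite subsystem $W_j\subseteq W$, an explicit tuple in $\Gamma$ satisfying all equations in $W_j$ but failing one in $W\setminus W_j$. In the model case $A\wr B$ this uses that $A$ is nonabelian (so some $[x,y]\ne 1$) and $B$ is infinite (so there are infinitely many distinct translates); here one must pin down a genuine copy of such a configuration inside $\Gamma$, which means checking (a) that the relevant elements of $E$ really do not commute with each other after the $a$-conjugations that are supposed to move them apart — this is where $|m|>1$ is essential, since for $|m|=1$ everything in the analogous configuration would commute — and (b) that Britton's lemma/normal form genuinely detects the non-triviality of the bad commutator. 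I would handle (a) and (b) together by computing $t$-lengths (the $|\cdot|_a$ function) and applying Lemma~\ref{LemFixEll} to reduce every commutator $[t_1 x t_1^{-1}, t_2 y t_2^{-1}]$ either to an element of $E$ (when the supports are "nested") where commutativity of $E$ finishes it, or to an element of positive $a$-length which is manifestly non-trivial. Once that case analysis is complete, \cite{BMR97}'s argument — or rather its proof technique, reproduced in our setting — yields that $\Gamma$ is not equationally noetherian, completing the proof.
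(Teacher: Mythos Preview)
Your treatment of the case $|m|=1$ is correct and coincides with the paper's: $\Z\wr\Z$ is linear over $\Q(X)$, hence equationally noetherian.

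For $|m|>1$, your route diverges from the paper's and, as you yourself flag, the argument is not completed. You try to manufacture inside $\Gamma$ a wreath-product-like configuration and then reproduce the \cite{BMR97} chain of root sets by hand, using tree geometry and Lemma~\ref{LemFixEll} to decide which commutators $[t_1 x t_1^{-1}, t_2 y t_2^{-1}]$ vanish. There are two concrete gaps. First, you never fix which symbols are variables and which are constants from $\Gamma$; the sketch oscillates between treating $x,y$ as variables and treating the $t_i$ as variables, and without this the system $W$ is not well-defined. Second, the $A\wr B$ template you are importing requires $A$ to be \emph{nonabelian}, whereas the vertex group $E$ here is free abelian; so the place where ``$[x,y]\neq 1$'' enters in \cite{BMR97} has no direct analogue, and you would have to manufacture non-commutation purely from the tree displacement of conjugates of $E$---which is plausible but is exactly the ``real work'' you leave undone.

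The paper's proof is shorter and bypasses all of this by exploiting the \emph{limit} definition of $\bBS m\xi$ rather than its internal HNN structure. One fixes $(\xi_n)$ with $|\xi_n|\to\infty$ and $\xi_n\to\xi$, takes the \emph{same} three-variable system $W=\{[x^{-i}yx^i,z]:i\ge 1\}$ already used for $BS(m,n)$ in Proposition~\ref{eqNBS}(2), and uses the key observation (Remark~\ref{estim}) that the bounds $k\le N_n(k)\le(\mu(m)+2)k$ are \emph{uniform in $n$}. Then for any finite $W_f\subset W$ one chooses $k$ large enough that the triple $(a,b^{\xi_n^k},b)$ is a root of $W_f$ in every $BS(m,\xi_n)$, hence in the limit $\bBS m\xi$; while for $i>(\mu(m)+2)k$ the word $w_i$ evaluated at this triple is nontrivial in every $BS(m,\xi_n)$, hence nontrivial in the limit. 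No tree analysis, no Lemma~\ref{LemFixEll}, and no new system of equations are needed---everything is inherited from the approximating Baumslag--Solitar groups via the topology of $\G$.
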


We need the following result on Baumslag-Solitar groups.
\begin{proposition} \cite[Pr. 5]{BMR99}\label{eqNBS}
  Let $m,n\in\Z\setminus \{0\}$.
 \begin{enumerate}
 \item If either $|m|=1$ or $|n|=1$ or $|m|=|n|$, then the group $BS(m,n)$ is linear over $\Q$ and hence equationally noetherian;
 \item else, the group $BS(m,n)$ is not equationally noetherian.
\end{enumerate}
\end{proposition}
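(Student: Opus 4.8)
The plan is to prove the two clauses by different methods: clause (1) by exhibiting faithful representations over $\Q$ and invoking the fact, recalled just above, that $\Q$-linear groups are equationally noetherian \cite{Bry77,Gub86}; clause (2) by producing, in a \emph{single} variable, an infinite system of equations over $BS(m,n)$ that is equivalent to no finite subsystem. Throughout I will use the elementary isomorphisms $BS(m,n)\cong BS(n,m)$ (sending $a\mapsto a^{-1}$) and $BS(m,n)\cong BS(-m,-n)$, which let me swap the roles of $m,n$ and normalise signs.

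For clause (1) these symmetries reduce the work to the cases $m=1$ and $2\le n=m$, together with $n=-m$. If $m=1$, then $BS(1,n)=\Pres{a,b}{aba^{-1}=b^n}\cong\Z[1/n]\rtimes\Z$ embeds faithfully in $GL_2(\Q)$ via $b\mapsto\left(\begin{smallmatrix}1&1\\0&1\end{smallmatrix}\right)$ and $a\mapsto\left(\begin{smallmatrix}n&0\\0&1\end{smallmatrix}\right)$, so it is $\Q$-linear. If $n=m\ge2$, then $b^m$ is central of infinite order in $BS(m,m)$ and the quotient $BS(m,m)/\langle b^m\rangle\cong\Z\ast\Z/m\Z$ is virtually free; pulling back a free subgroup of finite index yields a finite-index central extension $1\to\Z\to H\to F\to1$ with $F$ free, which splits (as $F$ is free) and whose action on the central factor is trivial, whence $H\cong F\times\Z$. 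Since $F\times\Z$ is $\Q$-linear and linearity passes to finite-index overgroups through the induced representation, $BS(m,m)$ is $\Q$-linear; the case $n=-m$ follows because $\langle a^2,b\rangle$ has index two in $BS(m,-m)$ and is isomorphic to $BS(m,m)$. In all three cases $\Q$-linearity gives equational noetherianity.

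For clause (2), assume $|m|,|n|\ge2$ and $|m|\ne|n|$; after the reductions I may take $2\le m<n$, so that $a^{-1}b^na=b^m$ and $n':=n/\gcd(m,n)>1$. The defining relation gives $a^{-j}b^{t}a^{j}=b^{tm^{j}/n^{j}}$ whenever the exponent is an integer, and a $\gcd$ computation shows that $b^{t}$ lies in $a^{j}\langle b\rangle a^{-j}$ exactly when $n'^{\,j}\mid t$. I would then consider the one-variable system $W=\{\,\br{a^{-j}xa^{j},\,b}=1 : j\ge1\,\}$ over $BS(m,n)$. Writing $V_k=\roots(\{w_1,\dots,w_k\})$ for the first $k$ equations, and using that the centralizer of $b$ equals $\langle b\rangle$, one gets $V_k=\bigcap_{j=1}^{k}a^{j}\langle b\rangle a^{-j}$, whence $V_k\cap\langle b\rangle=\langle b^{\,n'^{k}}\rangle$. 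Since $n'>1$, the element $b^{\,n'^{k}}$ lies in $V_k$ but not in $V_{k+1}$, so the chain $V_1\supsetneq V_2\supsetneq\cdots$ is strictly decreasing. Consequently $\roots(W)=\bigcap_kV_k$ is cut out by no finite subset of $W$, and $BS(m,n)$ is not equationally noetherian.

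I expect the main technical point to be the strict descent in clause (2): verifying, via Britton's lemma in the HNN extension $BS(m,n)$, both that the centralizer of $b$ is exactly $\langle b\rangle$ (so that $V_k$ is the asserted intersection of conjugates of $\langle b\rangle$; this uses that $b$ fixes a single vertex of the Bass-Serre tree when $|m|,|n|\ge2$) and that $b^{\,n'^{k}}\notin a^{k+1}\langle b\rangle a^{-(k+1)}$, which holds because $n'^{\,k+1}\nmid n'^{\,k}$. It is precisely here that the hypothesis $|m|\ne|n|$ enters, through $n'>1$; when $|m|=|n|$ one has $n'=1$ and the chain is stationary, in agreement with the linearity of clause (1). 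Note finally that one cannot instead exhibit the failure through a non-equationally-noetherian \emph{subgroup}: for $|m|,|n|\ge2$ the group $BS(m,n)$ has only cyclic centralizers, hence contains no $\Z^2$ and a fortiori no non-abelian-by-infinite wreath product, so the obstruction must be displayed by a global system of equations as above.
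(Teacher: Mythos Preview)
Your overall strategy matches the paper's: exhibit linearity in clause~(1) by passing to finite-index subgroups of the form $F\times\Z$, and in clause~(2) produce an infinite system $\{[a^{-j}xa^j,b]=1\}$ (the one-variable variant the paper itself mentions in a footnote). However, there are two genuine gaps in the execution.

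\medskip

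\textbf{Clause (1), the case $n=-m$.} The subgroup $\langle a^2,b\rangle\subset BS(m,-m)$ is \emph{not} of index two; in fact it has infinite index. The map $BS(m,m)\to BS(m,-m)$, $a\mapsto a^2$, $b\mapsto b$, is injective (a reduced word in $BS(m,m)$ stays reduced after doubling the $a$-exponents), so every element of $\langle a^2,b\rangle$ has a reduced form in $BS(m,-m)$ whose $a$-sign sequence is of the shape $(\epsilon_1,\epsilon_1,\epsilon_2,\epsilon_2,\ldots)$. By uniqueness of the sign sequence in reduced forms, $aba^{-1}$ (sign sequence $(+,-)$) and more generally $(ab)^j$ for $j\ge1$ (normal form has $g_1=b\neq1$, while elements of $\langle a^2,b\rangle$ have $g_1=1$) do not lie in $\langle a^2,b\rangle$; the cosets $H(ab)^j$ are pairwise distinct. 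The paper instead takes the \emph{normal closure} $\langle\langle a^2,b\rangle\rangle$, i.e.\ the kernel of $BS(m,-m)\to\Z/2\Z$, which genuinely has index two; one then checks that this kernel is isomorphic to the corresponding index-two subgroup of $BS(m,m)$.

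\medskip

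\textbf{Clause (2), the divisibility claim.} The assertion ``$b^t\in a^j\langle b\rangle a^{-j}$ exactly when $n'^{\,j}\mid t$'' is false. The identity $a^{-j}b^ta^j=b^{tm^j/n^j}$ holds only when every \emph{intermediate} exponent $tm^i/n^i$ ($0\le i\le j$) is an integer, not merely the last one; for $j=1$ the correct condition is $n\mid t$, not $n'\mid t$ (take $m=2,n=4,t=2$: then $tm/n=1\in\Z$ but $a^{-1}b^2a$ is Britton-reduced since $4\nmid2$). Consequently your witness $b^{n'^k}$ need not lie in $V_k$. The descent $V_1\supsetneq V_2\supsetneq\cdots$ that you want is nonetheless true: the paper establishes it by taking the test elements $b^{n^k}$ and showing, via a prime $p$ with $p^\nu\mid n$, $p^\nu\nmid m$, that there is a finite $N(k)\ge k$ with $b^{n^k}\in V_{N(k)}\setminus V_{N(k)+1}$; since $N(k)\to\infty$ the chain never stabilises, which suffices.
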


As we need some excerpts of the proof of Proposition \ref{eqNBS}, we provide it in full. Let $R$ be a commutative ring with unity. We will use the following elementary fact without further mention. If a group $G$ has a finite index subgroup which is linear over $R$, then so is $G$ \cite[Lem. 2.3]{Weh73}.
\begin{proof}[Proof of Proposition \ref{eqNBS}(1)]
Suppose first that $|m|=1$ or $|n|=1$. It is well known, and easy to show, that the map
 $a \mapsto (x \mapsto \frac{m}{n}x),\, b \mapsto (x \mapsto x+1)$ yields an injective group homomorphism 
from $BS(m,n)$ into the affine group over $\Q$. The group $BS(m,n)$ is then linear over $\Q$. As $BS(m,n)$ is soluble in this case, we observe that it is linear over $\Z$ if and only if it is polycyclic\footnote{By theorems of Mal'cev and Auslander \cite[Ch. 2 and Ch. 3]{Seg83}, a soluble group is linear over $\Z$ if and only if it is polycyclic.}, i.e. $|n|=|m|=1$. 

 If $m=n$, it is easy to check that the normal subgroup $\langle\langle a, b^m \rangle\rangle$ 
is isomorphic to $\Free_{|m|}\times \Z$ and hence linear over $\Z$. Clearly, it has index $|m|$ in $BS(m,m)$. Thus $BS(m,n)$ is linear over $\Z$.

 Suppose finally $n=-m$. Let $BS_2(m,n)=\langle\langle a^2, b \rangle\rangle \subset BS(m,n)$. 
The subgroups $B_2(m,m)$ and $B_2(m,-m)$ are clearly isomorphic and have index two in $BS(m,m)$ and $BS(m,-m)$ respectively. 
We have shown that $BS(m,m)$ is linear over $\Z$. We deduce that $B_2(m,m)$ is linear over $\Z$ and hence so is $BS(m,-m)$.
\end{proof}
It follows from the above proof that $BS(m,n)$ is linear over $\Z$ if and only if $|m|=|n|$.

\begin{proof}[Proof of Proposition \ref{eqNBS}(2)]
 As the groups $BS(m,n)$ and $BS(n,m)$ are isomorphic, we may assume that $|m|<|n|$. 
Then, there exists $\nu>0$ and a prime number $p$ such that $p^\nu$ divides $n$ 
but not $m$. Let us consider the set
 \[
  W = \big\{w_i := [x^{-i} y x^{i}, z] : i\in\N \setminus \{0\} \big\} \subseteq \Free(x,y,z)
 \]
 and the triples $(x_k = a, y_k = b^{n^k}, z_k = b)$.\footnote{It is possible to use only one variable: 
replace $W$ by $W' = \{[a^{-i} y a^{i}, b] : i\in\N \setminus \{0\} \big\} \subseteq BS(m,n)\ast\Free(y)$.} If $n$ divides an integer $\beta$, then we have $a^{-1}b^\beta a = b^{\beta'}$ and the 
factorization of $\beta'$ contains (strictly) less factors $p$ than the one of $\beta$. 
Consequently, for all $k$, there exists $N(k)\in\N$ and $\alpha(k)\in\Z$ such that
 \[
    a^{-N(k)} b^{n^k} a^{N(k)} = b^{\alpha(k)} \text{ and } n \text{ does not divide } \alpha(k) \ .
 \]
 \begin{remark}\label{estim}
  \begin{enumerate}
     \item As $a^{-k}b^{n^k} a^k = b^{m^k}$, we have $N(k)\geqslant k$;
     \item Set $\mu=\mu(m)$ to be the maximal exponent arising in the factorization of $m$. Then we obtain 
     $N(k) \leqslant (\mu+2)k$. Indeed, 
we have $a^{-k}b^{n^k}a^k = b^{m^k}$ and the exponent of $p$ in the factorization of $m^k$ is at most $(\mu+1)k$. 
  \end{enumerate}
 \end{remark}
 The triple $(x_k = a, y_k = b^{n^k}, z_k = b)$ is a root of $w_i$ if and only if $i\leqslant N(k)$. Indeed:
 \begin{itemize}
    \item if $i\leqslant N(k)$, then $a^{-i} b^{n^k} a^{i}$ is a power of $b$, so that 
    $[a^{-i} b^{n^k} a^{i}, b] =  1$;
    \item if $i>N(k)$, then $[a^{-i} b^{n^k} a^{i}, b] = a^{-(i-N(k))} 
    b^{\alpha(k)} a^{i-N(k)} \cdot b \cdot a^{-(i-N(k))} b^{-\alpha(k)} 
    a^{i-N(k)} \cdot b^{-1}$. This is reduced in $BS(m,n)$, since $|m|>1$ and $n$ does not divide $\alpha(k)$.
 \end{itemize}
 If we now consider a finite subset $W_f = \{w_{i_1}, \ldots, w_{i_s}\} \subset W$, 
then, choosing $k$ large enough, we have $N(k) \geqslant i_1,\ldots, i_s$. Consequently, the triple $(x_{k}, y_{k}, z_{k})$ 
is in $\roots(W_f) \setminus \roots(W)$. This proves that $BS(m,n)$ is not
equationally noetherian.
\end{proof}
\begin{proof}[Proof of Proposition \ref{eqNbarBS}]
If $|m|=1$, one has $\bBS{m}{\xi}=\Z\wr\Z$ which is equationnally noetherian
(e.g. it is linear over the field $\Q(X)$).

Let us now assume that $|m|>1$. Consider a sequence $(\xi_n)$ of rational
integers 
such that $|\xi_n|\to \infty$ and $\xi_n \to \xi$ in $\Z_m$ for $n\to\infty$.
 We may assume that $|m|<|\xi_n|$ for all $n$. Set $W = \{w_i := [x^{-i} y x^{i}, z] : i\in\N \setminus \{0\}\}$ 
and $(x_k = a, y_k = b^{\xi_n^k}, z_k = b)$, as in the proof of Proposition \ref{eqNBS}(2). 
We have proved the existence of natural numbers $N_n(k)$ such that $(x_k, y_k, z_k)$ is a 
root of $w_i$ in $BS(m,\xi_n)$ if and only if $i\leqslant N_n(k)$. Moreover, Remark \ref{estim} gives the estimates
 \[
  k \leqslant N_n(k) \leqslant (\mu(m)+2)k
 \]
 for all $n$. Therefore, if we take a finite subset 
$W_f = \{w_{i_1}, \ldots, w_{i_s}\} \subset W$, then, choosing $k$ large enough, we have $N_n(k) 
\geqslant k \geqslant i_1,\ldots, i_s$ for all $n$. Therefore, for all $j$, we have 
$w_{i_j}(x_{k}, y_{k}, z_{k}) = 1$ in all groups $BS(m,\xi_n)$, and, passing to the limit, 
we see that $(x_k,y_k,z_k)$ is a root of $W_f$, in the group $\bBS m\xi$.
 
 On the other hand, by considering $w_i$ with $i> (\mu(m)+2)k$,
 we see that $w(x_{k}, y_{k}, z_{k})\neq 1$ in all $BS(m,\xi_n)$. Hence,
 in $\bBS{m}{\xi}$, the triple $(x_{k}, y_{k}, z_{k})$ is not a root of $W$.
 This proves that $\bBS{m}{\xi}$ is not equationally noetherian.
\end{proof}

%%%%%%%%%%%%%%%%%%%%%%%%%%%%%%%%%%%%%
\section{Dimensions}\label{SecDim}
%%%%%%%%%%%%%%%%%%%%%%%%%%%%%%%%%%%%%

In this section we give the first non-trivial Hausdorff dimension estimates of a subspace of the space of marked groups on two generators. Let us recall that the map $\overline{BS}_m:\Z_m \to \G \, ; \, \xi \mapsto \bBS{m}{\xi}$ is 
injective on $\Z_m^\times$ \cite[Th. 1]{GS08}. In order to estimate Hausdorff dimensions of the subspaces
\[
 Z_m^\times = \overline{BS}_m(\Z_m^\times) = \{ \bBS{m}{\xi} : \xi \text{ is invertible in } \Z_m \} \ ,
\]
we will prove that the maps between $Z_m^\times$ and $\Z_m^ \times$ satisfy H\"older conditions and then apply classical results about Hausdorff dimension. In this section, we always assume that $m$ is a rational integer satisfying $|m|\geqslant 2$.

%%%%%%%%%%%%%%%%%%%%%%%%%%%%%%%%%%%%%%%%%%%%%%%%%%%%%%%%%%%%%%%%%%%%%%%
\subsection{Distances between limits}
%%%%%%%%%%%%%%%%%%%%%%%%%%%%%%%%%%%%%%%%%%%%%%%%%%%%%%%%%%%%%%%%%%%%%%%

The first step towards Hausdorff dimension estimates is to estimate the distance between groups $\bBS m\xi$ and $\bBS{m}{\xi'}$ in terms in the $m$-adic distance between $\xi$ and $\xi'$.

\begin{theorem}\label{CongDistLim}
 Let $h \in \N \setminus \{0\}$ and $\xi,\xi'\in \Z_m$ satisfying $d:=\gcd(m,\xi) = \gcd(m,\xi')$. Setting $\hat m = m/d$, we have:
 \begin{enumerate}
  \item[(1)] If $\bBS m\xi$ and $\bBS m{\xi'}$ have the same relations up 
to length $2(|m|+1)h + 2|m| + 6$, then $\xi \equiv \xi' \ (\operatorname{mod}  \hat m^h d\Z_m)$;
  \item[(2)] If $\xi \equiv \xi' \ (\operatorname{mod}  \hat m^h d\Z_m)$, 
then $\bBS m\xi$ and $\bBS m{\xi'}$ have the same relations up to length $2h$.
 \end{enumerate}
\end{theorem}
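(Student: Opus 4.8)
The plan is to reduce both implications to the purely arithmetic equivalence ``$\xi\equiv\xi'\ (\operatorname{mod}\hat m^hd\Z_m)\iff r_i(\xi)=r_i(\xi')$ for $i=1,\dots,h$'' supplied by Proposition~\ref{PropRiSiCont}$(i)$ (valid under the standing hypothesis $\gcd(m,\xi)=\gcd(m,\xi')=d$). Since $\bBS m\xi$ and $\bBS{-m}{-\xi}$ are the same marked group, and the quantities $d$, $\hat m$, $|m|$ as well as the congruence in question are unchanged by $(m,\xi)\mapsto(-m,-\xi)$, I would first reduce to the case $m\ge 2$.

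For $(1)$ I would use Lemma~\ref{LemWinE}. For each tuple $t_1,\dots,t_h\in\{0,\dots,m-1\}$ set
\[
 u(t_1,\dots,t_h)\ :=\ w(m,t_1,\dots,t_h)\,e_0\,w(-m,-t_1,\dots,-t_h)\,(-e_0),
\]
the word appearing in part $(ii)$ of Lemma~\ref{LemWinE} with $n=h$. A direct syllable count gives $|w(m,t_1,\dots,t_h)|=2(h+1)+m+\sum_{j=1}^h t_j$, hence $|u(t_1,\dots,t_h)|=2|w(m,t_1,\dots,t_h)|+2\le 2(m+1)h+2m+6$, the bound being uniform in the tuple because $t_j\le m-1$. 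I would then specialize $t_j=r_j(\xi)$: by the implication $(iii)\Rightarrow(ii)$ of Lemma~\ref{LemWinE} applied inside $\bBS m\xi$, the word $u(r_1(\xi),\dots,r_h(\xi))$ is trivial in $\bBS m\xi$, hence is a relation of length at most $2(|m|+1)h+2|m|+6$; by the hypothesis of $(1)$ it is therefore also a relation of $\bBS m{\xi'}$, i.e.\ trivial there, and the implication $(ii)\Rightarrow(iii)$ of Lemma~\ref{LemWinE} applied inside $\bBS m{\xi'}$ forces $r_j(\xi)=r_j(\xi')$ for $j=1,\dots,h$. Proposition~\ref{PropRiSiCont}$(i)$ then yields $\xi\equiv\xi'\ (\operatorname{mod}\hat m^hd\Z_m)$.

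For $(2)$, Proposition~\ref{PropRiSiCont}$(i)$ first gives $r_i(\xi)=r_i(\xi')$ for $i=1,\dots,h$, so it is enough to prove that a word $w$ in $a,b$ with $|w|\le 2h$ is trivial in $\bBS m\xi$ if and only if it is trivial in $\bBS m{\xi'}$. I would argue inside the HNN decomposition $\mathfrak{BS}(m,\xi)=\mathrm{HNN}(\mathfrak B,\mathfrak B_m,\mathfrak B_\xi,\theta)$ of Corollary~\ref{CorHNN2}, where $\mathfrak B=q(B)\subset\Z\br{X}$ is free abelian on $\{1,XP_0(X),XP_1(X),\dots\}$, the stable-letter conjugation $\theta$ is multiplication by $X$, and membership in $\mathfrak B_m$ (resp.\ in $\mathfrak B_\xi=\{P\in\mathfrak B:X\mid P\}$) is read off from Proposition~\ref{PropBandBm}$(iii)$ (resp.\ $(iv)$). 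Writing $w=\gamma_0a^{\epsilon_1}\gamma_1\cdots a^{\epsilon_k}\gamma_k$ with $\epsilon_i=\pm1$ and each $\gamma_i\in B$ a power of $b$ (so $q(\gamma_i)$ is a constant and $k\le|w|\le 2h$), I would run Britton reduction always resolving the leftmost pinch; each reduction step consumes two of the original letters $a^{\pm1}$ and replaces three consecutive base syllables $\gamma,\delta,\gamma'$ by $\gamma\,\theta^{\pm1}(\delta)\,\gamma'$, a polynomial of degree at most $\max\bigl(\deg q(\gamma),\ \deg q(\delta)+1,\ \deg q(\gamma')\bigr)$ (note that $\theta^{-1}$, division by $X$ on $\mathfrak B_\xi$, does not raise degrees). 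Since there are at most $\lfloor k/2\rfloor\le h$ such steps and one starts from constants, every base syllable occurring in the reduction has degree at most $h$. The key observation is that the degree-$\le h$ parts of $\mathfrak B$, $\mathfrak B_m$, $\mathfrak B_\xi$ and the map $\theta$ depend on $\xi$ only through $r_1(\xi),\dots,r_h(\xi)$: the basis polynomials $XP_0(X),\dots,XP_{h-1}(X)$ involve only $r_1(\xi),\dots,r_{h-1}(\xi)$; the membership test in $\mathfrak B_m$ for a polynomial of degree $\le h$ involves only $r_1(\xi),\dots,r_h(\xi)$ by Proposition~\ref{PropBandBm}$(iii)$; and the descriptions of $\mathfrak B_\xi$ and of $\theta$ involve no $r_i$ at all. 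Because $r_i(\xi)=r_i(\xi')$ for $i\le h$, the (now deterministic) leftmost-pinch reduction of $w$ proceeds step for step identically in $\mathfrak{BS}(m,\xi)$ and in $\mathfrak{BS}(m,\xi')$, so by Britton's lemma $w$ is trivial in one if and only if it is trivial in the other; transporting this back along the marked isomorphism $b\mapsto 1$ gives the claim for $\bBS m\xi$ and $\bBS m{\xi'}$.

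The main obstacle is the degree bookkeeping in $(2)$: one has to be certain that a word of length at most $2h$ can never drive the Britton reduction to inspect a coefficient $r_i(\xi)$ with $i>h$. This is exactly the content of the ``degree $\le h$'' invariance above, and once it is in place, together with Lemma~\ref{LemWinE}, Proposition~\ref{PropRiSiCont}$(i)$ and the elementary syllable counts, everything else is routine.
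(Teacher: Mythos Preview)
Your proposal is correct and follows essentially the same strategy as the paper: part~(1) is identical (apply Lemma~\ref{LemWinE} to the word $u(r_1(\xi),\dots,r_h(\xi))$ after checking it has length at most $2(|m|+1)h+2|m|+6$), and part~(2) is the same Britton-reduction argument showing that a word of length $\le 2h$ never forces the algorithm outside the ``index $\le h$'' range where the two HNN structures agree. The only cosmetic difference is that you run the reduction in the polynomial model $\mathfrak{BS}(m,\xi)$, tracking the \emph{degree} of base syllables, whereas the paper works in $\tBS{m}{\xi}$ and tracks the maximal index $i$ such that $e_i$ appears; under the identification $e_i\leftrightarrow XP_{i-1}(X)$ these are the same invariant, and your explicit choice of the leftmost pinch simply makes deterministic what the paper leaves as ``perform substitutions as long as possible''.
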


\begin{sproof}{Proof of Theorem \ref{CongDistLim}}
Thanks to Corollary \ref{CorHNN2}, we may work in $\tBS m\xi$ and $\tBS m{\xi'}$ instead of $\bBS m\xi$ and $\bBS m{\xi'}$. 
Recall that, given the free abelian groups of countable rank
\begin{eqnarray*}
 E & = & \Z e_0 \oplus \Z e_1 \oplus \Z e_2 \oplus \cdots \\
 M & = & \Z me_0 \oplus \Z(e_1 - r_1( \xi)e_0) \oplus \Z(e_2 - r_2( \xi)e_0) \oplus \cdots \leqslant E \\
 M' & = & \Z me_0 \oplus \Z(e_1 - r_1( \xi')e_0) \oplus \Z(e_2 - r_2( \xi')e_0) \oplus \cdots \leqslant E \\
 E_1 & = & \Z e_1 \oplus \Z e_2 \oplus \cdots \leqslant E \ ,
\end{eqnarray*}
We have 
\begin{eqnarray*}
 \tBS m\xi & = & \Pres{a,E}{a\psi(x)a^{-1} = x \ \forall x\in E_1} \\
 \tBS m{\xi'} & = & \Pres{a,E}{a\psi'(x)a^{-1} = x \ \forall x\in E_1} \ , 
\end{eqnarray*}
where the isomorphism $\psi:E_1 \to M$ is defined by $\psi(e_1) = me_0$ and $\psi(e_{i+1}) = e_i -r_i( \xi)e_0$ for $i>0$, and the isomorphism $\psi':E_1 \to M'$ is defined similarly. Recall also that the element $b\in\bBS m\xi$ corresponds to $e_0\in\tBS m\xi$.
By Proposition \ref{PropRiSiCont}, the condition $\xi \equiv \xi' \ (\operatorname{mod}  \hat m^h d\Z_m)$ is equivalent to $r_i(\xi)=r_i(\xi')$ for $i=1,\dots, h$. We will consider the latter condition.

(1) Let $w=w(m,r_1(\xi),\dots,r_h(\xi))e_0w(-m,-r_1(\xi),\dots,-r_h(\xi))(-e_0)$ be defined as in Lemma \ref{LemWinE}. As $|w| \le 2(|m|+1)h + 2|m| + 6$ and $w=1$ in $\tBS{m}{\xi}$, we also have $w=1$ in $\tBS{m}{\xi'}$. We deduce from Lemma \ref{LemWinE} that $r_i(\xi)=r_i(\xi')$ for $i=1,\dots, h$.

(2) Let $w$ be a (freely reduced) word on the alphabet $\{a^{\pm 1},b^{\pm 1}\}$ satisfying $|w|\leqslant 2h$. By substituting occurences of $b^\alpha$ by $\alpha e_0$, we obtain a sequence $s=(x_0, a^{\varepsilon_1}, x_1,\ldots, a^{\varepsilon_k}, x_k)$, with $k\geq 0$, of length at most $2h$, where $\varepsilon_i=\pm 1$ and $x_i$ is an element of the subgroup $\Z e_0 \leqslant E$ for all $i$. What we have to show is that the product of the sequence $s$ vanishes in $\tBS m\xi$ if and only if it vanishes in $\tBS m{\xi'}$. 

We \emph{reduce} the sequence $s$ in the HNN-extension $\tBS m\xi$, that is we perform, as long as possible, substitutions of:
\begin{itemize}
 \item a subsequence $(a,x,a^{-1})$, with $x\in M$, by the element $\psi^{-1}(x) \in E_1$;
 \item a subsequence $(a^{-1},x,a)$, with $x\in E_1$, by the element $\psi(x)\in M$.
\end{itemize}
We then obtain a sequence $t=(y_0,a^{\delta_1}, y_1, \ldots, a^{\delta_l}, y_l)$, with $l\geqslant 0$, which is reduced in $\tBS m\xi$, and whose product in the latter group is equal to the product of $s$. The number of substitutions from $s$ to $t$ is trivially at most $h$. Therefore, it is easy to see that $t$ and the intermadiate sequences contain only $a^{\pm 1}$ letters and elements of the subgroup $\Z e_0 \oplus \cdots \oplus \Z e_h$. 

Now, we use the hypothesis $r_i( \xi) = r_i( \xi')$ for $i=1,\ldots, h$. Therefore, the relation
\[
 M \cap (\Z e_0 \oplus \cdots \oplus \Z e_h) = M' \cap (\Z e_0 \oplus \cdots \oplus \Z e_h) 
\]
holds and $\psi$ and $\psi'$ are equal in restriction to $\Z e_1 \oplus \cdots \oplus \Z e_{h+1}$. It is thus possible to reduce the sequence $s$ in $\tBS m{\xi'}$ by performing the \emph{same} substitutions as in $\tBS m{\xi}$. Hence, the sequences $s$ and $t$ have the same product in $\tBS m{\xi'}$. Moreover, the sequence $t$ is also reduced in $\tBS m{\xi'}$ --- if not, an argument similar to the above one would show that $t$ is not reduced in $\tBS m\xi$.

Finally, by structure theorems on HNN-extensions, the product of $t$ vanishes in $\tBS m{\xi}$ (resp. $\tBS m{\xi'}$) if and only if $l=0$ and $y_0 = 0$ in $E$. This concludes the proof of part (2).

\end{sproof}

We now turn to the case $d=1$, that is, to the case of invertible $m$-adic integers. Recall that the metric on $\G$ is given by $d(N_1,N_2) = e^{-\nu(N_1,N_2)}$ if $N_1 \neq N_2$, where $\nu(N_1,N_2) = \inf\{|w|: w\in N_1 \triangle N_2 \}$.

\begin{corollary}\label{DistDist}
 Let $h\in\N \setminus \{0\}$ and $\xi,\xi'\in\Z_m^\times$ such that $|\xi - \xi'|_m = |m|^{-h}$. Setting $x= \ovBS$ and $x'= \bBS m{\xi'}$, we have
 \[
   e^{-2(|m|+1)(h+1) - 2|m| - 6} \leqslant d(x,x')  \leqslant e^{-2h-1} \ .
 \]
\end{corollary}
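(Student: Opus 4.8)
The plan is to derive both inequalities directly from Theorem \ref{CongDistLim} by unwinding the definition of the ultrametric $d$ on $\G$. Since $\xi,\xi'\in\Z_m^\times$ we have $d=\gcd(m,\xi)=\gcd(m,\xi')=1$, hence $\hat m=m$, so Theorem \ref{CongDistLim} applies with these values. Write $N,N'$ for the normal subgroups of $\Free(a,b)$ defining $x=\ovBS$ and $x'=\bBS m{\xi'}$. Since $\overline{BS}_m$ is injective on $\Z_m^\times$ \cite[Th. 1]{GS08} and $\xi\neq\xi'$ (because $|\xi-\xi'|_m=|m|^{-h}\neq 0$), we have $x\neq x'$, so $d(x,x')=e^{-\nu(x,x')}$ with $\nu(x,x')=\inf\{|w|:w\in N\triangle N'\}$. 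Recall that ``$x$ and $x'$ have the same relations up to length $L$'' means precisely that $N\triangle N'$ contains no word of length $\le L$, i.e. $\nu(x,x')\ge L+1$; the negation is $\nu(x,x')\le L$.

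For the upper bound, observe that $|\xi-\xi'|_m=|m|^{-h}$ yields in particular $\xi\equiv\xi'\ (\operatorname{mod}\hat m^h d\,\Z_m)$. By Theorem \ref{CongDistLim}(2), $x$ and $x'$ then have the same relations up to length $2h$, so $\nu(x,x')\ge 2h+1$ and $d(x,x')\le e^{-2h-1}$.

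For the lower bound, the idea is to invoke part (1) with the parameter $h+1$. From $|\xi-\xi'|_m=|m|^{-h}$ we get $\xi\not\equiv\xi'\ (\operatorname{mod}\hat m^{h+1} d\,\Z_m)$. If $x$ and $x'$ had the same relations up to length $2(|m|+1)(h+1)+2|m|+6$, then Theorem \ref{CongDistLim}(1), applied with $h+1$ in place of $h$, would force $\xi\equiv\xi'\ (\operatorname{mod}\hat m^{h+1} d\,\Z_m)$, a contradiction. Hence $\nu(x,x')\le 2(|m|+1)(h+1)+2|m|+6$, that is $d(x,x')\ge e^{-2(|m|+1)(h+1)-2|m|-6}$, which is the claimed lower bound.

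There is no substantial obstacle here: all the genuine content sits in Theorem \ref{CongDistLim}. The only points that need care are the bookkeeping translating ``same relations up to length $L$'' into the value $\nu(x,x')$ (the off-by-one between $L$ and $L+1$), remembering to apply part (1) at level $h+1$ rather than $h$ so that an honest non-congruence is available, and checking that $x$ and $x'$ are distinct points of $\G$ so that the formula $d=e^{-\nu}$ is meaningful — all of which are immediate.
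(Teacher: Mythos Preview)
Your proof is correct and follows essentially the same approach as the paper: both inequalities are derived directly from Theorem \ref{CongDistLim}, the upper bound from part (2) at level $h$ and the lower bound from the contrapositive of part (1) at level $h+1$. Your write-up is slightly more explicit about the off-by-one in translating between $\nu(x,x')$ and ``same relations up to length $L$'', and about checking $x\neq x'$, but the argument is the paper's own.
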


\begin{proof}
 If $d(x,x') < e^{-2(|m|+1)(h+1) - 2|m| - 6}$, then $\ovBS$ and $\bBS m{\xi'}$ have the same relations up to length $2(|m|+1)(h+1) + 2|m| + 6$. Theorem \ref{CongDistLim}(1) gives then $|\xi - \xi'|_m \leqslant |m|^{-(h+1)}$.
 
 On the other hand, the relation $|\xi - \xi'|_m = |m|^{-h}$ implies $\xi\equiv \xi' \, (\operatorname{mod} m^h\Z_m)$. Theorem \ref{CongDistLim}(2) gives then $d(x,x') \leqslant e^{-2h - 1}$.
\end{proof}

\subsection{Hausdorff dimension estimates}
%%%%%%%%%%%%%%%%%%%%%%%%%%%%%%%%%%%%%%%%%%%%%%
We set $f$ to be the inverse of the (bijective) map $\BBS_m:\Z_m^\times \to Z_m^\times$. We now show that $f$ and $f^{-1}=\BBS_m$ both satisfy a H\"older condition.
\begin{proposition}\label{Holder}
 For all $x,x'\in Z_m^\times$, we have
 \[
  |f(x) - f(x')|_m \leqslant C d(x,x')^\alpha
 \]
 where $\alpha = (2(|m|+1))^{-1}\log |m|$ and $C$ is some positive constant.
\end{proposition}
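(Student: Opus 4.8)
The plan is to read everything off the two-sided distance estimate of Corollary~\ref{DistDist}, which already absorbs the combinatorial content of the argument; what remains is bookkeeping with exponents. Fix distinct points $x,x'\in Z_m^\times$ and set $\xi=f(x)$, $\xi'=f(x')$, so that $\xi,\xi'\in\Z_m^\times$ and in particular $\gcd(m,\xi)=\gcd(m,\xi')=1$, whence $\hat m=m$. Let $h\in\N$ be the unique integer with $|\xi-\xi'|_m=|m|^{-h}$. The generic case is $h\geq 1$, where Corollary~\ref{DistDist} applies directly; the boundary case $h=0$ (not covered by that corollary) has to be handled separately.

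In the case $h\geq 1$, Corollary~\ref{DistDist} gives $d(x,x')\geq e^{-2(|m|+1)(h+1)-2|m|-6}$. Raising to the power $\alpha=\bigl(2(|m|+1)\bigr)^{-1}\log|m|>0$ and using the key identity $\alpha\cdot 2(|m|+1)(h+1)=(h+1)\log|m|$, which is precisely why $\alpha$ is chosen this way, one obtains
$$ d(x,x')^\alpha \;\geq\; |m|^{-(h+1)}\,e^{-\alpha(2|m|+6)} \;=\; |m|^{-1}e^{-\alpha(2|m|+6)}\,|\xi-\xi'|_m . $$
Rearranging yields $|f(x)-f(x')|_m=|\xi-\xi'|_m\leq |m|\,e^{\alpha(2|m|+6)}\,d(x,x')^\alpha$, i.e.\ the asserted inequality with $C_1:=|m|\,e^{\alpha(2|m|+6)}$.

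For $h=0$ we have $|\xi-\xi'|_m=1$ and $\xi\not\equiv\xi'\pmod{m\Z_m}$. Here I would apply the contrapositive of Theorem~\ref{CongDistLim}(1) with $h=1$ (and $d=1$, $\hat m=m$): the limits $\ovBS$ and $\bBS m{\xi'}$ cannot share all their relations up to length $2(|m|+1)+2|m|+6=4|m|+8$, so some word of that length or shorter lies in the symmetric difference of their relation sets, giving $d(x,x')\geq e^{-(4|m|+8)}$ and hence $1=|\xi-\xi'|_m\leq e^{\alpha(4|m|+8)}\,d(x,x')^\alpha$. Taking $C:=\max\{\,|m|\,e^{\alpha(2|m|+6)},\ e^{\alpha(4|m|+8)}\,\}$ then works uniformly for all $x\neq x'$, and the case $x=x'$ is trivial. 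I do not anticipate a real obstacle: once Corollary~\ref{DistDist} is in hand, the estimate is purely a matter of tracking constants, the only point requiring care being the boundary value $h=0$, which must be fed separately into Theorem~\ref{CongDistLim}.
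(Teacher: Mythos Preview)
Your proof is correct and follows essentially the same route as the paper: the case $h\geq 1$ is identical up to a trivial regrouping of the constant in the exponent (the paper rewrites $2(|m|+1)(h+1)+2|m|+6$ as $2(|m|+1)h+4|m|+8$), and for $h=0$ the paper exhibits the explicit word $a^2 b^m a^{-1} b^{-t} a^{-1} b\, a^2 b^{-m} a^{-1} b^{t} a^{-1} b^{-1}$ directly, whereas you invoke the contrapositive of Theorem~\ref{CongDistLim}(1)---which is itself proved via that same word---so the difference is purely cosmetic.
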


\begin{proof}
 Set $\xi = f(x)$ and $\xi'=f(x')$, so that $x=\bBS m\xi$ and $x'= \bBS m{\xi'}$, and write $|\xi-\xi'|_m = |m|^h$ with $h\in\N$. Let us treat the case $h\in\N \setminus \{0\}$ first. Using Corollary \ref{DistDist} (at the second line), we get: 
 \begin{eqnarray*}
  |f(x) - f(x')|_m & = & |\xi-\xi'|_m = e^{-h\log |m|} \\
  d(x,x') & \geqslant & e^{-2(|m|+1)h - 4|m| - 8} = C_1 e^{-2(|m|+1)h} = C_1 \left(e^{-h\log |m|}\right) ^{\frac{2(|m|+1)}{\log |m|}}
 \end{eqnarray*}
 with $C_1>0$. Consequently, we have $d(x,x') \geqslant
 C_ 1 |f(x) - f(x')|_m^{\alpha^{-1}}$, whence $|f(x) - f(x')|_m \leqslant C_2 d(x,x')^\alpha$ for some $C_2 >0$.
 
 Finally, in case $h=0$, that is $\xi \not\equiv \xi' \ (\operatorname{mod} m)$, there is a word
 \[
  a^2 b^m a^{-1} b^{-t} a^{-1} b \cdot a^2 b^{-m} a^{-1} b^{t} a^{-1} b^{-1} \ , \text{ with } 0\leqslant t \leqslant |m| - 1 \ ,
 \]
 which is trivial in one of the marked groups $x=\bBS m\xi,x'= \bBS m{\xi'}$ 
but not in the other one. This gives a constant $D>0$ such that $d(x,x') \geqslant D$, 
hence a constant $C_3>0$ such that $|f(x) - f(x')|_m = 1 \leqslant C_3 d(x,x')^\alpha$.
\end{proof}
\begin{proposition}\label{Holder2}
 For all $\xi,\xi'\in \Z_m^\times$, we have
 \[
  d(f^{-1}(\xi),f^{-1}(\xi')) \leqslant |\xi-\xi'|_m^\beta 
 \]
 where $\beta = 2(\log |m|)^{-1}$.
\end{proposition}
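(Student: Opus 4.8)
The plan is to read this H\"older bound off directly from Corollary \ref{DistDist}, which already packages all the real work (the word reductions in the HNN presentation of $\tBS m\xi$ carried out in Theorem \ref{CongDistLim}(2)). Recall that $f^{-1}=\BBS_m$, so that $f^{-1}(\xi)=\ovBS$ and $f^{-1}(\xi')=\bBS m{\xi'}$, and that the metric on $\G$ is $d(N_1,N_2)=e^{-\nu(N_1,N_2)}$.

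First I would dispose of the degenerate cases: if $\xi=\xi'$ there is nothing to prove, and otherwise, since the $m$-adic absolute value of a non-zero element of $\Z_m$ is of the form $|m|^{-h}$ with $h$ a natural number, we may write $|\xi-\xi'|_m=|m|^{-h}$ for a unique $h\in\N$. As $\xi,\xi'\in\Z_m^\times$ we have $\gcd(m,\xi)=\gcd(m,\xi')=1$, so the standing hypothesis $d:=\gcd(m,\xi)=\gcd(m,\xi')$ of Corollary \ref{DistDist} is satisfied with $d=1$ and $\hat m=m$. For $h\geqslant1$, Corollary \ref{DistDist} then yields
\[
 d\bigl(f^{-1}(\xi),f^{-1}(\xi')\bigr)\leqslant e^{-2h-1}\leqslant e^{-2h}=\bigl(e^{-h\log|m|}\bigr)^{2/\log|m|}=(|m|^{-h})^{\beta}=|\xi-\xi'|_m^{\beta},
\]
which is exactly the asserted inequality. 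For $h=0$ one has $|\xi-\xi'|_m^{\beta}=1$, while $d(f^{-1}(\xi),f^{-1}(\xi'))\leqslant1$ always (and indeed $<1$, the two marked groups being distinct since $\BBS_m$ is injective on $\Z_m^\times$ by \cite[Th.~1]{GS08}); so the bound holds trivially in that case too.

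I do not expect any genuine obstacle here: the statement is a formal consequence of Corollary \ref{DistDist} together with the elementary identity $|m|^{-h\beta}=e^{-2h}$ for $\beta=2(\log|m|)^{-1}$. The only point requiring a moment's care is that Corollary \ref{DistDist} is phrased for $h\geqslant1$, so the case $\xi\not\equiv\xi'\ (\operatorname{mod}m)$ must be noted separately, which is immediate.
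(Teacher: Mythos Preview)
Your proof is correct and follows essentially the same approach as the paper: write $|\xi-\xi'|_m=|m|^{-h}$, invoke Corollary \ref{DistDist} for $h\geqslant1$, and then rewrite $e^{-2h}$ as $|\xi-\xi'|_m^\beta$. The only cosmetic difference is that for $h=0$ the paper observes $\operatorname{diam}(\G)=e^{-1}$ (so the bound $e^{-2h-1}$ holds uniformly), whereas you use the weaker but still sufficient $d\leqslant 1=|\xi-\xi'|_m^\beta$.
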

\begin{proof}
 Let us write $|\xi-\xi'|_m = |m|^{-h}$ with $h\in\N$. 
By corollary \ref{DistDist}, we have $d(f^{-1}(\xi),f^{-1}(\xi')) \leqslant e^{-2h-1}$ (note that for $h=0$ this is trivially true, since $\operatorname{diam}(\G) = e^{-1}$). Hence, we get
 \[
  d(f^{-1}(\xi),f^{-1}(\xi')) \leqslant e^{-2h} = (e^{-h\log|m|})^{2(\log|m|)^{-1}} = |\xi-\xi'|_m^\beta \ ,
 \]
 which concludes the proof.
\end{proof}

\begin{theorem}\label{ThDimZm}
 The Hausdorff dimension of $Z_m^\times$ satisfies: 
 \[
  \frac{\log |m|}{2(|m|+1)}  \leqslant \dim_H(Z_m^\times) \leqslant \frac{\log |m|}2 
 \]
 (for all $m$ such that $|m|\geqslant 2$).
\end{theorem}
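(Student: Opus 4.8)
The plan is to transport the Hausdorff dimension of $\Z_m^\times$ to $Z_m^\times$ through the two H\"older estimates of Propositions \ref{Holder} and \ref{Holder2}, using the elementary principle that a $\gamma$-H\"older surjection multiplies Hausdorff dimension by at most $1/\gamma$.

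First I would record that $\dim_H(\Z_m^\times)=1$. Equipped with the $m$-adic metric, $\Z_m$ is Ahlfors $1$-regular: for any $\xi$ and any $k\ge 0$ the ball of radius $|m|^{-k}$ about $\xi$ is the coset $\xi+m^k\Z_m$, of index $|m|^k$ in $\Z_m$, to which the normalized Haar measure assigns mass $|m|^{-k}$. Covering $\Z_m$ by these $|m|^k$ cosets gives $\sum_i(\operatorname{diam}U_i)^s\le |m|^{k(1-s)}\to 0$ for $s>1$, so $\dim_H(\Z_m)\le 1$, while the mass distribution principle applied to Haar measure gives $\dim_H(\Z_m)\ge 1$. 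Since $\Z_m^\times=\Z_m\setminus\bigcup_{p\mid m}p\Z_m$ is a non-empty open subset of $\Z_m$ (each $p\Z_m$, containing $m\Z_m$, is a union of cosets of $m\Z_m$, hence clopen), it contains a ball $\xi_0+m^k\Z_m$, and the bijection $x\mapsto m^kx+\xi_0$ shows this ball is isometric up to the scaling factor $|m|^{-k}$ to $\Z_m$; hence $\dim_H(\Z_m^\times)=1$ as well.

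Next I would invoke the standard lemma: if $g\colon(X,d_X)\to(Y,d_Y)$ satisfies $d_Y(g(x),g(x'))\le C\,d_X(x,x')^{\gamma}$ for all $x,x'$, then $\dim_H(g(A))\le\gamma^{-1}\dim_H(A)$ for every $A\subseteq X$; indeed a cover $\{U_i\}$ of $A$ pushes forward to a cover $\{g(U_i)\}$ of $g(A)$ with $\operatorname{diam}g(U_i)\le C(\operatorname{diam}U_i)^{\gamma}$, so $\mathcal H^s(A)=0$ forces $\mathcal H^{s/\gamma}(g(A))=0$, and one lets $s\downarrow\dim_H(A)$. Recalling that $\BBS_m\colon\Z_m^\times\to Z_m^\times$ is a bijection with inverse $f$, I then apply the lemma twice. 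Applying it to $f$, which is $\alpha$-H\"older with $\alpha=\frac{\log|m|}{2(|m|+1)}$ by Proposition \ref{Holder}, with $A=Z_m^\times$ and $f(A)=\Z_m^\times$, gives $1=\dim_H(\Z_m^\times)\le\alpha^{-1}\dim_H(Z_m^\times)$, that is, the lower bound $\dim_H(Z_m^\times)\ge\frac{\log|m|}{2(|m|+1)}$. Applying it to $\BBS_m$, which is $\beta$-H\"older with $\beta=\frac{2}{\log|m|}$ by Proposition \ref{Holder2}, with $A=\Z_m^\times$ and $\BBS_m(A)=Z_m^\times$, gives the upper bound $\dim_H(Z_m^\times)\le\beta^{-1}\dim_H(\Z_m^\times)=\frac{\log|m|}{2}$.

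I do not expect a genuine obstacle here: both H\"older inequalities are already established and $\dim_H(\Z_m^\times)=1$ is routine. The only point that needs care is the \emph{direction} of the dimension estimate under a H\"older map (it is $1/\gamma$, not $\gamma$), which is precisely why Proposition \ref{Holder} yields the lower bound and Proposition \ref{Holder2} the upper bound; combining the two completes the proof.
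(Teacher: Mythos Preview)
Your argument is correct and is essentially the same as the paper's: both use Propositions \ref{Holder} and \ref{Holder2} together with the standard fact that a $\gamma$-H\"older surjection can only lower Hausdorff dimension by a factor of at most $\gamma$, applied in each direction to the bijection $\BBS_m:\Z_m^\times\to Z_m^\times$. The only difference is that you spell out the proof that $\dim_H(\Z_m^\times)=1$ and the H\"older--dimension lemma, whereas the paper simply cites these as well-known.
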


\begin{proof}
It is well-known, and easy to show, that $\dim_H(\Z_m^\times)=1$ with respect to the metric chosen in Section \ref{SubSecZm}.
 Set $\alpha = (2(|m|+1))^{-1}\log |m|$ and $\beta = 2(\log |m|)^{-1}$, 
as in Propositions \ref{Holder} and \ref{Holder2}. Classical theory of Haussdorf dimension
 (see e.g. \cite[Pr. 2.3]{Fal03} or \cite[Th. 29]{Rog70})
and these propositions give 
$1 \leqslant \alpha^{-1} \dim_H(Z_m^\times)$ and $\dim_H(Z_m^\times) \leqslant \beta^{-1}$, hence the result.
\end{proof}

\begin{corollary}\label{CorDimNotZero}
 The Hausdorff dimension of $\G$ satisfies $\dim_H(\G) \geqslant \log(2)/6$. In particular, this dimension does not vanish.
\end{corollary}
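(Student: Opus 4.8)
The plan is to deduce this immediately from Theorem \ref{ThDimZm} by monotonicity of Hausdorff dimension under inclusion of metric subspaces. First I would recall that, by the conventions fixed in Section \ref{SubSecSpace}, each limit $\ovBS$ is marked by its canonical generating pair $(a,b)$, so that $Z_m^\times = \overline{BS}_m(\Z_m^\times)$ is genuinely a subset of $\G$, the space of marked groups on two generators, equipped with the restriction of the ultrametric $d$. Since Hausdorff dimension is monotone, i.e. $A \subseteq B$ implies $\dim_H(A) \leqslant \dim_H(B)$, we get $\dim_H(\G) \geqslant \dim_H(Z_m^\times)$ for every rational integer $m$ with $|m| \geqslant 2$.

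Next I would invoke the lower bound of Theorem \ref{ThDimZm}, namely
\[
 \dim_H(Z_m^\times) \geqslant \frac{\log |m|}{2(|m|+1)} \ ,
\]
so that $\dim_H(\G) \geqslant \frac{\log |m|}{2(|m|+1)}$ for all such $m$. Specializing to $m = 2$ yields $\dim_H(\G) \geqslant \frac{\log 2}{2\cdot 3} = \frac{\log 2}{6}$, which in particular is strictly positive; this proves the corollary. (If one wished to push the constant a little, optimizing the quantity $\frac{\log |m|}{2(|m|+1)}$ over integers $|m|\geqslant 2$ gives a slightly better value — e.g. $m=4$ yields $\frac{\log 2}{5}$ — but the clean bound $\log(2)/6$ already settles non-vanishing, so I would simply record the $m=2$ case.)

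There is no real obstacle here: the entire substance of the argument is contained in Theorem \ref{ThDimZm}, which in turn rests on the two-sided distance estimates of Corollary \ref{DistDist} and the resulting Hölder conditions of Propositions \ref{Holder} and \ref{Holder2}, together with the classical fact that $\dim_H(\Z_m^\times) = 1$ for the $m$-adic metric and the standard behaviour of Hausdorff dimension under Hölder maps. The only points to state carefully in the write-up are the embedding $Z_m^\times \hookrightarrow \G$ and the monotonicity of $\dim_H$, both of which are routine.
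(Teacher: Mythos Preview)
Your proposal is correct and is exactly the intended argument: the paper states the corollary immediately after Theorem \ref{ThDimZm} with no explicit proof, the deduction being precisely the monotonicity of Hausdorff dimension applied to the inclusion $Z_2^\times \subset \G$ together with the lower bound $\dim_H(Z_2^\times) \geqslant \log(2)/6$. Your parenthetical remark that $m=4$ gives the slightly sharper bound $\log(2)/5$ is a valid observation the paper does not record.
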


We have estimated the Hausdorff dimension of the subspaces $Z_m^\times$, which are homeomorphic to the Cantor set (provided that $|m|\ge 2$). But many interesting subspaces of $\G$, or $\mathcal G_n$, appeared in the litterature, e.g:
\begin{itemize}
 \item the Cantor set of Grigorchuk groups \cite{Gri84}; many such groups have intermediate growth;
 \item the closure $\mathcal H_n\subseteq \mathcal G_n$ of non-elementary hyperbolic groups considered by Champetier \cite{Cham00};
 \item the minimal Cantor subset of $\mathcal G_3$ constructed by Nekrashevych \cite{Nek07}.
\end{itemize}
The first-named author has proved that the box-counting dimension (and hence the Hausdorff dimension) of the set of Grigorchuk groups vanishes \cite{Guy07}. It also holds for the set of Nekrashevych groups as these groups share similar contracting properties with the latter. In the case of hyperbolic groups, we do not know whether the Hausdorff dimension vanishes or not.

%%%%%%%%%%%%%%%%%%%%%%%%%%%%%%%%%%%%%%%%%%%%%%%%%%%%%%%%%%%%%%%%%%%%%%%%%%%%%%%%%%%%%%%%%%%%%%%%%%%%%%%%%%%%%%%
\section{Complexity of the word and conjugacy problems} \label{SecWP}
%%%%%%%%%%%%%%%%%%%%%%%%%%%%%%%%%%%%%%%%%%%%%%%%%%%%%%%%%%%%%%%%%%%%%%%%%%%%%%%%%%%%%%%%%%%%%%%%%%%%%%%%%%%%%%%%%%%
In this section, we study isomorphism invariants of groups originating from language theory, namely
the space complexity and the Turing degree of the word and conjugacy problems.
Our results are inspired by the works for Grigorchuk \cite{Gri84} and Garzon and Zalcstein \cite{GZ91}
on the word problem of Grigorchuk groups. First, we show that the space complexity of the word problem for $\ovBS$ is tightly related to the space complexity of the rational integer sequence $(r_i(\xi))$ (Proposition \ref{PropWPovBS}). Second, we show that the conjugacy problem for $\ovBS$ is Turing reducible to the word problem for $\ovBS$ (Corollary \ref{CorTuring}). For the sake of simplicity, our emphasis is on the space complexity of the word problem. Analogs of Proposition \ref{PropWPovBS} for time complexity and the conjugacy problem could be proved if one is prepared to more technicalities. 

\paragraph{Space complexity}
Let $\A$ be a set. We denote by $\Aa$ the set all strings (or words) on $\mathcal{A}$. Let $s \in \Aa$. We denote by $|s|_{\A}$ the string length of $s$, that is the number of symbols of $\A$ in $s$. We may simply write $|s|$ when the underlying set is clearly given by the context.
A set $L$ is a \emph{language} if it is a subset of $\Aa$ for some finite set $\A$ called \emph{alphabet}.

Let $G$ be a group and let $X$ be a finite generating set of $G$. We
denote by $WP(G,X)$ the set of strings $s \in {(X \cup X^{-
1})}^{\ast}$ such that $s=1$ in $G$, i.e. $s$ reduces to the trivial
element of $G$. The decision problem of membership in $WP(G,X)$ is
called the \emph{word problem with respect to $X$}. The Turing time
and space complexity of the language $WP(G,X)$ are group-theoretic
properties independant of $X$ \cite{MO85}; so $X$ will be omitted.

\begin{nb}
A Turing machine M is an \emph{off-line Turing machine} if it has a read-only input tape with endmarkers and finitely many semi-infinite storage tapes.
All Turing machines considered in this section are off-line Turing machines that halts on every input. We adress the reader to \cite{HU79} for the complete definitions of terms used in this section.
\end{nb}
Let M be an off-line Turing machine and let $f:\R_{+} \longrightarrow \N$ be a function. If for every input word of length $n$, the machine M scans at most $f(n)$ cells on any storage tape, then M is said to be an $f(n)$ \emph{space-bounded Turing machine}. We denote by $\DS(f)$ (resp. $\NS(f)$) the class of languages which are accepted by a deterministic (resp. non-deterministic) $f(n)$ space-bounded Turing machine. A language $L$ is \emph{recursive} if it is accepted by a Turing machine.
A function $g:\N^k \longrightarrow \N^l$ is a \emph{recursive function} if it can be computed by a Turing machine (the $k$ arguments $i_1,\dots ,i_k$ of $g$ are initially placed on the input tape separated by $1$'s, as $0^{i_1}10^{i_2}1 \cdots10^{i_k}$, the $l$ arguments are placed similarly in some output tape). A function $g:\N \longrightarrow \N^l$ is said to \emph{belong to} $\DS(f)$ (resp. $\NS(f)$) if there exists a deterministic (resp. non-deterministic) Turing machine taking as input the binary expansion of $j$ and computing $g(j)$ in space bounded above by $f(n)$ where $n$ is the number of binary digits of $j$.
A language $L$ (resp. a function $g$) is said to \emph{separate the inclusion} of two space complexity classes $$\DS(f) \subset \NS(f)$$
if $L$ (resp. $g$) belongs to $\NS(f)$ but not to $\DS(f)$.
Proofs below use of the Tape Compression Theorem \cite[Th. 12.1]{HU79} without mentioning it: the equality of language classes $$\DS(f)=\DS(cf)$$ holds for any $c>0$, with an analogue statement in the non-deterministic case.

Time complexity is analogously defined by counting the number of state transitions of a Turing machine with a read-and-write input tape. Every input word of length $n$ requires at least $n$ state transitions to be entirely read, hence $\DT(n)$ is the smallest time complexity class. For every function $f$, we have $\DT(f) \subset \DS(f)$.
We collect few facts on the word problem of finitely generated groups.
\begin{itemize}
\item The language $WP(G)$ is regular if and only if $G$ is a finite group \cite{Ani71}. If $G$ is infinite then $WP(G)$ does not belong to $\DS(\log \log)$ \cite{HS65}.
\item The language $WP(G)$ is context-free if and only if $G$ is virtually free \cite{MS83,Dun85}.
\item The language $WP(G)$ belongs to $\DS(\log)$ if $G$ is a linear group over a field of characteristic zero \cite{LZ77}. There exists a finitely presented non-linear group $G$ such that $WP(G) \in \DS(\log)$ \cite{Waa81}.
\item There is no known example of a ``simple'' group presentation for which the word problem does not belong to $\DS(\log)$.
\item If $G$ contains a copy of $\Z$ then $WP(G)$ does not belong to $\DS(g)$ for any $g$ such that $g(n)/\log(n)$ tends to $0$ \cite[Th. 2]{AGM92}. In particular, $\log$ is a sharp bound for the space complexity of the word problem of any infinite finitely generated linear group.
\item The word problem of a word hyperbolic group $G$ is solvable in real time \cite{Hol00}. In particular $WP(G) \in \DT(n)$.
\end{itemize}

Let $p,q \in \Z \setminus \{0\}$ and let $WP(p,q)$ (resp. in $WP(m,\xi)$) be the set of strings $s\in \{a^{\pm 1}, b^{\pm 1}\}^{\ast}$ such that $s=1$ in $BS(p,q)$ (resp. $\ovBS$).
Given $\ovBS$, we define the function $r$ on $\N$ by $r(0)=|m|$ and $r(n):=r_n(\epsilon_m  \xi)$ where $\epsilon_m$ is the sign of $m$. This definition is motivated by the fact that $WP(m,\xi)=WP(|m|,\epsilon_m \xi)$ since $\ovBS$ and $\overline{BS}(-m,-\xi)$ are isomorphic as marked groups. The following proposition can be proved by using arguments similar to those of Lemma \ref{LemSpaceBound}.

\begin{proposition} \label{PropWPBS}
$WP(p,q) \in \DS(n) \cap \DT(n^2)$.
\end{proposition}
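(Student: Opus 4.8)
The plan is to decide membership in $WP(p,q)$ by running Britton's reduction for the natural HNN structure of $BS(p,q)$ as a single left‑to‑right pass over the input word, kept on a pushdown store, and to control simultaneously the working space and the running time through one combinatorial estimate on the $b$‑exponents created during the reduction. The step I expect to require the most care is precisely that estimate: a crude bound shows an intermediate $b$‑exponent can have $\Theta(n)$ bits while there are $\Theta(n)$ of them, which would only give $\DS(n^{2})$; the trick is to bound instead the \emph{product} of these exponents.

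We may assume $|p|\le|q|$ (otherwise apply the relabeling $a\leftrightarrow a^{-1}$ of $\ab$, which induces a marked‑group isomorphism $BS(p,q)\cong BS(q,p)$ and preserves the classes $\DS(n)$ and $\DT(n^{2})$). Write $BS(p,q)=\operatorname{HNN}(\langle b\rangle,\langle b^{p}\rangle,\langle b^{q}\rangle,\psi)$ with stable letter $a$ and $\psi(b^{p})=b^{q}$. On input $s=s_{1}\cdots s_{n}\in\ab$, the algorithm maintains on a storage tape a reduced sequence $b^{c_{0}}a^{\eta_{1}}b^{c_{1}}\cdots a^{\eta_{l}}b^{c_{l}}$ (the $c_{i}\in\Z$ written in binary, $\eta_{i}=\pm1$, no subword $a\,b^{c_{i}}\,a^{-1}$ with $p\mid c_{i}$ and no $a^{-1}b^{c_{i}}a$ with $q\mid c_{i}$) representing in $BS(p,q)$ the image of the prefix already read. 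To process $s_{k+1}=b^{\pm1}$ it adds $\pm1$ to the top exponent $c_{l}$. To process $s_{k+1}=a^{\varepsilon}$ it checks whether $a^{\eta_{l}}b^{c_{l}}a^{\varepsilon}$ is a pinch (that is, $\varepsilon=-\eta_{l}$ and, accordingly, $p\mid c_{l}$ if $\eta_{l}=1$ or $q\mid c_{l}$ if $\eta_{l}=-1$); if so it deletes $a^{\eta_{l}},b^{c_{l}}$ and adds $(q/p)c_{l}$, resp. $(p/q)c_{l}$, to $c_{l-1}$; if not, it pushes $a^{\varepsilon}$ and a fresh $b^{0}$. It accepts iff at the end $l=0$ and $c_{0}=0$. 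Correctness is exactly Britton's lemma together with the Normal Form Theorem recalled in Section \ref{SubSecHNN}.

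For the space bound, note that $l\le n$ always, so it suffices to bound $\sum_{i}\log_{2}(|c_{i}|+1)$, i.e.\ the product $\Pi=\prod_{i}(|c_{i}|+1)$, by an exponential in $n$. Starting from $\Pi=1$: each of the at most $n$ letters $b^{\pm1}$ multiplies the top factor $(|c_{l}|+1)$ by at most $2$; each pinch replaces the two factors $(|c_{l-1}|+1)(|c_{l}|+1)$ by $|c_{l-1}+c'|+1$, where $|c'|=(|q|/|p|)|c_{l}|\le|q|\,|c_{l}|$ in the expanding case and $|c'|\le|c_{l}|$ in the contracting one, so in either case by at most $|q|\,(|c_{l-1}|+1)(|c_{l}|+1)$; and since each $a^{\pm1}$ of the input yields exactly one push or one pinch with (\#pinches) $\le$ (\#pushes), there are at most $n/2$ pinches. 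Hence $\Pi\le 2^{n}|q|^{\,n/2}$, so $\sum_{i}\log_{2}(|c_{i}|+1)=O(n)$; together with $l=O(n)$ this shows the stack occupies $O(n)$ cells throughout, and with one auxiliary $O(n)$‑cell tape for the arithmetic we get $WP(p,q)\in\DS(n)$. The naive estimate (each $|c_{i}|\le(|q|/|p|)^{n}$, with $O(n)$ of them) only gives $O(n^{2})$, which is why bounding $\Pi$ rather than $\max_{i}|c_{i}|$ is the essential point.

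For the time bound, each input letter triggers only a bounded number of stack operations — one exponent update, one push, or one pinch; there are no cascades, since a $b^{0}$ produced by a pinch is simply kept in place and handled, if need be, when the next $a^{\pm1}$ is read — so the run consists of $O(n)$ stack operations. Each such operation is a shift, an addition, a remainder, or a multiplication/exact division by the \emph{fixed} integer $p$ or $q$ of an integer of bit‑length $O(n)$, together with a repositioning of the head inside the $O(n)$‑cell stack, hence costs $O(n)$ steps. Therefore the machine halts within $O(n^{2})$ steps, so $WP(p,q)\in\DT(n^{2})$, which completes the proof.
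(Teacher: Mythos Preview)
Your proof is correct and follows essentially the same approach that the paper indicates: the paper does not spell out a proof of this proposition but refers to the method of Lemma~\ref{LemSpaceBound}, namely running Britton's reduction on the natural HNN decomposition of $BS(p,q)$ with $b$-exponents stored in binary, and arguing that the total encoding length grows only linearly. Your left-to-right single-pass/stack formulation is just an incremental organisation of that reduction, and your product bound $\prod_i(|c_i|+1)\le 2^{n}|q|^{n/2}$ is the multiplicative form of the paper's additive estimate $|s^{(i+1)}|\le |s^{(i)}|+\log_2(\max(|p|,|q|))$ with at most $n/2$ reduction steps; both give $O(n)$ space and, combined with the $O(n)$ cost per step for $O(n)$-bit arithmetic and head repositioning, $O(n^2)$ time.
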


As $BS(p,q)$ is not virtually free, we observe that the language $WP(p,q)$ is not a context-free language. The complement of $WP(p,q)$ is not a context-free language either, except if $|p|=|q|$ \cite{HRR+05}\footnote{It is uncorrectly claimed in the proof of \cite[Th. 13]{HRR+05} that $|p|=|q|$ if and only if $BS(p,q)$ is virtually abelian. The condition $|p|=|q|$ is less restrictive for it means that $BS(p,q)$ contains a copy of the direct product $\Free_{|p|} \times \Z$ as a finite index subgroup, or equally that $BS(p,q)$ is linear over $\Z$.}. Solvable Baumslag-Solitar groups (i.e., groups $BS(p,q)$ with $|p|=1$ or $|q|=1$) have a tidy real-time word problem \cite[Th. 2.1]{HR03}. We still ignore wether $WP(p,q)$ belongs to $\DS(\log)$ in the case $BS(p,q)$ is not linear. (Recall that $BS(p,q)$ is linear if and only if either $|p|=|q|$ or $|p|=1$ or $|q|=1$ by Proposition \ref{eqNBS}.) The reader interested in goedesic languages of Baumslag-Solitar groups should consult \cite{Eld05, DL09}.

Provided $r$ belongs to $\DS(n)$, Proposition \ref{PropWPBS} holds for $WP(m,\xi)$ and it corresponds to the lowest complexity bound we obtain. Our next result relate the space complexity of $WP(m,\xi)$ to the space complexity of $r$. Let us stress on the fact that functions $r$ are ``numerous'' because of Proposition \ref{PropRiSi}.$iii$: for any $m \in \Z\setminus\{0\},\,d \in \N \setminus\{0\}$ and for any $g:\N \longrightarrow \{0,\dots,|m|-1\}$ there is some $ \xi \in \Z_{m}^{\times}$ such that $g(n)=r_n(\epsilon_m  \xi)$ for all $n \ge 2$. Hence the following proposition can be seen as a result of density in the space hierarchy.
\begin{proposition} \label{PropWPovBS}
Let $f$ be a non-decreasing function such that $f(n) \ge n$ and $\DS(f) \neq \NS(f)$. Let $\ovBS$ be such that $r$ separates the inclusion $\DS(f) \subset \NS(f)$. Then $WP(m,\xi)$ separates the inclusion $$\DS(f(n/6|m|)) \subset \NS(f(n)).$$
\end{proposition}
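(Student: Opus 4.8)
The plan is to prove the proposition by exhibiting a space-efficient reduction in each direction between the language $WP(m,\xi)$ (on words of length $n$) and the sequence $r=(r_i(\xi))_i$ (on indices $i$), the two ``sizes'' being linearly related by the factor $6|m|$. The link is provided by Lemma~\ref{LemWinE}: for $t\in\{0,\dots,|m|-1\}$ and $i\ge 1$ set
\[
 W_{i,t}=w(m,r_1(\xi),\dots,r_{i-1}(\xi),t)\,e_0\,w(-m,-r_1(\xi),\dots,-r_{i-1}(\xi),-t)(-e_0),
\]
a word of length $\le 2(|m|+1)i+2|m|+6\le 6|m|\,i$ for $i\ge 2$ (a constant-size adjustment handles $i\le 1$), which by Lemma~\ref{LemWinE} is trivial in $\tBS m\xi$ if and only if $t=r_i(\xi)$; in particular exactly one value $t$ makes $W_{i,t}=1$. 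Throughout one works in $\tBS m\xi$ via Corollary~\ref{CorHNN2}, uses $WP(m,\xi)=WP(|m|,\epsilon_m\xi)$ to assume $m>0$, and absorbs multiplicative constants in space bounds by tape compression. The statement to prove is that $WP(m,\xi)\in\NS(f(n))$ (upper bound) while $WP(m,\xi)\notin\DS(f(n/6|m|))$ (lower bound); together these are exactly the assertion that $WP(m,\xi)$ separates $\DS(f(n/6|m|))\subset\NS(f(n))$.

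For the upper bound, first I would run the Britton reduction of an input word $w$ of length $n$ in the HNN extension $\tBS m\xi$. As in the proof of Theorem~\ref{CongDistLim}(2), this reduction performs at most $\lceil n/2\rceil$ substitutions and only ever manipulates elements of $\Z e_0\oplus\dots\oplus\Z e_{\lceil n/2\rceil}$; hence the only values of $r$ it consults are $r_1(\xi),\dots,r_{\lceil n/2\rceil}(\xi)$. Working in the polynomial model $\mathfrak{BS}(m,\xi)$ --- in which a pinch $a\gamma a^{-1}$ acts on $P_\gamma$ by multiplication by $X$ and its inverse by division by $X$, so that the coefficients that occur stay bounded by the total $b$-length $\le n$ of $w$ and the partially reduced word stays of total size $O(n)$ --- one shows, exactly as in the proof of Lemma~\ref{LemSpaceBound}, that the whole reduction (including each membership test ``$\gamma\in E_{m,\xi}$'', decided from the coordinates of $\gamma$ and from the $r_i(\xi)$) runs in space $O(n)$ once the relevant $r_i(\xi)$ are given. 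Inserting the nondeterministic $\NS(f)$-machine for $r$ as a subroutine --- invoked afresh, with workspace reused, each time some $r_i(\xi)$ with $i\le\lceil n/2\rceil$ is needed, at cost $f(i)\le f(n)$ since $f$ is non-decreasing --- the total space is $O(n)+f(n)=O(f(n))$, whence $WP(m,\xi)\in\NS(f(n))$.

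For the lower bound I would argue by contradiction. Suppose $WP(m,\xi)\in\DS(g)$ with $g(n)=f(\lfloor n/6|m|\rfloor)$, witnessed by a deterministic machine $M$; I claim then that $r\in\DS(f)$. Given the index $j$, compute $r_1(\xi),\dots,r_j(\xi)$ successively: once $r_1(\xi),\dots,r_{i-1}(\xi)$ are known (correctly, by induction, and stored in $O(i)\le O(j)$ space), form each $W_{i,t}$ for $t=0,\dots,|m|-1$ --- presenting it to $M$ as a read-only input generated on the fly from the stored data --- and simulate $M$, which uses space $g(|W_{i,t}|)\le g(6|m|\,i)=f(i)\le f(j)$; by Lemma~\ref{LemWinE} the unique accepted $t$ equals $r_i(\xi)$. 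Continuing to $i=j$ and outputting $r_j(\xi)$, the overall space is $O(j)+f(j)=O(f(j))$ (here I use $f(n)\ge n$), so after tape compression $r\in\DS(f)$, contradicting the hypothesis that $r$ does not belong to $\DS(f)$. Hence $WP(m,\xi)\notin\DS(f(n/6|m|))$.

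The hard part will be the space analysis in the upper bound: confirming that the Britton reduction in $\tBS m\xi$ genuinely fits in space $O(n)$. The point is that in the model $\mathfrak{BS}(m,\xi)$ the pinching operations are mere index shifts of polynomials, so no coefficient explosion occurs (every coefficient that appears is a signed partial sum of the $b$-exponents of $w$) and the partially reduced word never exceeds size $O(n)$; this is precisely the phenomenon exploited in the proof of Lemma~\ref{LemSpaceBound}, here supplemented only by the remark that membership of an element of $\Z e_0\oplus\dots\oplus\Z e_h$ in $E_{m,\xi}$ is decidable from its coordinates together with $r_1(\xi),\dots,r_h(\xi)$. The remaining ingredients --- the length bound on $W_{i,t}$, the uniqueness of the accepting value $t$, and the reuse of workspace for the $r$-subroutine --- are routine.
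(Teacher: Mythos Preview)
Your proposal is correct and follows essentially the same route as the paper. The paper's proof of Proposition~\ref{PropWPovBS} consists of one sentence: it is an immediate consequence of Lemmas~\ref{LemSpaceBound} and~\ref{LemSpaceBoundInf}; your upper bound is a sketch of the $\NS$-version of Lemma~\ref{LemSpaceBound} (Britton reduction in $O(n)$ space plus calls to the $\NS(f)$-machine for $r$), and your lower bound is a sketch of Lemma~\ref{LemSpaceBoundInf} (recovering $r_1,\dots,r_j$ by testing the words $W_{i,t}$ against a putative $\DS(f(n/6|m|))$-solver for the word problem), with the length estimate $|W_{i,t}|\le 6|m|\,i$ playing the same role as the paper's bound $|v(|m|,t_1,\dots,t_n)|\le 6n|m|$.

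One small caution on your upper-bound justification: the claim that in the polynomial model ``every coefficient that appears is a signed partial sum of the $b$-exponents of $w$'' is the right intuition, but the careful bound the paper actually uses is the incremental one $|s^{(i+1)}|\le |s^{(i)}|+\log_2|m|$ on the binary encoding (each pinch in the $e_i$-coordinates replaces one coefficient by a quotient by $m$ and shifts the others, adding at most $\log_2|m|$ to the total encoding length). Since you already defer to Lemma~\ref{LemSpaceBound} for this, the argument stands; just be aware that a direct ``coefficients are partial sums'' bookkeeping in the polynomial basis needs the membership test for $E_{m,\xi}$ (Proposition~\ref{PropBandBm}(iii)) and hence the $r_i$'s, exactly as the paper computes them.
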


This result is an immediate consequence of the following two lemmas.

\begin{lemma} \label{LemSpaceBound}
Assume $r \in \DS(f(n))$ for some non-decreasing function $f$. Then $WP(m,\xi) \in \DS(n+f(n))$. Likewise for $\NS$.
\end{lemma}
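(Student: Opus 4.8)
The plan is to implement, as an $O(n+f(n))$-space Turing machine, the standard Britton reduction of a word in an HNN extension, applied to $\ovBS$ in its decomposition $\operatorname{HNN}(B,B_m,B_\xi,\tau)$ of Proposition~\ref{PropHNNB} (equivalently $\tBS{m}{\xi}=\operatorname{HNN}(E,E_{m,\xi},E_1,\phi)$ via Corollary~\ref{CorHNN2}), feeding it the structure constants $r_i(\xi)$ on demand. Since $WP(m,\xi)=WP(|m|,\epsilon_m\xi)$ — $\ovBS$ and $\bBS{-m}{-\xi}$ are the same marked group — we may assume $m>0$, so that $r(0)=m$ and $r(i)=r_i(\xi)$ for $i\geqslant 1$. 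Given an input word $w$ of length $n$ over $\{a^{\pm1},b^{\pm1}\}$, replace each maximal power $b^{\alpha}$ by $\alpha e_0$, obtaining a sequence $s=(x_0,a^{\varepsilon_1},x_1,\dots,a^{\varepsilon_k},x_k)$ with $k\leqslant n$ and each $x_i$ in $\Z e_0\leqslant E$. By Britton's Lemma and the Normal Form Theorem (Section~\ref{SubSecHNN}), $w=1$ in $\ovBS$ if and only if, after exhaustively performing the moves ``replace a factor $a^{+1}\,g\,a^{-1}$ with $g\in E_{m,\xi}$ by $\phi(g)$'' and ``replace a factor $a^{-1}\,g\,a^{+1}$ with $g\in E_1$ by $\phi^{-1}(g)$'' (each time merging the neighbouring base syllables that become adjacent), the resulting reduced sequence has no $a$-letters left and its single base element is $0$. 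Each move strictly decreases the number of $a$-letters, so the process halts after at most $n$ moves.

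The group-theoretic primitives — deciding $g\in E_{m,\xi}$ (resp.\ $g\in E_1$) and computing $\phi(g)$ (resp.\ $\phi^{-1}(g)$) for a base element $g$ — I would carry out through the embedding $q\colon B\hookrightarrow \Z\br{X}$ of Proposition~\ref{PropBandBm}: a base element is recorded as a polynomial; $E_{m,\xi}$ and $E_1$ become the subgroups $\mathfrak B_m$ and $\mathfrak B_\xi$ of $\mathfrak B=q(B)$, whose membership is governed by the explicit free bases $\{P_h\}_{h\geqslant0}$ and $\{XP_h\}_{h\geqslant0}$ of Proposition~\ref{PropBandBm}(iii),(iv); and $\phi$, $\phi^{-1}$ become multiplication and division by $X$ (Proposition~\ref{PropBandBm}(v)). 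The only data from $\xi$ that enter are $r_1(\xi),\dots,r_n(\xi)$, each of which is produced on demand by running the given machine for $r$ on an input $j\leqslant n$; since $f$ is non-decreasing, each such call uses at most $f(n)$ space, and the calls are made one at a time, reusing the scratch space.

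The crux — and the step I expect to be the main obstacle — is the space accounting: one must verify that the whole configuration maintained during the reduction, together with the scratch space of the $r$-subroutine, never occupies more than $O(n+f(n))$ cells; the conclusion $WP(m,\xi)\in\DS(n+f(n))$ then follows by the Tape Compression Theorem. Two observations carry this out. First, only the generators $e_0=b,e_1,\dots,e_h$ with $h\leqslant n$ ever appear, because the largest index occurring is bounded by the current number of $a$-letters, which starts at $k\leqslant n$ and only decreases; likewise every base polynomial has degree $\leqslant n$. Second, the $\ell^1$-size of the base part — the sum over all base syllables $g_i$ of $\|q(g_i)\|_1$, the sum of the absolute values of the coefficients — is non-increasing under the reduction: $q$ is a homomorphism, so merging adjacent base syllables can only decrease the total (triangle inequality), while $\phi$ and $\phi^{-1}$ multiply and divide a polynomial by $X$ and hence preserve its $\ell^1$-norm exactly. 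As the initial value of this quantity is $\sum_i|\alpha_i|\leqslant n$, it stays $\leqslant n$ throughout; combined with the degree bound, a suitable (e.g.\ sparse, gap-encoded) representation of the reduced sequence then occupies $O(n)$ space. The point needing genuine care here is to organise the membership tests for $\mathfrak B_m$ and $\mathfrak B_\xi$ — the successive divisions by $m$ and by $X$ against the bases $\{P_h\}$, $\{XP_h\}$ — so that they too generate no intermediate data of super-linear size, which rules out the naive ``expand in the basis'' procedure.

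Finally, the $\NS$ statement follows from exactly the same construction with the deterministic machine for $r$ replaced by a non-deterministic $\NS(f)$ one: whenever a value $r_j(\xi)$ is required, simulate that machine; on each computation branch whose non-deterministic choices are legal it returns the correct $r_j(\xi)$ and the reduction continues deterministically, so the overall machine is a non-deterministic $(n+f(n))$-space machine accepting $WP(m,\xi)$.
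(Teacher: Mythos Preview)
Your approach is the paper's --- run Britton reduction through the HNN decomposition, calling the machine for $r$ on demand --- but you choose a different representation of base elements: the monomial coordinates of $q(\gamma)=P_\gamma\in\Z[X]$ instead of the $(e_i)$-coordinates the paper uses. In the monomial basis your observation is correct and elegant: $\phi$ and $\phi^{-1}$ are literally multiplication and division by $X$, so the total $\ell^1$-norm of the base part cannot increase, and with an appropriate unary/gap encoding (together with the easy bound $\sum_j\deg P_j\le n$, since a Britton step changes this sum by $\pm1$) the whole configuration occupies $O(n)$ cells.

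The gap is exactly where you say it is, and it is a real one. In the monomial basis there is no direct criterion for $P\in\mathfrak B_m$: the test is $k_0+\sum_{i\ge1}k_ir_i(\xi)\equiv 0\pmod m$, where the $k_i$ are the coordinates of $P$ in the basis $(1,XP_0,XP_1,\ldots)$ --- that is, precisely the $(e_i)$-coordinates. Converting from monomials to this basis by successive subtraction of $k_dXP_{d-1}$ adds $k_d r_{d-j}$ to every lower coefficient, so the intermediate $\ell^1$-norm can grow by a factor $\sim d$ at each of $d$ steps; ``ruling out the naive procedure'' does not supply one that works. You would need either an $O(n)$-space evaluation of $P_\gamma(\xi/m)\pmod m$ directly from the monomial coefficients and $r_1,\ldots,r_n$, or to carry the $(e_i)$-coordinates alongside --- at which point the monomial representation is redundant.

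The paper makes the opposite trade: it stores the $(e_i)$-coordinates $(\beta_{lj})$ throughout. Membership in $E_{m,\xi}$ is then the one-line congruence $\beta_{0j}+\sum_{l\ge1}\beta_{lj}r_l(\xi)\equiv 0\pmod m$ from Proposition~\ref{PropBandBm}(iii), while the price is that $\phi^{\pm1}$ is no longer a shift but replaces one coordinate by a short arithmetic combination of the others; the paper argues that this increases the binary-encoded length by only a bounded amount per step. So your $\ell^1$ invariant is cleaner conceptually, but buys ease of $\phi$ at the cost of a membership test you have not carried out; the paper's choice pushes the work into $\phi$, where it is tractable.
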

\begin{lemma} \label{LemSpaceBoundInf}
Assume $WP(m,\xi) \in \DS(f(n/6|m|))$ for some non-decreasing function $f$ such that $f(n) \ge n$. Then $r \in \DS(f(n))$. Likewise for $\NS$.
\end{lemma}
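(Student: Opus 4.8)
The plan is to use the hypothesised oracle for $WP(m,\xi)$ to recover the sequence $(r_i(\xi))$ from the word problem of $\ovBS$, one term at a time. The bridge is Lemma~\ref{LemWinE}: once the correct values $r_1(\xi),\dots,r_{i-1}(\xi)$ are known, the word
\[
 W_i(t)\ :=\ w(m,r_1(\xi),\dots,r_{i-1}(\xi),t)\ e_0\ w(-m,-r_1(\xi),\dots,-r_{i-1}(\xi),-t)\ (-e_0),
\]
read as a word in $a,b$ via $e_0=b$, vanishes in $\ovBS$ if and only if $t=r_i(\xi)$. So the first thing I would do is reduce to the case $m\ge 2$: if $|m|=1$ then $r$ is eventually constant and the statement is trivial, and replacing $(m,\xi)$ by $(|m|,\epsilon_m\xi)$ changes neither $WP(m,\xi)$ nor $r$.

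I would then build the following machine for $r$. On input $n$, if $n\le 1$ it outputs the constant value $r(n)$ from a finite table (recall $r(0)=m$). Otherwise it computes $r_1(\xi),r_2(\xi),\dots,r_n(\xi)$ in turn: with $r_1(\xi),\dots,r_{i-1}(\xi)$ stored on a work tape, it loops over $t=0,1,\dots,m-1$ and for each simulates the deterministic word-problem machine for $\ovBS$ on input $W_i(t)$; by Lemma~\ref{LemWinE} exactly one $t$ is accepted, and that $t$ equals $r_i(\xi)$, which is appended to the work tape. It outputs $r_n(\xi)$ at the end. The word $W_i(t)$ is never written down: when the simulated (read-only) input head asks for the symbol in position $p$, that symbol is recomputed on the fly from $m$, $t$, $p$ and the stored values. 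For the nondeterministic statement one instead guesses the whole tuple $(r_1(\xi),\dots,r_n(\xi))$ and verifies it in one shot with the analogous word, using the nondeterministic word-problem machine.

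The space accounting would go as follows. Storing $r_1(\xi),\dots,r_n(\xi)$ costs $O(n)$ cells; the input-head position inside $W_i(t)$ and the counters $i,t$ cost $O(\log n)$ cells; and the same length count used in the proof of Theorem~\ref{CongDistLim}(1) gives $|W_i(t)|\le 2(|m|+1)i+2|m|+6\le 2(|m|+1)n+2|m|+6\le 6|m|\,n$ for $n\ge 2$, so simulating a word-problem machine that runs, on an input of length $L$, in space $f(L/6|m|)$ costs at most $f(n)$ cells. Altogether the machine runs in space $O(n)+O(\log n)+f(n)=O(f(n))$, using the hypothesis $f(n)\ge n$, and the Tape Compression Theorem converts $O(f(n))$ to $f(n)$. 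This gives $r\in\DS(f(n))$, and the same reasoning with the nondeterministic oracle gives $r\in\NS(f(n))$.

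The one genuine constraint --- and the main thing to get right --- is the inequality $|W_i(t)|\le 6|m|\,n$: this is exactly what makes the oracle's space bound $f(\cdot/6|m|)$ collapse to $f(n)$, and it is where the factor $6|m|$ in the statement (and in Proposition~\ref{PropWPovBS}) comes from. The nuisance is that this inequality just fails for the finitely many pairs $(n,|m|)$ with $n\le 1$ and $|m|\in\{2,3\}$, which is why those inputs are dispatched by a table; and the fact that we may spend $\Theta(n)$ cells carrying the partial sequence needed to assemble $W_i(t)$ is affordable precisely because $f(n)\ge n$.
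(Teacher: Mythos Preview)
Your proof is correct and follows essentially the same route as the paper: recover $r_i(\xi)$ one index at a time by testing the commutator words of Lemma~\ref{LemWinE} with the word-problem machine, and use the length bound $|W_i(t)|\le 6|m|\,n$ to convert the hypothesised space bound $f(\cdot/6|m|)$ into $f(n)$. The paper's proof differs only in bookkeeping: it first recovers $|m|$ from the word problem via Lemma~\ref{LemGetM} (whereas you hard-code it in a finite table), and it writes the test words explicitly on a work tape identified with the input tape of the simulated machine (whereas you regenerate input symbols on the fly). Your treatment of the nondeterministic case---guess the whole tuple and verify with a single word---is more explicit than the paper's, which simply says ``Likewise for $\NS$''; your version makes clear why the iterative search over $t$ need not be carried out deterministically when the word-problem machine is nondeterministic.
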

Let us summarize the idea of the proof of Lemma \ref{LemSpaceBound}.
Applying to a given word $w\in \ab$ the natural algorithm originating from Britton's lemma, we obtain a reduced sequence for $w$. This reduction is carried out within at most $|w|_a$ steps and at each step we consider a word whose length is at most $|m|$ times the length of the previous one. As we encode the exponents of $a$ and $b$ by means of their binary expansions, this streching factor becomes an additive constant which explains the linear part of the space complexity bound. The other part of the bound comes from the fact that we need to compute $r(n)$ to reduce words $w$ such that $|w|_a=n$.

As for the proof of Lemma \ref{LemSpaceBoundInf}, we notice that a Turing machine which can solve the word problem for $\ovBS$, can decide which of the words defined in Lemma \ref{LemWinE} are trivial. Hence it can be used to compute $r_i(\xi)$ for every $i$.

We will work with our favoured HNN extension $\tBS{m}{\xi}$ instead of $\ovBS$. In order to make a careful enough counting of the numbers of scanned cells, we will use following notations.
We fix $\mathcal{A}:=\{a^{\pm 1},\pm e_0,\pm e_1,\dots\}$. Let $m \in \Z \setminus \{0\}$ and $\xi \in \Z_m$.
Given $w \in\mathcal{A}^{\ast}$, we can rewrite $w$ in $\langle  a \rangle*E$ under the form
\begin{equation} \label{EqForm}
 w^{(0)}= a^{\alpha_1} c_1 a^{\alpha_2}c_2 \cdots a^{\alpha_h} c_h
\end{equation}
with $c_j=(\beta_{0j}e_0) (\beta_{1j}e_1) \cdots (\beta_{k_j j}e_{k_j}),\,\alpha_j,\beta_{lj} \in \Z$ for all $l,j$.
 %%%%and $\beta_{k_j j} \in \Z\setminus\{0\}$ for all $j<h$.
We denote by $\eps_j$ the sign of $\alpha_j$. We suppose that the following holds: there is some $j$ such that
\begin{equation} \label{C} \tag{*}
\eps_j=-\eps_{j+1}=-1 \text{ and } c_j \in E_{m,\xi} \text{ or } \eps_j=-\eps_{j+1}=1 \text{ and } c_j \in E_1.
\end{equation}

We denote by $\ell=\ell(w)$ the smallest $j$ such that
(\ref{C}) holds. Let $w'$ be the word we get from $w^{(0)}$
by replacing $a^{\alpha_{\ell}} c_{\ell} a^{\alpha_{\ell+1}}$ by $a^{\alpha_{\ell}-\eps_{\ell}} \phi^{\eps_{\ell}} (c_{\ell})
 a^{\alpha_{\ell+1}+\eps_{\ell}}$ in $w$ and reducing this new word as 
in \ref{EqForm}. We write $w'=a^{\alpha_1'}c_1'a^{\alpha'_2} \cdots a^{\alpha'_{h'}}c'_{h'}$. Notice that a given exponent $\alpha$ 
of $a$ in $w$ either remains unchanged in $w'$, vanishes or is replaced by some $\alpha'$ such that $|\alpha'|=|\alpha|-1$. The subwords $c_j$ remain unchanged in $w'$ or vanish, except one which is replaced by some subword $c'$ with $|c'|\le(2+ \vert m\vert)n$ where $n=|w|$.
As long as (\ref{C}) holds for $w^{(i)}=a^{\alpha_1^{(i)}}c_1^{(i)}a^{\alpha_2^{(i)}} \cdots a^{\alpha^{(i)}_{h^{(i)}}}c_{h^{(i)}}^{(i)}$ with $i \ge 0$, we can define $w^{(i+1)}=(w^{(i)})'$.
%We set $\ell^{(i)}:=\ell(w^{(i)})$ and $c^{(i)}:=\left\{
%\begin{array}{ccc}
%c^{(i)}_{\ell^{(i-1)}-1}& \text{ if } &|\alpha^{(i-1)}_{\ell^{(i-1)}}|=1\\
%c^{(i)}_{\ell^{(i-1)}}&\text{ else. }&
%\end{array}\right.$

By Britton's Lemma, for any $w \in \Aa$, there is some $i=i(w) \ge 0$ such that $w^{(i)}=1$ is a reduced form for $w$. We call the previous algorithm the Britton's algorithm.

\begin{sproof}{ Lemma \ref{LemSpaceBound}}

By hypothesis, there is an $f(n)$ space-bounded Turing machine
$\text{M}_r$ computing $r(n)$. We denote by $R_1$
its input tape and by $R_k$ $(2 \le k \le p)$ its storage tapes.
We design an off-line Turing machine M that halts on every input $w \in \aeo$: if a
non-trivial reduced form for $w$ has been found, it halts without
accepting, else $w$ is reduced to $1$ and M halts in an accepting state. Tape I is the read-only input tape
where $w$ is displayed without accounting for any space.
At the beginning, M writes the string $s^{(0)}$ on Tape 0 that encodes $w^{(0)}$:
$$s^{(0)}:=\mbox{\textcent} \eps_1\overline{\alpha}_1 \overline{c}_1 \eps_2\overline{\alpha}_2 \overline{c}_2
 \dots \eps_h\overline{\alpha}_h \overline{c}_h \$.$$
The strings $\overline{\alpha}_j\in \{0,1\}^{\ast}$ are the binary expansions of $|\alpha_j|$; each string
 $\overline{c}_j \in\{0,1,\pm \}^{\ast}$ is the concatenation of the binary expansions of the numbers
 $|\beta_{lj}|$ separated by sign symbols. If $\alpha_1=0$ (respectively $c_h=0$) then $\eps_1\overline{\alpha}_1$
  (respectively $\overline{c}_h$) is replaced by the empty string.

We now describe how M works on its storage tapes $R_k, \,(1 \le k \le p)$, $T_0,\,T_1$ and D.
First, the machine read the input: while
the head of tape I scans the first $j$ symbols of $w$, M stores the number $j$ using a counter situated in tape $R_1$ and then M computes and stores $r_j(\epsilon_m  \xi)$
in some of the tapes $R_k$ by simulating $M_r$. Meanwhile, the head of tape $T_0$ writes $s^{(0)}$,
 following an obvious linearly space-bounded algorithm. Once the input is read, M goes ahead by running
 Britton's algorithm. During the $i$-th step of this algorithm, with $i$ even, the head of $T_0$
 writes the string $s^{(i)}$ encoding $w^{(i)}$ over $s^{(i-2)}$ if condition (\ref{C}) holds for $w^{(i-1)}$.
  The latter word is encoded by a string $s^{(i-1)}$ stored in Tape $T_1$. In the next step,
   the head of Tape $T_1$ writes the string $s^{(i+1)}$ encoding $w^{(i+1)}$ over $s^{(i-1)}$
   if condition (\ref{C}) holds for $w^{(i-1)}$.
Tape D is a draft tape used to carry
 out two kind of arithmetical computations on binary expansions: the tests for condition
 (\ref{C}) and the computations of $c^{(i)}$. The content of Tape D is erased after each step.
The machine M halts in a state of acceptation if $s^{(i)}$ is the trivial string.
It halts without accepting in case condition (\ref{C}) does not hold for $w^{(i)}$.

{\it Space bound.}
The machine M scans at most $f(n)$ cells on the storage tapes $R_k$ while computing $r_1(\epsilon_m  \xi),\dots,r_n(\epsilon_m  \xi)$.
It also scans at most $C_0n$ cells while storing each number $j$ and all numbers $r_j(\epsilon_m  \xi)$ for $j \le n$, where $C_0>0$ is
independent of $n$.

Since $|s^{(i+1)}| \le |s^{(i)}|+\log_2(|m|)$ and $|s^{(0)}| \le 2n+2$, we deduce that M scans at most $C_1n$ cells on the storage
 tapes $T_0$ and $T_1$, where $C_1>0$ is independent of $n$.
In order to decide if $c_j^{(i)}$ belongs to $E_{m,\xi}$ or $E_1$, M uses the formula of Proposition \ref{PropBandBm}.$iii$: according to the signs of $\alpha_j^{(i)}$ and $\alpha_{j+1}^{(i)}$, M carries out the division of $\gamma_{m}:=\beta^{(i)}_{0j}+\beta_{1j}^{(i)}r_1(\epsilon_m  \xi)+\beta_{2j}^{(i)} r_2(\epsilon_m  \xi)+\dots+\beta_{k_j j}^{(i)}r_{k_j}(\epsilon_m  \xi))$ by $|m|$ or divides $\gamma_{\xi}:=\beta^{(i)}_{1j}+\beta_{2j}^{(i)}r_1(\epsilon_m  \xi)+\beta_{2j}^{(i)} r_2(\epsilon_m  \xi)+\dots+\beta_{k_j j}^{(i)}r_{k_j}(\epsilon_m  \xi)$ by $|m|$ if moreover $\beta_{0j}^{(i)}=0$.
As $\log_2(1+|\gamma|) \le |s^{(i+1)}|$, for $\gamma=\gamma_m,\gamma_{\xi}$, this requires to scan at most $C_2n$ cells on Tape D, where $C_2>0$ is independent of $n$. In order to compute $c^{(i)}$, no more than $|s^{(i+1)}|$ cells need to be scanned on Tape D. Hence the number of cells scanned by M on Tape D is linearly bounded. All in all, we get $WP(m,\xi) \in \DS(n+f(n))$.
\end{sproof}

The first part of the proof of Lemma \ref{LemSpaceBoundInf} is based on the following facts.

\begin{lemma} \label{LemGetM}
Let $m,n\in\Z\setminus\{0\}$ with $\vert m \vert>1$ and let $\xi \in
\Z_m$. Let $v_k=\lbrack ab^ka^{-1},b \rbrack$ for $k\in \Z$. Then we
have: $v_k=1$ in $\ovBS$ if and only if $k \equiv 0 \, (\operatorname{mod} m\Z
)$.

\end{lemma}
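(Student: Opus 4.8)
The plan is to reduce the statement to a membership question in the base group of the HNN decomposition $\ovBS = \operatorname{HNN}(B, B_m, B_\xi, \tau)$ of Proposition \ref{PropHNNB}. Since $|m| > 1$, Corollary \ref{CorSlender} identifies the centralizer of $b$ in $\ovBS$ with $B$; therefore $v_k = [ab^ka^{-1}, b]$ is trivial if and only if $ab^ka^{-1}$ commutes with $b$, that is, if and only if $ab^ka^{-1} \in B$. So everything will come down to proving that $ab^ka^{-1} \in B$ precisely when $m$ divides $k$.

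For the easy direction I would start from $m \mid k$, write $k = m\ell$, and note that $b^k = (b_0^m)^\ell$ lies in $B_m$ because $b_0^m$ is one of the free generators of $B_m$ exhibited in Proposition \ref{PropBandBm}$(iii)$. By the defining relations of the HNN decomposition (namely $ab_0^ma^{-1} = b_1$) this gives $ab^ka^{-1} = \tau(b^k) = b_1^\ell \in B_\xi \subseteq B$, as wanted.

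For the converse I would argue by contraposition. If $m \nmid k$ then $k \neq 0$, so $b^k \neq 1$ (the group $\ovBS$ is torsion-free), and moreover $b^k \notin B_m$: by Lemma \ref{LemCarB} an element of $B_m$ is represented by a word equal to $b^{m\lambda_n}$ in $BS(m,\xi_n)$ for all large $n$, and since $b$ has infinite order in each $BS(m,\xi_n)$ this would force $m \mid k$. Hence the sequence $(1, a, b^k, a^{-1}, 1)$ is reduced in the sense of Britton's lemma, so $ab^ka^{-1}$ is non-trivial with $a$-length $|ab^ka^{-1}|_a = 2$; in particular it does not lie in $B$, whose elements all have $a$-length $0$.

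I do not expect a genuine obstacle: the argument is short once the description of the centralizer of $b$ is available (this is the only place where the hypothesis $|m| > 1$ enters) together with the explicit free generating set of $B_m$. The only points that will need a little care are the infinite order of $b$ in the approximating Baumslag-Solitar groups $BS(m,\xi_n)$ and the $a$-length bookkeeping in the converse direction.
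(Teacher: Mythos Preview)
Your argument is correct. You reduce $v_k=1$ to the question of whether $ab^ka^{-1}\in B$ via the centralizer description of Corollary~\ref{CorSlender}, and then settle that membership question with Britton's lemma in the HNN decomposition $\ovBS=\operatorname{HNN}(B,B_m,B_\xi,\tau)$. Both directions are fine; for the converse you could also read off $b^k\notin B_m$ directly from Proposition~\ref{PropBandBm}$(iii)$ (since $P_{b^k}(X)=k$ is constant), which avoids the detour through Lemma~\ref{LemCarB} and the approximating groups.

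The paper takes a different, lighter route: it proves the statement first in $BS(m,n)$ for $|n|>1$ by a direct application of Britton's lemma there (where the base group is just $\langle b\rangle$ and the edge groups are $\langle b^m\rangle$ and $\langle b^n\rangle$), and then transfers the result to $\ovBS$ by the limit argument, since $v_k$ is a fixed word and $BS(m,\xi_n)\to\ovBS$. This approach is more self-contained---it needs none of the structural results on $B$, $B_m$, the polynomial description, or the centralizer computation---whereas your proof leans on Corollary~\ref{CorSlender}, which in turn rests on Proposition~\ref{PropFixEll} and the full analysis of stabilizers. On the other hand, your argument is intrinsic to $\ovBS$ and would still work in a context where the group is handed to you as an HNN extension without the approximating sequence.
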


\begin{proof} \noindent
 Let $\vert n \vert >1$. We deduce from Britton's lemma the following claim: for
every $k \in \Z$, we have $v_k=1$ in $BS(m,n)$ if and only if $k
\equiv 0 \, (\operatorname{mod} m\Z )$. As $BS(m,\xi_n)$ tends to $\ovBS$ as $n$ goes to
infinity, $v_k$ is trivial in $\ovBS$ if and only if it is
trivial in $BS(m,\xi_n)$ for all $n$ large enough, which completes the proof.

\end{proof}
For $h \ge 1,t_1,\dots,t_h \in \{0,\dots,|m|-1\}$, we set $$v(|m|,t_1,\dots,t_h):=w(|m|,t_1,\dots,t_h)bw(-|m|,-t_1,\dots,-t_h)b^{-1}$$
where $w(|m|,t_1,\dots,t_h)$ is defined as in Lemma \ref{LemWinE}. 

\begin{sproof}{Lemma \ref{LemSpaceBoundInf}}
By hypothesis, there is a deterministic $f(n/6|m|)$ space-bounded Turing machine M that solves the word problem for $\ovBS$. We design a Turing machine $\Mp$ computing $r(n)$ as follows. The storage tapes of $\Mp$ consists of the tapes of M and two other tapes W, and O (output tape). The tape W identifies with the input tape of M and $\Mp$ simulates M on every tape of M.

{\it Computation of $|m|$.}
By Lemma \ref{LemGetM},
we have $|m|=\min \{k\ge 1:\, v_k=1 \text{ in } \ovBS \}$ 
The machine $\Mp$ first writes $v_k$ on tape W for $k=1$ and runs M. While $v_k$ is not accepted by M, the machine $\Mp$ writes $v_{k+1}$ over $v_k$, adds one to a counter storing $k$ in tape O and clears the storage tapes of M. If $v_k$ is accepted, which means $k=|m|$, then $\Mp$ clears tape W.

{\it Computation of $r_n(\epsilon_m  \xi)$.}
Using two counters that store $i \le n$ and $t \in \{0,\dots,|m|-1\}$ in tape O, the machine $\Mp$ lists recursively the words $w_i(t)=v(|m|,r_1(\epsilon_m\xi),\dots,r_{i-1}(\epsilon_{m}\xi),t)$ on tape W. Once a word $w_i(t)$ is written on tape W, the machine $\Mp$ runs M. If $w_i(t)$ is not accepted by M, then $\Mp$ writes $w_i(t+1)$ over $w_i(t)$, clears the storage tapes of M and runs M again. If the word written on $W$ is accepted by M, which means $t_i=r_i(\epsilon_m  \xi)$ by Lemma \ref{LemWinE}, then $\Mp$ stores $r_i(\xi)$ in tape O, increment $i$ and restarts with $w_{i+1}(0)$ or halts if $i=n$.

{\it Space bound.}
Obviously, the number of cells scanned by $\Mp$ to compute $|m|$ is bounded by some constant $C(m)$ independent of $n$.
The number of cells scanned by $\Mp$ while writing words $v(|m|,t_1,\dots,t_n)$ on tape W is bounded by $|v(|m|,t_1,\dots,t_n)| \le 6n|m|$, the number of cells used to store $r_1(\xi),\dots,r_n(\xi)$ is bounded by $n \log_2(|m|+1)$ and the number of cells scanned by $\Mp$ while simulating M over its storage tapes is bounded by $f(n)$. Hence $WP(m,\xi) \in \DS(f)$.
\end{sproof}

\paragraph{Turing degree}

Let $E,F$ be languages. The language $E$ is said to be \emph{Turing reducible to} $F$ if there is Turing machine M with oracle $F$ whose accepted language is $E$. The language $E$ is said \emph{Turing equivalent to} $F$ if $E$ is Turing reducible to $F$ and $F$ is Turing reducible to $E$. The \emph{Turing degree of} $E$ (also called the \emph{degree of unsolvability} of $E$) is the class all languages that are Turing equivalent to $E$. Let $f:E \longrightarrow F$ be a function. We define the \emph{Turing degree of $f$} as the Turing degree of the graph of $f$. We denote by $CP(m,\xi)$ the set of pairs  $(w,w') \in \{a^{\pm 1},b^{\pm 1}\}^{\ast} \times \{a^{\pm 1},b^{\pm 1}\}^{\ast}$ such that $w$ is conjugated to $w'$ in $\ovBS$. We call the Turing degrees of $WP(m,\xi),CP(m,\xi)$ the Turing degrees of the word problem and the conjugacy problem for $\ovBS$. These Turing degrees does not depend on the choice of a generating set for $\ovBS$.
\begin{corollary} \label{CorTuring}
The following Turing degrees coincide:
\begin{itemize}
\item the Turing degree of the word problem for $\ovBS$;
\item the Turing degree of the conjugacy problem for $\ovBS$;
\item the Turing degree of $r$.
\end{itemize}
In particular, the word problem is solvable for $\ovBS$, i.e $WP(m,\xi)$ is a recursive language, if and only if $r$ is a recursive function.
\end{corollary}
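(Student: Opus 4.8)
The plan is to establish the cycle of Turing reductions
\[
 r \;\leqslant_T\; WP(m,\xi) \;\leqslant_T\; CP(m,\xi) \;\leqslant_T\; WP(m,\xi) \;\leqslant_T\; r ,
\]
where $L\leqslant_T L'$ abbreviates ``$L$ is Turing reducible to $L'$''. This immediately yields that the three Turing degrees coincide, and the final ``in particular'' follows because a language (resp.\ a function) is recursive precisely when its Turing degree is that of a decidable set.

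The equivalence $WP(m,\xi)\equiv_T r$ is just the relativised form of Lemmas \ref{LemSpaceBound} and \ref{LemSpaceBoundInf}. For $WP(m,\xi)\leqslant_T r$ one runs Britton's algorithm on the input word, querying the oracle for the finitely many values $r(0)=|m|, r(1),\dots$ needed to test membership in $E_{m,\xi}$ and $E_1$ via Proposition \ref{PropBandBm}(iii); the space bound in Lemma \ref{LemSpaceBound} is irrelevant here. For $r\leqslant_T WP(m,\xi)$ one recovers $|m|=\min\{k\geqslant1: v_k=1\text{ in }\ovBS\}$ by Lemma \ref{LemGetM}, and then recovers $r(i)$ recursively as the unique $t\in\{0,\dots,|m|-1\}$ for which the word $v(|m|,r(1),\dots,r(i-1),t)$ vanishes in $\ovBS$, using Lemma \ref{LemWinE} and querying the word-problem oracle; this is the algorithm of Lemma \ref{LemSpaceBoundInf} with ``simulate $M$'' replaced by ``query the oracle''. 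Moreover $WP(m,\xi)\leqslant_T CP(m,\xi)$ is immediate, since $w=1$ in $\ovBS$ iff $w$ is conjugate to the empty word.

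The substantial step, which I expect to be the main obstacle, is $CP(m,\xi)\leqslant_T WP(m,\xi)$, i.e.\ that the conjugacy problem of the HNN extension $\tBS m\xi=\operatorname{HNN}(E,E_{m,\xi},E_1,\phi)$ (Corollary \ref{CorHNN2}) is Turing reducible to its word problem. By the previous paragraph such an oracle also computes $r$, so with it one can: run Britton's algorithm, hence compute a reduced form and then a cyclically reduced form of any input word; and decide membership of a given element of $E$ in $E_{m,\xi}$ or $E_1$ (Proposition \ref{PropBandBm}(iii)) and equalities in $E$. Conjugacy of two cyclically reduced elements is then governed by Collins' lemma for HNN extensions: if their $a$-lengths differ they are not conjugate; if both equal $n\geqslant1$ one is conjugate to the other iff it is obtained from one of the $n$ cyclic permutations of its syllables by conjugation by an element of $E_{m,\xi}$ or $E_1$, and for each fixed cyclic permutation this unwinds (because $E$ is abelian) to a decidable system of membership conditions in $E_{m,\xi}$, $E_1$ together with linear equations over $E$ --- all checkable with the oracle at hand.

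The delicate case is $a$-length $0$, i.e.\ two elements $x,x'\in E$. By Collins' lemma (again using that $E$ is abelian, so ``conjugate in $E$'' means ``equal'') they are conjugate in $\tBS m\xi$ iff $x'$ lies in the orbit of $x$ under the partial bijections $\phi\colon E_{m,\xi}\to E_1$ and $\phi^{-1}\colon E_1\to E_{m,\xi}$. This orbit can be infinite --- precisely when $\xi$ is algebraic, by Proposition \ref{PropBandBm}(vi) --- so it cannot simply be enumerated. The way around is to use that, identifying $E$ with $\mathfrak B$ as in Proposition \ref{PropBandBm}, the passage from $\gamma$ to $\phi(\gamma)$ multiplies $P_\gamma$ by $X$ and hence raises the valuation $\nu(P_\gamma)$ by exactly $1$; since $\nu(P_\gamma)<\infty$ for every $\gamma\neq1$, the orbit of $x$ is injectively and monotonically indexed by the value of $\nu(P_\cdot)$. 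Consequently, if $x'$ is in the orbit of $x$ its position there is forced to be $j=\nu(P_{x'})-\nu(P_x)$, and it remains only to check, with the oracle, that the iterate $\phi^{j}(x)$ (resp.\ $\phi^{-j}(x)$) is defined --- finitely many membership tests in $E_{m,\xi}$ resp.\ $E_1$ --- and equals $x'$. This settles $CP(m,\xi)\leqslant_T WP(m,\xi)$ and, together with the chain above, completes the proof.
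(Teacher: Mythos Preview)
Your proof is correct and follows essentially the same route as the paper: the equivalence $WP(m,\xi)\equiv_T r$ via Lemmas \ref{LemSpaceBound} and \ref{LemSpaceBoundInf}, and then $CP(m,\xi)\leqslant_T WP(m,\xi)$ via Collins' lemma, with the elliptic case handled by observing that conjugation by $a$ acts on $P_\gamma$ as multiplication by $X$. The only cosmetic differences are that the paper computes \emph{normal forms} (with respect to fixed coset representatives $T_{m,\xi}$ and $T_1$) to handle the cyclic-permutation comparison cleanly rather than ``unwinding'' as you describe, and that in the elliptic case the paper pins down the candidate exponent as $n(v,w)=\deg P_v - \deg P_w$ whereas you use $\nu(P_{x'})-\nu(P_x)$; both invariants shift by exactly one under multiplication by $X$, so either works.
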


In contrast, Britton has proved that the conjugacy problem for any HNN extension with base a finitely generated abelian group is solvable, i.e. both Turing degrees are $\mathbf{0}$ \cite{Bri79}. It is also worth noting this optimal result of Miller: for every pair of recursively enumerable Turing degrees $\mathbf{a},\,\mathbf{b}$ where $\mathbf{a}$ is Turing reducible to $\bf{b}$, there is a finitely presented group whose word problem has Turing degree $\mathbf{a}$ and whose conjugacy problem has Turing degree $\mathbf{b}$ \cite{Mil71}.

Observe that one can define recursive $m$-adic numbers in the very same way one defines recursive (equivalently computable) real numbers (see \cite{Wei00} for a definition of computable real numbers). The Turing degree of an $m$-adic number is then defined by means of its Hensel expansion. If $\xi \in \Z_m^{\times}$, Corollary \ref{CorTuring} then says that the word problem is solvable in $\ovBS$ if and only if $\xi$ is a recursive number and that the Turing degree of the word problem coincides with the Turing degree of $\xi$.

\begin{proof}
The last claim directly follows from Lemmas \ref{LemSpaceBound} and \ref{LemSpaceBoundInf}. From the proofs of these lemmas, we can easily deduce that the Turing degree of $WP(m,\xi)$ coincides with the Turing degree of $r$. 

To complete the proof we design quite informally a Turing machine with oracle $r$ that solves the conjugacy problem in $\ovBS$.
We fix the set of reprensatives $T_{m,\xi}=\{0,e_0,\dots,(m-1)e_0\}$ of the cosets of $E_1$ in $E$ and the set of representatives  $T_{1}=\{0,e_0,2e_0,\dots\}$ of the cosets of $E_{m,\xi}$ in $E$. If $r$ can be computed by means of a Turing machine, Britton's algorithm (see the proof of Lemma \ref{LemSpaceBound}) yields a reduced form in $\tBS{m}{\xi}$ of any $w \in \aeo$. The process of working from the right with the relations $a(me_0)a^{-1}=e_1$ and $a(e_i-r_i(\xi)e_0)a^{-1}=e_{i+1}$ yields a normal form for $w$ with respect to the sets of representatives $T_{m,\xi}$ and $T_{1}$. Thus we can design a Turing machine with oracle $r$ that computes normal forms $\tv,\tw$ of cyclically reduced conjugates of $v$ and $w$ for any $v,w \in \aeo$. If $|\tv|_a \neq |\tw|_a$, we deduce from Collin's lemma \cite[Th. 2.5]{LS77} that $v$ is not a conjugate of $w$ in $\tBS{m}{\xi}$. The machine can be designed in such a way that it halts in this case in a non-accepting state. Hence we can assume that $|\tv|_a=|\tw|_a$.
Comparing the normal form $\tv$ to the normal form of each cyclic permutation of $\tw$, the machine can decide wether or not there exist, $e \in E$ and some cyclic permutation $(\tw)^{\ast}$ of $\tw$ such that $\tv=e(\tw)^{\ast} (-e)$. By Collin's lemma, it is enough to decide wether $v$ is a conjugate of $w$, provided either $|\tv|_a$ or $|\tw|_a$ is not zero.
Hence we can assume that $v,w$ have their images in $E$. We deduce from Lemma \ref{LemFixEll} that $v$ is a conjugate of $w$ in $\tBS{m}{\xi}$ if and only if there is some $n \in \Z$ such that $v=a^nwa^{-n}$ in $\tBS{m}{\xi}$. Identifying $E$ with $B$ in Proposition \ref{PropBandBm}.$i$, we can consider $n(v,w)=\deg P_{v}(X)-\deg P_w(X)$. By means of a Turing machine with oracle $r$, we can compare the Laurent polynomials $P_{v}(X)$ and $X^{n(v,w)}P_{w}(X)$ and hence decide wether or not $v$ is a conjugate of $w$.

\end{proof}

\begin{rem}
We can construct a family of public-key cryptosystems based on the word problems of limits of Baumslag-Solitar groups by adaptating the construction  in \cite{GZ91} based on Grigorchuk groups. The attack conceived in \cite{GHM+04} does not threaten these new cryptosystems since such an attack would require in our case at least $|m|^N$ numbers of computations when the length of the public-key is $N$, if we follow the cryptanalysis of the authors. However, another attack conceived in \cite{GHM+04}, namely the reaction attack against the Magyarik-Wagner protocol, can be proved to be successful. 
\end{rem}

%-------------------------------------------------------------------------------------------------
\bibliographystyle{alpha}

\begin{thebibliography}{99}

\newcommand{\etalchar}[1]{$^{#1}$}
\def\cprime{$'$}


\bibitem[AGM92]{AGM92}
H. Alt, V. Geffert, and K. Mehlhorn.
\newblock A lower bound for the nondeterministic space complexity of
  context-free recognition.
\newblock {\em Inform. Process. Lett.}, 42(1):25--27, 1992.

\bibitem[An{\={\i}}71]{Ani71}
A. An{\={\i}}s{\={\i}}mov.
\newblock The group languages.
\newblock {\em Kibernetika (Kiev)}, (4):18--24, 1971.


\bibitem[ABL{\etalchar{+}}05]{ABL+05}
G.~Arzhantseva, J.~Burillo, M.~Lustig, L.~Reeves, H.~Short, and E.~Ventura.
\newblock Uniform non-amenability.
\newblock {\em Adv. Math.}, 197(2):499--522, 2005.


\bibitem[Bau81]{Bau81}
A.~Baudisch.
\newblock Subgroups of semifree groups.
\newblock {\em Acta Math. Acad. Sci. Hungar.}, 38(1-4):19--28, 1981.


\bibitem[BMR97]{BMR97}
G.~Baumslag, A. Myasnikov, and V. Roman{\cprime}kov.
\newblock Two theorems about equationally {N}oetherian groups.
\newblock {\em J. Algebra}, 194(2):654--664, 1997.

\bibitem[BMR99]{BMR99}
G.~Baumslag, A. Myasnikov, and V. Remeslennikov.
\newblock Algebraic geometry over groups. {I}. {A}lgebraic sets and ideal
  theory.
\newblock {\em J. Algebra}, 219(1):16--79, 1999.


\bibitem[BS62]{BS62}
G.~Baumslag and D.~Solitar.
\newblock Some two-generator one-relator non-{H}opfian groups.
\newblock {\em Bull. Amer. Math. Soc.}, 68:199--201, 1962.


\bibitem[BH86]{BH86}
E.~B{\'e}dos and P.~de~la Harpe.
\newblock Moyennabilit\'e int\'erieure des groupes: d\'efinitions et exemples.
\newblock {\em Enseign. Math. (2)}, 32(1-2):139--157, 1986.

\bibitem[Bri79]{Bri79}
J. Britton.
\newblock The conjugacy problem for an {HNN} extension of an abelian group.
\newblock {\em Math. Sci.}, 4(2):85--92, 1979.

\bibitem[Bry77]{Bry77}
R.~Bryant.
\newblock The verbal topology of a group.
\newblock {\em J. Algebra}, 48(2):340--346, 1977.


\bibitem[Cha50]{Chab50}
C.~Chabauty.
\newblock Limite d'ensembles et g\'eom\'etrie des nombres.
\newblock {\em Bull. Soc. Math. France}, 78:143--151, 1950.



\bibitem[Cha00]{Cham00}
C.~Champetier.
\newblock L'espace des groupes de type fini.
\newblock {\em Topology}, 39(4):657--680, 2000.

\bibitem[CG05]{ChGu05}
C.~Champetier and V.~Guirardel.
\newblock Limit groups as limits of free groups.
\newblock {\em Israel J. Math.}, 146:1--75, 2005.


\bibitem[Cor09a]{Cor09a}
Y.~de~Cornulier.
\newblock On the cantor-bendixson rank of metabelian groups.
\newblock arXiv:0904.4230v2 [math.GR] 9 Jun 2009.

\bibitem[Cor09b]{Cor09b}
Y.~de~Cornulier.
\newblock A sofic group away from amenable groups.
\newblock arXiv:0906.3374v1 [math.GR] 18 Jun 2009.


\bibitem[CGP07]{CGP07}
Y. de~Cornulier, L. Guyot, and W. Pitsch.
\newblock On the isolated points in the space of groups.
\newblock {\em J. Algebra}, 307(1):254--277, 2007.


\bibitem[Dav96]{Dav96}
K. Davidson.
\newblock {\em {$C^*$}-algebras by example}, volume~6 of {\em Fields Institute
  Monographs}.
\newblock American Mathematical Society, Providence, RI, 1996.

\bibitem[DL]{DL09}
V.~Diekert and J.~Laun.
\newblock On computing geodesics in Baumslag-Solitar groups.
\newblock Preprint. arXiv:0907.5114v2 [math.GR] 28 Aug 2009.


\bibitem[Dun85]{Dun85}
M.~Dunwoody.
\newblock The accessibility of finitely presented groups.
\newblock {\em Invent. Math.}, 81(3):449--457, 1985.


\bibitem[Eff75]{Eff75}
E.~Effros.
\newblock Property {$\Gamma $} and inner amenability.
\newblock {\em Proc. Amer. Math. Soc.}, 47:483--486, 1975.

\bibitem[Eld05]{Eld05}
M.~Elder.
\newblock A context-free and a 1-counter geodesic language for a
  {B}aumslag-{S}olitar group.
\newblock {\em Theoret. Comput. Sci.}, 339(2-3):344--371, 2005.

\bibitem[Fal03]{Fal03}
K.~Falconer.
\newblock {\em Fractal geometry}.
\newblock John Wiley \& Sons Inc., Hoboken, NJ, second edition, 2003.
\newblock Mathematical foundations and applications.


\bibitem[FM98]{FM98}
B.~Farb and L.~Mosher.
\newblock A rigidity theorem for the solvable {B}aumslag-{S}olitar groups.
\newblock {\em Invent. Math.}, 131(2):419--451, 1998.
\newblock With an appendix by Daryl Cooper.

\bibitem[FG06]{FG06}
A.~Fel{\cprime}shtyn and D. Gon{\c{c}}alves.
\newblock Twisted conjugacy classes of automorphisms of {B}aumslag-{S}olitar
  groups.
\newblock {\em Algebra Discrete Math.}, (3):36--48, 2006.

\bibitem[FG08]{FG08}
A.~Fel{\cprime}shtyn and D. Gon{\c{c}}alves.
\newblock The {R}eidemeister number of any automorphism of a
  {B}aumslag-{S}olitar group is infinite.
\newblock In {\em Geometry and dynamics of groups and spaces}, volume 265 of
  {\em Progr. Math.}, pages 399--414. Birkh\"auser, Basel, 2008.

\bibitem[FH94]{FH94}
A.~Fel{\cprime}shtyn and Hill.
\newblock The reidemeister zeta function with applications to nielsen theory
  and a connection with reidemeister torsion.
\newblock {\em K-Theory}, 8(4):367--393, 1994.

\bibitem[GZ91]{GZ91}
M.~Garzon and Y.~Zalcstein.
\newblock The complexity of {G}rigorchuk groups with application to
  cryptography.
\newblock 88(1):83--98, 1991.

\bibitem[GG08]{GG08}
T.~Gelander and Y.~Glasner.
\newblock Countable primitive groups.
\newblock {\em Geom. Funct. Anal.}, 17(5):1479--1523, 2008.

\bibitem[Gil79]{Gil79}
D.~Gildenhuys.
\newblock Classification of soluble groups of cohomological dimension two.
\newblock {\em Math. Z.}, 166(1):21--25, 1979.


\bibitem[GHM{\etalchar{+}}04]{GHM+04}
M.~Gonz{\'a}lez~Vasco, D.~Hofheinz, C.~Mart{\'{\i}}nez, and R.~Steinwandt.
\newblock On the security of two public key cryptosystems using non-abelian
  groups.
\newblock {\em Des. Codes Cryptogr.}, 32(1-3):207--216, 2004.

\bibitem[GW06]{GW06}
D.~Gon{\c{c}}alves and P.~Wong.
\newblock Twisted conjugacy classes in wreath products.
\newblock {\em Internat. J. Algebra Comput.}, 16(5):875--886, 2006.

\bibitem[Gri84]{Gri84}
R.~Grigorchuk.
\newblock Degrees of growth of finitely generated groups and the theory of
  invariant means.
\newblock {\em Izv. Akad. Nauk SSSR Ser. Mat.}, 48(5):939--985, 1984.


\bibitem[Gri85]{Gri85}
R.~I. Grigorchuk.
\newblock A relationship between algorithmic problems and entropy
  characteristics of groups.
\newblock {\em Dokl. Akad. Nauk SSSR}, 284(1):24--29, 1985.


\bibitem[Gro81]{Grom81}
M.~Gromov.
\newblock Groups of polynomial growth and expanding maps.
\newblock {\em Inst. Hautes \'Etudes Sci. Publ. Math.}, (53):53--73, 1981.

\bibitem[Gub86]{Gub86}
V.~Guba.
\newblock Equivalence of infinite systems of equations in free groups and
  semigroups to finite subsystems.
\newblock {\em Mat. Zametki}, 40(3):321--324, 428, 1986.

\bibitem[GS08]{GS08}
L.~Guyot and Y.~Stalder.
\newblock Limits of {B}aumslag-{S}olitar groups.
\newblock {\em Groups Geom. Dyn.}, 2(3):353--381, 2008.

\bibitem[Guy07]{Guy07}
L.~Guyot.
\newblock Estimations de dimensions de {M}inkowski dans l'espace des groupes
  marqu\'es.
\newblock {\em Ann. Fac. Sci. Toulouse Math. (6)}, 16(1):107--124, 2007.

\bibitem[Har08]{Har08}
P.~de~la Harpe.
\newblock Spaces of closed subgroups of locally compact groups.
\newblock arXiv:0807.2030v2 [math.GR] 12 Nov 2008, not intended for
  publication.

\bibitem[Har85]{Har85}
P.~de~la Harpe.
\newblock Reduced {$C^\ast$}-algebras of discrete groups which are simple with
  a unique trace.
\newblock In {\em Operator algebras and their connections with topology and
  ergodic theory ({B}u\c steni, 1983)}, volume 1132 of {\em Lecture Notes in
  Math.}, pages 230--253. Springer, Berlin, 1985.

\bibitem[Har07]{Har07}
P.~de~la Harpe.
\newblock On simplicity of reduced {$C^\ast$}-algebras of groups.
\newblock {\em Bull. Lond. Math. Soc.}, 39(1):1--26, 2007.


\bibitem[HB00]{HB00}
P.~de~la Harpe and M.~Busher.
\newblock Free products with amalgamation, and {HNN}-extensions of uniformly
  exponential growth.
\newblock {\em Mat. Zametki}, 67(6):811--815, 2000.


\bibitem[HP09]{HP09}
P.~de~La~Harpe and J-P. Pr\'eaux.
\newblock {$C^\ast$}-simple groups: amalgamated free products, {HNN}-extensions
  and fundamental groups of $3$-manifolds.
\newblock 2009.


\bibitem[HS65]{HS65}
J.~Hartmanis and R.~Stearns.
\newblock On the computational complexity of algorithms.
\newblock {\em Trans. Amer. Math. Soc.}, 117:285--306, 1965.

\bibitem[Hol00]{Hol00}
D.~Holt.
\newblock Word-hyperbolic groups have real-time word problem.
\newblock {\em Internat. J. Algebra Comput.}, 10(2):221--227, 2000.


\bibitem[HRR+05]{HRR+05}
D.~Holt, S.~Rees, C. R{\"o}ver, and R.~Thomas.
\newblock Groups with context-free co-word problem.
\newblock {\em J. London Math. Soc. (2)}, 71(3):643--657, 2005.

\bibitem[HR03]{HR03}
D.~Holt and C.~R{\"o}ver.
\newblock On real-time word problems.
\newblock {\em J. London Math. Soc. (2)}, 67(2):289--301, 2003.


\bibitem[HU79]{HU79}
J.~Hopcroft and J.~Ullman.
\newblock {\em Introduction to automata theory, languages, and computation}.
\newblock Addison-Wesley Publishing Co., Reading, Mass., 1979.
\newblock Addison-Wesley Series in Computer Science.

\bibitem[Iva07]{Ivan07}
N.~Ivanov.
\newblock On the structure of some reduced amalgamated free product
  {$C^{\ast}$}-algebras.
\newblock arXiv.0705.3919v3 (4 Sep 2007).

\bibitem[JS79]{JS79}
W.~Jaco and P.~Shalen.
\newblock Seifert fibered spaces in {$3$}-manifolds.
\newblock {\em Mem. Amer. Math. Soc.}, 21(220):viii+192, 1979.

\bibitem[KS71]{KS71}
A.~Karrass and D.~Solitar.
\newblock Subgroups of {${\rm HNN}$} groups and groups with one defining
  relation.
\newblock {\em Canad. J. Math.}, 23:627--643, 1971.


\bibitem[Kro90]{Kro90}
P.~Kropholler.
\newblock Baumslag-{S}olitar groups and some other groups of cohomological
  dimension two.
\newblock {\em Comment. Math. Helv.}, 65(4):547--558, 1990.



\bibitem[LZ77]{LZ77}
R.~Lipton and Y.~Zalcstein.
\newblock Word problems solvable in logspace.
\newblock {\em J. Assoc. Comput. Mach.}, 24(3):522--526, 1977.


\bibitem[LS77]{LS77}
R.~Lyndon and P.~Schupp.
\newblock {\em Combinatorial group theory}.
\newblock Springer-Verlag, Berlin, 1977.
\newblock Ergebnisse der Mathematik und ihrer Grenzgebiete, Band 89.


\bibitem[MO85]{MO85}
K.~Madlener and F.~Otto.
\newblock Pseudonatural algorithms for the word problem for finitely presented
  monoids and groups.
\newblock {\em J. Symbolic Comput.}, 1(4):383--418, 1985.


\bibitem[Mah46]{Mah46}
K.~Mahler.
\newblock On lattice points in {$n$}-dimensional star bodies. {I}. {E}xistence
  theorems.
\newblock {\em Proc. Roy. Soc. London. Ser. A.}, 187:151--187, 1946.

\bibitem[Mil71]{Mil71}
C.~Miller, III.
\newblock {\em On group-theoretic decision problems and their classification}.
\newblock Princeton University Press, Princeton, N.J., 1971.
\newblock Annals of Mathematics Studies, No. 68.

\bibitem[Mol69]{Mol69}
D.~Moldavanski{\u\i}.
\newblock A certain theorem of {M}agnus.
\newblock {\em Ivanov. Gos. Ped. Inst. U\v cen. Zap.}, 44(mat.):26--28, 1969.


\bibitem[MS83]{MS83}
D.~Muller and P.~Schupp.
\newblock Groups, the theory of ends, and context-free languages.
\newblock {\em J. Comput. System Sci.}, 26(3):295--310, 1983.


\bibitem[MR00]{MR00}
A.~Myasnikov and V.~Remeslennikov.
\newblock Algebraic geometry over groups. {II}. {L}ogical foundations.
\newblock {\em J. Algebra}, 234(1):225--276, 2000.


\bibitem[Nek07]{Nek07}
Volodymyr Nekrashevych.
\newblock A minimal {C}antor set in the space of 3-generated groups.
\newblock {\em Geom. Dedicata}, 124:153--190, 2007.

\bibitem[OH07]{Hou07}
A.~Ould~Houcine.
\newblock Limit groups of equationally {N}oetherian groups.
\newblock In {\em Geometric group theory}, Trends Math., pages 103--119.
  Birkh\"auser, Basel, 2007.

\bibitem[Pow75]{Pow75}
R.~Powers.
\newblock Simplicity of the {$C^{\ast} $}-algebra associated with the free
  group on two generators.
\newblock {\em Duke Math. J.}, 42:151--156, 1975.

\bibitem[Rog70]{Rog70}
C.~Rogers.
\newblock {\em Hausdorff measures}.
\newblock Cambridge University Press, London, 1970.

\bibitem[Rom]{Rom09}
V.~Roman'kov.
\newblock Twisted conjugacy classes of nilpotent groups.
\newblock arXiv:0903.3455v1 [math.GR] 20 Mar 2009.

\bibitem[Seg83]{Seg83}
D.~Segal.
\newblock {\em Polycyclic groups}, volume~82 of {\em Cambridge Tracts in
  Mathematics}.
\newblock Cambridge University Press, Cambridge, 1983.

\bibitem[Ser77]{Ser77}
J.-P.~Serre.
\newblock {\em Arbres, amalgames, {${\rm SL}\sb{2}$}}.
\newblock Soci\'et\'e Math\'ematique de France, Paris, 1977.
\newblock Avec un sommaire anglais, R\'edig\'e avec la collaboration de Hyman
  Bass, Ast\'erisque, No. 46.

\bibitem[Sha00]{Sha00}
Y.~Shalom.
\newblock Rigidity of commensurators and irreducible lattices.
\newblock {\em Invent. Math.}, 141(1):1--54, 2000.

\bibitem[Sta05]{Sta05PhD}
Y.~Stalder.
\newblock {\em Espace des groupes marqu\'es et groupes de Baumslag-Solitar}.
\newblock PhD thesis, Universit\'e de {N}euch\^atel,
  http://doc.rero.ch/search.py?recid=5501\&ln=en, 2005.

\bibitem[Sta06a]{Sta06a}
Y.~Stalder.
\newblock Convergence of {B}aumslag-{S}olitar groups.
\newblock {\em Bull. Belg. Math. Soc. Simon Stevin}, 13(2):221--233, 2006.

\bibitem[Sta06b]{Sta06b}
Y.~Stalder.
\newblock Moyennabilit\'e int\'erieure et extensions {HNN}.
\newblock {\em Ann. Inst. Fourier (Grenoble)}, 56(2):309--323, 2006.

\bibitem[Ste96]{Ste96}
A.~Stepin.
\newblock Approximation of groups and group actions, the {C}ayley topology.
\newblock In {\em Ergodic theory of ${\bf Z}\sp d$ actions (Warwick,
  1993--1994)}, volume 228 of {\em London Math. Soc. Lecture Note Ser.}, pages
  475--484. Cambridge Univ. Press, Cambridge, 1996.

\bibitem[Waa81]{Waa81}
S.~Waack.
\newblock Tape complexity of word problems.
\newblock In {\em Fundamentals of computation theory ({S}zeged, 1981)}, volume
  117 of {\em Lecture Notes in Comput. Sci.}, pages 467--471. Springer, Berlin,
  1981.

\bibitem[Weh73]{Weh73}
B.~A.~F. Wehrfritz.
\newblock {\em Infinite linear groups. {A}n account of the group-theoretic
  properties of infinite groups of matrices}.
\newblock Springer-Verlag, New York, 1973.
\newblock Ergebnisse der Matematik und ihrer Grenzgebiete, Band 76.


\bibitem[Wei00]{Wei00}
K.~Weihrauch.
\newblock {\em Computable analysis}.
\newblock Texts in Theoretical Computer Science. An EATCS Series.
  Springer-Verlag, Berlin, 2000.
\newblock An introduction.

\bibitem[Why01]{Why01}
K.~Whyte.
\newblock The large scale geometry of the higher {B}aumslag-{S}olitar groups.
\newblock {\em Geom. Funct. Anal.}, 11(6):1327--1343, 2001.

\bibitem[ZL70]{ZL70}
A.~K. Zvonkin and L.~A. Levin.
\newblock The complexity of finite objects and the basing of the concepts of
  information and randomness on the theory of algorithms.
\newblock {\em Uspehi Mat. Nauk}, 25(6(156)):85--127, 1970.

\end{thebibliography}

\medskip

Authors addresses:

\medskip

L. G. Mathematisches Institut, Georg-August Universit\"at,
Bunsenstrasse 3-5, G\"ottingen 37073, Germany,
guyot@uni-math.gwdg.de
\medskip

Y. S. Clermont Universit\'e, Universit\'e Blaise Pascal, Laboratoire de Math\'ematiques, BP 10448, F-63000 Clermont-Ferrand, France ---
CNRS, UMR 6620, Laboratoire de Math\'ematiques, F-63177 Aubi\`ere, France, yves.stalder@math.univ-bpclermont.fr

\end{document}